\numberwithin{equation}{section}
\newcommand{\defeq}{\vcentcolon=}
\newcommand{\eqdef}{=\vcentcolon}
\newcommand{\N}{\mathbb{N}}
\newcommand{\Z}{\mathbb{Z}}
\newcommand{\Q}{\mathbb{Q}}
\newcommand{\R}{\mathbb{R}}
\newcommand{\B}{\mathfrak{B}}
\newcommand{\C}{\mathbb{C}}
\newcommand{\V}{\mathbb{V}}
\newcommand{\G}{\mathbb{G}}
\renewcommand{\O}{\mathbb{O}}
\newcommand{\U}{\mathbb{U}}
\newcommand{\Rnn}{\R_{\geq}}
\newcommand{\Rp}{\R_{>}}
\newcommand{\imag}{\mathrm{i}}
\newcommand{\Real}{\mathrm{Re}}
\newcommand{\Imag}{\mathrm{Im}}
\renewcommand{\S}{\mathcal{S}}
\newcommand{\W}{\mathcal{W}}
\newcommand{\m}{\mathfrak{m}}
\newcommand{\cZ}{\mathcal{Z}}
\newcommand{\Prob}{\mathbb{P}}
\DeclareMathOperator{\Var}{\mathbb{V}\mathrm{ar}}
\DeclareMathOperator{\Cov}{\mathrm{Cov}}
\newcommand{\E}{\mathbb{E}}
\newcommand{\A}{\mathcal{A}}
\renewcommand{\L}{\mathcal{L}}
\newcommand{\F}{\mathcal{F}}
\newcommand{\1}{\mathbbm{1}}
\newcommand{\Id}{\mathit{I}_{\mathit{d}}}
\newcommand{\Sd}{\mathbb{S}^{d-1}}
\renewcommand{\S}{\mathbb{S}}
\newcommand{\Orth}{\mathbb{O}\mathit{(d)}}
\newcommand{\SOd}{\mathrm{SO}\mathit{(d)}}
\newcommand{\Sim}{\mathbb{S}\mathit{(d)}}
\newcommand{\eqdist}{\stackrel{\mathrm{law}}{=}}
\newcommand{\comp}{\mathsf{c}}
\newcommand{\transp}{\mathsf{T}}
\DeclareMathOperator{\trace}{\mathrm{tr}}
\newcommand{\da}{\mathrm{d} \mathit{a}}
\newcommand{\ddo}{\mathrm{d} \mathit{o}}
\newcommand{\dr}{\mathrm{d} \mathit{r}}
\newcommand{\ds}{\mathrm{d} \mathit{s}}
\newcommand{\dt}{\mathrm{d} \mathit{t}}
\newcommand{\dx}{\mathrm{d} \mathit{x}}
\newcommand{\dy}{\mathrm{d} \mathit{y}}
\newcommand{\bC}{\mathbf{C}}
\newcommand{\bT}{\mathbf{T}}
\newcommand{\bX}{\mathbf{X}}
\newcommand{\AU}{\mathit{A}_{\U}}
\newcommand{\HAU}{\mathit{H}_{\!\mathit{A}_{\U}}}
\newcommand{\CU}{\mathit{C}_{\U}}
\newcommand{\SU}{\mathit{S}_{\U}}
\newcommand{\Au}{\mathit{A}_{U}}
\newcommand{\Cu}{\mathit{C}_{U}}
\newcommand{\Su}{\mathit{S}_{U}}
\newcommand{\abs}[1]{\left| #1 \right|}
\newcommand{\norm}[1]{\left\| #1 \right\|}
\newcommand{\scalar}[1]{\langle #1 \rangle}
\newcommand{\Step}{\noindent \textit{Step}\ }
\begin{document}

\title{Solutions to complex smoothing equations\thanks{Research supported by short visit grant 6172 from the European Science Foundation (ESF)
for the activity entitled `Random Geometry of Large Interacting Systems and Statistical Physics'.}
}


\author{Matthias Meiners       \and	Sebastian Mentemeier 
}

\authorrunning{M. Meiners and S. Mentemeier} 

\institute{M. Meiners	 \at
             Fachbereich Mathematik, Technische Universit\"at Darmstadt	\\
		Schlossgartenstra\ss e 7, 64289 Darmstadt, Germany	\\
              \email{meiners@mathematik.tu-darmstadt.de}           
           \and
           S. Mentemeier \at
           Fakult\"at f\"ur Mathematik, Technische Universit\"at Dortmund\\
           Vogelpothsweg 87, 44227 Dortmund, Germany	\\
 \email{sebastian.mentemeier@tu-dortmund.de}          
}

\date{Submitted to the arXiv.org: July 29, 2015}

\maketitle

\begin{abstract}
We consider smoothing equations of the form
\begin{equation*}
X	~\eqdist~	\sum_{j \geq 1} T_j X_j + C
\end{equation*}
where $(C,T_1,T_2,\ldots)$ is a given sequence of random variables and $X_1,X_2,\ldots$ are independent copies of $X$
and independent of the sequence $(C,T_1,T_2,\ldots)$.
The focus is on complex smoothing equations, i.e.,
the case where the random variables $X, C,T_1,T_2,\ldots$ are complex-valued,
but also more general multivariate smoothing equations are considered,
in which the $T_j$ are similarity matrices.
Under mild assumptions on $(C,T_1,T_2,\ldots)$, we describe the laws of all random variables $X$ solving the above smoothing equation.
These are the distributions of randomly shifted and stopped L\'evy processes satisfying a certain invariance property
called $(U,\alpha)$-stability, which is  related to operator (semi)stability.
The results are applied to various examples from applied probability and statistical physics.
\keywords{Branching process \and characteristic function \and infinite divisibility \and L\'evy process \and multiplicative martingales  \and multivariate smoothing equation
\and similarity matrix}
 \subclass{60J80 \and 39B22 \and 60E10}
\end{abstract}

\section{Introduction and main results}	\label{sec:intro}

\subsection{Smoothing equations}	\label{subsec:Smoothing equations}

In the paper at hand, we consider complex smoothing equations of the form
\begin{equation}	\label{eq:FP of ST inhom}
X	~\eqdist~	\sum_{j \geq 1} T_j X_j + C
\end{equation}
where $\eqdist$ denotes equality in law, $(C,T_1,T_2,\ldots)$ is a given sequence of complex random variables
and $X_1, X_2, \ldots$ are independent copies of the complex random variable $X$ and independent of $(C,T_1,T_2,\ldots)$.
The law of $X$ is called a solution\footnote{In
slight abuse of language, we will sometimes call a random variable $X$ a solution
if the distribution of $X$ is a solution to \eqref{eq:FP of ST inhom}.}
to \eqref{eq:FP of ST inhom}.
Under suitable assumptions on $(C,T_1,T_2,\ldots)$, we provide a complete description of the set of all solutions to \eqref{eq:FP of ST inhom}.

This kind of problem has a long history going back at least to \cite{Durrett+Liggett:1983,Holley+Liggett:1981,Kahane+Peyriere:1976}.
Recently, there has been progress leading to a complete description of the set of all solutions to \eqref{eq:FP of ST inhom}
in the case where all the $T_j$, $j \in \N$ are real and $C$ and $X$ are random vectors \cite{Alsmeyer+Meiners:2013,Iksanov+Meiners:2015a}.
We also refer to \cite{Alsmeyer+Biggins+Meiners:2012} for an overview over the early results in the field.

The complex case can naturally be embedded into the more general multivariate case,
in which $C$ and $X$ are $d$-dimensional and the $T_j$ are random $d \times d$ matrices, $d \in \N$.
In this setup, \eqref{eq:FP of ST inhom} has been solved under two different sets of assumptions,
namely, in the homogeneous case (i.e., $C=0$ a.s.) under assumptions that guarantee
that all solutions are essentially scale mixtures of multivariate normal distributions \cite{Bassetti+Matthes:2014},
and in the case that $C, T_1, T_2, \ldots$ have positive entries only
and attention is restricted to solutions $X$ on $[0,\infty)^d$ \cite{Mentemeier:2015}.
Yet, these results either cannot be applied to the complex case \cite{Mentemeier:2015}
or cover only a very special situation \cite{Bassetti+Matthes:2014}.
We will solve \eqref{eq:FP of ST inhom} in a multivariate setup that comfortably covers the complex case
and thereby address open questions in papers by Barral \cite[Remark 4]{Barral:2001}
(posed for $C=0$ and $T_1, T_2, \dots$ taking values in a Banach algebra); Chauvin et al.\;\cite[Remark 4.5]{Chauvin+Liu+Pouyanne:2014} 
and Madaule et al.\;\cite[Section 1.2]{Madaule+Rhodes+Vargas:2015} (in the complex case).

\subsection{Motivation}	\label{subsec:motivation}

We believe that solving \eqref{eq:FP of ST inhom} in a setup as general as possible
is of high theoretical value for probability theory.
Smoothing equations appear naturally in various fields of probability and statistical physics.
We mention here two classes of examples that will also motivate the choice of our setup.

\subsubsection{Complex smoothing equations in models of Applied Probability}	\label{subsubsec:Complex smoothing equations}

In various models of Applied Probability
including $b$-ary search trees \cite{Chauvin+Liu+Pouyanne:2014,Chauvin+Pouyanne:2004,Fill+Kapur:2004,Janson:2004},
P\'olya urn models \cite{Janson:2004,Knape+Neininger:2014,Pouyanne:2005}, the conservative fragmentation model \cite{Janson+Neininger:2008},
and B-urns \cite{Chauvin+Gardy+Pouyanne+Ton-That:2014}
phase transitions were shown for the limiting behavior of the distributions of quantities of interest.
All these phase transitions follow very similar patterns suggesting that they are particular instances of one universal phenomenon.

Before we sketch the general phenomenon, we describe it in the context of $b$-ary search trees.
The space requirement (i.e., the total number of nodes) in a $b$-ary search tree with $n$ keys inserted under the random permutation model,
centered by its mean and scaled by $\sqrt{n}$,
is known \cite{Lew+Mahmoud:1994} to be asymptotically normal when $b \leq 26$.
On the other hand, it exhibits stable periodic fluctuations \cite{Chauvin+Pouyanne:2004,Fill+Kapur:2004} around its mean when $b > 26$.

The general scheme is as follows. For each of the models mentioned above,
a characteristic equation can be formulated with several complex roots
(for $b$-ary search trees, $b$ appears as a parameter in the equation).
The root with largest real part always is $1$.
The asymptotic behavior of the quantity of interest is determined by the root with second largest real part, $\xi + \imag \eta$, say.
Roughly speaking,
if $\xi \leq \frac12$, the fluctuations around the expected size of the quantity are of order $\sqrt{n}$
(times possibly a slowly varying correction term),
where $n$ denotes the ``size'' of the model in an appropriate sense,
and the limiting distribution is normal.
When $\xi > \frac12$, the fluctuations around the mean are of order $n^{\xi}$
and there is no convergence but a periodic limiting behavior
the precise description of which involves solutions to complex smoothing equations.
Our results reflect the phase transition between normal and periodic limiting behavior with stable fluctuations.
Concrete examples are considered in detail in Section \ref{sec:examples}.

\subsubsection{Kinetic models}	\label{subsubsec:Kac caricature}

A main objective in the kinetic theory of gas is to understand the distribution $\mu_t$ on $\R^3$
of particle velocities and its evolution in time, given by the equation 
\begin{equation}	\label{eq:evolution equation}
\frac{\partial}{\partial t} \mu_t + \mu_t = S(\mu_t).
\end{equation}
Eq. \eqref{eq:evolution equation} has to be understood in the weak sense, i.e.,
$$ \frac{\partial}{\partial t} \int f \mathrm{d}\mu_t + \int f \mathrm{d} \mu_t = \int f \mathrm{d} S(\mu_t)$$
for all bounded continuous functions $f$ on $\R^3$.
Here $S$  is a mapping on probability measures, called collisional-gain operator, that describes the change in $\mu_t$ due to the collision of two uncorrelated particles. Its form can be deduced from the equations of energy and momentum conservation (see \cite{Villani:2002} for a detailed survey on the topic).

In order to understand qualitative aspects of \eqref{eq:evolution equation}, simplifications of $S$ have been considered, the most important one being the Kac caricature of the Boltzmann equation \cite{Kac:1956}, where $\mu_t$ is restricted to the real line, and 
\begin{equation*}
S : \mu \mapsto\ \textrm{law of}\,\big( \sin (\Theta) V_1 + \cos(\Theta) V_2 \big)
\end{equation*}
where  $V_1, V_2$ are i.i.d.\ with law $\mu$ and assumed to be independent of the random arc length $\Theta$
which has the uniform distribution on $[0, 2\pi)$. In this case, only energy is conserved, but not momentum.
Any steady state distribution $\mu_\infty$, i.e., satisfying $\frac{\partial}{\partial t} \mu_\infty =0$,
is a solution of the smoothing equation $S(\mu_{\infty})=\mu_{\infty}$
with real valued weights. Such equations were investigated in \cite{Iksanov+Meiners:2015a}.

Recently, there has been a lot of interest in generalizations of the Kac caricature \cite{Bassetti:2012,Bassetti:2011,Bobylev+al:2009}, also with applications in economic theory \cite{Chakraborti+al:2011,Cordier+al:2005,Matthes+Toscani:2008}, to mention just a few.

Of particular interest for us is a generalization of the Kac model in $\R^3$, considered by Bassetti and Matthes \cite[Section 6.2]{Bassetti+Matthes:2014}.
Here, a random vector $V$ the law of which is the steady state distribution, satisfies
\begin{equation}	\label{eq:Bassetti+Matthes}
V ~\eqdist~ L V_1 + R V_2,
\end{equation}
where $V, V_1, V_2$ are i.i.d.\ and independent of the random pair $(L,R)$ of similarity matrices\footnote{
By a $d \times d$ \emph{similarity matrix} we mean a $d \times d$ matrix that can be written as a scale multiple of an orthogonal matrix.}
which is subject to the condition $\E [\norm{L}^2 + \norm{R}^2] =1$ and density assumptions,
which guarantee that there is a unique solution (up to scaling) to \eqref{eq:Bassetti+Matthes}.
It is shown that this solution is a mixture of multivariate Gaussian distributions.
We solve \eqref{eq:Bassetti+Matthes} in much greater generality.
In particular, dropping their density assumption,
we show that larger classes of Gaussian solutions appear.

\subsection{Setup and main results}	\label{subset:Main results}

We aim for solving \eqref{eq:FP of ST inhom} in a multivariate setup covering both,
the complex equation and the multivariate equation as outlined in Section \ref{subsubsec:Kac caricature}.

Fix $d \in \N$, denote by $\abs{x}=(x_1^2 + \dots x_d^2)^{1/2}$ for $x=(x_1, \dots, x_d) \in \R^d$
the Euclidean norm and by $\norm{a}\defeq\sup \{\abs{ax} \, : \, \abs{x}=1\}$ for a real $d \times d$-matrix $a$ the associated matrix norm.
Assume that we are given a sequence $(C, T_1, T_2, \ldots) \eqdef (C,T)$
where $C$ is a $d$-dimensional random vector and $T = (T_j)_{j \in \N}$ is a sequence of
random $d \times d$ similarity matrices, i.e., for each $j \in \N$, $T_j = \norm{T_j} O_j$
where $O_j \in \Orth$,
with $\Orth$ denoting the group of orthogonal $d \times d$ matrices.
Notice that $\norm{T_j}$ and $O_j$ are random elements of $\Rnn=[0,\infty)$ and $\Orth$, respectively, which in general depend on each other. 
Further, we point out that no assumptions on the dependence structure of the sequence $(C, T)$ are imposed.

This setup includes the case where $X, C, T_1, T_2, \ldots$ are complex random variables,
henceforth referred to as the \emph{complex case},
since $\C$ can be identified with $\R^2$
and multiplication by a complex number $r e^{\imag \theta}$, $r \geq 0$, $\theta \in [0,2\pi)$
corresponds to multiplication in $\R^2$ from the left by the similarity matrix
\begin{equation*}
\begin{pmatrix}
r \cos \theta & -r \sin \theta	\\
r\sin \theta & r \cos \theta
\end{pmatrix}.
\end{equation*}

We proceed by introducing the assumptions and some basic concepts needed for the statement of our main result.

\subsubsection{Assumptions}	\label{subsubsec:Assumptions}

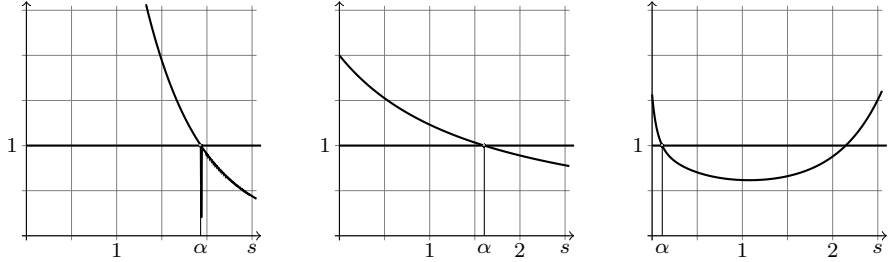
\begin{figure}[b]
		\begin{subfigure}[t]{0.3\textwidth}	
		\begin{center}
		\begin{tikzpicture}[scale=0.6]
		\draw [help lines] (-0.1,-0.1) grid (5.1,5.1); 
		\draw [->] (0,-0.1) -- (0,5.2);
		\draw [->] (-0.1,0) -- (5.2,0);
		\draw [-,thick] (5.2,2) -- (0,2) node[left] {$1$};
		\draw (2,0) node[below]{$1$};
		\draw (5,0) node[below]{$s$};
		\draw [thick, domain=2.65:5.1, samples=500] plot (\x, {2*(1/(0.5*0.517*(\x/2)+0.5))*(1/((1/3)*0.517*(\x/2)+2/3))*(1/(0.25*0.517*(\x/2)+3/4))*(1/(0.2*0.517*(\x/2)+4/5))*(1/(1/6*0.517*(\x/2)+5/6))*(1/(1/7*0.517*(\x/2)+6/7))*(1/(1/8*0.517*(\x/2)+7/8))*(1/(1/9*0.517*(\x/2)+8/9))*(1/(0.1*0.517*(\x/2)+9/10))*(1/(1/11*0.517*(\x/2)+10/11))*(1/(1/12*0.517*(\x/2)+11/12))*(1/(1/13*0.517*(\x/2)+12/13))*(1/(1/14*0.517*(\x/2)+13/14))*(1/(1/15*0.517*(\x/2)+14/15))*(1/(1/16*0.517*(\x/2)+15/16))*(1/(1/17*0.517*(\x/2)+16/17))*(1/(1/18*0.517*(\x/2)+17/18))*(1/(1/19*0.517*(\x/2)+18/19))*(1/(1/20*0.517*(\x/2)+19/20))*(1/(1/21*0.517*(\x/2)+20/21))*(1/(1/22*0.517*(\x/2)+21/22))*(1/(1/23*0.517*(\x/2)+22/23))*(1/(1/24*0.517*(\x/2)+23/24))*(1/(1/25*0.517*(\x/2)+24/25))*(1/(1/26*0.517*(\x/2)+25/26))*(1/(1/27*0.517*(\x/2)+26/27))});
		\draw [-,color=black] (3.86,2) -- (3.86,0) node[below] {$\alpha$}; 
		\shade[ball color=black] (3.86,2) circle(0.3ex);
		\end{tikzpicture}
		\end{center}
		\caption{$m(s)$ for Eqs. \eqref{eq:Fill+Kapur} and \eqref{eq:Chauvin+Liu+Pouyanne} with $b=27$}
		\label{subfig:m(s) b-ary}
	\end{subfigure}
\hfill
	\begin{subfigure}[t]{0.3\textwidth}
		\begin{center}
		\begin{tikzpicture}[scale=0.6]
		\draw [help lines] (-0.1,-0.1) grid (5.1,5.1); 
		\draw [->] (0,-0.1) -- (0,5.2);
		\draw [->] (-0.1,0) -- (5.2,0);
		\draw [-,thick] (5.2,2) -- (0,2) node[left] {$1$};
		\draw (2,0) node[below]{$1$};
		\draw (4,0) node[below]{$2$};
		\draw (5,0) node[below]{$s$};
		\draw [thick, domain=0:5.1, samples=500] plot (\x, {4/(1+cos(deg(2*pi/7))*(\x/2))});
		\draw [-,color=black] (3.21,2) -- (3.21,0) node[below] {$\alpha$}; 
		\shade[ball color=black] (3.21,2) circle(0.3ex);
		\end{tikzpicture}
		\end{center}
		\caption{$m(s)$ for Eq.\,\eqref{eq:cyclic Polya} with $b=7$}
		\label{subfig:m(s) Polya}
	\end{subfigure}
\hfill
	\begin{subfigure}[t]{0.3\textwidth}
		\begin{center}
		\begin{tikzpicture}[scale=0.6]
		\draw [help lines] (-0.1,-0.1) grid (5.1,5.1); 
		\draw [->] (0,-0.1) -- (0,5.2);
		\draw [->] (-0.1,0) -- (5.2,0);
		\draw [-,thick] (5.2,2) -- (0,2) node[left] {$1$};
		\draw (2,0) node[below]{$1$};
		\draw (4,0) node[below]{$2$};
		\draw (5,0) node[below]{$s$};
		\draw [thick, domain=0:5.1, samples=500] plot (\x, {exp(0.75*exp(-20*(\x/3)))+exp(-\x)+0.1*exp(\x-2)});
		\draw [-,color=black] (0.225,2) -- (0.225,0) node[below] {$\alpha$};
		\shade[ball color=black] (0.225,2) circle(0.3ex);
		\end{tikzpicture}
		\end{center}
		\caption{$m(s)$ may even\-tually increase if 
$\sup_j |T_j| > 1$ with positive probability}
		\label{subfig:m(s) increasing}
	\end{subfigure}
\caption{Possible shapes of $s \mapsto m(s)$}
\label{fig:plots of m}
\end{figure}
Throughout the paper, we assume that the number of non-zero $T_j$ is a.s.\ finite,
i.e., $N \defeq \#\{j \in \N: \norm{T_j} > 0\} < \infty$ a.s.
We suppose without loss of generality that $T_1, \ldots, T_N \not = 0$ and $T_j = 0$ for $j>N$.
For the formulation of further assumptions, we define, for $s \geq 0$,
\begin{equation}	\label{eq:m}
m(s)	~\defeq~	\E \bigg[ \sum_{j=1}^N \norm{T_j}^{s} \bigg].
\end{equation}
Notice that $m(0) = \E[N]$.
Throughout the paper the following assumptions will be in force.
\begin{align*}
\E[N] > 1.	\tag{A1}	\label{eq:A1}	\\
m(\alpha) = 1 \text{ for some } \alpha > 0.	\tag{A2}	\label{eq:A2}
\end{align*}
(Notice that we do not exclude the case that $m(s)=\infty$ for all $s \not = \alpha$.)
Then $W_1 \defeq \sum_{j=1}^N \norm{T_j}^{\alpha}$ is a nonnegative random variable with unit mean.
In our main results, we assume that
\begin{equation*}	\tag{A3}	\label{eq:A3}
m'(\alpha) \defeq \E \bigg[\sum_{j=1}^N \norm{T_j}^{\alpha} \log\norm{T_j} \! \bigg] \in (-\infty,0)	\,\text{ and }\,	\E [W_1 \log^+ W_1] < \infty.
\end{equation*}
Notice that $m'(\alpha)$ as defined in \eqref{eq:A3} is indeed the derivative of $s \mapsto m(s)$ at $\alpha$
if the latter exists. $\alpha$ can be thought of as a generalized index of stability.
Indeed, strictly $\alpha$-stable random variables solve the particular instance of \eqref{eq:FP of ST inhom}
with $C=0$ and $T_j = N^{-1/\alpha}$, $N \ge 2$ fixed.

While the assumptions imposed up to now are very mild and suffice when $\alpha \not= 1$, it might be not surprising that additional assumptions are needed 
in the case $\alpha=1$ in order to overcome severe technical obstacles. We recommend to skip the following part at first reading.

\subsubsection{Additional assumptions for the case $\alpha=1$}

We solve \eqref{eq:FP of ST inhom} in the case $\alpha=1$ under two sets of assumptions,
which cover the most relevant cases. 
Let $\ell = 1+\dim_\R \Orth = 1+ d(d-1)/2$.
Then the first set of additional assumptions for $\alpha=1$ is 
\begin{align*}
\E\bigg[\sum_{j=1}^N \norm{T_j}^{\alpha} \delta_{T_j}(\cdot) \bigg]
\text{ is spread-out (w.r.t.\ the Haar measure on $\Sim$),}	\\  
\E \bigg[\sum_{j=1}^N \norm{T_j}^{\alpha} |\log^-(\norm{T_j})|^{ \ell+\delta+1} \bigg] <  \infty 
\,\text{ and }\,
\E [h_{ 2 \ell + \delta + 1} (W_1)] < \infty   \tag{A4} \label{eq:A4}
\end{align*}
for some $\delta>0$ and $h_r(x) \defeq x (\log^+ (x))^{r} \log^+(\log^+(x))$. 
Notice that in the complex case, for the first condition in \eqref{eq:A4} to hold,
it is sufficient that, for some $j \in \N$,
the law of $T_j$ is spread-out on $\C$.
Let $\O$ denote the smallest closed subgroup of $\Orth$
which contains the random set $\{T_j/\norm{T_j}: j=1,\ldots,N\}$ with probability one. Assumption \eqref{eq:A4} will imply that $\O \supseteq \SOd$, the subgroup of $\Orth$ of matrices with determinant 1. In the second set of assumptions, we assume $\O$ to be a finite group:
\begin{align*}
\O \text{ is finite, } \E\bigg[\sum_{j=1}^N \norm{T_j}^{\alpha} \delta_{-\log (\norm{T_j})}(\cdot) \bigg]
\text{ is spread-out,}	\notag	\\  
\E \bigg[\sum_{j=1}^N \norm{T_j}^{\alpha} (\log^-(\norm{T_j}))^2\bigg] <  \infty 
\,\text{ and }\,
\E [h_3 (W_1)] < \infty.	\label{eq:A4'}	\tag{A4'}
\end{align*}
For a probability measure $\mu$ on $\Orth \times \R$, we say that $\mu$ satisfies the minorization condition \eqref{eq:minorization}
if there is a nonempty open interval $I \subseteq \R$ and $\gamma >0$ such that
\begin{equation*}	\tag{M}	\label{eq:minorization}
\mu(\ddo, \dx) ~\geq~ \gamma \1_{\SOd \times I}(o,x) \, H_{\Orth}(\ddo) \, \dx
\end{equation*}
where $H_{\Orth}$ is the normalized Haar measure on the compact group $\Orth$.
We will show in Lemma \ref{Lem:embedded ladder process} that we may assume the following stronger property instead of \eqref{eq:A4}:
\begin{equation}\label{eq:A5}\tag{A5}
\text{\eqref{eq:A4} holds and } \E\bigg[\sum_{j=1}^N \norm{T_j}^{\alpha} \delta_{(T_j/\norm{T_j}, - \log \norm{T_j})}(\cdot) \bigg]
\text{ satisfies \eqref{eq:minorization}}.
\end{equation}

\subsubsection{Weighted branching}	\label{subsubsec:Weighted branching}

The weighted branching process is a natural tool in the study of equations of the form \eqref{eq:FP of ST inhom}.
In our context, this process is defined as follows.

Let $\V = \bigcup_{n \in \N_0} \N^n$ denote the infinite Ulam-Harris tree where $\N^0 \defeq \{\varnothing\}$ is the set that contains the empty tuple only.
For $u,v \in \V$, $u=(u_1,\ldots,u_m)$, $v = (v_1,\ldots,v_n)$, we write $uv$ for $(u_1,\ldots,u_m,v_1,\ldots,v_n)$.
We say that $v$ is in generation $n$, in short: $|v|=n$, if $v \in \N^n$.
The restriction of $v$ to its first $k$ components is denoted by $v|_k$.
 
Let $(\bC,\bT) \defeq ((C(v),T(v)))_{v \in \V} = ((C(v),T_1(v),T_2(v),\ldots))_{v \in \V}$ be a family of i.i.d.\ copies of the sequence $(C,T)$.
For notational simplicity, we assume $(C(\varnothing),T(\varnothing)) = (C,T)$.
Let $\Id$ denote the $d \times d$ identity matrix and let $L(\varnothing)=\Id$.
Recursively, for $v \in \V$ and $j \in \N$, we define $L(vj) = L(v) T_j(v)$.
Hence, if $v=v_1\ldots v_n \in \N^n$, then
\begin{equation*}
L(v)	~=~	T_{v_{1}}(\varnothing) \cdot \ldots \cdot T_{v_n}(v|_{n-1}). 
\end{equation*}
Notice that the order of multiplication matters.
Each of the matrices $T_j(v)$ and $L(v)$ (if nonzero) can be written as the product of a positive scaling factor and an orthogonal matrix:
\begin{equation*}
T_j(v)	=	e^{-S_j(v)} O_j(v)	\quad	\text{ and }	\quad	L(v)	=	\norm{L(v)} O(v)
\end{equation*}
where
\begin{align*}
S_j(v)	&= -\log \norm{T_j(v)} \in \R,	\quad	& O_j(v)	& =	\norm{T_j(v)}^{-1} T_j(v) \in \Orth	\\
S(v) & = -\log \norm{L(v)} \in \R,		\quad 	& O(v)		& = \norm{L(v)}^{-1} L(v)	\in \Orth
\end{align*}
whenever $\norm{T_j(v)} > 0$ or $\norm{L(v)}>0$, respectively.
We make the convention that whenever we quantify over the $|v|=n$ as in $\sum_{|v|=n}$ or $\prod_{|v|=n}$,
this has to be understood as a quantification over the $|v|\!=\!n$ with $\norm{L(v)}>0$ only.

These definitions imply that, for $v = v_1 \ldots v_n$, when $\norm{L(v)} > 0$,
\begin{equation*}
S(v)	~=~	\sum_{k=1}^n S_{v_k}(v|_{k-1})	\quad	\text{and}	\quad	O(v)	~=~	O_{v_{1}}(\varnothing) \cdot \ldots \cdot O_{v_n}(v|_{n-1}).
\end{equation*}
Here, of course, the order of summation does not matter while the order of matrix multiplication does (in general).
We further point out that conditions \eqref{eq:A1}--\eqref{eq:A2}
imply that
\begin{equation}	\label{eq:Biggins:1998}
\lim_{n \to \infty} \ \sup_{|v|=n} \norm{L(v)}	~=~	0	\quad	\text{a.s.}
\end{equation}
(with the convention that $\sup \emptyset = 0$),
see \cite[Theorem 3]{Biggins:1998} for a reference.

For $u \in \V$ and a function $\Psi=\Psi((\bC,\bT))$ of the weighted branching process,
let $[\Psi]_u$ be defined as $\Psi(((C(uv),T(uv)))_{v \in \V})$, that is,
the same function but applied to the weighted branching process rooted at $u$.
The $[\cdot]_u$, $u \in \V$ are called {\em shift operators}.

\subsubsection{Special solutions to smoothing equations}

The solutions to \eqref{eq:FP of ST inhom} are connected with the solutions
of two related distributional identities, namely,
the tilted homogeneous equation for nonnegative random variables
\begin{equation}	\label{eq:FP tilted}
W	~\eqdist~	\sum_{j \geq 1} \norm{T_j}^{\alpha} W_j
\end{equation}
where $W_1, W_2, \ldots$ are i.i.d.\ copies of the nonnegative random variable $W$ and independent of $T$,
and the homogeneous equation
\begin{equation}	\label{eq:FP of ST hom}
X	~\eqdist~	\sum_{j \geq 1} T_j X_j
\end{equation}
where $X_1,X_2,\ldots$ are i.i.d.\ copies of $X$ and independent of $T$.
Special solutions to \eqref{eq:FP tilted} and \eqref{eq:FP of ST hom}
can be constructed using the weighted branching process.
Indeed, \eqref{eq:A2} implies that
\begin{equation}	\label{eq:Biggins' martingale}
W_n	~\defeq~	\sum_{|v|=n} \norm{L(v)}^{\alpha}	~=~	\sum_{|v|=n} e^{-\alpha S(v)},	\quad	n \in \N_0
\end{equation}
defines a nonnegative mean-one martingale. Let $W = \lim_{n \to \infty} W_n$ a.s.
It is known \cite{Lyons:1997} that \eqref{eq:A1}--\eqref{eq:A3} guarantee $\E[W]=1$.
It can be checked that
\begin{equation}	\label{eq:FP tilted a.s.}
W	~=~	\sum_{|v|=n} \norm{L(v)}^{\alpha} [W]_v	\quad	\text{a.s.}
\end{equation}
for every $n \in \N_0$. In particular, $W$ is a solution to \eqref{eq:FP tilted}.
The set of solutions to \eqref{eq:FP tilted} is $\{\mathrm{Law}(cW): c \geq 0\}$, see \cite{Alsmeyer+Biggins+Meiners:2012,Iksanov:2004}.
The description of the set of solutions to \eqref{eq:FP of ST hom} is more delicate
and most of the analysis in this paper is concerned with solving it.
One aspect of this equation is that special solutions may arise due to balancing effects
in the sum $Z_1 \defeq \sum_{j \geq 1} T_j$.
This may happen when the matrix $\E[Z_1]$ has eigenvalue $1$.
Then, for any eigenvector $w$ corresponding to the eigenvalue $1$ and with $Z_n \defeq \sum_{|v|=n} L(v)$,\label{eq:Z_n}
the sequence $(Z_nw)_{n \in \N_0}$ defines a martingale.
If it converges in probability, we denote its limit by $Z^w$. $Z^w$ is a function of $\bT$ and a solution to the identity
\begin{equation}	\label{eq:FP of ST hom a.s.}
Z	~=~	\sum_{j \geq 1} T_j [Z]_j	\quad	\text{a.s.}
\end{equation}
The solutions to \eqref{eq:FP of ST hom a.s.} are described by the following result.
Note that $Z =0$ a.s.\ always satisfies \eqref{eq:FP of ST hom a.s.}, this we call the trivial solution.

\begin{proposition}	\label{Prop:convergence Z_n w}
Assume that \eqref{eq:A1}--\eqref{eq:A3} hold and let $Z$ satisfy \eqref{eq:FP of ST hom a.s.}.
\begin{itemize}
\item[(a)]	If $0\!<\!\alpha\!<\!1$,
			then $Z=0$ a.s.
\item[(b)]	If $\alpha\!=\!1$ and \eqref{eq:A4} holds, then $Z=0$ a.s.
			If \eqref{eq:A4'} holds, then a nontrivial solution $Z$ exists
			iff $w = \E[Z]$ is an eigenvector corresponding to the eigenvalue $1$ of $\E[Z_1]$,
			and then $Z=Ww$.  
\item[(c)]	If $1\!<\!\alpha\!<\!2$,
			then a nontrivial solution $Z$  exists
			iff $w = \E[Z]$ exists and is an eigenvector corresponding to the eigenvalue $1$ of $\E[Z_1]$
			and $(Z_nw)_{n \in \N_0}$ is uniformly integrable. If these conditions hold, then $Z=Z^w$
			and the sequence $(Z_n w)_{n \in \N_0}$ is bounded in $\mathcal{L}^s$ for all $1 < s < \alpha$.
	
			If $1$ is an eigenvalue of $\E[Z_1]$ associated with the eigenvector $w$,
			then a sufficient condition for $\mathcal{L}^\beta$-boundedness (and hence uniform integrability)
			of $(Z_n w)_{n \in \N_0}$ for $\beta \in (\alpha,2]$
			is $\E[|Z_1w|^{\beta}]<\infty$ and $m(\beta)<1$.
\item[(d)]	If $\alpha \! \geq \! 2$, then a nontrivial solution $Z$  exists
			iff there is a deterministic $w \not = 0$ such that $w$ is an eigenvector corresponding to the eigenvalue $1$ of the random matrix $Z_1$ a.s.,
			and then $Z \!=\! w$ a.s.
\end{itemize}
\end{proposition}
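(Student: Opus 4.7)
The plan is to exploit, for each $n \in \N_0$, the iterated a.s.\ identity
\begin{equation*}
Z ~=~ \sum_{|v|=n} L(v) [Z]_v \quad \text{a.s.},
\end{equation*}
obtained by unfolding \eqref{eq:FP of ST hom a.s.}. Conditional on $\F_n \defeq \sigma((C(u),T(u)) : |u| < n)$, the shifted copies $\{[Z]_v : |v|=n\}$ are i.i.d.\ copies of $Z$ and independent of $\F_n$; taking Fourier transforms gives $\phi(t) = \E[\prod_{|v|=n} \phi(L(v)^\transp t)]$ for the characteristic function $\phi$ of $Z$. Combined with Biggins' result $\sup_{|v|=n} \norm{L(v)} \to 0$ a.s.\ and the mean-one martingale $W_n = \sum_{|v|=n} \norm{L(v)}^\alpha \to W$, this is the main engine driving all four cases.

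For case (d), $\alpha \geq 2$: once $\E|Z|^2 < \infty$ is secured, I would compute $\trace \Cov(Z)$. The similarity structure gives $L(v) A L(v)^\transp = \norm{L(v)}^2 O(v) A O(v)^\transp$, so taking the trace (which is $O(v)$-invariant) and using the conditional i.i.d.\ structure yields
\begin{equation*}
\trace \Cov(Z) ~=~ \E\Big[\sum_{|v|=n} \norm{L(v)}^2\Big] \cdot \trace \Cov(Z) ~=~ m(2)^n \cdot \trace \Cov(Z).
\end{equation*}
Log-convexity of $m$ with $m(\alpha)=1$, $m'(\alpha)<0$ gives $m(2) \geq 1$, strict whenever $\alpha > 2$; the boundary case $\alpha = 2$ needs an $L^{2+\varepsilon}$ refinement. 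Hence $\Cov(Z) = 0$, so $Z$ is a.s.\ a constant $w$, and the defining equation then reads $Z_1 w = w$ a.s. For case (a), $0 < \alpha < 1$: after a preliminary bound establishing $\E|Z|^\alpha < \infty$ (obtained by truncating the subadditive recursion $|Z|^\alpha \leq \sum \norm{L(v)}^\alpha |[Z]_v|^\alpha$ and exploiting $\sup \norm{L(v)} \to 0$), taking expectations produces equality a.s.\ in the subadditivity inequality, since $\E[\sum \norm{L(v)}^\alpha |[Z]_v|^\alpha] = \E[W_n] \E|Z|^\alpha = \E|Z|^\alpha$. But for $\alpha < 1$ this inequality is strict as soon as two summands are nonzero, and \eqref{eq:A1} ensures that infinitely many $v$ with $\norm{L(v)}>0$ will eventually appear; the conditional i.i.d.\ structure of the $[Z]_v$ then forces $Z = 0$ a.s.

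For case (c), $1 < \alpha < 2$: necessity of $w = \E[Z]$ being an eigenvector of $\E[Z_1]$ for eigenvalue $1$ follows by taking expectation in the a.s.\ identity and iterating. Conversely, $(Z_n w)_n$ is an $\F_n$-martingale, whose uniform integrability yields an a.s.\ limit; Fatou together with a uniqueness argument identifies this limit with $Z$. For $L^\beta$-boundedness with $\beta \in (\alpha, 2]$, I would invoke a Topchi\u{\i}--Vatutin / von Bahr--Esseen type inequality of the form $\E|\sum X_i|^\beta \leq 2 \sum \E|X_i|^\beta$ for centered independent summands and close a geometric recursion on $\E|Z_n w|^\beta$ by the hypothesis $m(\beta) < 1$.

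The main obstacle is case (b), $\alpha = 1$: neither the second-moment contraction of (d) nor the strict subadditivity of (a) is at hand, since $|x+y| = |x|+|y|$ already for aligned vectors. The proof must split according to the global structure of the subgroup $\O$: under \eqref{eq:A4}, spread-out-ness on $\Sim$ forces $\O \supseteq \SOd$, and rotational averaging combined with Markov-renewal-theoretic input for the ladder process $(O(v), S(v))$ rules out any nontrivial $Z$; under \eqref{eq:A4'}, $\O$ is finite, which leaves room for an eigenvector $w$ and yields $Z = Ww$ through the key renewal theorem on the finite extension $\O \times \R$. The harmonic analysis on $\O$ and the sharp renewal-theoretic estimates under the logarithmic-moment conditions on $\norm{T_j}$, together with the passage from \eqref{eq:A4} to \eqref{eq:A5} via Lemma \ref{Lem:embedded ladder process} and the minorization \eqref{eq:minorization}, constitute the principal technical difficulty.
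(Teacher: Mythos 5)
Your overall engine (iterating \eqref{eq:FP of ST hom a.s.}, the conditional i.i.d.\ structure given $\F_n$, $\sup_{|v|=n}\norm{L(v)}\to 0$ and $W_n\to W$) is the paper's framework, and your treatment of (c)-sufficiency via Topchi\u{\i}--Vatutin matches the paper. The central missing ingredient, however, is any control of the tails/moments of $Z$. In the paper every case with $\alpha\neq 1$ leans on the multiplicative-martingale analysis (Proposition \ref{Prop:Multiplicative martingales}): for a solution of the a.s.\ equation the random L\'evy measure and Gaussian part vanish, which yields $\Prob(|Z|>t)=o(t^{-\alpha})$ (see \eqref{eq:tail F endogenous}), the truncated-moment estimate \eqref{eq:L_beta asymptotics endogenous}, and the representation \eqref{eq:Z alternative} of $Z$ as a limit of truncated conditional expectations. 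Your sketch produces none of this, and it is not decorative: in (a) your strict-subadditivity argument needs $\E[|Z|^\alpha]<\infty$ \emph{a priori}, but the only available tail bound is of order $t^{-\alpha}$, which does not give a finite $\alpha$-moment, and the truncation you allude to only gives $\E[|Z|^\alpha\wedge t]=O(\log t)$, so the equality-in-expectation step does not close (the paper instead bounds $|Z|\le\limsup_n\sum_{|v|=n}\norm{L(v)}L_1(\norm{L(v)}^{-1})$ and uses $L_1(t)=o(t^{1-\alpha})$). In (c), the necessity direction requires knowing that $\E[Z]$ exists before you may take expectations and identify $Z=\lim_n Z_nw$; this integrability is exactly what the tail estimate supplies. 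In (d), ``once $\E[|Z|^2]<\infty$ is secured'' is precisely the issue, and it is securable (from the tail estimate) only when $\alpha>2$.

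Two further gaps. In (d) the correct identity is the law of total covariance, $\Cov(Z)=\E\big[\sum_{|v|=n}L(v)\Cov(Z)L(v)^\transp\big]+\Cov(Z_nw)$; you dropped the second term. For $\alpha>2$ the corrected identity still forces $\trace\Cov(Z)=0$ because $m(2)>1$, but at $\alpha=2$ one has $m(2)=1$, the identity only yields that $Z_nw$ is deterministic, and it cannot exclude a nondegenerate $Z$ whose covariance satisfies $\Sigma=\E[\sum_j T_j\Sigma T_j^\transp]$ (e.g.\ $\Sigma=c\Id$ does); moreover an ``$\mathcal{L}^{2+\varepsilon}$ refinement'' is unavailable since no moment of order $\ge\alpha$ may be assumed. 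The paper instead takes $s\in(1,2)$ and uses Burkholder--Davis--Gundy lower bounds to show $\E[|Z-w|^s]=\infty$ unless $Z_1w=w$ a.s. Finally, case (b) is not proved in your proposal: you correctly name the inputs (passage to \eqref{eq:A5} via Lemma \ref{Lem:embedded ladder process}, $\SOd\subseteq\O$ under \eqref{eq:A4}, Markov renewal theory), but the actual argument --- splitting $\R^d=V_+\oplus V_+^\perp$, reducing the $V_+$-part to the one-dimensional theory (giving $Z^{V_+}=Ww$, and $w=0$ once $\SOd\subseteq\O$), and killing the $V_+^\perp$-part through the stopping-line representation over the coming generation $\mathcal{C}(t)$ combined with the rate-of-convergence Lemma \ref{Lem:rate of convergence} and Proposition \ref{prop:uniform renewal} --- is entirely absent, and that is the technical heart of the proposition.
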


Besides $W$ and $Z$, the class of $(U,\alpha)$-stable L\'evy processes,
with $U$ being a closed subgroup of the group of similarity matrices of $\R^d$,
will be relevant for the description of solutions, and is introduced next.
This notion is a particular case of $(U,\bar \alpha)$-stability,
a concept introduced in \cite[p.\,338]{Buraczewski+Damek+Guivarch:2010}.

Let $(Y_t)_{t \geq 0}$ be a L\'evy process on $\R^d$.
We say that $(Y_t)_{t \geq 0}$ is \emph{$(U,\alpha)$-stable}
if there exists a mapping $b: U \to \R^d$ such that, for all $u \in U$, $t >0$,
\begin{equation}	\label{eq:(U,alpha)-stable process}
u Y_t ~\eqdist~ Y_{\norm{u}^\alpha t} + tb(u).
\end{equation} 
This implies in particular that each $Y_t$ is operator semistable \cite{Hazod+Siebert:2001},
see Section \ref{subsect:operator semistable} for more details.
Property \eqref{eq:(U,alpha)-stable process} is equivalent to
\begin{equation}	\label{eq:(u,alpha)-stable chf}
\Psi(u^\transp x) =  \norm{u}^\alpha \Psi(x) + \imag \scalar{x,b(u)}
\end{equation}
for all $u \in U$ for the characteristic exponent $\Psi$ of $Y_1$.
$(Y_t)_{t \geq 0}$ is said to be \emph{strictly $(U,\alpha)$-stable} if $b(u) \equiv 0$.
We say that a probability measure $P$ on $\R^d$ is (strictly) $(U,\alpha)$-stable
if it is the law of a (strictly) $(U,\alpha)$-stable L\'evy process at time $1$.

The general formula of the L\'evy measure of a $(U, \alpha)$-stable law is given in Proposition \ref{Prop:(U,alpha)-stable laws} below. 
Let us just point out that there are no $(U,\alpha)$-stable L\'evy processes for $\alpha > 2$ unless $U \subseteq \Orth$ and that the larger the group $U$, the smaller the class of $(U, \alpha)$-stable L\'evy processes.
For example, if $U \supseteq \R_> \times \SOd$, then $\Psi(x)=- c \norm{x}^{\alpha}$
for some $c \geq 0$, i.e.\;$(Y_t)_{t \geq 0}$ is strictly $\alpha$-stable and rotation-invariant.
It will be shown (in Remark \ref{Rem:U under A4} below)  that assumption \eqref{eq:A4} implies $U \supseteq \R_> \times \SOd$.
Below, we consider $(\U,\alpha)$-stable L\'evy processes, where $\U$ denotes the smallest closed subgroup of the similarity group that covers $\{T_j: j=1,\ldots,N\}$ with probability one.

There are further technicalities to deal with before the main result can be formulated
but the complex and homogeneous case,
the most important special case in view of applications,
is now given as an illustration.

\subsubsection{Solutions to complex smoothing equations}	\label{subsubsec:solutions to complex smoothing equations}

In the complex case, each $Z_n$ is a complex random variable,
and the condition that $\E[Z_1]$ has eigenvalue $1$ is equivalent to $\E[Z_1]=1$,
which in turn is equivalent to $(Z_n)_{n \in \N_0}$ being a (complex) martingale.
We write $Z$ for the a.s.\ limit of this martingale if it exists, and define $Z=0$, otherwise. Note that $\U$ is now the smallest closed subgroup of the multiplicative group $\C^*$ that covers $\{T_j \, : \, j=1, \dots, N\}$ with probability one.


\begin{theorem}	\label{Thm:solutions to complex smoothing equations}
Consider \eqref{eq:FP of ST hom} in the complex case and assume that \eqref{eq:A1}--\eqref{eq:A3} hold.
Additionally suppose that \eqref{eq:A4} or \eqref{eq:A4'} holds if $\alpha=1$.

Then a probability distribution on $\C$ is a solution to \eqref{eq:FP of ST hom}
if and only if it is the law of a random variable of the form
\begin{equation}	\label{eq:solutions to complex smoothing equations}
Y_{W} + aZ
\end{equation}
for some $a \in \C$ and a complex strictly $(\U,\alpha)$-stable L\'evy process $(Y)_{t \geq 0}$ independent of $(W,Z)$.
\end{theorem}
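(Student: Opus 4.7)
The plan is to treat sufficiency and necessity separately, working throughout with the complex characteristic function $\phi_X(x)\defeq\E[e^{\imag\Real(\overline{x}\,X)}]$, under which multiplication of $X$ by $T_j\in\C$ corresponds to multiplication of $x$ by $\overline{T_j}$ inside $\phi_X$. For sufficiency, let $(Y_t)_{t\geq 0}$ be a complex strictly $(\U,\alpha)$-stable L\'evy process with characteristic exponent $\Psi$ and independent of $(W,Z)$, so that $Y_W+aZ$ has characteristic function $\E[\exp(-W\Psi(x)+\imag\Real(\overline{x}\,aZ))]$ by conditioning on $W$. Taking i.i.d.\ copies of this representation, conditioning on $\bT$, factoring across the independent subtrees and applying the scaling law $\Psi(\overline{T_j}\,x)=\|T_j\|^{\alpha}\Psi(x)$ (valid because $T_j\in\U$ a.s.\ and $Y$ is strictly $(\U,\alpha)$-stable), the resulting product expectation collapses under the a.s.\ identities $W=\sum_j\|T_j\|^{\alpha}[W]_j$ from \eqref{eq:FP tilted a.s.} and $Z=\sum_j T_j[Z]_j$ from \eqref{eq:FP of ST hom a.s.} to reproduce $\phi_X(x)$.

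For necessity, let $X$ be any solution. Iterating \eqref{eq:FP of ST hom} gives, for every $n$, the identity $\phi_X(x)=\E[M_n(x)]$ with
$$M_n(x)\defeq\prod_{|v|=n}\phi_X\bigl(\overline{L(v)}\,x\bigr),$$
which a short conditioning argument shows is, for fixed $x$, a complex $\F_n$-martingale bounded by $1$ in modulus, where $\F_n=\sigma(T(u):|u|<n)$. Hence $M_n(x)\to M_\infty(x)$ a.s.\ and $\phi_X(x)=\E[M_\infty(x)]$ by bounded convergence. Since $\sup_{|v|=n}\|L(v)\|\to 0$ a.s.\ by \eqref{eq:Biggins:1998}, for $x$ in a neighborhood of $0$ in which $\phi_X$ does not vanish I can pass to logarithms and study $\log M_\infty(x)=\lim_n \sum_{|v|=n}\log\phi_X(\overline{L(v)}\,x)$. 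Using a local expansion $\log\phi_X(y)=\imag\Real(\overline{y}\,b)-\Psi_X(y)+o(|y|^{\alpha})$ near the origin (with $b\in\C$ and $\Real\Psi_X(y)\geq 0$ of order $|y|^{\alpha}$), this splits into a drift sum $\imag\Real(\overline{x}\,\overline{b}\,Z_n)$ and a nonlinear sum $\sum_{|v|=n}\Psi_X(\overline{L(v)}\,x)$. Proposition~\ref{Prop:convergence Z_n w} governs the drift sum and forces its limit to be $\imag\Real(\overline{x}\,aZ)$ for some $a\in\C$ (with $a=0$ whenever $b$ is not compatible with an eigenvalue-$1$ direction of $\E[Z_1]$). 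The nonlinear sum converges thanks to $W_n\to W$ together with $\alpha$-homogeneity of $\Psi_X$, while the tree-invariance $M_\infty(x)=\prod_j [M_\infty]_j(\overline{T_j}\,x)$ upgrades this homogeneity to full strict $(\U,\alpha)$-stability of $\Psi_X$, producing $M_\infty(x)=\exp(-W\Psi(x)+\imag\Real(\overline{x}\,aZ))$ and realising $X$ as $Y_W+aZ$ in law.

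The main obstacle is the last step, that is, rigorously extracting the $(\U,\alpha)$-stable structure of $\Psi_X$ and cleanly separating it from the drift, especially in the boundary case $\alpha=1$ where the two contributions can mix logarithmically under scaling. Under \eqref{eq:A4} the closed group $\U$ contains $\R_{>}\times\SOd$, which severely restricts admissible $\Psi_X$ and, combined with Proposition~\ref{Prop:convergence Z_n w}(b), forces $aZ=0$; under \eqref{eq:A4'} the group $\O$ is finite, $Z$ may be nontrivial, and the logarithmic moment and minorization hypotheses are calibrated exactly to control the associated ladder process of $-\log\|L(v)\|$ and to rule out spurious Cauchy-type drifts, permitting nontrivial $aZ$. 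Proposition~\ref{Prop:convergence Z_n w} is the essential ingredient closing the argument by enumerating all admissible drift-type contributions arising from the limiting multiplicative martingale.
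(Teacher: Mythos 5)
The sufficiency half of your proposal is correct and is essentially the paper's own ``direct inclusion'' argument: condition on $\F_1$, factor over the independent subtrees, apply $\Psi(T_j^\transp x)=\norm{T_j}^\alpha\Psi(x)$, and collapse via the a.s.\ identities \eqref{eq:FP tilted a.s.} and \eqref{eq:FP of ST hom a.s.}. No issue there.

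The necessity half has a genuine gap at the point where you invoke ``a local expansion $\log\phi_X(y)=\imag\Real(\overline{y}\,b)-\Psi_X(y)+o(|y|^{\alpha})$ near the origin.'' Nothing entitles you to such an expansion for an arbitrary solution $X$: assuming that $\log\phi_X$ splits into a linear drift plus a term of exact order $|y|^{\alpha}$ with nonnegative real part is tantamount to assuming the shifted-stable structure you are trying to prove. Concretely: for $\alpha<1$ the putative drift $b=\E[X]$ need not exist; for $1<\alpha<2$ the drift and the imaginary part of the L\'evy integral cannot be separated by a naive small-$y$ expansion (this separation is exactly the hardest identification in the paper, cf.\ \eqref{eq:FE Psi}, where the compensator terms contribute to the shift); and the order-$|y|^{\alpha}$ part is in general only $\U$-invariant, not homogeneous, so in the geometric case $\G=r^{\Z}$ it oscillates and your limiting argument for the ``nonlinear sum'' does not go through as stated. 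The paper closes exactly this gap by a different mechanism: (i) since $\sup_{|v|=n}\norm{L(v)}\to0$, the conditional law of $\sum_{|v|=n}L(v)X_v$ given the tree is a row sum of an infinitesimal independent triangular array, so $M(\omega,\cdot)=\exp(\Psi)$ for a genuine random L\'evy--Khintchine triplet $(W',\mathbf{\Sigma},\nu)$ -- this is what makes ``$\Psi_X$'' well defined at all; (ii) the tree-invariance $M(x)=\prod_{|v|=n}[M]_v(L(v)^\transp x)$ yields separate functional equations for $\nu$ and $\mathbf{\Sigma}$, solved respectively by a Choquet--Deny lemma on the group $\U$ (giving $\nu=W\bar\nu$ for a deterministic $(\U,\alpha)$-invariant $\bar\nu$) and by a spectral argument (giving $\mathbf{\Sigma}=W\Sigma$); and (iii) only after subtracting the resulting $\eta_i^{\alpha}$-terms does the residual shift satisfy the endogenous equation, at which point Proposition \ref{Prop:convergence Z_n w} applies -- and for $\alpha=1$ that step itself requires the Markov renewal rate-of-convergence results of the appendix, not merely the observation that the hypotheses are ``calibrated'' for it. Your sketch names Proposition \ref{Prop:convergence Z_n w} as the closing ingredient but supplies neither the infinite-divisibility step nor the Choquet--Deny argument that converts tree-invariance into $(\U,\alpha)$-invariance of the L\'evy measure; these are the substance of the converse direction, not removable technicalities.
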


Theorem \ref{Thm:solutions to complex smoothing equations} is a special case of the more general
Theorem \ref{Thm:solutions to multivariate smoothing equations}.
Therefore, we do not give a separate proof of Theorem \ref{Thm:solutions to complex smoothing equations}.

We have tried to present the result of Theorem \ref{Thm:solutions to complex smoothing equations}
as concise as possible. This should not hide the fact that it hosts a multitude of different cases.
On the one hand, there are several qualitatively different possibilities for the group $\U$
two of which, coming from examples discussed in Section \ref{subsec:Applications complex},
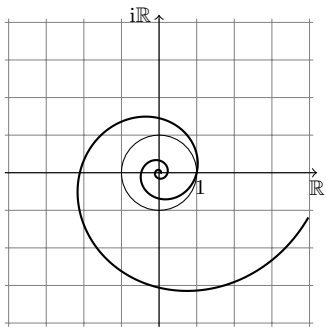
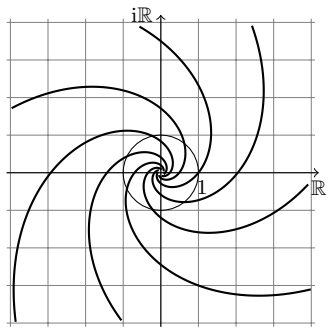
\begin{figure}[h]
\hfill
	\begin{subfigure}[h]{0.45\textwidth}	
		\begin{tikzpicture}[scale=0.5]
		\draw [help lines] (-4.1,-4.1) grid (4.1,4.1);
		\draw (1.1,0) node[below]{$1$};
		\draw [->] (-4.1,0) -- (4.2,0) node[below] {\footnotesize $\R$}; 
		\draw [->] (0,-4.1) -- (0,4.2) node[left] {$\imag\R$}; ;
		\draw (0,0) circle (1);
\draw [thick, domain=-6:2.75, samples=1000] plot ({exp(0.517*\x)*cos(deg(2.179*\x))},
{exp(0.517*\x)*sin(deg(2.179*\x))});
		\end{tikzpicture}
		\caption{The group $\U$ associated with Eq.\ \eqref{eq:Chauvin+Liu+Pouyanne}
		appearing in the context of $b$-ary search trees (here, $b=27$)}
		\label{subfig:snail}
	\end{subfigure}
\hfill
	\begin{subfigure}[h]{0.45\textwidth}
		\begin{tikzpicture}[scale=0.5]
		\draw [help lines] (-4.1,-4.1) grid (4.1,4.1);
		\draw (1.1,0) node[below]{$1$};
		\draw [->] (-4.1,0) -- (4.2,0) node[below] {$\R$}; 
		\draw [->] (0,-4.1) -- (0,4.2) node[left] {$\imag\R$}; ;
		\draw (0,0) circle (1);
\draw [thick, domain=-4:2.2, samples=1000] plot ({exp(cos(deg(2*pi/7))*\x)*cos(deg(sin(deg(2*pi/7))*\x))},
{exp(cos(deg(2*pi/7))*\x)*sin(deg(sin(deg(2*pi/7))*\x))});
\draw [thick, domain=-4:2.35, samples=1000] plot ({exp(cos(deg(2*pi/7))*\x)*cos(deg(2*pi/7+sin(deg(2*pi/7))*\x))},
{exp(cos(deg(2*pi/7))*\x)*sin(deg(2*pi/7+sin(deg(2*pi/7))*\x))});
\draw [thick, domain=-4:2.75, samples=1000] plot ({exp(cos(deg(2*pi/7))*\x)*cos(deg(2*2*pi/7+sin(deg(2*pi/7))*\x))},
{exp(cos(deg(2*pi/7))*\x)*sin(deg(2*2*pi/7+sin(deg(2*pi/7))*\x))});
\draw [thick, domain=-4:2.25, samples=1000] plot ({exp(cos(deg(2*pi/7))*\x)*cos(deg(3*2*pi/7+sin(deg(2*pi/7))*\x))},
{exp(cos(deg(2*pi/7))*\x)*sin(deg(3*2*pi/7+sin(deg(2*pi/7))*\x))});
\draw [thick, domain=-4:2.6, samples=1000] plot ({exp(cos(deg(2*pi/7))*\x)*cos(deg(4*2*pi/7+sin(deg(2*pi/7))*\x))},
{exp(cos(deg(2*pi/7))*\x)*sin(deg(4*2*pi/7+sin(deg(2*pi/7))*\x))});
\draw [thick, domain=-4:2.2, samples=1000] plot ({exp(cos(deg(2*pi/7))*\x)*cos(deg(5*2*pi/7+sin(deg(2*pi/7))*\x))},
{exp(cos(deg(2*pi/7))*\x)*sin(deg(5*2*pi/7+sin(deg(2*pi/7))*\x))});
\draw [thick, domain=-4:2.45, samples=1000] plot ({exp(cos(deg(2*pi/7))*\x)*cos(deg(6*2*pi/7+sin(deg(2*pi/7))*\x))},
{exp(cos(deg(2*pi/7))*\x)*sin(deg(6*2*pi/7+sin(deg(2*pi/7))*\x))});
		\end{tikzpicture}
		\caption{The group $\U$ associated with Eq.\ \eqref{eq:cyclic Polya}
		appearing in the context of cyclic P\'olya urns (with $b=7$)}
		\label{subfig:snails}
	\end{subfigure}
\hfill
\caption{The group $\U$ in examples}
\label{fig:snail and snails}
\end{figure}

\noindent
The reader should notice that $\U$ may also be a family of discrete points aligned on a ``snail graph''
or consist of a finite number of copies of $\Rp \defeq (0,\infty)$ obtained by multiplication with roots of unity as depicted in Figure \ref{fig:examples}.
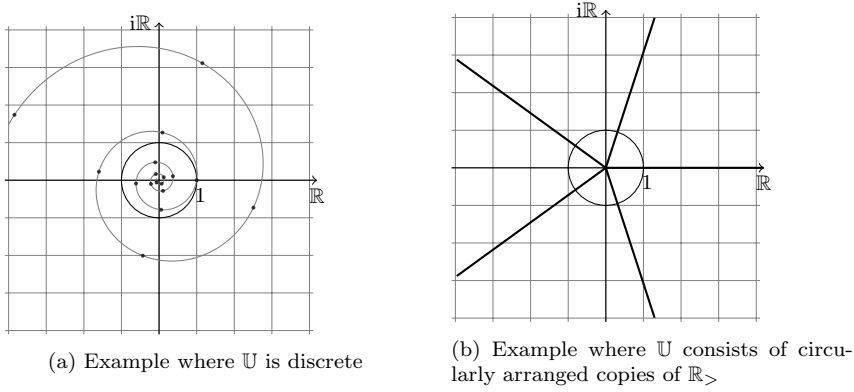
\begin{figure}[h] 
\hfill
	\begin{subfigure}[h]{0.45\textwidth}
		\begin{tikzpicture}[scale=0.5]
		\draw [help lines] (-4.1,-4.1) grid (4.1,4.1);
		\draw (1.1,0) node[below]{$1$};
		\draw [->] (-4.1,0) -- (4.2,0) node[below] {$\R$}; 
		\draw [->] (0,-4.1) -- (0,4.2) node[left] {$\imag\R$}; ;
		\draw (0,0) circle (1);
		\draw [color=black!50, thin, domain=-11:6.05, samples=500] plot ({exp(0.24*\x)*cos(deg(1.5*\x))},
		{exp(0.24*\x)*sin(deg(1.5*\x))});
		\foreach \exp in{-10,-9,-8,-7,-6,-5,-4,-3,-2,-1,0,1,2,3,4,5,6}
		\fill[black,opacity=.75] ({exp(0.24*\exp)*cos(deg(1.5*\exp))},
		{exp(0.24*\exp)*sin(deg(1.5*\exp))}) circle (1.5pt);
		\end{tikzpicture}
		\caption{Example where $\U$ is discrete}
	\end{subfigure}
\hfill
		\begin{subfigure}[h]{0.45\textwidth}
		\begin{tikzpicture}[scale=0.5]
		\draw [help lines] (-4.1,-4.1) grid (4.1,4.1); 
		\draw (1.1,0) node[below]{$1$};
		\draw [->] (-4.1,0) -- (4.2,0) node[below] {$\R$}; 
		\draw [->] (0,-4.1) -- (0,4.2) node[left] {$\imag\R$}; ;
		\draw (0,0) circle (1);
		\draw [thick, domain=-0:4.2, samples=1000] plot ({cos(deg(0*pi/5))*\x},
{sin(deg(0*pi/5))*\x});
\draw [thick, domain=-0:4.2, samples=1000] plot ({cos(deg(2*pi/5))*\x},
{sin(deg(2*pi/5))*\x});
\draw [thick, domain=-0:4.9, samples=1000] plot ({cos(deg(4*pi/5))*\x},
{sin(deg(4*pi/5))*\x});
\draw [thick, domain=-0:4.9, samples=1000] plot ({cos(deg(6*pi/5))*\x},
{sin(deg(6*pi/5))*\x});
\draw [thick, domain=-0:4.2, samples=1000] plot ({cos(deg(8*pi/5))*\x},
{sin(deg(8*pi/5))*\x});
		\end{tikzpicture}
		\caption{Example where $\U$ consists of circularly arranged copies of $\Rp$}
	\end{subfigure}
\hfill
\caption{Two possible shapes for $\U$}
\label{fig:examples}
\end{figure}
On the other hand, different values of $\alpha$ give rise to qualitatively different regimes.
A brief discussion of the implications of Proposition \ref{Prop:convergence Z_n w} and the structure of $(\U,\alpha)$-stable L\'evy processes (described in Proposition \ref{Prop:(U,alpha)-stable laws} below) is given in the next remark.

\begin{remark}	\label{Rem:solutions to complex smoothing equations}	
Before we discuss the different cases of Theorem \ref{Thm:solutions to complex smoothing equations},
we point out that \eqref{eq:A2} implies that $\U \not \subseteq \S$.
\begin{itemize}
	\item[(i)]
		$\alpha \! < \! 1$: Then $Z = 0$ a.s., i.e., the second summand in \eqref{eq:solutions to complex smoothing equations} vanishes.
	\item[(ii)]
		$\alpha \! = \! 1$: $Z$ vanishes unless $\E[Z_1]=1$.
		$\E[Z_1]=1$ and $m(1)\!=\!1$ imply $T_j \in [0,\infty)$ a.s.\ for all $j \in \N$.
		In this case, $Z=W$ and $\U \subseteq \Rp$. In fact, Assumptions \eqref{eq:A4} and \eqref{eq:A4'}, resp., imply that $\U=\Rp$.
	\item[(iii)]
		$1\!<\!\alpha\!<\!2$: $Z$ vanishes unless $\E[Z_1]=1$.
		In most applications, $\E[Z_1] = 1$ will hold and the full spectrum of solutions given by \eqref{eq:solutions to complex smoothing equations}
		will arise.
	\item[(iv)]
		$\alpha\!=\!2$: In this case, $(Y_t)_{t \geq 0}$ is a centered Gaussian process (or $0$).
		Moreover, $(\U,2)$-stability necessitates 
		that real and imaginary part of $(Y_t)_{t \geq 0}$ are i.i.d.\ centered one-dimensional Brownian motions (or $0$)
		whenever $\U \setminus \R \not = \emptyset$.
		Further, $Z=1$ a.s.\ iff $Z_1 = 1$ a.s.\ and $Z=0$, otherwise.
	\item[(v)]
		$\alpha\!>\!2$: Here, $Y_t = 0$ a.s.\ for all $t$ and 
		$Z=1$ a.s.\ iff $Z_1 = 1$ a.s.\ and $Z=0$, otherwise.
		Hence, the set of solutions to \eqref{eq:FP of ST hom} is either $\{\delta_a: a \in \C\}$ or $\{\delta_0\}$, respectively,
		where here and throughout the paper, $\delta_a$ denotes the Dirac distribution with a point at $a$.		
\end{itemize}
\end{remark}

\begin{remark}	\label{Rem:tails Y_W}
In many cases, the different solutions to \eqref{eq:solutions to complex smoothing equations}
can be distinguished via their tail behavior.
We discuss here in details the most relevant case $1 < \alpha < 2$.
In applications, typically $\E[|Z|^\beta]<\infty$ for some $\beta > \alpha$
(see the sufficient condition of Proposition \ref{Prop:convergence Z_n w}(c)),
whereas $Y_W$ for a non-trivial $(\U,\alpha)$-stable L\'evy process $(Y_t)_{t \geq 0}$ exhibits heavier tails.
Indeed, the L\'evy measure $\bar\nu$ of $(Y_t)_{t \geq 0}$
satisfies $h_1 r^{-\alpha} \leq \bar \nu (\{|x| \geq r\}) \leq h_2 r^{-\alpha}$
for $0 < h_1 \leq h_2 < \infty$,
which can be derived directly from the $(\U,\alpha)$-stability
and is also implicit in our proofs.
According to \cite[Corollary 25.8]{Sato:1999},
this implies that, for every $t>0$, $Y_t$ has all absolute moments of order $<\alpha$ finite, while $\E[|Y_t|^\alpha]=\infty$.
Now if $\G \defeq \{\norm{u}: u \in \U\} = \Rp$, then, using that $\E[W]=1<\infty$
and the independence of $W$ and $(Y_t)_{t \geq 0}$,
\begin{equation*}
\E[|Y_W|^p] = \E[(W^{1/\alpha} |Y_1|)^p] = \E[W^{p/\alpha}] \E[|Y_1|^p] < \infty
\end{equation*}
iff $p < \alpha$.
In the case where $\G = r^\Z$ for some $r > 1$, one can argue similarly
using the fact that $\E[\sup_{1 \leq s \leq r}|Y_s|^p]<\infty$ iff $\E[|Y_1|^p]<\infty$,
see \cite[Theorem 25.18]{Sato:1999}.

More results on the tail behavior of $Z$ (under stronger conditions than imposed here)
can be found in \cite{Buraczweski+Damek+Mentemeier:2013}. 
\end{remark}

\subsubsection{Solutions to multivariate smoothing equations}	\label{subsubsec:solutions to multivariate smoothing equations}

We continue with the description of the solutions to \eqref{eq:FP of ST inhom} in the general situation.
A special solution can be constructed in terms of the weighted branching process. Define
\begin{equation}	\label{eq:W_n^*}
W_n^*	~\defeq~	\sum_{\abs{v}<n} L(v) C(v),	\quad	n \in \N_0
\end{equation}
and let $W^*$ denote the limit in probability as $n \to \infty$ of $W_n^*$ provided the limit exists.
If it exists, $W^*$ satisfies
\begin{equation}	\label{eq:FP of ST inhom a.s.}
W^*	~=~	\sum_{j \geq 1} T_j [W^*]_j + C	\quad	\text{a.s.}
\end{equation}
and thus constitutes a solution to \eqref{eq:FP of ST inhom}.
Each of the following is a sufficient condition for the convergence in probability of $W_n^*$
taken from \cite[Proposition 2.1]{Iksanov+Meiners:2015a},
which remains valid in the present context:
\begin{itemize}
	\item[(S1)]
		\eqref{eq:A1} and \eqref{eq:A2} hold and there is $\beta \in (0,1]$ with $m(\beta)<1$ and $\E[|C|^{\beta}]<\infty$.
	\item[(S2)]
		For some $\beta \geq 1$, $\sup_{n \in \N_0} \E[|W_n^*|^{\beta}] < \infty$ and
		either $T_j \geq 0$ for all $j \in \N$ or $\E[C] = 0$.
\end{itemize}
Notice that if $C = 0$ a.s., then $W_n^*$ converges trivially to $W^* = 0$ a.s.

We now state the main result for the general case.

\begin{theorem}	\label{Thm:solutions to multivariate smoothing equations}
Assume that \eqref{eq:A1}--\eqref{eq:A3} hold and that $W_n^* \to W^*$ in probability as $n \to \infty$.
If $\alpha=1$, assume that \eqref{eq:A4} or \eqref{eq:A4'} holds in addition.

Then a probability distribution  on $\R^d$ is a solution to \eqref{eq:FP of ST inhom}
if and only if it is the law of a random variable of the form
\begin{equation}	\label{eq:solutions to multivariate smoothing equations}
W^* + Y_W + Z
\end{equation}
where $Z$ is a solution of \eqref{eq:FP of ST hom a.s.} and $(Y_t)_{t \geq 0}$ is a strictly $(\U,\alpha)$-stable L\'evy process on $\R^d$ independent of $(W^*,W,Z)$.
\end{theorem}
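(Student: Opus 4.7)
For the ``if'' direction, take i.i.d.\ copies $(Y^{(j)})_{j \in \N}$ of the L\'evy process $(Y_t)_{t \geq 0}$, together with i.i.d.\ copies $(W^{*}_j, W_j, Z_j)_{j \in \N}$ of $(W^*, W, Z)$ (built from disjoint independent copies of $(\bC, \bT)$ via the weighted branching process), all jointly independent. Strict $(\U,\alpha)$-stability and the L\'evy property yield, conditional on $(\bT, (W_j)_j)$,
\begin{equation*}
\sum_{j \geq 1} T_j Y^{(j)}_{W_j} ~\eqdist~ Y_{\sum_{j \geq 1} \|T_j\|^\alpha W_j},
\end{equation*}
since each summand has conditional characteristic function $t \mapsto \exp(W_j \|T_j\|^\alpha \Psi(t))$ with $\Psi$ the characteristic exponent of $Y_1$. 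Combined with the a.s.\ identities \eqref{eq:FP of ST inhom a.s.}, \eqref{eq:FP tilted a.s.}, \eqref{eq:FP of ST hom a.s.} for $W^{*}$, $W$ and $Z$, a direct computation of characteristic functions shows that $W^* + Y_W + Z$ satisfies \eqref{eq:FP of ST inhom}.

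\textbf{Necessity.} Given a solution $X$ with characteristic function $\phi$, iterating \eqref{eq:FP of ST inhom} $n$ times yields, for every $t \in \R^d$,
\begin{equation*}
\phi(t) ~=~ \E\biggl[ e^{\imag\langle t, W_n^{*}\rangle} \prod_{|v|=n} \phi(L(v)^\transp t)\biggr].
\end{equation*}
By \eqref{eq:Biggins:1998}, $\sup_{|v|=n}\|L(v)\| \to 0$ a.s., so for $t$ in a fixed neighbourhood of the origin one may pass to logarithms and the analysis reduces to the asymptotics of $\Sigma_n(t) \defeq \sum_{|v|=n} \Phi(L(v)^\transp t)$, with $\Phi \defeq \log \phi$ well-defined near $0$. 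The plan is to split $\Phi$ into a ``drift'' component linear in its argument and a ``L\'evy'' component. The drift component aggregates along $Z_n \defeq \sum_{|v|=n} L(v)$ and contributes, in the limit, a summand $\imag\langle t, \zeta\rangle$ for some random $\zeta$ that, by passing to the limit in the a.s.\ recursion obeyed by the partial sums, solves \eqref{eq:FP of ST hom a.s.}, and is therefore classified by Proposition \ref{Prop:convergence Z_n w}. The residual component, after using $m(\alpha) = 1$ to accumulate along Biggins' martingale $W_n$, yields a summand $W \Psi(t)$, where $\Psi$ is forced to be invariant (modulo the factor $\|\cdot\|^\alpha$) under all random elements of $\U$ that appear in the branching structure; by density and closure this promotes to the full $(\U,\alpha)$-stability relation \eqref{eq:(u,alpha)-stable chf} with $b \equiv 0$, i.e.\ to strict $(\U,\alpha)$-stability of the associated L\'evy process.

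\textbf{Main difficulty.} The hardest step is the rigorous execution of this splitting in the boundary case $\alpha = 1$, where neither the drift nor the L\'evy part is automatically integrable or small and where the naive Taylor expansion of $\Phi$ around $0$ no longer controls the remainder. This is precisely where the extra assumptions \eqref{eq:A4}/\eqref{eq:A4'} enter: they furnish the integrability and the spread-out structure of the Markov random walk $(O(v), S(v))$ on $\Orth \times \R$ needed for a Markov renewal theorem, which in turn delivers the required convergence of $\Sigma_n(t)$ and the uniqueness of the decomposition. Away from $\alpha \in \{1,2\}$, the tail estimates of Remark \ref{Rem:tails Y_W} together with \eqref{eq:A3} already suffice; at $\alpha = 2$ the $(\U,2)$-stability restriction eliminates any non-Gaussian component, and for $\alpha > 2$ one has $Y \equiv 0$ so that everything reduces to Proposition \ref{Prop:convergence Z_n w}(d). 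Once $\Sigma_n(t) \to W\Psi(t) + \imag\langle t, \zeta\rangle$ is secured and $\zeta$ is identified as a solution of \eqref{eq:FP of ST hom a.s.}, setting $Z \defeq \zeta$ and defining $(Y_t)_{t \geq 0}$ as the strictly $(\U,\alpha)$-stable L\'evy process with exponent $\Psi$ produces the representation $X \eqdist W^* + Y_W + Z$ claimed.
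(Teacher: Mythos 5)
Your sufficiency argument is essentially the paper's: one verifies the functional equation \eqref{eq:FE of ST inhom} for the characteristic function of $W^*+Y_W+Z$ using the strict stability relation \eqref{eq:(u,alpha)-stable chf} with $b\equiv 0$ together with the a.s.\ recursions \eqref{eq:FP of ST inhom a.s.}, \eqref{eq:FP tilted a.s.} and \eqref{eq:FP of ST hom a.s.}. That half is correct.

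The necessity direction, however, is only a plan, and it has genuine gaps at its two load-bearing points. First, the step ``pass to logarithms and reduce to the asymptotics of $\Sigma_n(t)=\sum_{|v|=n}\Phi(L(v)^\transp t)$'' is not available as stated: the expectation sits \emph{outside} the product in the iterated equation, so nothing about $\phi$ follows from the behaviour of $\Sigma_n(t)$ unless you first establish a pathwise limit. The paper's mechanism is the multiplicative martingale $M_n(x)=e^{\imag\scalar{x,W_n^*}}\prod_{|v|=n}\phi(L(v)^\transp x)$, which converges a.s.\ because it is bounded, together with the nontrivial fact (Lemma \ref{Lem:Caliebe's lemma}) that on a single null set the limit $M(\omega,\cdot)$ is a characteristic function for all $x$ simultaneously. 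Moreover, your assumed splitting of $\Phi=\log\phi$ into ``a drift component linear in its argument and a L\'evy component'' is precisely what must be \emph{proved}, not assumed: the paper obtains it for the random limit $M(\omega,\cdot)$, not for $\phi$ itself, by viewing $M_n(\omega,\cdot)$ as the characteristic function of the row sums of an independent infinitesimal triangular array (using $\sup_{|v|=n}\norm{L(v)}\to 0$), whence $M=\exp(\Psi)$ with a random L\'evy triplet $(W',\mathbf{\Sigma},\nu)$. A general solution $\phi$ need not itself be infinitely divisible, so no Taylor expansion of $\log\phi$ near $0$ yields this decomposition.

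Second, even granting the L\'evy--Khintchine form, the identification of the triplet is where the real work lies, and ``by density and closure this promotes to the full $(\U,\alpha)$-stability relation'' does not carry it. One must show $\nu=W\bar\nu$ a.s.\ for a single deterministic $(\U,\alpha)$-invariant measure $\bar\nu$ and $\mathbf{\Sigma}=W\Sigma$; in the paper this rests on the self-similarity identities \eqref{eq:FE nu} and \eqref{eq:FE Sigma} (consequences of uniqueness of the triplet applied to \eqref{eq:M}), a Choquet--Deny argument on the possibly non-Abelian group $\U$, a product-measurability argument, and a uniqueness-of-measures argument over a countable generator. Likewise, identifying the residual shift $W'$ as an endogenous fixed point requires evaluating the L\'evy integrals (Lemma \ref{Lem:I(tx) evaluated}), separating real and imaginary parts, and letting $|x|\to\infty$ or $|x|\to 0$ according to whether $\alpha<1$ or $\alpha>1$; you correctly flag that $\alpha=1$ needs the Markov renewal machinery of \eqref{eq:A4}/\eqref{eq:A4'}, but none of these steps is executed for any value of $\alpha$. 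As written, the converse inclusion is an outline of the paper's strategy rather than a proof of it.
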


Denote by $E_{1} \subseteq \R^d$ the eigenspace corresponding to the eigenvalue $1$ of the matrix $\E[Z_1]$
and let $E_1 = \{0\}$ if $1$ is not an eigenvalue of $\E[Z_1]$.
Then Proposition \ref{Prop:convergence Z_n w} yields that
the set of solutions to \eqref{eq:FP of ST hom a.s.} is either empty or parametrized by $E_1$.
Besides, 
the solutions are parametrized by the different strictly $(\U,\alpha)$-stable distributions, which always include $Y_t \equiv 0$, i.e., $W^*+Z$ is also a fixed point.
In previous works, the solutions to smoothing equations on the nonnegative halfline \cite{Alsmeyer+Meiners:2012},
on $\R$ \cite{Alsmeyer+Meiners:2013} and on $\R^d$ in the case where $\U \subseteq \R^*$ \cite{Iksanov+Meiners:2015a}
have been represented in the form
\begin{equation}	\label{eq:solutions to multivariate smoothing equations classic}
W^* + W^{1/\alpha} Y + Z
\end{equation}
with $(W^*,W,Z)$ defined as here and $Y$ denoting an independent strictly $\alpha$-stable random variable
(where $Y=0$ is allowed). The same representation is possible here when $\Rp \times \{\Id\} \subseteq \U$ since then
$(Y_t)_{t \geq 0}$ is strictly $\alpha$-stable (or zero) and thus $Y_W$ has the same law as $W^{1/\alpha} Y_1$.
In general, solutions to \eqref{eq:FP of ST inhom} do not possess a representation of the form \eqref{eq:solutions to multivariate smoothing equations classic}
as $\U$ need not contain $\Rp \times \{\Id\}$, see e.g.\ the examples depicted in Figure \ref{fig:snail and snails}.

Observe that $W^*$ and $Z$ are measurable functions of $(\bC,\bT)$ and satisfy a.s.\ versions of the inhomogeneous or homogeneous fixed point equation,
i.e., \eqref{eq:FP of ST inhom a.s.} and \eqref{eq:FP of ST hom a.s.}, respectively.
Such fixed points are called {\em endogenous}, a notion coined by Aldous and Bandyopadhyay \cite{AB2005},
see \cite[Section 6]{Alsmeyer+Biggins+Meiners:2012} and \cite[Section 3.5]{Iksanov+Meiners:2015a} for further information.
The fixed point $Y_W$ is not endogenous, for the process $(Y_t)_{t \geq 0}$ introduces additional randomness.
Note, however, that $W$ is an endogenous fixed point of the one-dimensional smoothing transform with scalar weights
$\norm{T_1}^{\alpha}, \dots, \norm{T_N}^{\alpha}$, see \eqref{eq:FP tilted a.s.}.
The tail behavior of $W^*$ is investigated in \cite{Buraczweski+Damek+Mentemeier:2013}, and parallels that of $Z$.

\subsection{The class of strictly $(U,\alpha)$-stable L\'evy processes}	\label{subsec:strictly (U,alpha)-stable Levy processes}

To complement our main result, we determine the form of the characteristic exponent
of strictly $(U,\alpha)$-stable L\'evy processes in this section.
The analysis of $(U,\alpha)$-stable L\'evy processes for $U \subseteq \Orth$ is of no relevance for this paper
and simpler than in the case $U \not \subseteq \Orth$ and therefore omitted here.

Below, denote by $\nu^\alpha$, $0<\alpha <2$, a L\'evy measure satisfying
\begin{equation} \label{eq:nualpha (U,alpha)-invariant}
\nu^\alpha(uB) = \norm{u}^{-\alpha} \nu^\alpha(B) 
\end{equation}
for all $u \in U$ and all Borel sets $B \subseteq \R^d \setminus \{0\}$.
We call such a L\'evy measure $(U, \alpha)$-invariant as well.
The structure of such measures is described in Section \ref{subsec:U-invariant Levy measures}.
Given $\nu^{\alpha}$, define the functions
\begin{align}
\label{eq:Definition eta1}
\eta_1^\alpha(x) ~&\defeq~ \frac{1}{|x|^\alpha} \int \big(1 - \cos (\scalar{x,y}) \big) \nu^\alpha(\dy), \\
\eta_2^\alpha(x) ~&\defeq~ \frac{1}{|x|^\alpha} \int \big(\sin (\scalar{x,y}) - \1_{\{\alpha >1\}}\scalar{x,y} \big) \nu^\alpha(\dy).
\label{eq:Definition eta2}
\end{align}
It is proved in Lemma \ref{Lem:I(tx) evaluated} that $\eta_i^\alpha$ are bounded functions,
satisfying $\eta_i(u^\transp x) = \eta_i(x)$ for all $u \in U$, $x \in \R^d \setminus\{0\}$,  $i=1,2$. 

Now consider the case where the image of $U$ under the homomorphism $U \ni u \mapsto \norm{u}$ is $\R_>$.
Notice that the groups depicted in Figure \ref{fig:snail and snails} are of this type.
It is proved in \cite[Proposition C.1 and Theorem D.13]{Buraczewski+al:2009},
see also Proposition \ref{Prop:Buraczewski et al}, that then $U = \{ t^Q \, : \, t \in \Rp\} \times C$
for a suitable $d \times d$-matrix $Q$ and $C \defeq U \cap \Orth$.
Here, $t^Q \defeq e^{(\ln t) Q}$ and we scale $Q$ in such a way that $\norm{t^Q}=t$.
Observe that \eqref{eq:(U,alpha)-stable process} then implies that any $(U,\alpha)$-stable law is also operator stable
with exponent $\frac{1}{\alpha}Q$, see Section \ref{subsect:operator semistable} for details.

Further, let $\rho$ be a measure on $\Sd \defeq \{x \in \R^d \, : \, \abs{x}=1\}$
satisfying $\rho(oB)=\rho(B)$ for all $o \in C$ and Borel sets $B \subseteq \Sd$.
We call such a measure $C$-invariant and define
\begin{align}
\label{eq:Definition eta}
\eta^1 (x)~\defeq~ \int_{\Rp} \int_{\Sd} \bigg( e^{\imag\scalar{x,t^{Q}y}} - 1 - \frac{\imag \scalar{x,t^{Q}y}}{1+t^2} \bigg) t^{-2} \, \rho(\dy) \, \dt, \\
(Q-\Id) \, \gamma^1 ~\defeq~  \int_{\Rp} \int_{\Sd} (t^Q y) \, \frac{2t \scalar{Qy,y}}{(1+ t^2)^2} \, \rho(\dy) \, \dt,
\label{eq:Definition gamma}
\end{align}
given the right-hand side in \eqref{eq:Definition gamma} is in the range of $(Q-\Id)$.

Below, we write $O\defeq\{u/\norm{u} \, : \, u \in U\}$ for the projection of $U$ onto $\Orth$.

\begin{proposition}	\label{Prop:(U,alpha)-stable laws}	
Let $U \not \subseteq \Orth$ be a closed subgroup of the similarity group and $\alpha > 0$.
Then for a $d$-dimensional L\'evy process $(Y_t)_{t \geq 0}$ with characteristic exponent $\Psi$, the following assertions hold:
\begin{itemize}
	\item[(i)]	Let $1 \not = \alpha \in (0,2)$.
				$(Y_t)_{t \geq 0}$ is strictly $(U,\alpha)$-stable iff $\Psi$ is of the form
				\begin{equation}\label{eq:Psi deterministic alpha not 1}
				\Psi(x) ~=~- |x|^\alpha \eta_1^\alpha(x) + \imag |x|^\alpha \eta_2^\alpha(x)
				\end{equation}
				for a $(U, \alpha)$-invariant L\'evy measure $\nu^\alpha$.
	\item[(ii)]	Let  $U = \{ t^{Q} \, : \, t \in \Rp\} \times C$.  $(Y_t)_{t \geq 0}$ is strictly $(U,1)$-stable iff $\Psi$ is of the form
				\begin{equation}\label{eq:Psi deterministic alpha is 1}
				\Psi(x) ~=~\eta^1(x) + \imag\scalar{\gamma^1+z,x} 
				\end{equation}
				for a $C$-invariant measure $\rho$ on $\Sd$, satisfying 
				$ \int \scalar{x,s} \rho(\ds) =0$ for all $x$ with $Q^\transp x=x$
				(this also guarantees the existence of $\gamma^1$), and any vector $z$ satisfying $u z = \norm{u}z$ for all $u \in U$.
	\item[(iii)]	$(Y_t)_{t \geq 0}$ is strictly $(U,2)$-stable iff $\Psi$ is of the form
				\begin{equation}\label{eq:Psi deterministic alpha is 2}
				\Psi(x)	~=~	- x^{\transp} \Sigma x / 2,	\quad	x \in \R^d
				\end{equation}
				for a positive semi-definite symmetric $d \times d$ matrix $\Sigma$ satisfying
				$o \Sigma o^\transp = \Sigma$ for all $o \in O$.
	\item[(iv)]	Let $\alpha > 2$.
				$(Y_t)_{t \geq 0}$ is strictly $(U,\alpha)$-stable iff $\Psi(x)=0$ for all $x \in \R^d$, equivalently,
				$Y_t=0$ a.s.\ for all $t \geq 0$.
\end{itemize} 
\end{proposition}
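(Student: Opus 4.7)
The starting point is the equivalent reformulation of strict $(U,\alpha)$-stability as the functional equation $\Psi(u^\transp x) = \norm{u}^\alpha \Psi(x)$ for all $u \in U$, $x \in \R^d$ (that is, \eqref{eq:(u,alpha)-stable chf} with $b \equiv 0$). Combining this with the uniqueness of the Lévy–Khintchine decomposition
\begin{equation*}
\Psi(x) ~=~ -\tfrac{1}{2} x^\transp \Sigma x + \imag\scalar{\gamma,x} + \int \bigl(e^{\imag\scalar{x,y}} - 1 - \imag\scalar{x,y}\tau(y)\bigr) \nu(\dy)
\end{equation*}
splits the scaling relation into independent constraints on the Gaussian covariance $\Sigma$, the Lévy measure $\nu$, and the drift $\gamma$. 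The plan is to analyse these three constraints and take cases in $\alpha$.

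For the sufficiency direction, given $\Psi$ of any of the four stated forms, I would verify $\Psi(u^\transp x) = \norm{u}^\alpha \Psi(x)$ by direct substitution, using $\abs{u^\transp x} = \norm{u}\abs{x}$ (since $u^\transp$ is a similarity with the same norm as $u$), the identities $\eta_i^\alpha(u^\transp x) = \eta_i^\alpha(x)$ from Lemma \ref{Lem:I(tx) evaluated} in case (i), the change of variables $y \mapsto uy$ together with $(U,1)$-invariance of the measure implicit in \eqref{eq:Definition eta} for case (ii), the hypothesis $o\Sigma o^\transp = \Sigma$ for $o \in O$ in case (iii), and triviality in case (iv). The condition $uz = \norm{u}z$ in case (ii) is precisely what makes the extra linear term $\imag\scalar{z,x}$ strictly $(U,1)$-stable on its own.

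For the necessity direction, the Gaussian part must satisfy $u\Sigma u^\transp = \norm{u}^\alpha \Sigma$; writing $u = \norm{u}o$ and taking traces yields $\norm{u}^{2-\alpha}\trace\Sigma = \trace\Sigma$, so since $U \not\subseteq \Orth$ provides some $u$ with $\norm{u}\ne 1$, either $\Sigma = 0$ or $\alpha = 2$ (and then $o\Sigma o^\transp = \Sigma$ for all $o \in O$). The Lévy measure is forced to be $(U,\alpha)$-invariant, and the integrability $\int(\abs{y}^2\wedge 1)\,\nu(\dy)<\infty$ combined with this scaling confines nontrivial $\nu$ to $\alpha \in (0,2)$. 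These observations immediately yield cases (iii) and (iv), the residual drift being killed by $u\gamma = \norm{u}^\alpha\gamma$ combined with $\norm{u}\ne 1$ and $\alpha\ne 1$. In case (i), $\alpha \in (0,2)\setminus\{1\}$, choosing the compensator $\tau \equiv \1_{\{\alpha>1\}}$ (legitimate by the scaling of $\nu^\alpha$) collapses the integral into $-\abs{x}^\alpha\eta_1^\alpha(x) + \imag\abs{x}^\alpha\eta_2^\alpha(x)$, and any leftover drift satisfies $u\gamma_0 = \norm{u}^\alpha\gamma_0$ and must therefore vanish.

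The main obstacle is case (ii), $\alpha = 1$, where no choice of LK truncation trivialises the drift. Here I would invoke Proposition \ref{Prop:Buraczewski et al} to write $U = \{t^Q : t \in \Rp\} \times C$ and disintegrate every $(U,1)$-invariant Lévy measure as $\nu^1(B) = \int_0^\infty\int_{\Sd} \1_B(t^Q y)\,t^{-2}\,\rho(\dy)\,\dt$ for a unique $C$-invariant measure $\rho$ on $\Sd$ (as developed in Section \ref{subsec:U-invariant Levy measures}). Substituting this disintegration into the LK formula with the $Q$-adapted truncation $\scalar{x,t^Q y}/(1+t^2)$ should yield exactly $\eta^1(x)$ up to a linear-in-$x$ correction whose coefficient is $(Q - \Id)\gamma^1$ with $\gamma^1$ as in \eqref{eq:Definition gamma}; solvability of \eqref{eq:Definition gamma} amounts to the right-hand side lying in the range of $Q - \Id$, equivalently orthogonal to $\ker(Q^\transp - \Id)$, which is precisely the condition $\int\scalar{x,s}\,\rho(\ds) = 0$ for $Q^\transp x = x$. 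Any further strictly $(U,1)$-stable linear term $\imag\scalar{z,x}$ must satisfy $uz = \norm{u}z$ for all $u \in U$, delivering \eqref{eq:Psi deterministic alpha is 1}. The delicate step is executing this change of truncation cleanly so that the split into $\eta^1$ plus a $\gamma^1$-drift is exact, and then matching the residual freedom against the set $\{z : uz = \norm{u}z \ \forall u \in U\}$ to show that no further solutions arise.
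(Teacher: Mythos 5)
Your proposal is correct in substance and, for cases (i), (iii) and (iv), follows the same route as the paper: reformulate strict $(U,\alpha)$-stability as $\Psi(u^\transp x)=\norm{u}^\alpha\Psi(x)$, use uniqueness of the L\'evy triplet to split this into separate constraints on $\Sigma$, $\nu$ and the drift, kill the Gaussian part unless $\alpha=2$ and the L\'evy part unless $\alpha\in(0,2)$, and eliminate the residual drift by comparing homogeneity degrees (your choice of the compensator $\1_{\{\alpha>1\}}$, which makes the drift transform equivariantly, is exactly what Lemma \ref{Lem:I(tx) evaluated} encodes via $\gamma^\alpha$; the paper phrases the same elimination as letting $\norm{u}\to\infty$ resp.\ $\norm{u}\to 0$). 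The genuine difference is case (ii): the paper does not redo the $\alpha=1$ computation but passes through Corollary \ref{Cor:operator stability} to operator stability with exponent $Q$ and then quotes \cite[Proposition 12 and Theorem 13]{Luczak:2010}, which state precisely that the centering condition \eqref{eq:condition for rho} is equivalent to the existence of a strict shift and that $\gamma^1$ from \eqref{eq:Definition gamma} does the job; $C$-invariance of $\gamma^1$ (inherited from $\rho$ because $t^Q$ and $C$ commute, Proposition \ref{Prop:Buraczewski et al}) then upgrades strict operator stability to strict $(U,1)$-stability, and the residual $z=\gamma^1-\gamma$ satisfies $uz=\norm{u}z$. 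Your plan is to reprove this by substituting the polar disintegration of $\nu^1$ (Proposition \ref{Prop:structure of bar nu}) into the L\'evy--Khintchine formula with the truncation $\scalar{x,t^Qy}/(1+t^2)$ and solving a linear equation for $\gamma^1$; this is viable and buys a self-contained argument, but be aware that your one-line identification ``solvability $\Leftrightarrow$ orthogonality to $\ker(Q^\transp-\Id)$ $\Leftrightarrow$ $\int\scalar{x,s}\,\rho(\ds)=0$'' is not immediate: pairing the right-hand side of \eqref{eq:Definition gamma} with $x\in E_1(Q^\transp)$ produces (up to a constant) $\int\scalar{x,s}\scalar{Qs,s}\,\rho(\ds)$ rather than $\int\scalar{x,s}\,\rho(\ds)$, so the equivalence requires the additional computation that is exactly the content of the cited results of \L{}uczak. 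This is the step you yourself flag as delicate; executed carefully it closes the argument, otherwise the citation route of the paper is the shortcut.
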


Notice that for $\alpha=1$, the proposition excludes the case
where the image of $U$ under the homomorphism $U \ni u \mapsto \norm{u}$
is a discrete subgroup of $\Rp$.
This is because \eqref{eq:A4} and \eqref{eq:A4'}, assumed in the case $\alpha=1$,
imply that $\G=\Rp$ and hence the discrete case is of no relevance here. 

A description of the matrices $\Sigma$ satisfying $o \Sigma o^\transp = \Sigma$ for all $o \in O$
is given in Proposition \ref{Prop:oSigmao^t}. In particular, if there is no proper subspace $V \subseteq \R^d$,
satisfying $oV = V$ for all $o \in O$, then $\Sigma$ is a scalar multiple of the identity matrix.

\subsection{Further organization of the paper}
In Section \ref{sec:examples}, we apply our main results to the examples mentioned in the introduction as well as to further important models.
The rest of the paper is then devoted to the proofs of the main results,
with the major, probabilistic part of the proof being given in Section \ref{sect:proof 1},
at the beginning of which we will also introduce further notation and concepts relevant for the proofs.
More algebraic considerations, concerned with the structure of $(U,\alpha)$-stable L\'evy processes,
are contained in Section \ref{sec:invariant matrices and measures}.
The appendix contains a Choquet-Deny lemma for functions on $U$
and a rate-of-convergence result for Markov renewal processes;
the latter result will be needed only in the case $\alpha=1$.

\section{Applications of the main results}	\label{sec:examples}	
In this section, we discuss the examples from the introduction, as well as further applications of our main results to the study of Biggins' martingale with complex parameter or Gaussian multiplicative chaos.

\subsection{Applications of Theorem \ref{Thm:solutions to complex smoothing equations}}	\label{subsec:Applications complex}

\label{subsubsec:b-ary search trees} \subsubsection{$b$-ary search trees}

$b$-ary search trees are $b$-ary trees
which are basic data structures in computer science used in searching and sorting,
see \cite{Mahmoud:1992} for the definition and background information.
Each node of a $b$-ary search tree can store up to $b-1$ elements from a set of distinct real numbers
$x_1,\ldots,x_n$, called the set of keys.
For the problems considered, it constitutes no loss of generality to assume $\{x_1,\ldots,x_n\} = \{1,\ldots,n\}$.
Denote by $Y_n$ the space requirement of a $b$-ary search tree under the random permutation model,
i.e., $(x_1,\ldots,x_n)$ is a uniform permutation of the set $\{1,\ldots,n\}$
and $Y_n$ is the number of nodes in the resulting $b$-ary search tree.

Let $b \geq 4$. The asymptotic behavior of $Y_n$ is coded in the equation
\begin{equation}	\label{eq:chi}
\chi(z)	~\defeq~	\prod_{j=1}^{b-1} (z+j) - b!	~=~	0,	\quad	z \in \C.
\end{equation}
The root with largest absolute value is $\lambda_1=1$.
Let $\lambda_2$ denote the root with second-largest real part and $\Imag(\lambda_2)>0$.
Then $\Real(\lambda_2) \in (0,1)$.
If $\Real(\lambda_2) \leq 1/2$, equivalently, $b \leq 26$, then after centering and norming,
$Y_n$ is asymptotically normal, see \cite{Lew+Mahmoud:1994}.
By using martingale methods, Chauvin and Pouyanne \cite{Chauvin+Pouyanne:2004} have shown
that if $\Real(\lambda_2) > 1/2$, equivalently, $b \geq 27$, then
\begin{equation}	\label{eq:limit of b-ary search trees embedding}
Y_n	~=~	\mathrm{const} \cdot n + 2 \Real(n^{\lambda_2} X) + o(n^{\Real(\lambda_2)})
\end{equation}
where $o(n^{\Real(\lambda_2)})$ is a term that, after dividing by $n^{\Real(\lambda_2)}$, tends to $0$ a.s.\ and in $\L^2$,
and $X$ is a complex-valued random variable.
Since $n^{\lambda_2}$ is complex, $n^{-\Real(\lambda_2)}(Y_n- \mathrm{const}\cdot n)$
does not converge in distribution, but behaves like $2 \Real(n^{\mathrm{Im}(\lambda_2)} X)$. 

Fill and Kapur \cite{Fill+Kapur:2004} showed that $X$ solves the smoothing equation
\begin{equation}	\label{eq:Fill+Kapur}
X	~\eqdist~	\sum_{j=1}^b V_j^{\lambda_2} X_j
\end{equation}
where $V_1,\ldots,V_b$ are the spacings of $b-1$ i.i.d.\ random variables uniformly distributed over $(0,1)$,
$U_1,\ldots,U_{b-1}$, say.
In other words, $V_j = U_{(j)}-U_{(j-1)}$ for $j=1,\ldots,b$ with $U_{(0)}=0$,
$U_{(b)}=1$ and $(U_{(1)},\ldots,U_{(b-1)})$ being the order statistics of $(U_1,\ldots,U_{b-1})$.
What is more, Fill and Kapur showed that (the law of) $X$ is the unique solution to \eqref{eq:Fill+Kapur}
subject to the additional constraints $\E[X]=\mu$ and $\E[|X|^2]<\infty$ where $\mu \not = 0$ is a given complex constant.

Later, using an embedding into continuous-time multi-type Markov branching processes,
Chauvin et al.\;\cite{Chauvin+Liu+Pouyanne:2014} established a connection between the law of $X$
and the complex smoothing equation
\begin{equation}	\label{eq:Chauvin+Liu+Pouyanne}
X	~\eqdist~	e^{- \lambda_2 T}(X_1+\ldots+X_b)
\end{equation}
where $T$ has the same distribution as the sum $\tau_1+\ldots+\tau_{b-1}$
of independent random variables with $\tau_j$ being exponentially distributed with parameter $j$, $j=1,\ldots,b-1$.
Again, the connection concerns the solution to \eqref{eq:Chauvin+Liu+Pouyanne} with fixed expectation $\E[X]=\mu \not = 0$
and finite second moment $\E[|X|^2]<\infty$.

\smallskip
Here we will consider Eq.\ \eqref{eq:Fill+Kapur} only;
the study of \eqref{eq:Chauvin+Liu+Pouyanne} bears a striking similarity.
One can check that $V_1,\ldots,V_b$ are identically distributed with Lebesgue density $(b-1)(1-x)^{b-2}\1_{(0,1)}(x)$.
Hence, for $z \in \C$,
\begin{align}	\label{eq:bE[V_1^z]}
\E[V_1^z+\ldots+V_b^z]
~&=~ b(b-1) \int_0^1 x^z(1-x)^{b-2} \, \dx
~=~ \frac{b! \, \Gamma(z+1)}{\Gamma(z+b)}	\\
~&=~ \frac{b!}{(z+1)\cdot \ldots \cdot (z+b-1)}		\notag
\end{align}
where $\Gamma$ denotes Euler's gamma function.
We conclude that in the present context the function $s \mapsto m(s)$ defined in \eqref{eq:m} takes the form
\begin{equation}	\label{eq:m for b-ary search trees}
m(s)	~=~	b! \prod_{j=1}^{b-1} \frac{1}{\Real(\lambda_2) s + j},
\end{equation}
Figure \ref{subfig:m(s) b-ary} shows the graph of $s \mapsto m(s)$.
Notice that $m(\alpha)=1$ only for $\alpha = 1/\Real(\lambda_2)$.
In particular, the phase transition at $b=26$ is visible here
since $\alpha \geq 2$ for $b \leq 26$ and $\alpha \in (1,2)$ for $b \geq 27$,
and the transition between normal and stable behavior occurs at $\alpha=2$.
Further, from \eqref{eq:bE[V_1^z]}
for $z=\lambda_2$,
we conclude that $\E[Z_1]=1$ for
$Z_1=V_1^{\lambda_2}+\ldots+V_b^{\lambda_2}$.
It follows from \eqref{eq:m for b-ary search trees} that $m(s) < 1$ for all $s > \alpha$
and it can be checked that $\E[|Z_1|^2] < \infty$.
Thus, the sufficient condition of Proposition \ref{Prop:convergence Z_n w}(c) applies
and $Z_n \to Z$ a.s.\ and in $\L^2$ for a complex random variable $Z$ with $\Prob(Z \not = 0) > 0$.

By Theorem \ref{Thm:solutions to complex smoothing equations}, the set of solutions to \eqref{eq:Fill+Kapur}
is given by all laws of random variables of the form
\begin{equation}	\label{eq:solutions to Fill+Kapur equation}
Y_{W} + a Z
\end{equation}
where $(Y_{t})_{t \geq 0}$ is a strictly $(\U,\alpha)$-stable L\'evy process independent of $(W,Z)$.
Here, $\U=\{e^{\lambda_2 t} \, : \, t \in \R\}$.
For $b=27$, this group is depicted in Figure \ref{subfig:snail}.
The solution of interest can be singled out by moment properties using Remark \ref{Rem:tails Y_W}
and is $X=\mu Z$. 
This gives in particular a positive answer to the question posed in \cite[Remark 4.5]{Chauvin+Liu+Pouyanne:2014}
about the existence of further solutions with infinite second moment.

\subsubsection{Cyclic P\'olya urns}	\label{subsubsec:Cyclic Polya urn}

Consider an urn containing finitely many balls of $b$ different types, $1, \ldots, b$.
At each step, a ball is drawn and placed back into the urn together with an assortment of new balls,
the types of which depend on the type of the ball drawn. Such a scheme is called a generalized P\'olya urn.
If the replacement rule is such that if a ball of type $k$ is drawn,
then it is placed back into the urn together with a ball of type $k+1$ if $k<b$ and of type $1$ if $k=b$,
the urn is called cyclic.

Let $R_{n,k}$ be the number of balls of type $1$ in a cyclic urn after $n$ steps
when starting with exactly one ball of type $k$ and no other ball.
We have $\E [R_{n,k}] = \frac nb + O(1)$ as $n \to \infty$, see e.g.\ \cite[Lemma 6.7]{Knape+Neininger:2014}.

If $b \leq 6$, then $R_{n,k}-\frac nb$, suitably scaled, is asymptotically normal.
If $b \geq 7$, let $\zeta = \exp(2\pi \imag / b) = \xi + \imag \eta$ be a primitive $b$th root of unity.
It has been shown with martingale methods \cite{Janson:2004,Pouyanne:2005}
and via the contraction method \cite[Section 6.3]{Knape+Neininger:2014} that $n^{-\xi}(R_{n,k} - \frac nb)$
has an asymptotic periodic behavior (similar to Eq. \eqref{eq:limit of b-ary search trees embedding}) that is governed by the law of a random variable $X$
which is the unique non-degenerate solution with expectation $2/(b \Gamma(\zeta+1))$ and finite second moment
of the equation
\begin{equation}	\label{eq:cyclic Polya}
X ~\eqdist~ U^\zeta X_1 + \zeta(1-U)^\zeta X_2
\end{equation}
where $X_1, X_2$ are i.i.d.\ copies of $X$ that are independent of $U$ which has the uniform distribution on $[0,1]$.

\smallskip
Therefore,
\begin{equation*}
m(s) = \E[|U^\zeta|^s + |\zeta(1-U)^\zeta|^s]	
= \frac{2}{1+ \xi s},
\end{equation*}
see Figure \ref{subfig:m(s) Polya} for a plot of $s \mapsto m(s)$.
Thus $\alpha=1/\xi$ and $\alpha \in (1,2)$ iff $\xi = \cos(2 \pi/b) > \frac12$ iff $b \geq 7$.
In particular, the phase transition at $b=6$ is visible here as the phase transition between normal and stable behavior
occurs at $\alpha=2$.

\eqref{eq:A1}--\eqref{eq:A3} are readily checked to be valid in the present context.
Further,
\begin{equation*}
\E[Z_1]	 = \E[U^{\zeta}+\zeta (1-U)^{\zeta}] = (1+\zeta) \E[U^\zeta] = 1.
\end{equation*}
Since $m(2) <1$ and $\E[|Z_1|^{2}] \leq 4 < \infty$,
the sufficient condition in Proposition \ref{Prop:convergence Z_n w}(c) is fulfilled and we conclude that $Z_n \to Z$ a.s.\ and in $\L^2$ as $n \to \infty$
for a random variable $Z$ with $\E[Z]=1$ and $\E[|Z|^2]<\infty$.
We have $\U = \{\zeta^k e^{\zeta t} \, : \,  0 \le k < b,\  t \in \R \}$;
Figure \ref{subfig:snails} is a depiction of $\U$ in the case $b=7$.
It follows that the whole spectrum of solutions given in \eqref{eq:solutions to complex smoothing equations} appears.
The special solution $X$ appearing in the description of the limiting behavior of $R_{n,k}-\frac nb$
is the unique solution to \eqref{eq:cyclic Polya} with mean $2/(b \Gamma(\zeta+1))$ and finite variance.
Since $\E[|Y_{W}|^2] = \infty$ for any non-trivial $(\U,\alpha)$-stable L\'evy process $(Y_t)_{t \geq 0}$
by Remark \ref{Rem:tails Y_W}, it is $X=2Z/(b \Gamma(\zeta+1))$.

\subsubsection{Asymptotic size of fragmentation trees}	\label{subsubsec:Size of fragmentation trees}

In Kolmogorov's conservative fragmentation model \cite{Bertoin:2006,Kolmogorov:1941} an object of mass $x=1$, say,
is split into $b$ parts with respective masses $0 \leq V_1,\ldots,V_b < 1$
where $b \geq 2$ is a fixed integer and $V_1,\ldots,V_b$ are random variables with $V_1+\ldots+V_b=1$ a.s.
The splitting procedure is repeated with the resulting objects using independent copies of the splitting vector $(V_1,\ldots,V_b)$
to determine the relative sizes of the emerging objects.
Janson and Neininger \cite{Janson+Neininger:2008} investigated the size $N(\epsilon)$ of the random fragmentation tree
the vertices of which correspond to all objects created in the fragmentation process
that have mass strictly $\geq \epsilon$ for some given $\epsilon > 0$.
They showed that the asymptotics of $N(\epsilon)$ are coded
in the function $\psi$ that maps $z \in \C$ to $\E [\sum_{j=1}^{b} V_j^z]$
(whenever the expectation exists).
To be more precise, denote by $1=\lambda_1, \lambda_2, \lambda_3, \ldots$ the roots of the equation $\psi(z)=1$
with the convention that $1 = \Real(\lambda_1) > \Real(\lambda_2) \geq \Real(\lambda_3) \geq \ldots$. 
Then, under suitable assumptions,
when $\Real(\lambda_2) \leq \frac12$, $N(\epsilon)$ suitably shifted and scaled, converges in distribution to a centered normal.
On the other hand, when $\Real(\lambda_2) > \frac12$, $N(\epsilon)$
exhibits a periodic limiting behavior governed by a complex-valued random variable $X$,
with finite second moment and a fixed expectation $\gamma \in \C$,
satisfying the distributional equation
\begin{equation}	\label{eq:FPE of fragmentation trees}
X	~\eqdist~	\sum_{j=1}^b V_j^{\lambda_2} X_j
\end{equation}
where $X_1, \ldots, X_b$ are i.i.d.\ copies of $X$ and independent of $(V_1,\ldots,V_b)$.

\smallskip
This equation is in the scope of our analysis.
Indeed, letting $T_j \defeq V_j^{\lambda_2}$ for $j=1,\ldots,b$, and $T_j = 0$ for $j>b$, we have
\begin{equation}	\label{eq:m for FPE of fragmentation trees}
m(s)	~=~ \E \bigg[\sum_{j \geq 1} |T_j|^s	\bigg]	~=~	\E \bigg[\sum_{j=1}^b V_j^{\Real(\lambda_2) s}	\bigg]
~=~	\psi(\Real(\lambda_2) s).
\end{equation}
In particular, $m(\alpha)=1$ iff $\alpha = 1/\Real(\lambda_2)$.
Again, the phase transition between normal and stable fluctuations
is reflected in the equation since $\alpha < 2$ iff $\Real(\lambda_2) > \frac12$.
Further, the conditions \eqref{eq:A1}--\eqref{eq:A3} are easily checked to hold.
Since $m(2)<1$ and
\begin{equation*}
\E \bigg[ \bigg| \sum_{j=1}^b V_j^{\lambda_2} \bigg|^2\bigg]
~\leq~	b\, \E \bigg[ \sum_{j=1}^b V_j^{2 \Real(\lambda_2)}\bigg]
~<~	1,
\end{equation*}
the sufficient condition of Proposition \ref{Prop:convergence Z_n w}(c) is fulfilled
and hence
the martingale $(Z_n)_{n \in \N_0}$ converges a.s.\ and in $\L^2$
and the special solution $X$ used to describe the limiting behavior of $N(\epsilon)$ is $\gamma Z$. The general form of solutions is given by \eqref{eq:solutions to complex smoothing equations}.

It is assumed in \cite{Janson+Neininger:2008} 
that each $V_j$ has an absolutely continuous component, hence we are in the continuous case; in fact, $\U=\{e^{\lambda_2 t} \, : \, t \in \R\}$. If the values of $\lambda_2$ are the same, we obtain the same class of $(\U, \alpha)$-stable L\'evy processes as in the case of $b$-ary search trees, but the law of $W$ will depend on the explicit distribution of $V_1, \dots, V_b$, not only on their support.

\subsubsection{Biggins' martingale with complex parameter \\	and complex Gaussian multiplicative chaos}

Eq. \eqref{eq:FP of ST hom a.s.} arises naturally in the context of branching random walks:
Consider an initial ancestor at the origin with children placed on $\R$
according to a point process $\cZ$ on $\R$ with $\E[\cZ(\R)]>1$ (supercritical case).
Each child produces offspring with positions relative to its location given by an independent copy of $\cZ$, and so on.
Denoting by $(\mathcal{S}(v))_{\abs{v}=n}$ the positions of the $n$th generation particles,
consider the Laplace transform with complex parameter $\lambda$ of the random point measure formed by the $n$th generation particles,
\begin{equation*}
\mathcal{M}_n(\lambda) ~\defeq~\sum_{|v|=n} e^{-\lambda \mathcal{S}(v)}.
\end{equation*} 
If $\m(\lambda) \defeq \E [\sum_{|v|=1} e^{-\lambda \mathcal{S}(v)}]$ is finite,
then $\E[ M_n(\lambda)]=\m(\lambda)^n$, and
\begin{equation*}
\W_n(\lambda) ~\defeq~\frac{\mathcal{M}_n(\lambda)}{\m(\lambda)^n}
\end{equation*}
is a complex-valued martingale with $\E[\W_1(\lambda)]=1$,
called {\em Biggins' martingale}, see \cite{Biggins:1977}.
Sufficient conditions for the convergence of $\W_n(\lambda)$ to a nondegenerate limit $\W(\lambda)$ are studied in \cite{Biggins:1992}.
Upon defining $T_j \defeq e^{-\lambda\mathcal{S}(j)}/\m(\lambda)$ and using the same shift notation as for the weighted branching process,
one obtains that
\begin{equation*}
\W(\lambda)
~=~\sum_{j \ge 1} \frac{e^{-\lambda \mathcal{S}(j)}}{\m(\lambda)} [\W(\lambda)]_j
~=~\sum_{j \ge 1} T_j [\W(\lambda)]_j \qquad \text{a.s.}
\end{equation*}
Thus, $\W(\lambda)$ is a solution to \eqref{eq:FP of ST hom a.s.}, in particular, $\W_n(\lambda)=Z_n$ in our notation. 

The sufficient conditions for the $\mathcal{L}^\beta$-convergence of $\W_n(\lambda)$
from \cite[Theorem 1]{Biggins:1992} translate as follows:
(2.1) there is equivalent to $\E[(\sum_{j \geq 1} \abs{T_j})^\gamma] < \infty$ for some $\gamma \in (1,2]$,
while (2.2) equals $m(\beta)<1$ for some $\beta \in (1, \gamma]$.
These imply the sufficient conditions of Proposition \ref{Prop:convergence Z_n w}(c).

\smallskip
It is an important open problem to find equivalent conditions for the convergence of $Z_n$ to a nondegenerate limit,
for it may also provide educated guesses in the theory of complex Gaussian multiplicative chaos.
This is the complex analogue of real Gaussian multiplicative chaos, which was introduced by Kahane \cite{Kahane:1985},
see also \cite{Rhodes+Vargas:2014} for a recent review and more details. 
Complex Gaussian multiplicative chaos is a complex random measure $\mathcal{M}^{\gamma,\beta}$
(with parameters $\beta, \gamma >0$) on $\R$,
which is obtained via a limiting procedure from {\em regularized} measures $\mathcal{M}^{\gamma, \beta}_\epsilon$. 
The question is about the correct renormalization needed to obtain convergence.
Three phases (Phases I, II and III) appear, see Figure 1 in \cite{Lacoin+Rhodes+Vargas:2013}.
The suitable scaling can be guessed from the behavior of Biggins' martingale with complex parameter.
A particular instance,
which was studied by Madaule, Rhodes and Vargas \cite{Madaule+Rhodes+Vargas:2015}
in order to provide intuition for the behavior on the boundary between phases I/II ($\gamma \in (1/2,1)$, $\gamma + \beta=1$),
is
\begin{equation*}
\mathcal{M}_n(\gamma, \beta) ~\defeq~ \sum_{\abs{v}=n} \exp \big( - \gamma \mathcal{S}(v) + \imag \beta \sqrt{2 \ln 2} \mathcal{S}'(v) \big),
\end{equation*}
where $(\mathcal{S}(v))_{v \in \V}$, $(\mathcal{S}'(v))_{v \in \V}$ are independent branching random walks with binary branching
and i.i.d.\ displacements with normal laws with mean $2 \log 2$ and variance ${2 \log 2}$
for $\mathcal{S}(v)$ resp.\ mean $0$ and variance $1$ for $\mathcal{S}'(v)$.

As described above, if $\mathcal{W}_n \defeq (\E[\mathcal{M}_n])^{-1} \mathcal{M}_n$ converges to a limit $\W$,
then this limit is a solution to a smoothing equation, and equal to the particular solution $Z$. 
In the setting of \cite{Madaule+Rhodes+Vargas:2015},
one has $\E [\mathcal{M}_1(\gamma,\beta)]=1$ and $m(s)= \exp\big((s \gamma -1)^2 \log 2 \big)$,
hence $\alpha = 1/\gamma \in (1,2)$. 
The phase transition is reflected in the fact that $m'(\alpha)=0$ on the boundary between phases I/II.
In this case, the sufficient conditions of Proposition \ref{Prop:convergence Z_n w} do not hold,
nevertheless it is proved in \cite[Theorem 1]{Madaule+Rhodes+Vargas:2015} that $\W_n$ converges to a nontrivial limit.

\subsection{Application of Theorem \ref{Thm:solutions to multivariate smoothing equations}}

We end this section with the study of the example described in Section \ref{subsubsec:Kac caricature}. In contrast to the examples above, the relevant solution will  be given by $Y_W$. 

Bassetti and Matthes \cite[Section 6.2]{Bassetti+Matthes:2014} study the equation
\begin{equation}	\label{eq:Bassetti+Matthes2}
V ~\eqdist~ L V_1 + R V_2,
\end{equation}
where $V, V_1, V_2$ are i.i.d.\ random vectors in $\R^3$, independent of the  random pair $(L,R)$ of similarities
which satisfies
\begin{equation*}
m(2) = \E \big[\norm{L}^2 + \norm{R}^2\big] = 1
\text{ and }
m(p) = \E \big[\norm{L}^p + \norm{R}^p\big] < 1
\end{equation*}
for some $p \in (2,3)$.
Thus \eqref{eq:A1}--\eqref{eq:A3} are satisfied with $\alpha=2$,
and all solutions to \eqref{eq:Bassetti+Matthes2} are given by Theorem \ref{Thm:solutions to multivariate smoothing equations}.
Since the equation is homogeneous, $W^*$ vanishes, while $Y_1$ can be any centered multivariate normal random variable
with a covariance matrix that is invariant under conjugation by elements of $\O$.

Bassetti and Matthes further assume that $L=lA$ for independent random variables $l \in \R$ and $A \in \Orth$,
and that the law of $A^\transp x$ dominates the volume measure on $\Sd$ for every $x \in \Sd$.
This implies that $\O$ acts transitively on $\Sd$ and hence that scalar multiples of $\Sigma=\Id$
are the only possible choices for the covariance matrix of $Y_1$.
As Proposition \ref{Prop:oSigmao^t} below shows,
the weaker assumption that there is no $\U$-invariant proper subspace readily implies $\Sigma=\Id$. 

Finally, $Z=Z^w \neq 0$ if and only if $Lw+Rw=w$ a.s., which corresponds to the conservation of momentum. In \cite{Bassetti+Matthes:2014}, only centered solutions were considered, the physical interpretation of which is that the centre of gravity does not move. Here, we see that the centre of gravity may have drift $Z^w$, which corresponds to the validity of Newtons first law in this context.

In recent papers by Dolera and Regazzini \cite{Dolera+Regazzini:2014} and Bassetti et.\;al.\;\cite{Bassetti+Ladelli+Matthes:2013},
solutions of the Boltzmann equation for Maxwellian molecules in $\R^3$ have been studied directly,
rather than its simplifications like the Kac caricature.
These steady state solutions cannot be written directly as solutions to smoothing equations,
but the techniques employed seem to be very similar.
We hope that our results allow for a better understanding; in particular under which conditions rotation invariant solutions appear.

\section{Proofs of the main results}\label{sect:proof 1}

In this section, we prove our main results: Proposition \ref{Prop:convergence Z_n w},
Theorem \ref{Thm:solutions to multivariate smoothing equations} and Proposition \ref{Prop:(U,alpha)-stable laws}.
At the beginning, we collect the relevant notation and introduce tools and concepts which are used in the proofs below. 

\subsection{Notation}	\label{subsec:Notation}

\subsubsection{Vector spaces, sets, matrices, etc}	\label{subsubsec:vector notation}
We work in the $d$-dimensional space $\R^d$.
We think of an element $x \in \R^d$ as a column vector.
We write $x^{\transp}$ for the corresponding row vector.
$e_1,\ldots,e_d$ denote the canonical basis vectors of $\R^d$.
By $\scalar{\cdot,\cdot}$, we denote the standard Euclidean scalar product on $\R^d$,
that is, $\scalar{x,y} = x^\transp y$ for $x,y \in \R^d$.
We write $|x|$ for $\sqrt{\scalar{x,x}}$, the Euclidean norm of $x$.
For a set $B \subseteq \R^d$, $\partial B$ denotes the boundary of $B$,
$B^\perp = \{y \in \R^d: \scalar{x,y} = 0 \text{ for all } x \in B\}$ denotes the orthogonal complement of $B$ in $\R^d$.
$B_r \defeq \{x \in \R^d: |x|<r\}$ denotes the ball of radius $r$ centered around the origin, $r > 0$.
For a given real $d \times d$ matrix $A$, we write $A_{ij}$ for the coefficient in the $i$th row and the $j$th column of $A$
and $\norm{A} \defeq \sup_{|x|=1}\abs{Ax}$ for its norm.
$A^{\transp}, \trace(A)$ and $\det(A)$ denote the transpose, the trace and the determinant of $A$, respectively.
For $\lambda \in \R$, we set $E_{\lambda}(A) = \{x \in \R^d: Ax = \lambda x\}$.
We write $I_{k}$ for the $k \times k$ identity matrix, $k \in \N$.

\subsubsection{Probability spaces, expectations, etc}
Throughout the paper, we fix a probability space $(\Omega,\A,\Prob)$ which is large enough to carry all random variables
appearing in the paper. By $\E[\cdot]$ and $\Var[\cdot]$ 
we  denote expectation resp.\ variance 
with respect to $\Prob$.
We also consider expectations of random vectors  and random matrices.
which are defined componentwise.
For a random vector $Y$, we write $\Cov[Y] \defeq \E[(Y-\E[Y])(Y^{\transp}-\E[Y]^{\transp})]$ for the covariance matrix of $Y$ with respect to $\Prob$.
$\1_A$ denotes the indicator function of a set $A$ and we write $\E[Y; A]$ for $\E[Y \1_{A}]$.
Further, $\Cov[Y;A]$ is the covariance matrix of the random vector $Y \1_{A}$.

\subsubsection{Relevant groups}
The following groups are of relevance. $\Sim \subseteq GL(d,\R)$ is the group of similarity matrix, i.e., scalar multiples of orthogonal matrices.
$\U$ denotes the smallest closed subgroup of $\Sim$ that covers $\{T_j: j=1,\ldots,N\}$ with probability one.
In the complex case, we identify $\U$ with a subgroup of the multiplicative group $\C^*$ of $\C$. 
Similarly, $\O$ is the smallest closed subgroup of $\Orth$
which contains the random set $\{T_j/\norm{T_j}: j=1,\ldots,N\}$ with probability one.
In the complex case, we identify $\O$ with the smallest multiplicative subgroup of $\S = \{z \in \C: |z|=1\}$
which contains the random set $\{T_j/|T_j|: j=1,\ldots,N\}$ with probability one.
In this case, either $\O = \{e^{2 \pi \imag k / m}: k=0,\ldots,m-1\}$ for some $m \in \N$ or $\O = \S$.
Finally, let $\G$ be the smallest closed multiplicative subgroup of $\Rp$ that covers the random set $\{\norm{T_j}: j=1,\ldots,N\}$ with probability one.
Equivalently, $\G$ is the image of the group $\U$ under the homomorphism $u \mapsto \norm{u}$.
There are three possibilities:
(C) $\G = \Rp$;
(G) $\G = r^{\Z}$ for some $r>1$;
(T) $\G = \{1\}$.
The trivial case (T) is excluded by \eqref{eq:A3}.
We refer to (G) as the geometric or $r$-geometric case and to (C) as the continuous or non-geometric case.

More information about the structure of $\U$ is provided in Section \ref{app:polar coordinates}.
In particular, $\U = \AU \ltimes \CU$ for a one-parameter group $\AU$ which is isomorphic to $\G$, and $\CU=\U \cap \Orth$.
Note that in general, $\CU \subsetneq \O$.

Throughout the paper,
we call a measure $\mu$ on $\R^d$ $G$-(left)-invariant for a group $G$ of matrices
if $\mu(g^{-1}B)=\mu(B)$ for all $g \in G$ and all Borel sets $B \subseteq \R^d$.

\subsubsection{Weighted branching}	\label{subsubsec:weighted branching notation}
We set $\F_n \defeq \sigma((C(v),T(v)): |v|<n)$ and $\F \defeq \sigma(\F_n: n \in \N_0)$.
$(\bC,\bT)$ serves as an abbreviation for the family $((C(v),T(v)))_{v \in \V}$.
We further assume that on the basic probability space, a family $\bX \defeq (X_v)_{v \in \V}$ of i.i.d.\ random variables is defined
which is independent of $\F$. We do not specify the law of $X \defeq X_{\varnothing}$ here, but typically,
$X$ will be a solution to \eqref{eq:FP of ST inhom} or \eqref{eq:FP of ST hom}.

\subsection{Characteristic functions}	\label{subsec:Characteristic functions}

It is natural to approach Equation \eqref{eq:FP of ST inhom} via characteristic functions.
Indeed, the distributional equation \eqref{eq:FP of ST inhom} for an $\R^d$-valued random variable $X$
is equivalent to the functional equation
\begin{equation}	\label{eq:FE of ST inhom}
\phi(x)	~=~	\E\bigg[e^{\imag \scalar{x,C}} \prod_{j \geq 1} \phi(T_j^\transp x)\bigg],	\qquad	x \in \R^d
\end{equation}
for the characteristic function $\phi(x) = \E[e^{\imag \scalar{x,X}}]$ of $X$.
In the homogeneous case, the functional equation takes the simpler form
\begin{equation}	\label{eq:FE of ST hom}
\phi(x)	~=~	\E\bigg[\prod_{j \geq 1} \phi(T_j^\transp x)\bigg],	\qquad	x \in \R^d.
\end{equation}
Solving \eqref{eq:FP of ST inhom} is equivalent to finding all characteristic functions $\phi$ of $\R^d$-valued random variables
which satisfy \eqref{eq:FE of ST inhom}.

\subsection{Proof of Theorem \ref{Thm:solutions to multivariate smoothing equations}: The direct inclusion} \label{subsec:proof direct inclusion}

We are now ready to prove the direct inclusion of Theorem \ref{Thm:solutions to multivariate smoothing equations}.
Here, with the direct inclusion, we mean the assertion that any distribution of a random variable of the form
\eqref{eq:solutions to multivariate smoothing equations} is a solution to \eqref{eq:FP of ST inhom}.

\begin{proof}[of the direct inclusion of Theorem \ref{Thm:solutions to multivariate smoothing equations}]
Consider the situation of Theorem \ref{Thm:solutions to multivariate smoothing equations}
and let $X = W^* + Y_{W} + Z$ where $(Y_{t})_{t \geq 0}$ is a strictly $(\U,\alpha)$-stable L\'evy process
independent of the family $(\bC,\bT)$, in particular independent of $(W^*,W,Z)$.
Denote the characteristic function of $X$ by $\phi$
and the characteristic exponent of $Y_{1}$ by $\Psi$, that is,
$\E[e^{\imag \scalar{x,Y_{t}}}] = \exp(t\Psi(x))$.
By assumption, $\Psi$ satisfies \eqref{eq:(u,alpha)-stable chf} with $b(u) \equiv 0$ for $u \in \U$.
Using the independence of $(W^*,W,Z)$ and $(Y_{t})_{t \geq 0}$, we conclude that
\begin{equation}	\label{eq:ch.f. of solution}
\phi(x)
= \E\big[\exp\big(\imag \scalar{x,W^*} + \imag \scalar{x,Z} + W\Psi(x) \big) \big],	\quad	x \in \R^d.
\end{equation}
As $([W^*]_j,[W]_j,[Z]_j)$ is a copy of $(W^*,W,Z)$, \eqref{eq:ch.f. of solution} still holds
when $(W^*,W,Z)$ is replaced by $([W^*]_j,[W]_j,[Z]_j)$, $j \in \N$.
Furthermore, since $([W^*]_j,[W]_j,[Z]_j)$ is independent of $\F_1$, we conclude that
\begin{align*}
\phi(T_j^{\transp} x)
&= \E\big[\exp(\imag \scalar{T_j^{\transp} x,[W^*]_j} + \imag \scalar{T_j^{\transp} x,[Z]_j} +  [W]_j \Psi(T_j^{\transp} x)) \mid \F_1\big]	\\
&= \E\big[\exp(\imag \scalar{x, T_j [W^*]_j} + \imag \scalar{x,T_j [Z]_j} + \norm{T_j}^{\alpha} [W]_j \Psi(x)) \mid \F_1\big]
\end{align*}
a.s.\ for every $x \in \R^d$, where in the last step we have used \eqref{eq:(u,alpha)-stable chf} and the fact that $T_j \in \U$ a.s.
Consequently, for every $x \in \R^d$, we infer
\begin{align*}
\E\bigg[&e^{\imag \scalar{x,C}} \prod_{j \geq 1} \phi(T_j^\transp x)\bigg]	\\
&= \E\bigg[e^{\imag \scalar{x,C}} \prod_{j \geq 1}
\E\Big[e^{\imag \scalar{x, T_j [W^*]_j} + \imag \scalar{x,T_j [Z]_j} + \norm{T_j}^{\alpha} [W]_j \Psi(x)} \,\big|\, \F_1\Big]\bigg]	\\
&= \E\Big[e^{\imag \scalar{x,C} + \sum_{j \geq 1} \big(\imag \scalar{x, T_j [W^*]_j} + \imag \scalar{x,T_j [Z]_j} + \norm{T_j}^{\alpha} [W]_j \Psi(x)\big)}\Big]	\\
&= \E\Big[e^{\imag \scalar{x,C + \sum_{j \geq 1} T_j [W^*]_j} + \imag \scalar{x, \sum_{j \geq 1} T_j [Z]_j}
+ \sum_{j \geq 1} \norm{T_j}^{\alpha}[W]_j \Psi(x)}\Big]	\\
&= \E\Big[e^{\imag \scalar{x,W^*} + \imag \scalar{x, Z} + W \Psi(x)}\Big]
= \phi(x),
\end{align*}
i.e., $\phi$ solves \eqref{eq:FE of ST inhom}. Hence, $X$ is a solution to \eqref{eq:FP of ST inhom}.
 \qed
\end{proof}

Most of the remainder of this paper is devoted to the proof of the converse inclusion of Theorem \ref{Thm:solutions to multivariate smoothing equations}
and the description of the class of strictly $(\U,\alpha)$-stable L\'evy processes.
We begin with a short section on a common technique in the theory of branching processes,
an exponential change of measure.

\subsection{Exponential change of measure}	\label{subsec:change of measure}

Recall from Section \ref{subsubsec:Weighted branching} the definition of the weighted branching process.
We define the associated random walk $(L_n)_{n \in \N_0}$ on $\Sim$ by the many-to-one formula
\begin{equation}	\label{eq:change of measure}
\E[f(L_0,\ldots,L_n)]	~=~	\E\bigg[\sum_{|v|=n} \norm{L(v)}^{\alpha} f((L(v|_k))_{k=0,\ldots,n})\bigg]
\end{equation}
for all nonnegative Borel-measurable functions $f: \Sim^{n+1} \to \Rnn$.
\eqref{eq:A2} implies that the law of $(L_n)_{n \in \N_0}$ is a proper probability measure.
From this definition, it can be checked that $(L_n)_{n \in \N_0}$ is a multiplicative random walk on the group $\U \subseteq \Sim$.
We define $S_n \defeq -\log \norm{L_n}$ and $O_n \defeq L_n/\norm{L_n}$.
$(S_n)_{n \in \N_0}$ is a standard random walk on $\R$,
while $(O_n)_{n \in \N_0}$ is a multiplicative random walk on $\Orth$.
The step distribution of $(S_n)_{n \in \N_0}$ is given by
\begin{equation}	\label{eq:S_1}
\Prob(S_1 \in \cdot)	~=~	\E\bigg[\sum_{j=1}^N \norm{T_j}^{\alpha} \delta_{-\log \norm{T_j}}(\cdot)\bigg].
\end{equation}
Consequently, when \eqref{eq:A3} holds,
$\E[S_1] = -m'(\alpha) \in (0,\infty)$.

Later on, we will use that \eqref{eq:change of measure} remains valid under certain stopping rules: Considering 
$ \tau(t)\defeq\inf \{{n \in \N_0} \, : \, S_n >t \text{ and } S_k \le t \text{ for all } k <n\}$,
\eqref{eq:change of measure} gives
\begin{align*}
\E&[f(L_0,\ldots,L_n) \1_{\{\tau(t)=n\}}]	\\
&=~	\E\bigg[\sum_{|v|=n} \norm{L(v)}^{\alpha} f((L(v|_k))_{k=0,\ldots,n}) \1_{\{S(v) > t \geq S(v|_k) \, \forall \,  k<n \}}\bigg].
\end{align*}
Summing over all $n \in \N_0$ and defining the {\em coming generation at time} $t \geq 0$,
\begin{equation}	\label{eq:C(t)}
\mathcal{C}(t)	~=~	\{v \in \V: \norm{L(v)}>0 \text{ and } S(v) > t \geq S(v|_k) \text{ for all } k<|v|\},
\end{equation}
we infer that in particular
\begin{equation}	\label{eq:change of measure stopping line}
\E[f(S_{\tau(t)-1},S_{\tau(t)},O_{\tau(t)})] = \E\bigg[\sum_{v \in \mathcal{C}(t)} e^{-\alpha S(v)} f(S(v|_{|v|-1}), S(v), O(v))\bigg]
\end{equation}
for all nonnegative Borel-measurable functions $f: \R^2 \times \Orth \to \Rnn$.
See \cite{Kyprianou:2000} for more information on stopping lines and further references.

\subsection{Multiplicative martingales}	\label{subsec:multiplicative martingales}

Let $X$ be a solution to \eqref{eq:FP of ST inhom} and denote the characteristic function of $X$ by $\phi$.
We will show that $\phi$ is the characteristic function of a random variable of the form \eqref{eq:solutions to multivariate smoothing equations}.
As in previous works on fixed points of smoothing transformations
\cite{Alsmeyer+Biggins+Meiners:2012,Alsmeyer+Meiners:2012,Alsmeyer+Meiners:2013,Biggins+Kyprianou:1997,Biggins+Kyprianou:2005,Iksanov+Meiners:2015a},
we make use of multiplicative martingales.
As this technique is well-known by now,
we keep the presentation short here
and refer to the above references for more detailed expositions.
For $x \in \R^d$, define
\begin{equation}	\label{eq:M_n}
M_n(x)	~\defeq~	\exp(\imag \scalar{x,W_n^*}) \cdot \prod_{|v|=n} \phi \big(L(v)^{\transp} x\big),	\quad	n \in \N_0.
\end{equation}
The fact that $\phi$ solves \eqref{eq:FE of ST inhom} implies that $(M_n(x))_{n \in \N_0}$
is a complex-valued martingale.
Since it is bounded by $1$ in absolute value, it converges a.s.\ and in mean.
We denote its a.s.\ limit by $M(x)$ and note that
\begin{equation}	\label{eq:phi(x)=EM(x)}
\phi(x)	~=~	\E[M(x)],	\quad	x \in \R^d.
\end{equation}
In order to determine $\phi(x)$, it thus suffices to determine $M(x)$.

We begin with a key lemma. We will sometimes write $M(x)=M(\omega,x)$ in order to make more transparent,
whether we consider $M(x)$ as a random variable, or $x \mapsto M(\omega,x)$ as a function of $x$. 

\begin{lemma}	\label{Lem:Caliebe's lemma}
There is an $\F$-measurable set $N$ with $\Prob(N)=0$
such that, for  $\omega \in N^\comp$, $M_n(\omega, \cdot) \to M(\omega,\cdot)$ pointwise on $\R^d$.
Further, $x \mapsto M(\omega,x)$ is the characteristic function of a probability distribution on $\R^d$ for all $\omega \in N^\comp$.
\end{lemma}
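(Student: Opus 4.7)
The plan is to combine pointwise martingale convergence on a countable dense set with an a.s.\ tightness argument based on a Bochner-type inequality. First, for each fixed $x \in \R^d$ the sequence $(M_n(x))_{n \in \N_0}$ is a complex martingale bounded by $1$ in modulus, hence converges a.s.\ and in $L^1$ to a limit $M(x)$ with $\E[M(x)] = \phi(x)$. Choosing a countable dense set $D \subseteq \R^d$ and intersecting the corresponding null sets, I obtain a set $\Omega_0$ with $\Prob(\Omega_0)=1$ on which $M_n(\omega, x) \to M(\omega, x)$ for every $x \in D$. Using the family $\bX$ of i.i.d.\ copies of $X$ independent of $\F$, one readily checks that
\[
M_n(\omega, x) = \E\big[\exp\big(\imag \scalar{x,\, W_n^* + \textstyle\sum_{|v|=n} L(v) X_v}\big) \,\big|\, \F_n\big](\omega),
\]
so $M_n(\omega, \cdot)$ is the characteristic function of a (random) probability measure $\mu_n^\omega$ on $\R^d$ for every $\omega$.

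The main task is to extend pointwise convergence from $D$ to all of $\R^d$ and to show the limit is itself a characteristic function; for this I would establish a.s.\ tightness of $(\mu_n^\omega)_{n \in \N_0}$. Consider the nonnegative bounded martingale $N_n(x) \defeq 1 - \Real(M_n(x)) \in [0,2]$, whose mean equals $1-\Real(\phi(x))$. Doob's $L^2$-inequality together with the uniform bound $N_n \leq 2$ yields
\[
\E[\sup_n N_n(x)^2] \leq 4 \sup_n \E[N_n(x)^2] \leq 8(1 - \Real(\phi(x))),
\]
and hence by Jensen $\E[\sup_n N_n(x)] \leq 2\sqrt{2(1-\Real(\phi(x)))}$.

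Next I would invoke the standard multidimensional Bochner-type estimate: there is a constant $c_d>0$ such that every probability measure $\mu$ on $\R^d$ with characteristic function $\hat\mu$ satisfies $\mu(\{|y|>R\}) \leq c_d R^d \int_{B_{1/R}} (1-\Real\hat\mu(x)) \, \dx$ for all $R > 0$. Applied pathwise to $\mu_n^\omega$, taking $\sup_n$ inside the integral (valid since $N_n \geq 0$) and then expectation via Fubini gives
\[
\E\big[\sup_n \mu_n^\omega(\{|y|>R\})\big] \leq 2\sqrt{2}\, c_d R^d \int_{B_{1/R}} \sqrt{1-\Real(\phi(x))} \, \dx,
\]
which tends to $0$ as $R \to \infty$ since $\sqrt{1-\Real(\phi(\cdot))}$ is $o(1)$ near the origin by continuity of $\phi$, while $\mathrm{vol}(B_{1/R}) = O(R^{-d})$. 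Monotonicity of $R \mapsto \sup_n \mu_n^\omega(\{|y|>R\})$ then yields an $\F$-measurable set $\Omega_1 \subseteq \Omega_0$ with $\Prob(\Omega_1)=1$ on which $(\mu_n^\omega)_{n \in \N_0}$ is tight.

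For $\omega \in \Omega_1$, tightness and pointwise convergence on $D$ combine via Prokhorov's theorem and uniqueness of characteristic functions to force weak convergence $\mu_n^\omega \to \mu^\omega$ to a single probability measure; L\'evy's continuity theorem then upgrades this to pointwise convergence of characteristic functions on all of $\R^d$. Since the pointwise limit agrees a.s.\ with the previously defined $M(\omega, x)$ for $x \in D$, I may unambiguously set $M(\omega, \cdot) \defeq \widehat{\mu^\omega}$ for $\omega \in \Omega_1$, obtaining a characteristic function. Taking $N \defeq \Omega_1^\comp$ completes the argument. The principal obstacle is the a.s.\ tightness step: the pointwise martingale convergence gives no immediate control on $\sup_n N_n(x)$ uniformly in $x$, but the combination of Doob's $L^2$-inequality with the integrated Bochner bound resolves it cleanly.
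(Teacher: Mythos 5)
Your proof is correct. Note that the paper itself gives no argument here: it simply points to Theorem 1 of Caliebe (2003) and says an obvious modification works, and Caliebe's argument lives in the same circle of ideas as yours (interpret $M_n(\omega,\cdot)$ as a conditional characteristic function, get a.s.\ convergence on a countable dense set, and control continuity at the origin/tightness through $\E[1-\Real M_n(x)]=1-\Real\phi(x)$ together with the truncation inequality). What you do differently, and what makes your write-up pleasantly self-contained, is the quantitative maximal estimate: applying Doob's $L^2$ inequality to the bounded nonnegative martingale $1-\Real M_n(x)$ gives $\E[\sup_n(1-\Real M_n(x))]\leq 2\sqrt{2(1-\Real\phi(x))}$, and feeding this into the multivariate truncation inequality via Tonelli yields a.s.\ tightness of $(\mu_n^\omega)_n$ uniformly in $n$ on a single $\F$-measurable full-measure set; Prokhorov plus agreement of subsequential limits on the dense set then finishes the argument exactly as you say. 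Two cosmetic points you should fix when writing this up: restrict at the outset to the ($\F$-measurable) a.s.\ event on which every generation contains only finitely many nonzero weights, so that $M_n(\omega,\cdot)$ really is the characteristic function of $W_n^*(\omega)+\sum_{|v|=n}L(v)(\omega)X_v$ for every $\omega$ considered (your ``for every $\omega$'' is otherwise slightly too strong); and state explicitly that for $x\notin D$ the limit $\widehat{\mu^\omega}(x)$ coincides a.s.\ with the previously defined a.s.\ limit $M(x)$, so that $M(\omega,\cdot)\defeq\widehat{\mu^\omega}$ is a legitimate version, which is precisely how the lemma is meant to be read.
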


An obvious modification of the proof of Theorem 1 in \cite{Caliebe:2003} yields the result.
We refrain from giving any details.

Solving the inhomogeneous equation \eqref{eq:FP of ST inhom} can be reduced to
solving the associated homogeneous equation \eqref{eq:FP of ST hom} along the lines of
\cite[Section 5]{Alsmeyer+Meiners:2013},
a sketch of the reduction argument will be given in Section \ref{subsec:inhomogeneous equation} below.
For now, we restrict our attention to the homogeneous case and assume that $C=0$ a.s.
Then \eqref{eq:M_n} takes the simpler form
\begin{equation}	\label{eq:M_n hom}
M_n(x)	~=~	\prod_{|v|=n} \phi \big(L(v)^{\transp} x\big),	\quad	n \in \N_0.
\end{equation}
We claim that for all $\omega$ from a set of probability one,
$M(\omega,\cdot)$ is the characteristic function of an infinitely divisible law.
Since $\sup_{|v|=n} \norm{L(v)} \to 0$ a.s., we can assume without loss of generality
that the set $N$ from Lemma \ref{Lem:Caliebe's lemma} is such that
$N^\comp \subseteq \{\sup_{|v|=n} \norm{L(v)} \to 0\}$.
Now pick an $\omega \in N^\comp$.
We view $M_n(\omega,\cdot)$ as the characteristic function of $\sum_{|v|=n} L(v) X_v$ conditional given $(L(v))_{v \in \V}$.
Thus, $M(\omega,\cdot)$ is the limit of characteristic functions of the row sums in a triangular array
which is independent and infinitesimal.
Hence, it is the characteristic function of an infinitely divisible law
and we can write $M(x) = \exp(\Psi(x))$ for a random characteristic exponent $\Psi(x)$ which is of the form
\begin{equation}	\label{eq:Psi}
\Psi(x)	~=~	\imag \scalar{W',x} - \frac{x^{\transp} \mathbf{\Sigma} x}{2}
+ \int \left(e^{\imag \scalar{x,y}} - 1 - \imag \scalar{x,y} \1_{[0,1]}(\abs{y}) \right) \nu(\dy)
\end{equation}
for $W'=W'(\bT) \in \R^d$, a covariance matrix $\mathbf{\Sigma}=\mathbf{\Sigma}(\bT)$ and a L\'evy measure $\nu=\nu(\bT)$ on $\R^d$ (see \cite[p.\,290]{Kallenberg:2002}).

That $W', \Sigma$ and $\nu$ are indeed functions of $\bT$ comes from the following representation,
valid for triangular arrays (see \cite[Chapter 15]{Kallenberg:2002}):
Using the convention $\int_{\{h < |x| \leq 1\}} = -\int_{\{1<|x| \leq h\}}$ when $h>1$, it holds on $N^\comp$,
\begin{equation}	\label{eq:W'}
W'	~=~	W^h + \int_{\{h < |x| \leq 1\}} x \, \nu(\dx)
\end{equation}
where $W^h$ is defined by
\begin{equation}	\label{eq:W'^h}
W^h	~\defeq~	\lim_{n \to \infty} \sum_{|v|=n} \E[L(v) X_v; |L(v)X_v| \leq h \mid \F]
\end{equation}
for every $h>0$ with $\nu(\{|x|=h\}) = 0$,
\begin{equation}	\label{eq:Sigma}
\mathbf{\Sigma}	~=~	\mathbf{\Sigma}^h - \int_{\{h < |x| \leq 1\}} x x^\transp \, \nu(\dx)
\end{equation}
where $\mathbf{\Sigma}^h$ is defined by
\begin{equation}	\label{eq:Sigma^h}
\mathbf{\Sigma}^h	~\defeq~	\lim_{n \to \infty} \sum_{|v|=n} \Cov[L(v) X_v; |L(v)X_v| \leq h \mid \F],
\end{equation}
and
\begin{equation}	\label{eq:nu is vague limit}
\int f(x) \, \nu(\dx)	~=~	\lim_{n \to \infty} \sum_{|v|=n} \int f(L(v)x) \, F(\dx)
\end{equation}
for all continuous functions $f$ with compact support on $\overline{\R^d}\setminus\{0\}$
(\,$\overline{\R^d}$ denotes the one-point compactification of $\R^d$). 
Eq. \eqref{eq:nu  is vague limit} also yields that $\nu$ is a random measure (see \cite[Lemma 4.1]{Kallenberg:1986}), 
i.e., the mapping $\bT \mapsto \int f(x) \nu(\bT,\dx)$ is measurable for every nonnegative Borel measurable function $f$ on
$\R^d \setminus \{0\}$.
In order to show that $W'$ and $\mathbf{\Sigma}$ are random variables as well,
we need some more preparation (the problem is to choose $h$ in a measurable way).

Right now, we can use that $W'$, $\Sigma$ and $\nu$ are functions of $\bT$ and apply the shift operator $[\cdot]_v$. 
Arguing as in the proof of Lemma 4.3 in \cite{Alsmeyer+Meiners:2013}, we conclude that on $N^\comp$
\begin{equation}	\label{eq:M}
M(x)	~=~	\prod_{|v|=n} [M]_v(L(v)^{\transp} x)	\quad	\text{for all } x \in \R^d \text{ and } n \in \N_0.
\end{equation}
Now using \eqref{eq:M} in \eqref{eq:Psi}, we conclude that for all $n \in \N_0$, on $N^\comp$
\begin{align}
\Psi(x)
&= \imag \!\sum_{|v|=n} \scalar{[W']_v, L(v)^\transp x} - \sum_{|v|=n} \frac{x^\transp L(v) [\mathbf{\Sigma}]_v L(v)^\transp x}{2} \nonumber \\ 
&\phantom{=}
+\!\sum_{|v|=n}\!\int\!\! \left( e^{\imag \scalar{L(v)^\transp x, y}} - 1 - \imag \scalar{L(v)^\transp x,y} \1_{[0,1]}(\abs{y})  \right) \![\nu]_v(\dy) \nonumber \\
&= \imag\scalar{\sum_{|v|=n}\! L(v) [W']_v, x} - \frac{1}{2}\, x^\transp \sum_{|v|=n} L(v) [\mathbf{\Sigma}]_v L(v)^\transp x \label{eq:FE Psi} \\
&\phantom{=}
+\!\sum_{|v|=n}\! \int \!\! \left( e^{\imag  \scalar{x,L(v)y}} - 1 - \imag \scalar{x,L(v)y} \1_{[0,1]}(\abs{L(v) y})  \right) \![\nu]_v(\dy) \nonumber	\\
&\phantom{=}
-\!\sum_{|v|=n}\! \int \!\! \left( \imag \scalar{x,L(v)y} \1_{[0,1]}(\abs{y})  - \imag \scalar{x,L(v)y} \1_{[0,1]}(\abs{L(v) y})   \right) \![\nu]_v(\dy) \nonumber.
\end{align}
Since the last term contributes to the random shift,
using the uniqueness of the L\'evy triplet, we get:
\begin{align}
\int f(y) \, \nu(\dy)						~&=~	\sum_{|v|=n} \int f(L(v) y) \, [\nu]_v(\dy)	\quad	\text{on } N^\comp	\label{eq:FE nu}	\\
\text{and}	\qquad	\mathbf{\Sigma}	~&=~	\sum_{|v|=n} L(v) [\mathbf{\Sigma}]_v L(v)^{\transp}	\quad	\text{on } N^\comp	\label{eq:FE Sigma}
\end{align}
for all $n \in \N_0$ and all nonnegative Borel-measurable functions $f$ on $\overline{\R^d}\setminus\{0\}$.
We will use \eqref{eq:FE nu} and \eqref{eq:FE Sigma} to determine $\nu$ and $\mathbf{\Sigma}$, respectively.

The next lemma gives an important estimate for $\nu$ and will allow us to infer the measurability of $W'$ and $\mathbf{\Sigma}$.

\begin{lemma}
There is a multiplicatively $r$-periodic (if $\G = r^{\Z}$) or constant (if $\G = \Rp$) function
$\mathfrak{h}:[0,\infty) \to (0,\infty)$ such that $t \mapsto \mathfrak{h}(t)t^{\alpha}$ is nondecreasing
and such that on $N^\comp$,
\begin{equation}	\label{eq:nu evaluated outside balls}
\nu(B_{|x|}^{\comp})	~=~	W \mathfrak{h}(|x|^{-1}) |x|^{-\alpha}	\quad	\text{for all } x \in \R^d \setminus\{0\}.
\end{equation}
\end{lemma}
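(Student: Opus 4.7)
The plan is to transform \eqref{eq:FE nu} into a scalar fixed-point equation for
$$g_\omega(r) \defeq \nu(\omega, B_r^\comp), \qquad r > 0,$$
and then to identify its solutions via a combination of stopping-line and renewal-theoretic arguments. Applying \eqref{eq:FE nu} with $f = \1_{B_r^\comp}$ (approximated by continuous, compactly supported functions on $\overline{\R^d}\setminus\{0\}$ to handle potential atoms of $[\nu]_v$ on $\partial B_r$), and using that each $L(v)$ is a similarity, so that $L(v) y \in B_r^\comp$ iff $|y| \geq r/\norm{L(v)}$, one obtains
$$g(r) = \sum_{|v|=n} [g]_v(r/\norm{L(v)}) \qquad \text{on } N^\comp$$
for every $n \in \N_0$ and every $r>0$ outside a random, countable exceptional set; right-continuity of $g$ extends this to all $r>0$. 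Writing $r = e^{-t}$, $S(v) = -\log\norm{L(v)}$, and multiplying by $r^\alpha$, the quantity $F(t) \defeq e^{-\alpha t}\, g(e^{-t})$ satisfies the Biggins-type relation
$$F(t) = \sum_{|v|=n} e^{-\alpha S(v)}\, [F]_v(t - S(v)) \qquad \text{on } N^\comp.$$

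The central step is to show that $F(\omega, t) = W(\omega) h(t)$ a.s., for a deterministic, bounded, positive function $h:\R\to\Rp$ that is $\log r$-periodic if $\G = r^\Z$ and constant if $\G = \Rp$; setting $\mathfrak h(s) \defeq h(-\log s)$ then yields the statement. To establish the product form, I would iterate the displayed identity along the coming generation $\mathcal C(s)$ of \eqref{eq:C(t)}, the stopping-line analogue being valid along the lines of \eqref{eq:change of measure stopping line} applied to $[\nu]_v$, and pass to the limit $s \to \infty$: the sum $\sum_{v \in \mathcal C(s)} e^{-\alpha S(v)}\, [W]_v$ converges a.s.\ to $W$, while the overshoots $(S(v) - s)_{v \in \mathcal C(s)}$ equidistribute by the renewal theorem for the positive-drift walk $(S_n)$ (note $\E[S_1] = -m'(\alpha) > 0$ by \eqref{eq:A3}). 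The Choquet-Deny lemma from the appendix then identifies the deterministic limit as $h(t)$, constant on cosets of the closed subgroup $\log \G$ of $\R$ generated by the support of the tilted step distribution \eqref{eq:S_1}. Monotonicity of $s \mapsto \mathfrak h(s)\, s^\alpha$ is immediate from the identity $g(s^{-1}) = W\, \mathfrak h(s)\, s^\alpha$ combined with the fact that $g$ is nonincreasing. Positivity of $\mathfrak h$ follows from the functional equation: should $\mathfrak h$ vanish at one point, periodicity and the smoothing identity propagate the vanishing globally, leaving only the degenerate case $\nu = 0$.

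The main obstacle I anticipate is that $F$ is not a priori bounded, so the renewal/Choquet-Deny step cannot be applied off the shelf. The L\'evy-measure integrability gives only $\int_{\R} e^{(\alpha - 2) t}\, F(t)\, \dt < \infty$ in the range $\alpha < 2$, which is too weak for pointwise control. I would address this by a truncation argument: replacing $g$ with $g \wedge M$ turns the smoothing identity into a subharmonic-type inequality, to which Choquet-Deny applies to yield a dominating periodic function $h_M$; sending $M \to \infty$ and invoking monotone convergence (justified by monotonicity of $g$) then removes the truncation. A secondary technical point, in the lattice case $\G = r^\Z$, is to verify using \eqref{eq:A3} that the span of the tilted step distribution equals $\log r$ exactly, rather than a proper multiple thereof, so that the claimed period of $\mathfrak h$ is correct.
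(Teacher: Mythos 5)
Your reduction to the scalar relation $g(r)=\sum_{|v|=n}[g]_v(r/\norm{L(v)})$ on $N^\comp$ is fine (you do not even need the approximation step: \eqref{eq:FE nu} holds for all nonnegative Borel functions, so $f=\1_{B_r^\comp}$ is admissible directly). The gap is in the identification step $F=Wh$. You correctly diagnose that $F(t)=e^{-\alpha t}\nu(B_{e^{-t}}^\comp)$ is not a priori bounded, but the proposed repair does not close the hole. Replacing $g$ by $g\wedge M$ yields only the sub-invariance $g\wedge M\le\sum_{|v|=n}[g\wedge M]_v(\cdot)$; Lemma \ref{Lem:Choquet-Deny} requires exact harmonicity, and for the transient positive-drift walk $(S_n)_{n\ge0}$ bounded subinvariant functions need not be constant, so no periodic dominating $h_M$ is produced this way. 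Even granting one, passing to the limit $M\to\infty$ requires $\sup_M h_M<\infty$, which is precisely the a priori bound $\nu(B_s^\comp)\le C\,W s^{-\alpha}$ that the lemma is meant to establish — the argument is circular. You also cannot bound $\E[F(t)]$ via the tail estimate \eqref{eq:tail F}, since that estimate is derived downstream of this lemma; and the renewal-theorem statement about equidistribution of overshoots is distributional and does not by itself yield the a.s.\ pathwise identity $F=Wh$ without first knowing $\E[F(t)]<\infty$ so as to run $F(t)=\lim_n\E[F(t)\mid\F_n]$.

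The missing idea is the choice of truncation: the paper passes to $f(t)\defeq\E[\exp(-\nu(B_{t^{-1}}^{\comp}))]$. The exponential interacts with the additive structure exactly as needed, since $\exp(-\sum_{|v|=n}[g]_v)=\prod_{|v|=n}\exp(-[g]_v)$ and $O(v)^{-1}B_r^\comp=B_r^\comp$, so $f$ is a monotone $[0,1]$-valued solution of the functional equation $f(t)=\E[\prod_{|v|=n}f(\norm{L(v)}t)]$ of the one-dimensional smoothing transform with weights $\norm{T_j}$. The complete classification of such solutions (Theorem 8.3 of \cite{Alsmeyer+Biggins+Meiners:2012}) gives that the associated multiplicative-martingale limit equals $\exp(-W\mathfrak h(t)t^{\alpha})$ with $\mathfrak h$ exactly as in the statement, and the argument of \cite[Lemma 4.8]{Alsmeyer+Meiners:2013} identifies that limit a.s.\ with $\exp(-\nu(B_{t^{-1}}^{\comp}))$. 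Your monotonicity and positivity remarks at the end are fine, and the lattice-span issue is vacuous because $\G$ is by definition the group generated by the $\norm{T_j}$; but some bounded monotone transform of $g$ (in practice, the exponential) is indispensable before any Choquet--Deny or renewal argument can legitimately be applied.
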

\begin{proof}
Define $f:[0,\infty) \to [0,1]$ by
\begin{equation*}
f(t)	~\defeq~	\begin{cases}
				\E \big[ \exp \big( -\nu \big( B_{t^{-1}}^{\comp} \big)\big)\big]			&	\text{for } t > 0,	\\
				1																			&	\text{for } t = 0.
				\end{cases}
\end{equation*}
$f$ is decreasing in $t$ and continuous at $0$ since $B_{t^{-1}}^{\comp} \downarrow \emptyset$ as $t \downarrow 0$.
The most important property of $f$, however, is that it solves the functional equation of the smoothing transformation
as studied in \cite{Alsmeyer+Biggins+Meiners:2012}.
Indeed, using \eqref{eq:FE nu} and the fact that $L(v)=\norm{L(v)} O(v)$ for an orthogonal matrix $O(v)$
and $O(v)^{-1} B_{r}^{\comp} = B_{r}^{\comp}$ for all $r>0$,
we get
\begin{eqnarray*}
f(t)
& = &
\E \bigg[ \exp \bigg( - \sum_{|v|=n} [\nu]_v (L(v)^{-1} B_{t^{-1}}^{\comp}) \bigg) \bigg] \\
& = &
\E \bigg[ \prod_{|v|=n} \exp\left( - [\nu]_v B_{(\norm{L(v)}t)^{-1}}^{\comp}) \right)  \bigg]
~=~	\E \bigg[ \prod_{|v|=n} f \left( \norm{L(v)} t\right)  \bigg].
\end{eqnarray*}
Consider the limit $M_f$ of the multiplicative martingale associated with $f$, i.e.,
$M_f(t) = \lim_{n \to \infty} \prod_{|v|=n} f(\norm{L(v)}t)$, $t \geq 0$.
By Theorem 8.3 in \cite{Alsmeyer+Biggins+Meiners:2012}, $M_f(t) = \exp(-W\mathfrak{h}(t)t^{\alpha})$ a.s.~for all $t \geq 0$
and some function $\mathfrak{h}$ with properties as above.
By the arguments given in the proof of \cite[Lemma 4.8]{Alsmeyer+Meiners:2013},
\begin{equation}	
\nu(B_{|x|}^{\comp})	~=~	W \mathfrak{h}(|x|^{-1}) |x|^{-\alpha}	\quad	\text{for all } x \in \R^d \setminus\{0\} \text{ on } N^\comp.
 \qed
\end{equation}
\end{proof}

Since \eqref{eq:nu evaluated outside balls} holds, we can pick some $0 < h \leq 1$ such that $\mathfrak{h}$ is continuous at $h$,
in particular, $\nu(\partial B_h) = 0$ on $N^\comp$.
Using this $h$ in \eqref{eq:W'}, \eqref{eq:W'^h}, \eqref{eq:Sigma} and \eqref{eq:Sigma^h}
implies the asserted measurability statements for $W'$ and $\mathbf{\Sigma}$.

We call a L\'evy triplet $(W',\mathbf{\Sigma},\nu)$  with these measurability properties an $\F$-measurable L\'evy triplet.
We summarize the results of the above discussion in the following proposition.

\begin{proposition}	\label{Prop:Multiplicative martingales}
Assume that \eqref{eq:A1} and \eqref{eq:A2} hold.
Let $X$ be a solution to \eqref{eq:FP of ST hom} and denote its characteristic function and distribution function by $\phi$ and $F$, respectively.
Denote by $(M_n(x))_{n \in \N_0}$, $x \in \R^d$ the multiplicative martingales associated with $X$ and defined by \eqref{eq:M_n hom}.
Then, on an $\F$-measurable set $N^{\comp}$ with $\Prob(N)=0$,
\begin{equation}
M_n(x)	\to	M(x)	\quad	\text{ as } n \to \infty
\end{equation}
for all $x \in \R^d$.
For $\omega \in N^\comp$, $x \mapsto M(\omega,x)$ is a characteristic function 
and possesses a representation $M(x) = \exp(\Psi(x))$ for all $x \in \R^d$
where $\Psi$ is given by \eqref{eq:Psi}.
$(W',\mathbf{\Sigma},\nu)$ is an $\F$-measurable random L\'evy triplet satisfying \eqref{eq:W'}--\eqref{eq:nu is vague limit}.
\end{proposition}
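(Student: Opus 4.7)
The plan is to assemble the proposition from the ingredients produced in the discussion leading up to it. First, I would apply Lemma \ref{Lem:Caliebe's lemma} directly: it furnishes an $\F$-measurable null set $N$ outside of which $M_n(\omega, x) \to M(\omega, x)$ pointwise for all $x \in \R^d$ and outside of which $M(\omega, \cdot)$ is itself a characteristic function. This settles the first two assertions without further work.

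Next, I would establish the L\'evy--Khintchine representation $M(x) = \exp(\Psi(x))$ with $\Psi$ of the form \eqref{eq:Psi}. The key idea is that, for fixed $\omega \in N^\comp$, the function $M_n(\omega, \cdot)$ is the conditional characteristic function, given $\F$, of the row sum $\sum_{|v|=n} L(v) X_v$ of an independent array built from the i.i.d.\ copies $X_v$ of $X$. Combined with \eqref{eq:Biggins:1998}, which is a consequence of \eqref{eq:A1}--\eqref{eq:A2} and gives $\sup_{|v|=n} \norm{L(v)} \to 0$ a.s., I may enlarge $N$ so that this array is infinitesimal on $N^\comp$. The classical convergence theorem for row sums of infinitesimal independent triangular arrays (see \cite[Ch.\;15]{Kallenberg:2002}) then forces $M(\omega, \cdot)$ to be an infinitely divisible characteristic function and supplies the explicit formulas \eqref{eq:W'}--\eqref{eq:nu is vague limit} describing the limiting triplet $(W', \mathbf{\Sigma}, \nu)$ at any truncation level $h>0$ with $\nu(\partial B_h)=0$.

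The remaining step is to promote $(W', \mathbf{\Sigma}, \nu)$ to $\F$-measurable objects. By \cite[Lemma 4.1]{Kallenberg:1986} the measure $\nu$ is automatically a random measure, since \eqref{eq:nu is vague limit} exhibits it as a vague limit of $\F$-measurable integrals. The subtler point for $W'$ and $\mathbf{\Sigma}$ is that formulas \eqref{eq:W'}--\eqref{eq:Sigma^h} require a truncation level $h$ with $\nu(\partial B_h)=0$, which a priori may depend on $\omega$. This is exactly what the auxiliary lemma preceding the proposition resolves: the identity $\nu(B_{|x|}^\comp) = W\, \mathfrak{h}(|x|^{-1}) |x|^{-\alpha}$ shows that the atoms of the radial part of $\nu$ sit in the deterministic set of discontinuities of $\mathfrak{h}$, so any $h$ at which $\mathfrak{h}$ is continuous can be chosen uniformly in $\omega$ and renders the limits \eqref{eq:W'^h}, \eqref{eq:Sigma^h} $\F$-measurable.

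The main obstacle in the overall argument is thus precisely the identification of $\nu(B_{|x|}^\comp)$ carried out in the preceding lemma: one has to exploit \eqref{eq:FE nu} together with the fact that each $L(v) = \norm{L(v)} O(v)$ with $O(v) \in \Orth$ preserves centered balls, in order to see that $t \mapsto \E[\exp(-\nu(B_{t^{-1}}^\comp))]$ satisfies a one-dimensional smoothing-transform fixed point equation, and then to invoke Theorem 8.3 of \cite{Alsmeyer+Biggins+Meiners:2012}. Once that lemma is in hand, the proposition follows by a straightforward synthesis of Lemma \ref{Lem:Caliebe's lemma}, the triangular-array L\'evy--Khintchine theorem, and this measurable-truncation argument.
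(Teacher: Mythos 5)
Your proposal is correct and follows essentially the same route as the paper: Lemma \ref{Lem:Caliebe's lemma}, the infinitesimal triangular-array argument enabled by \eqref{eq:Biggins:1998}, the Kallenberg representation \eqref{eq:W'}--\eqref{eq:nu is vague limit} with the random-measure property of $\nu$, and the measurable choice of the truncation level $h$ via the identity $\nu(B_{|x|}^{\comp}) = W\,\mathfrak{h}(|x|^{-1})|x|^{-\alpha}$ obtained from \eqref{eq:FE nu} and Theorem 8.3 of \cite{Alsmeyer+Biggins+Meiners:2012}. You also correctly single out that lemma as the only genuinely nontrivial ingredient, so nothing is missing.
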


Next, we state some consequences of Proposition \ref{Prop:Multiplicative martingales}
concerning the tail behavior of solutions to \eqref{eq:FP of ST hom}.
These will be useful when determining $\nu$ and $\mathbf{\Sigma}$.

\subsection{Tail estimates}	\label{subsec:Tail estimates}

If $X$ is a solution to \eqref{eq:FP of ST hom}, then
\begin{equation}	\label{eq:tail F}
\limsup_{t \to \infty} t^{\alpha} \Prob(\abs{X} > t)	~<~	\infty.
\end{equation}
If, additionally, the random L\'evy measure $\nu$ of the limit of the multiplicative martingale $M$
associated with $X$ (or its characteristic function $\phi$) vanishes a.s., then the stronger estimate
\begin{equation}	\label{eq:tail F endogenous}
\limsup_{t \to \infty} t^{\alpha} \Prob(\abs{X} > t)	~=~	0
\end{equation}
holds.
The derivation of these estimates can be carried out along the lines of \cite[Lemma 4.7]{Iksanov+Meiners:2015a}
and \cite[Lemma 4.9]{Alsmeyer+Meiners:2013}, we refrain from giving more details here.
As a consequence of \eqref{eq:tail F}, we obtain the following inequality
for 
\begin{equation} \label{eq:L_beta} L_\beta(t) \defeq \E [|X|^\beta ; |X| \leq t] \end{equation} with $\beta > \alpha$:
\begin{align}	\label{eq:L_beta asymptotics}
\limsup_{t \to \infty} t^{\alpha-\beta} L_\beta(t)
& \leq
\limsup_{t \to \infty} t^{\alpha-\beta} \int_0^t \beta x^{\beta-1} \Prob(|X|>x) \, \dx	\\
& \leq
C \limsup_{t \to \infty} t^{\alpha-\beta} \int_0^t x^{\beta-\alpha-1} \, \dx
<	\infty	\notag
\end{align}
where $C$ is a constant depending on $\alpha, \beta$ and the law of $X$ (via \eqref{eq:tail F}).
In the case where the random L\'evy measure $\nu$ vanishes a.s., using \eqref{eq:tail F endogenous} instead of \eqref{eq:tail F}
produces the stronger estimate
\begin{align}	\label{eq:L_beta asymptotics endogenous}
\lim_{t \to \infty} t^{\alpha-\beta} L_\beta(t)	~=~	0.
\end{align}

\subsection{Determining $\nu$}	\label{subsec:nu}

The characterization of $\nu$ is given by the following lemma.

\begin{lemma}	\label{Lem:nu evaluated}
Assume that \eqref{eq:A1}--\eqref{eq:A3} are in force and let $\nu$ be a $\F$-measurable random L\'evy measure.
Then $\nu$ satisfies \eqref{eq:FE nu} if and only if $\nu = W \bar \nu$ a.s.\ for a deterministic $(\U,\alpha)$-invariant L\'evy measure $\bar\nu$, i.e.,
satisfying
\begin{equation}	\label{eq:bar nu (U,alpha)-invariant}
\bar \nu(g B)	~=~	\norm{g}^{-\alpha} \bar \nu(B)
\end{equation}
for all $g \in \U$ and all Borel sets $B \subseteq \R^d \setminus \{0\}$. Further, $\bar{\nu}=0$ if $\alpha \ge 2$.
\end{lemma}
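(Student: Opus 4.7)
For the ``if'' direction, assume $\nu = W\bar\nu$ a.s.\ with $\bar\nu$ deterministic and $(\U,\alpha)$-invariant. Since $\bar\nu$ is deterministic, $[\nu]_v = [W]_v\bar\nu$, and the change of variables $z = L(v)y$ together with the invariance $\bar\nu(L(v)B) = \|L(v)\|^{-\alpha}\bar\nu(B)$ gives $\int f(L(v)y)\,\bar\nu(\dy) = \|L(v)\|^\alpha \int f(y)\,\bar\nu(\dy)$ for every nonnegative Borel $f$. Therefore the right-hand side of \eqref{eq:FE nu} reduces to $\bigl(\sum_{|v|=n} [W]_v \|L(v)\|^\alpha\bigr)\int f(y)\,\bar\nu(\dy)$, which equals $W\int f(y)\,\bar\nu(\dy) = \int f(y)\,\nu(\dy)$ by \eqref{eq:FP tilted a.s.}.

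The ``only if'' direction splits into two main steps: (A) show that $\nu = W\bar\nu$ a.s.\ for a deterministic Borel measure $\bar\nu$ on $\R^d \setminus \{0\}$, and (B) deduce the $(\U,\alpha)$-invariance of $\bar\nu$. For step (A), I would extend the Laplace-transform argument used to obtain \eqref{eq:nu evaluated outside balls}. For any Borel set $B$ bounded away from the origin, $\nu(B)<\infty$ a.s.\ and its Laplace transform $F_B(t) \defeq \E[e^{-t\nu(B)}]$ satisfies, by the conditional independence of $([\nu]_v)_{|v|=n}$ given $\F_n$ combined with \eqref{eq:FE nu}, the functional equation
\begin{equation*}
F_B(t) ~=~ \E\Bigl[\prod_{|v|=n} F_{L(v)^{-1}B}(t)\Bigr].
\end{equation*}
For radial sets $B = B_r^\comp$ this reduces, after the substitution $L(v)^{-1}B_r^\comp = B_{r/\|L(v)\|}^\comp$, to the one-parameter smoothing-transform equation solved via Theorem 8.3 of \cite{Alsmeyer+Biggins+Meiners:2012} in the previous lemma. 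For general Borel sets the angular factor $O(v)^{-1}$ in $L(v)^{-1} = \|L(v)\|^{-1}O(v)^{-1}$ couples the radial and angular coordinates. To decouple them, I would consider joint Laplace transforms of $(\nu(B_1),\dots,\nu(B_k))$ for finitely many Borel ``cone--annulus'' sets, apply the multiplicative-martingale machinery of \cite{Alsmeyer+Biggins+Meiners:2012} to the joint functional, use $\sup_{|v|=n}\|L(v)\| \to 0$ a.s.\ to control the radial displacement of the $L(v)^{-1}B_i$, and conclude via a monotone-class argument that $\nu(B)/W$ is a.s.\ constant on $\{W>0\}$, thereby defining $\bar\nu(B)$.

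Once step (A) is in place, step (B) is short. Substituting $\nu = W\bar\nu$ and $[\nu]_v = [W]_v\bar\nu$ into \eqref{eq:FE nu} with $n=1$ and using the identity $W = \sum_{j=1}^N \|T_j\|^\alpha [W]_j$ from \eqref{eq:FP tilted a.s.} yields
\begin{equation*}
\sum_{j=1}^N [W]_j\, \bigl(\bar\nu(T_j^{-1}B) - \|T_j\|^\alpha\, \bar\nu(B)\bigr) ~=~ 0 \qquad \text{a.s.}
\end{equation*}
for every Borel $B$ bounded away from $0$. The $[W]_j$ are i.i.d.\ copies of $W$ independent of $(T_1,\ldots,T_N)$, and $\Var[W] > 0$ under \eqref{eq:A1}--\eqref{eq:A3} in the generic situation (the degenerate case $W\equiv 1$, corresponding to $\sum_j\|T_j\|^\alpha = 1$ a.s., can be treated by iterating to generation $n \geq 2$ and exploiting the independence across disjoint subtrees). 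Hence conditioning on $(T_1,\ldots,T_N)$ and computing the conditional variance forces $\bar\nu(T_j^{-1}B) = \|T_j\|^\alpha \bar\nu(B)$ a.s.\ for every $j$, equivalently $\bar\nu(T_jB) = \|T_j\|^{-\alpha}\bar\nu(B)$. Since the set of $g \in \Sim$ satisfying $\bar\nu(gB) = \|g\|^{-\alpha}\bar\nu(B)$ for every Borel $B$ is a closed subgroup of $\Sim$ that a.s.\ contains $\{T_1,\dots,T_N\}$, by minimality it contains $\U$.

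For the last assertion, combining \eqref{eq:nu evaluated outside balls} with $\nu = W\bar\nu$ yields $\bar\nu(B_r^\comp) = \mathfrak{h}(r^{-1})r^{-\alpha}$; since $\mathfrak{h}$ is bounded away from $0$ and $\infty$ on bounded subsets of $\G$, the L\'evy-integrability condition $\int_{|y|\leq 1}|y|^2\,\bar\nu(\dy) < \infty$ requires $\int_0^1 r^{1-\alpha}\,\dr$ (continuous case) or its geometric analog to converge, which fails for every $\alpha \geq 2$; hence $\bar\nu = 0$. The main obstacle will be step~(A): although the multiplicative martingale structure strongly suggests the deterministic factorization $\nu = W\bar\nu$, the simultaneous presence of the orthogonal factor $O(v)$ and the scaling factor $\|L(v)\|$ in $L(v)$ makes the functional equation for $F_B$ genuinely infinite-dimensional, so recovering $\bar\nu$ as a $\sigma$-additive Borel measure requires a careful joint Laplace-transform argument with a monotone-class extension beyond the radial case handled in the previous lemma.
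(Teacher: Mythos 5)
Your ``if'' direction and your treatment of the case $\alpha \geq 2$ match the paper's proof. The serious problem is your step (A), which is precisely the heart of the lemma and for which you offer no workable mechanism. For a non-radial set the identity \eqref{eq:FE nu} turns $\nu(B)$ into a sum of $[\nu]_v(L(v)^{-1}B)$ where $L(v)^{-1}B$ is a cone-type set whose \emph{direction} has been twisted by the random orthogonal part $O(v)$; the resulting functional equation is indexed by points of the sphere and is genuinely a fixed-point problem on the group $\U$, not a one-parameter problem. The multiplicative-martingale machinery of Alsmeyer--Biggins--Meiners (Theorem 8.3 there) characterizes fixed points of the \emph{scalar} smoothing transform with weights $\norm{L(v)}^{\alpha}$ and cannot absorb this angular coupling; controlling the radial displacement via $\sup_{|v|=n}\norm{L(v)}\to 0$ does nothing to decouple the directions, and ``joint Laplace transforms plus monotone class'' never produces the constancy of $B\mapsto\nu(B)/W$ across directions. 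What the paper actually does is: establish product measurability of $(\omega,g)\mapsto\nu(g^{-1}B)$, evaluate $\nu$ on the cone sets $I_{x/|x|^2}^{\epsilon}$, pass via the change of measure \eqref{eq:change of measure} to the bounded function $\psi_{\epsilon,x}(g)=\E[\Psi_\epsilon(g^{\transp}x)]$ which is harmonic for the multiplicative random walk $(L_n)$ on the possibly non-Abelian group $\U$, invoke the Choquet--Deny result of Lemma \ref{Lem:Choquet-Deny} (Guivarc'h) to get constancy along $\U$, and then upgrade this to the a.s.\ identity $\nu(I_{x/|x|^2}^{\epsilon})=Wc_{\epsilon,x}$ by martingale convergence, extending to all Borel sets through a countable intersection-stable generator. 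This group-theoretic Choquet--Deny input is the missing idea in your proposal; note also that in the paper it yields the factorization and the $(\U,\alpha)$-invariance in one stroke.

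Your step (B) also has a concrete hole. The conditional-variance argument needs $W$ to be nondegenerate, but under \eqref{eq:A1}--\eqref{eq:A3} the degenerate case $W\equiv 1$ (equivalently $\sum_{j}\norm{T_j}^{\alpha}=1$ a.s., e.g.\ the canonical stable situation $T_j=N^{-1/\alpha}O_j$ with deterministic weights) is not exotic but central. In that case every $[W]_v\equiv 1$, so iterating to generation $n\geq 2$ and ``exploiting independence across disjoint subtrees'' creates no randomness whatsoever; the a.s.\ relation only gives $\sum_{j}\bar\nu(T_j^{-1}B)=\bar\nu(B)$, an averaged identity from which the individual invariance $\bar\nu(T_j^{-1}B)=\norm{T_j}^{\alpha}\bar\nu(B)$ does not follow without again feeding the bounded function $g\mapsto\norm{g}^{-\alpha}\bar\nu(g^{-1}B)$ into a Choquet--Deny argument on $\U$ (also, when $W$ is nondegenerate your variance computation tacitly assumes $\Var[W]<\infty$, which \eqref{eq:A1}--\eqref{eq:A3} do not guarantee, though that defect is repairable by a conditioning argument). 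So even granting (A), your route to the invariance is incomplete exactly where the paper's group-theoretic argument is doing the work.
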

\begin{proof}
We first prove the sufficiency. To this end, suppose $\nu = W \bar\nu$ a.s.\ for a deterministic L\'evy measure $\bar\nu$
satisfying \eqref{eq:bar nu (U,alpha)-invariant}. It suffices to check validity of \eqref{eq:FE nu}
when $f$ is the indicator function of an arbitrary Borel set $B \subseteq \R^d \setminus \{0\}$.
Using \eqref{eq:bar nu (U,alpha)-invariant} and then \eqref{eq:FP tilted a.s.}, we obtain
\begin{eqnarray*}
\sum_{|v|=n} [\nu]_v(L(v)^{-1}B)
& = &
\sum_{|v|=n} [W]_v \bar\nu(L(v)^{-1}B)	\\
& = &
\sum_{|v|=n} \norm{L(v)}^{\alpha} [W]_v \bar\nu(B)
~=~	W\bar\nu(B)
~=~	\nu(B)	\quad	\text{a.s.}
\end{eqnarray*}

For the converse implication, assume that $\nu$ is an $\F$-measurable random L\'evy measure satisfying \eqref{eq:FE nu}.

The idea of the proof is as follows: We define $\bar{\nu}\defeq \E[\nu]$
and prove that $g \mapsto \norm{g}^{-\alpha} \bar{\nu} (g^{-1} B)$ is a constant function in $g$
(this necessitates proving that $(\omega,g) \mapsto \nu(g^{-1} B)$ is product-measurable).
Then we show that the identity $\nu(B)=W \bar{\nu}(B)$ a.s.\ holds simultaneously for enough sets $B$
(namely, a countable generator of the Borel $\sigma$-field which is closed under finite intersections)
on a common $\Prob$-null set. \\

\Step 1: We start by proving the measurability statement:
For every Borel set $B \subseteq \R^d \setminus \{0\}$,
the mapping $(\omega,g) \mapsto \nu(g^{-1}B)$ from $\Omega \times \Sim$ to $\Rnn$
is $\F \otimes \B(\Sim)$-measurable where $\B(\Sim)$ denotes the Borel $\sigma$-field on $\Sim$.
Let $B \subseteq \R^d \setminus \{0\}$ be closed,
$B^{1/k} \defeq \{y \in \R^d: |y-z|<\frac1k \text{ for some } z \in B\}$
and $f_k$ be a continuous function satisfying $0 \leq f_k \leq 1$,
$f_k|_B=1$ and $f_k|_{(B^{1/k})^\comp}=0$ (such a function exists, for instance, by Urysohn's lemma).
Then $f_k \to \1_B$ as $k \to \infty$.
By \eqref{eq:nu is vague limit},
\begin{equation*}
\int f_k(gx) \, \nu(\dx)	~=~	\lim_{n \to \infty} \sum_{|v|=n} \int f_k(L(v)gx) \, F(\dx)
\end{equation*}
By Fubini's theorem, the right-hand side is $\F \otimes \B(\Sim)$-measurable
and by the dominated convergence theorem,
the left-hand side tends to $\nu(g^{-1}B)$ as $k \to \infty$.
Hence $(\omega,g) \mapsto \nu(g^{-1}B)$ is $\F \otimes \B(\Sim)$-measurable.
For every fixed $r>0$, this extends to the Dynkin system generated by all closed sets $B \subseteq B_r^\comp$,
hence to all Borel measurable sets $B \subseteq \R^d\setminus\{0\}$. \\

\Step 2: Now we introduce the sets that will form the countable generator. We define, for $x \in \R^d$ and $\epsilon > 0$,
\begin{equation*}
I_{x}^\epsilon ~=~ \left\{y \in \R^d~:~\abs{y} \geq |x|,~\Big|\frac{y}{\abs{y}}-\frac{x}{\abs{x}}\Big| \leq \epsilon \right\}.
\end{equation*}
We will determine $\nu(I_{x/|x|^2}^{\epsilon})$.
First notice that, for $o \in \Orth$,
\begin{equation*}
\bigg|\frac{oy}{|oy|} - \frac{x}{\abs{x}} \bigg|	~=~	\bigg|o^{-1} \bigg(o \frac{y}{\abs{y}} - \frac{x}{\abs{x}} \bigg)\bigg|
~=~\bigg| \frac{y}{\abs{y}} - o^{-1}\frac{x}{\abs{x}} \bigg|
\end{equation*}
and, therefore, for a similarity $g = \norm{g} o$ with $\norm{g}>0$,
\begin{equation}		\label{eq:I_x^epsilon transformed}
g^{-1} I_{x}^\epsilon
=	\left\{y \in \R^d: |y| \geq \frac{|x|}{\norm{g}},\bigg| \frac{y}{\abs{y}} - o^{-1}\frac{x}{\abs{x}}\bigg| \leq \epsilon \right\}
=	I_{g^{-1}x}^{\epsilon}
= I^{\epsilon}_{g^{\transp} x / \norm{g}^2}	
\end{equation}
where in the last step, we have used that $g^{-1} = \norm{g}^{-1} o^{\transp} = g^{\transp}/\norm{g}^2$.
Using this in \eqref{eq:FE nu} with $f = \1_{I_{x/|x|^2}^{\epsilon}}$ gives
\begin{equation}	\label{eq:nu evaluation}
\nu(I_{x/|x|^2}^{\epsilon})
=	\sum_{|v|=n} [\nu]_v(L(v)^{-1} I_{x/|x|^2}^{\epsilon})
=	\sum_{|v|=n} [\nu]_v(I_{L(v)^{\transp} x/|L(v)^{\transp}x|^2}^{\epsilon})	\text{ a.s.}
\end{equation}

\Step 3:
For $x \in \R^d \setminus \{0\}$, define $\Psi_\epsilon(x)	 \defeq \nu(I_{x/|x|^2}^{\epsilon}) |x|^{-\alpha}$.
By \eqref{eq:nu evaluated outside balls}, $I_{x/|x|^2}^{\epsilon} \subseteq B_{|x|^{-1}}^{\comp}$ implies
\begin{equation}	\label{eq:nu estimate}
\nu(I_{x/|x|^2}^{\epsilon}) \leq \nu(B_{|x|^{-1}}^{\comp}) = W\mathfrak{h}(|x|) |x|^{\alpha}	\quad	\text{a.s.}
\end{equation}
Consequently, with $h^* \defeq \sup_{t > 0} \mathfrak{h}(t)$, we have
\begin{equation}	\label{eq:uniform bound on Psi}
\sup_{x \in \R^d \setminus \{0\}} \Psi_\epsilon(x)	\leq W h^*	\quad	\text{a.s.}
\end{equation}
Further, by \eqref{eq:nu evaluation},
\begin{equation}	\label{eq:Psi_eps}
\Psi_\epsilon(x)	~=~	\sum_{|v|=n} \norm{L(v)}^{\alpha} [\Psi_\epsilon]_v(L(v)^{\transp} x)	\quad	\text{for all } x \in \R^d \setminus \{0\} \text{ a.s.}
\end{equation}
For $x \in \R^d \setminus \{0\}$, define $\psi_{\epsilon,x}: \U \to \Rp$ via
\begin{equation*}
\psi_{\epsilon,x}(g) ~=~ \E[\Psi_\epsilon(g^{\transp} x)],	\quad	g \in \U.
\end{equation*}
$\psi_{\epsilon,x}$ is measurable (since $(\omega,g) \mapsto \nu(g^{-1}B)$ is product-measurable),
nonnegative and bounded (due to \eqref{eq:uniform bound on Psi} and the fact that $\E[W]=1$).
Further, as a consequence of \eqref{eq:Psi_eps} and the change of measure \eqref{eq:change of measure},
we have
\begin{align*}
\psi_{\epsilon,x}(g)
~=~ \E[\Psi_\epsilon(g^{\transp} x)]
~&=~ \E \bigg[\sum_{|v|=n} \norm{L(v)}^{\alpha} [\Psi_\epsilon]_v(L(v)^{\transp} g^\transp x)\bigg]	\\
&=~ \E \bigg[\sum_{|v|=n} \norm{L(v)}^{\alpha} \psi_{\epsilon,x}(gL(v))\bigg]
~=~ \E[\psi_{\epsilon,x}(gL_n)]
\end{align*}
for all $g \in \U$.
In other words, $\psi_{\epsilon,x}$ solves a Choquet-Deny functional equation on the
(possibly non-Abelian) topological group $\U$.
From the Choquet-Deny lemma (see Lemma \ref{Lem:Choquet-Deny} in the appendix)
we infer that $\psi_{\epsilon,x}$ is $\Prob(L_n \in \cdot)$-a.s.\ constant on $\U$.
Consequently, there is a constant $c_{\epsilon,x} \geq 0$ such that, with $U=\{\psi_{\epsilon,x} = c_{\epsilon,x}\}$,
$\Prob(L_n \in U) = 1$ for all $n \in \N_0$.
Notice that this implies
\begin{equation*}
\E\bigg[\sum_{|v|=n} \norm{L(v)}^{\alpha} \1_U(L(v))\bigg]	~=~	\Prob(L_n \in U) = 1
\end{equation*}
and hence $\Prob(L(v) \in U \text{ for all } |v|=n \text{ with } \norm{L(v)}>0) = 1$.
This together with the martingale convergence theorem and \eqref{eq:Psi_eps} yields
\begin{eqnarray*}
\Psi_\epsilon(x)
& = &
\lim_{n \to \infty} \E [\Psi_\epsilon(x)|\F_n]
~=~	\lim_{n \to \infty} \sum_{|v|=n} \norm{L(v)}^{\alpha} \E [[\Psi_\epsilon]_v(L(v)^{\transp} x) | \F_n]	\\
& = &
\lim_{n \to \infty} \sum_{|v|=n} \norm{L(v)}^{\alpha} \psi_{\epsilon,x}(L(v))
~=~	W c_{\epsilon,x}	\quad	\text{a.s.}
\end{eqnarray*}
In terms of $\nu(I_{x/|x|^2}^{\epsilon})$, this reads
\begin{equation*}
\nu(I_{x/|x|^2}^{\epsilon})	~=~	W c_{\epsilon,x} |x|^{\alpha}	\quad	\text{a.s.}
\end{equation*}
Setting $\bar \nu(\cdot) \defeq \E[\nu(\cdot)]$ and taking expectations in the above equation gives
$\bar \nu(I_{x/|x|^2}^{\epsilon}) = c_{\epsilon,x} |x|^{\alpha}$
and hence
\begin{equation}	
\nu(I_{x/|x|^2}^{\epsilon})	~=~	W \bar \nu(I_{x/|x|^2}^{\epsilon}) \quad	\text{a.s.}
\end{equation}

\Step 4: Using \eqref{eq:I_x^epsilon transformed}, we conclude that, for every $g \in \U$,
\begin{equation*}\label{eq:nu of  I_x^epsilon transformed}
\nu(g^{-1} I_{x/|x|^2}^\epsilon)
= \nu(I_{g^\transp x/|g^\transp x|^2}^\epsilon)
= W \psi_{\epsilon,x}(g^\transp) |g^\transp x|^\alpha
= \norm{g}^{\alpha} \nu(I_{x/\abs{x}^2}^\epsilon) \quad \text{a.s.}
\end{equation*}
and hence, upon taking expectation, we arrive at the transformation formula \eqref{eq:bar nu (U,alpha)-invariant} for $\bar \nu$,
valid for all $g \in \U$ and sets $I_{x/|x|^2}^\epsilon$.
\eqref{eq:bar nu (U,alpha)-invariant} extends to finite intersections of the form
$I = I_{x_1/|x_1|^2}^{\epsilon_1} \cap \ldots \cap I_{x_n/|x_n|^2}^{\epsilon_n}$
for $\vec x = (x_1,\ldots,x_n) \in (\R^d \setminus \{0\})^n$ and $\vec \epsilon = (\epsilon_1,\ldots,\epsilon_n) \in \Rp^n$.
To be more precise, first notice that in such an intersection, one can assume that $|x_1|=\ldots=|x_n|$
(otherwise, let $r \defeq \min_{j=1}^n |x_j|$ and notice that $I$ does not change when $x_j$ is replaced by $r x_j/|x_j|$, $j=1,\ldots,n$).
\eqref{eq:nu estimate} then carries over with $I_{x/|x|^2}^{\epsilon}$ replaced by $I$ and $|x|$ replaced by $|x_1|$.
The obvious counterparts for $I$ of \eqref{eq:I_x^epsilon transformed} and \eqref{eq:nu evaluation} hold true
and one can define $\Psi_{\vec \epsilon}(\vec x) \defeq \nu(I) |x_1|^{-\alpha}$.
The remaining arguments apply almost without changes and give
\begin{align}
\nu(I)	~&=~	W \bar \nu(I) \quad	\text{a.s.}		\label{eq:nu=W times bar nu}
\end{align}
and the counterpart of \eqref{eq:bar nu (U,alpha)-invariant}.
Now notice that the finite intersections of sets $I_{x/|x|^2}^{\epsilon}$ with $x \in \Q^d \setminus \{0\}$ and rational $\epsilon > 0$
form a countable generator of the Borel $\sigma$-field on $\R^d$
and on a set of probability one, $\nu(I) = W \bar \nu(I)$ for all sets from this generator.
It follows by an application of the uniqueness theorem from measure theory that $\nu(B)= W \bar \nu(B)$ for all Borel sets $B \subseteq \R^d \setminus \{0\}$.
Analogously, \eqref{eq:bar nu (U,alpha)-invariant} extends to all Borel measurable $B \subseteq \R^d \setminus \{0\}$. \\

\Step 5: It remains to show that $\bar \nu$ is a L\'evy measure and that $\bar \nu = 0$ if $\alpha \geq 2$.
For the former, notice that $\bar \nu$ is a measure on $\R^d \setminus \{0\}$ and that
\begin{equation*}
\int (|x|^2 \wedge 1) \, \bar \nu(\dx) < \infty
\quad	\text{iff}	\quad
\int (|x|^2 \wedge 1) \, \nu(\dx) < \infty		\text{ a.s.}
\end{equation*}
by \eqref{eq:nu=W times bar nu}. For the proof of the second claim, notice that taking expectations in \eqref{eq:nu evaluated outside balls}
gives $\bar \nu(B_t^\comp) = \mathfrak{h}(t^{-1}) t^{-\alpha}$ for all $t>0$ for a multiplicatively $\G$-periodic function $\mathfrak{h}$
such that $\mathfrak{h}(t) t^{\alpha}$ is nondecreasing in $t$.
Because of these properties we have $h_* \defeq \inf_{t > 0} \mathfrak{h}(t) = 0$ iff $h \equiv 0$.
Now assume that $\alpha \geq 2$. We show that then $h_* = 0$.
Indeed, since $\bar \nu$ is a L\'evy measure, integration by parts gives
\begin{eqnarray*}
\infty
& > &	\int_{\{|x|^2 \leq 1\}} |x|^2 \, \bar \nu(\dx)
~=~
\int_0^1 2t \bar \nu(B_t^\comp \setminus B_1^\comp) \, \dt	\\
& = &
\int_0^1 2t \mathfrak{h}(t^{-1}) t^{-\alpha} \dt - \mathfrak{h}(1)
~\geq~	2h_* \int_0^1 t^{1-\alpha} \dt - \mathfrak{h}(1)
\end{eqnarray*}
which implies that $h_*=0$. \qed
\end{proof}

Any L\'evy measure $\bar \nu$ satisfying the invariance property \eqref{eq:bar nu (U,alpha)-invariant}
can be factorized into a radial and spherical part, similar to the decomposition valid for L\'evy measures of stable laws.
The details are given in Proposition \ref{Prop:structure of bar nu} in Section \ref{sec:invariant matrices and measures} below.

\subsection{Determining $\mathbf{\Sigma}$}	\label{subsec:Sigma} 

Now we use the identity
\begin{equation*}	\tag{\ref{eq:FE Sigma}}
\mathbf{\Sigma}	~=~	\sum_{|v|=n} L(v) [\mathbf{\Sigma}]_v L(v)^{\transp}	\quad	\text{a.s.,}
\end{equation*}
to determine $\Sigma$.

\begin{lemma}	\label{Lem:Sigma=W times deterministic Sigma}
Assume that \eqref{eq:A1}--\eqref{eq:A3} hold
and let $\mathbf{\Sigma}$ be a $\F$-measurable random covariance matrix. Then $\mathbf{\Sigma}$ satisfies \eqref{eq:FE Sigma}
if and only if
$\mathbf{\Sigma} = W \Sigma$ a.s.\ for a deterministic covariance matrix $\Sigma$ satisfying
\begin{equation}	\label{eq:Sigma=oSigmao^T}
\Sigma = o \Sigma o^{\transp}
\end{equation}
for all $o \in \O$.
Further, $\Sigma = 0$ if $\alpha \not = 2$.
\end{lemma}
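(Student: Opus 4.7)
The plan is to verify sufficiency by direct substitution, and handle necessity in three cases $\alpha<2$, $\alpha>2$, and $\alpha=2$, with the last case paralleling the Choquet-Deny argument of Lemma~\ref{Lem:nu evaluated}. For sufficiency, since $L(v)=\norm{L(v)}O(v)$ with $O(v)\in\O$, the $\O$-invariance of $\Sigma$ collapses the matrix identity to the scalar $W$-identity \eqref{eq:FP tilted a.s.} when $\alpha=2$, and reduces to $0=0$ when $\Sigma=0$.

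For necessity, I first take the trace of \eqref{eq:FE Sigma}, using $\trace(LAL^\transp)=\norm{L}^2\trace A$ for any similarity $L$, to obtain
\begin{equation*}
\trace\mathbf{\Sigma}=\sum_{|v|=n}\norm{L(v)}^2[\trace\mathbf{\Sigma}]_v\quad\text{a.s.}
\end{equation*}
When $\alpha<2$, the tail estimate \eqref{eq:L_beta asymptotics} with $\beta=2$ gives $L_2(t)\leq Ct^{2-\alpha}$ for large $t$, which together with \eqref{eq:Biggins:1998} applied to the representation \eqref{eq:Sigma^h} yields $\trace\mathbf{\Sigma}^h\leq Ch^{2-\alpha}W$; since $\mathbf{\Sigma}\leq\mathbf{\Sigma}^h$ in the PSD order by \eqref{eq:Sigma}, sending $h\to 0$ forces $\mathbf{\Sigma}=0$. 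When $\alpha>2$, Lemma~\ref{Lem:nu evaluated} gives $\nu=0$ a.s., so $M(x)=\exp(\imag\scalar{W',x}-\tfrac12 x^\transp\mathbf{\Sigma}x)$ and $X\mid\F$ is Gaussian with covariance $\mathbf{\Sigma}$; the tail bound \eqref{eq:tail F} then yields $\E|X|^2<\infty$, hence $\E\trace\mathbf{\Sigma}<\infty$. Taking expectation in the scalar identity and applying the change of measure \eqref{eq:change of measure} produces $\E\trace\mathbf{\Sigma}=m(2)^n\E\trace\mathbf{\Sigma}$; log-convexity of $m$ together with $m(\alpha)=1$ and $m'(\alpha)<0$ implies $m(2)>1$ for $\alpha>2$, so $\mathbf{\Sigma}=0$.

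The case $\alpha=2$ mirrors the approach of Lemma~\ref{Lem:nu evaluated}. The scalar identity for $\psi\defeq\trace\mathbf{\Sigma}$ matches \eqref{eq:FP tilted}, so the classification of solutions as $\{\mathrm{Law}(cW):c\geq 0\}$ forces $\psi\eqdist cW$ for some finite $c\geq 0$ (the case $c=\infty$ is ruled out as $\psi<\infty$ a.s.); an upward martingale convergence argument based on $\E[\psi\mid\F_n]=W_n c$ upgrades this to $\psi=cW$ a.s., whence $\norm{\mathbf{\Sigma}}\leq cW$ and $\bar\Sigma\defeq\E[\mathbf{\Sigma}]$ is a well-defined finite matrix. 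Setting $F(\xi)\defeq\xi^\transp\mathbf{\Sigma}\xi$ for $\xi\in\Sd$, the quadratic form reduction of \eqref{eq:FE Sigma} reads $F(\xi)=\sum_{|v|=n}\norm{L(v)}^2[F]_v(O(v)^\transp\xi)$ a.s.; taking expectation and using \eqref{eq:change of measure} gives $\bar F(\xi)=\E[\bar F(O_n^\transp\xi)]$ with $\bar F(\xi)=\xi^\transp\bar\Sigma\xi$. By Choquet-Deny on the compact group $\Orth$ applied to the walk $(O_n)$, whose step law is supported in $\O$, the bounded function $\bar F$ is constant on $\O$-orbits, which translates to $o\bar\Sigma o^\transp=\bar\Sigma$ for all $o\in\O$. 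Finally, $\O$-invariance of $\bar F$ together with $O(v)\in\O$ a.s.\ yields
\begin{equation*}
\E[F(\xi)\mid\F_n]=\sum_{|v|=n}\norm{L(v)}^2\bar F(O(v)^\transp\xi)=W_n\bar F(\xi),
\end{equation*}
and since $F(\xi)\leq cW$ is integrable, martingale convergence combined with $W_n\to W$ gives $F(\xi)=W\bar F(\xi)$ a.s.\ for each $\xi$; polarization extends this to $\mathbf{\Sigma}=W\bar\Sigma$ a.s.

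The main obstacle is the integrability step in the critical case $\alpha=2$: unlike $\alpha>2$, the tail bound \eqref{eq:tail F} does not directly yield $\E|X|^2<\infty$, so the dominating bound needed to set up the Choquet-Deny argument for the matrix-valued $\mathbf{\Sigma}$ is not automatic. The reduction via the trace to an endogenous nonnegative fixed point of the scalar critical smoothing transform is precisely what supplies the bound $\norm{\mathbf{\Sigma}}\leq cW$, after which the argument follows the template of Lemma~\ref{Lem:nu evaluated}.
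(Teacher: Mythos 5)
Your sufficiency check and your treatment of the critical case $\alpha=2$ are sound, but there is a genuine gap in the cases $\alpha\neq 2$: the lemma concerns an \emph{arbitrary} $\F$-measurable random covariance matrix $\mathbf{\Sigma}$ satisfying the a.s.\ identity \eqref{eq:FE Sigma}, whereas your argument for $\alpha<2$ and $\alpha>2$ presupposes that $\mathbf{\Sigma}$ is the Gaussian component attached to a solution $X$ of \eqref{eq:FP of ST hom} via Proposition~\ref{Prop:Multiplicative martingales}. The objects you invoke --- the truncated moment $L_2$, the tail bounds \eqref{eq:tail F} and \eqref{eq:L_beta asymptotics}, the representations \eqref{eq:Sigma} and \eqref{eq:Sigma^h}, and the random L\'evy measure $\nu$ (used to claim conditional Gaussianity for $\alpha>2$) --- are all defined in terms of such an $X$ and are not available under the hypotheses of the lemma. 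Note also that for $\alpha<2$ you cannot fall back on your $\alpha=2$ trick of classifying distributional fixed points: the tilted equation with weights $\norm{T_j}^2$ then has index $\alpha/2<1$ and admits nontrivial distributional solutions, so it is precisely the endogeneity (the a.s.\ identity) that must be exploited. The paper closes this abstractly: taking traces in \eqref{eq:FE Sigma} shows that $\trace(\mathbf{\Sigma})$ is a nonnegative endogenous fixed point of the one-dimensional smoothing transform with weights $(\norm{T_j}^2)_{j\ge1}$, and Theorem 6.2 of \cite{Alsmeyer+Biggins+Meiners:2012} yields $\trace(\mathbf{\Sigma})=cW$ if $\alpha=2$ and $\trace(\mathbf{\Sigma})=0$ otherwise, after which positive semi-definiteness kills $\mathbf{\Sigma}$ for $\alpha\neq2$. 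Your $\alpha\neq2$ computations do establish the conclusion for the particular $\mathbf{\Sigma}$ arising in the paper's application (modulo the slip that $M(\omega,\cdot)$ is not the conditional law of $X$ given $\F$, only a mixture representative of the law of $X$), but they do not prove the lemma as stated.

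For $\alpha=2$ your route is correct and genuinely different from the paper's: you get $\trace(\mathbf{\Sigma})=cW$ from the distributional classification of \eqref{eq:FP tilted} plus a conditional-expectation upgrade (in effect re-deriving the needed special case of the endogenous fixed point theorem), and you get the $\O$-invariance of $\bar\Sigma$ by a Choquet--Deny argument for the harmonic function $o\mapsto\xi^\transp o\bar\Sigma o^\transp\xi$ of the walk $(O_n)$ under the change of measure \eqref{eq:change of measure}; the paper instead works with the extremal eigenvalue of $\Sigma$, using $m(2)=1$ to force equality in $x^\transp O_j\Sigma O_j^\transp x\le\lambda_1|x|^2$ and descending through the eigenspaces. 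Your Choquet--Deny step needs one line of glue, since the appendix lemma gives constancy only $\mu$-a.e.: observe that $\{o\in\Orth: o\bar\Sigma o^\transp=\bar\Sigma\}$ is a closed subgroup containing the support of the step law, hence contains $\O$. The final identification $\mathbf{\Sigma}=W\Sigma$ via martingale convergence coincides with the paper's argument.
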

\begin{proof}
Our first observation is that $\trace(\mathbf{\Sigma})$, the trace of $\mathbf{\Sigma}$,
is a nonnegative endogenous fixed point of the one-di\-men\-sional smoothing transform with weights $(\norm{T_j}^2)_{j \geq 1}$.
To see this, notice that it follows from \eqref{eq:FE Sigma} and the fact that the trace is invariant under conjugations with orthogonal transformations
that
\begin{eqnarray*}
\trace(\mathbf{\Sigma})
& = &
\trace \bigg(\sum_{|v|=n} L(v) [\mathbf{\Sigma}]_v L(v)^{\transp} \bigg)
~=~	\sum_{|v|=n} \norm{L(v)}^2 \trace(O(v) [\mathbf{\Sigma}]_v O(v)^{\transp})	\\
& = &
\sum_{|v|=n} \norm{L(v)}^2 [\trace(\mathbf{\Sigma})]_v
\quad	\text{a.s.}
\end{eqnarray*}
From Theorem 6.2 in \cite{Alsmeyer+Biggins+Meiners:2012}, we conclude that $\trace(\mathbf{\Sigma}) = cW$ for some constant $c \geq 0$
if $\alpha = 2$, and $\trace(\mathbf{\Sigma}) = 0$ a.s., otherwise.

Now recall that all eigenvalues of a covariance matrix, i.e., a positive semi-definite, symmetric matrix are nonnegative
and that there is a basis of $\R^d$ consisting only of eigenvectors of that matrix.
Further, the trace of the matrix equals the sum of its eigenvalues.

In the case $\alpha \not = 2$, this implies that all eigenvalues of $\mathbf{\Sigma}$ equal $0$ a.s.,
which means that the matrix itself vanishes a.s.
This, in turn, proves the assertion in the case where $\alpha \not = 2$.

In the case $\alpha = 2$, we can conclude that $\trace(\mathbf{\Sigma})$ has finite expectation since $\E[W]=1$.
This implies that $\mathbf{\Sigma}_{ij}$ is integrable for $i,j=1,\ldots,d$ since
\begin{equation*}
|\mathbf{\Sigma}_{ij}|
= |e_i \mathbf{\Sigma} e_j|
\leq \norm{\mathbf{\Sigma}}
\leq \trace(\mathbf{\Sigma})
\end{equation*}
where for the last inequality, we have used that the norm of $\mathbf{\Sigma}$ is
the largest eigenvalue of $\mathbf{\Sigma}$ while the trace of $\mathbf{\Sigma}$ is the sum of its eigenvalues.
Consequently, $\Sigma \defeq \E[\mathbf{\Sigma}]$ is a symmetric $d \times d$ matrix.
We claim that $\Sigma = o \Sigma o^{\transp}$ for all $o \in \O$.
To prove this, notice that since $\Sigma$ is symmetric and positive semi-definite,
it has $n \leq d$ distinct real eigenvalues $\lambda_1> \ldots > \lambda_n \geq 0$
such that $d_1+\ldots+d_n=d$ where $d_j$ denotes the dimension of the eigenspace $E_{\lambda_j}(\Sigma)$
corresponding to the eigenvalue $\lambda_j$, $j=1,\ldots,n$.
For arbitrary $x \in E_{\lambda_1}(\Sigma)$,
first apply \eqref{eq:FE Sigma} for $n=1$ and then condition w.r.t.~$\F_1$ to obtain
\begin{equation}	\label{eq:x^T Sigma x}
\lambda_1 |x|^2
~=~	x^{\transp} \Sigma x	~=~	\E  \bigg[\sum_{j=1}^N \norm{T_j}^2 x^{\transp} O_j \Sigma O_j^{\transp} x\bigg].
\end{equation}
On the other hand, $x^{\transp} O_j \Sigma O_j^{\transp} x \leq \lambda_1 |x|^2$ for all $j=1,\ldots,N$ a.s.\ since
$\lambda_1$ is the largest eigenvalue of $\Sigma$.
Using $m(2)=1$ and \eqref{eq:x^T Sigma x}, we conclude that $x^{\transp} O_j \Sigma O_j^{\transp} x = \lambda_1 |x|^2$ for all $j=1,\ldots,N$ a.s.
Since $x \in E_{\lambda_1}(\Sigma)$ was arbitrary, we infer that, a.s.,
the restriction of $O_j$ to $E_{\lambda_1}(\Sigma)$ is an automorphism of $E_{\lambda_1}(\Sigma)$, $j=1,\ldots,N$.
Using this fact, the above argument can be repeated consecutively for the remaining eigenvalues of $\Sigma$ (in decreasing order)
to conclude that, a.s., each $O_j$, $j=1,\ldots,N$ is an automorphism of $E_{\lambda_k}(\Sigma)$, $k=2,\ldots,n$.
Since every vector $x \in \R^d$ can be written as a linear combination of normed eigenvectors of the eigenvalues $\lambda_1,\ldots,\lambda_n$,
we conclude that $\Sigma x = O_j \Sigma O_j^{\transp} x$ for $j=1,\ldots,N$ a.s.\ and
hence $\Sigma = O_j \Sigma O_j^{\transp}$ for $j=1,\ldots,N$ a.s.
Standard arguments then yield that $\Sigma = o \Sigma o^{\transp}$ for all $o \in \O$.
Using this, the martingale convergence theorem, \eqref{eq:FE Sigma} and the convergence $W_n \to W$ a.s.,
we conclude that
\begin{eqnarray*}
\mathbf{\Sigma}
& = &
\lim_{n \to \infty} \E [\mathbf{\Sigma} | \F_n]	~=~	\lim_{n \to \infty} \sum_{|v|=n} L(v) \Sigma L(v)^{\transp}	\\
& = &
\lim_{n \to \infty} \sum_{|v|=n} \norm{L(v)}^2 O(v) \Sigma O(v)^{\transp}
~=~	W \Sigma	\quad	\text{a.s.}
\end{eqnarray*}
 \qed
\end{proof}

\subsection{The proof of Proposition \ref{Prop:convergence Z_n w}}

In contrast to Eqs. \eqref{eq:FE nu} and \eqref{eq:FE Sigma}, which allowed to determine $\nu$ and $\Sigma$, there is in general no such equation for $W'$ due to additional contributions coming from the L\'evy measure, see Eq. \eqref{eq:FE Psi}. Therefore, we have to postpone the identification of $W'$ until the next section, and will prove Proposition \ref{Prop:convergence Z_n w} first. This is closely related to determining $W'$:
Indeed, suppose that $Z$ is a solution to \eqref{eq:FP of ST hom a.s.},
in particular, $Z$ is $\F$-measurable.
Iteration of \eqref{eq:FP of ST hom a.s.} yields
\begin{equation}	\label{eq:endogenous hom FP}
Z	~=~	\sum_{|v|=n} L(v) [Z]_v	\quad	\text{a.s.}
\end{equation}
for all $n \in \N_0$. Then consider the multiplicative martingales $(M_n(x))_{n \in \N_0}$ associated with $Z$ (and its characteristic function $\phi$).
As in the proof of \cite[Proposition 4.17]{Alsmeyer+Meiners:2013}, we infer from the martingale convergence theorem,
\begin{align*}
M(x)
&=	\lim_{n \to \infty} \prod_{|v|=n} \phi(L(v)^\transp x)
=	\lim_{n \to \infty} \E\bigg[ \exp\bigg(\sum_{|v|=n} \imag \scalar{L(v)^\transp x, [Z]_v} \bigg) \,\Big|\, \F_n\bigg]	\\
&=	\lim_{n \to \infty} \E\bigg[ \exp\bigg(\imag \Big\langle x,\sum_{|v|=n} L(v) [Z]_v \Big\rangle \bigg) \,\Big|\, \F_n\bigg]
=	\lim_{n \to \infty} \E[\exp(\imag \scalar{x,Z}) | \F_n]	\\
&=	\exp(\imag \scalar{x,Z})	\quad	\text{a.s.}
\end{align*}
Hence, $Z = W'$ a.s.\ in \eqref{eq:Psi} with $\Psi$ denoting the characteristic exponent of $M$ (in this case, $\Sigma$ and $\nu$ vanish a.s.).
Therefore, by Proposition \ref{Prop:Multiplicative martingales},
$Z$ can be written in the form
\begin{equation}	\label{eq:Z alternative}
Z	~=~	\lim_{n \to \infty} \sum_{|v|=n} \E[L(v) [Z]_v; |L(v)[Z]_v| \leq 1 \mid \F_n]	\quad	\text{a.s.}
\end{equation}
Further, \eqref{eq:tail F endogenous} gives
\begin{equation}	\label{eq:tail of Z}
\Prob(|Z|>t)	~=~	o(t^{-\alpha})	\text{ as } t \to \infty.
\end{equation}
This and the estimate \eqref{eq:L_beta asymptotics endogenous} for $L_\beta$ (see Eq. \eqref{eq:L_beta})
from Section \ref{subsec:Tail estimates}
is everything we need to determine $Z$ when $\alpha \not = 1$.

\begin{proof}[Proof of Proposition \ref{Prop:convergence Z_n w} in the case $\alpha \not = 1$] 
(a)
Let $0<\alpha<1$. Then, using \eqref{eq:Z alternative}, \eqref{eq:L_beta asymptotics endogenous}, \eqref{eq:Biggins:1998}
and $W_n \to W$ a.s., we infer
\begin{eqnarray*}
|Z|
& = &
\lim_{n \to \infty} \bigg|\sum_{|v|=n} \E\big[L(v) [Z]_v; |L(v)[Z]_v| \leq 1 \mid \F_n\big]\bigg|	\\
& \leq &
\limsup_{n \to \infty} \sum_{|v|=n} \norm{L(v)} \E\big[|[Z]_v|; |[Z]_v| \leq \norm{L(v)}^{-1} \mid \F_n\big]	\\
& = &
\limsup_{n \to \infty} \sum_{|v|=n} \norm{L(v)} L_1(\norm{L(v)}^{-1})
~=~	0	\quad	\text{a.s.}
\end{eqnarray*}

(c)  Suppose that $1 < \alpha <2$, that $Z$ satisfies \eqref{eq:endogenous hom FP}
and that $\Prob(Z \not = 0) > 0$.
Notice that \eqref{eq:tail of Z} implies that $w \defeq \E[Z]$ is finite, moreover, $Z \in \mathcal{L}^s$ for all $s < \alpha$.
By standard martingale theory and \eqref{eq:endogenous hom FP}, for any $1 \leq s < \alpha$,
\begin{eqnarray*}
Z
& = &
\lim_{n \to \infty} \E[Z|\F_n]
~=~ \lim_{n \to \infty}	\E \bigg[ \sum_{|v|=n} L(v) [Z]_v \,\Big|\, \F_n\bigg]	\\
& = &
\lim_{n \to \infty} \sum_{|v|=n} L(v) w
~=~ \lim_{n \to \infty} Z_n w	\quad	\text{a.s.\ and in } \L^s.
\end{eqnarray*}
Hence, $Z=Z^w$ and thus $w \not = 0$.
Further, taking expectations in \eqref{eq:endogenous hom FP}, we obtain
\begin{equation*}
w	~=~	\E[Z]	~=~	\E\bigg[\sum_{j \geq 1} T_j [Z]_j \bigg]	~=~	\E[Z_1] w,
\end{equation*}
i.e., $w$ is an eigenvector corresponding to the eigenvalue $1$ of $\E[Z_1]$.

Conversely, suppose that $w = \E[Z]$ exists and is an eigenvector corresponding to the eigenvalue $1$ of $\E[Z_1]$
and that $(Z_nw)_{n \in \N_0}$ is uniformly integrable.
Then $Z_nw \to Z^w$ a.s.\ as $n \to \infty$ by the martingale convergence theorem and $\E[Z^w] = w$.
Since $w$ is an eigenvector, we have $w \not = 0$ and thus $\Prob(Z^w \not = 0) > 0$.
One can then check that $Z^w$ satisfies \eqref{eq:endogenous hom FP}.

To finish the proof in the case $1<\alpha<2$,	
we have to show that $\E[|Z_1w|^{\beta}]<\infty$ and $m(\beta)<1$ is sufficient for the $\L^\beta$-boundedness of $(Z_nw)_{n \in \N_0}$.
This, however, is a standard application of the Topchi{\u\i}-Vatutin inequality for martingales.
For details, we refer to \cite[p.\,182]{Alsmeyer+Kuhlbusch:2010} and \cite{Roesler+Topchii+Vatutin:2000}.	

(d) Next, let $\alpha \geq 2$. If $w \in \R^d$ is an eigenvector corresponding to the eigenvalue $1$ of $Z_1$ a.s.,
then $Z_n w = w$ a.s.\ for all $n \in \N_0$ and $Z^w = Z$ solves \eqref{eq:endogenous hom FP}.
For the converse implication, assume that $Z$ solves \eqref{eq:endogenous hom FP} and that $\Prob(Z\not=0)>0$.
By \eqref{eq:tail of Z}, $Z \in \L^s$ for all $s \in [1,\alpha)$. Pick some $s \in (1,2)$ if $\alpha = 2$ and $s=2$ if $\alpha > 2$
and use the lower bound in the Burkholder-Davis-Gundy inequality \cite[Theorem 11.3.1]{Chow+Teicher:1997}
to derive lower bounds for $\E[|Z-w|^s]$ as in the proof of Theorem 2.3 in \cite{Iksanov+Meiners:2015a}.
It follows that $\E[|Z-w|^s] = \infty$ unless $Z_1w=w$ a.s.
We refrain from providing more details. \qed
\end{proof}

The proof for the case $\alpha=1$ is much more involved, some parts of it will be deferred to the Appendix.
As a preparation, we show that it constitutes no loss of generality
to assume the stronger assumption \eqref{eq:A5} instead of \eqref{eq:A4}.

\begin{lemma}	\label{Lem:embedded ladder process}
Suppose that \eqref{eq:A1}--\eqref{eq:A3} and either \eqref{eq:A4} or \eqref{eq:A4'} hold.
Let $\phi$ be a solution to \eqref{eq:FE of ST hom}.
Then there is a weight sequence $\bT'\defeq (T_j')_{j=1}^{N'}$ with $N' < \infty$ a.s.\ such that \eqref{eq:A1}--\eqref{eq:A4}
resp. \eqref{eq:A1}--\eqref{eq:A4'} persist to hold for $\bT'$,
$\phi$ is a solution to equation \eqref{eq:FE of ST hom} associated with $\bT'$ as well
and the martingale limit $W$ defined in \eqref{eq:Biggins' martingale} is the same for $\bT$ and $\bT'$.
Moreover, $\norm{T_j'} < 1$ a.s. for all $1 \leq j \le N'$ and the group $\G$ generated by $\bT'$
is the same as the one generated by $\bT$. If $\bT$ satisfies \eqref{eq:A4'}, then also the group $\O$ remains the same.
If $\bT$ satisfies \eqref{eq:A4}, then \eqref{eq:A5} holds for $\bT'$.
\end{lemma}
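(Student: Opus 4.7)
The plan is to define $\bT'$ as the descendant family along a first-passage stopping line of the weighted branching process generated by $\bT$. Fix a threshold $t > 0$ to be tuned later and, with $\mathcal{C}(t)$ as in \eqref{eq:C(t)}, set $\bT' \defeq (L(v))_{v \in \mathcal{C}(t)}$ after an arbitrary measurable enumeration. By construction $\norm{T_j'} = e^{-S(v)} < e^{-t} < 1$, and $N' = \#\mathcal{C}(t)$ is a.s.\ finite thanks to \eqref{eq:Biggins:1998}. The change of measure \eqref{eq:change of measure stopping line} with $f \equiv 1$ immediately yields $\E[\sum_j \norm{T_j'}^\alpha] = 1$, so \eqref{eq:A2} holds for $\bT'$ with the same $\alpha$; choosing $f(s_0,s,o) = e^{\alpha s}$ gives $\E[N'] = \E[e^{\alpha S_{\tau(t)}}] \ge e^{\alpha t} > 1$, which is \eqref{eq:A1}. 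Iterating the functional equation \eqref{eq:FE of ST hom} across $\mathcal{C}(t)$ produces $\phi(x) = \E[\prod_{v \in \mathcal{C}(t)} \phi(L(v)^\transp x)]$, so $\phi$ solves \eqref{eq:FE of ST hom} for $\bT'$, and evaluating \eqref{eq:FP tilted a.s.} along the stopping line identifies the martingale limit $W$ for $\bT'$ with that of $\bT$.

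Next I would verify preservation of the groups $\G$ and, under \eqref{eq:A4'}, $\O$, together with the remaining moment conditions. The values $\norm{L(v)}$ for $v \in \mathcal{C}(t)$ lie in $\G$; in the geometric case $\G = r^{\Z}$ the size-biased support of $-\log\norm{T_j'}$ contains $k \log r$ for infinitely many integers $k$, and consecutive such values give a multiplicative ratio $r$, so the generated group is $r^{\Z}$; in the continuous case a density argument gives $\Rp$. Under \eqref{eq:A4'} the group $\O$ is finite and, since each original orthogonal generator appears with positive probability and arbitrary finite words in them can be prolonged along a ray that crosses level $t$, every element of $\O$ is realized as some $O(v)$, $v \in \mathcal{C}(t)$, with positive probability. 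Via \eqref{eq:change of measure stopping line}, the integrability assumptions in \eqref{eq:A3}, \eqref{eq:A4} or \eqref{eq:A4'} translate into moment conditions on the ladder height $S_{\tau(t)}$; because $\norm{T_j'} < 1$ forces $S_{\tau(t)} > 0$, only $\log^-$-moments enter, and the required bounds on $\E[S_{\tau(t)}^{\ell+\delta+1}]$ together with the $L \log^k L$-type control on $W_1' = \sum_{v \in \mathcal{C}(t)} \norm{L(v)}^\alpha [W_1]_v$ follow from the assumed moments on $\bT$ by standard Wald-type and branching estimates. In particular $\E[S_{\tau(t)}] \in (0,\infty)$, so $m'(\alpha)$ for $\bT'$ is negative and finite.

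The main obstacle will be establishing \eqref{eq:A5}, namely, producing the minorization \eqref{eq:minorization} for the size-biased joint law of $(T_j'/\norm{T_j'}, -\log\norm{T_j'})$, which by \eqref{eq:change of measure stopping line} equals the law of $(O_{\tau(t)}, S_{\tau(t)})$, the first-passage value of the Markov random walk $(O_n, S_n)_{n \ge 0}$ on $\Orth \times \R$. Here \eqref{eq:A4} makes the one-step transition spread-out with respect to $H_{\Orth} \otimes \dx$, so after finitely many convolutions the $n$-step kernel admits a lower bound of the form $\gamma_0 \1_K(o,x) H_{\Orth}(\ddo) \dx$ on some nonempty open set $K \subset \Orth \times \R$. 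The plan is then to transfer this smoothing to the overshoot distribution by invoking the rate-of-convergence result for Markov renewal processes promised in the appendix: for $t$ sufficiently large, it yields an analogous density lower bound for $(O_{\tau(t)}, S_{\tau(t)})$ on a nonempty open subset of $\Orth \times \R$. Since \eqref{eq:A4} additionally forces $\SOd \subseteq \O$ (Remark \ref{Rem:U under A4}), this lower bound can be localized to $\SOd \times I$ for a nonempty open interval $I$ and some $\gamma > 0$, which is precisely \eqref{eq:minorization}.
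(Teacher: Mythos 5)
Your first stage --- passing to the descendants on a first-passage stopping line, checking \eqref{eq:A1}--\eqref{eq:A2} via \eqref{eq:change of measure stopping line}, iterating \eqref{eq:FE of ST hom} across the line, and identifying $W$ --- is essentially the paper's first step (the paper uses $\mathcal{C}(0)$ rather than $\mathcal{C}(t)$, which already forces $\norm{T_j'}<1$; the preservation of the moment conditions is delegated there to \cite[Proposition 3.7]{Iksanov+Meiners:2015b} and the identification of $W$ to \cite[Theorem 9]{Kyprianou:2000}). Your argument that every element of $\O$ is ``realized as some $O(v)$, $v\in\mathcal{C}(t)$'' is not quite right as stated (only products whose cumulative log-norm crosses the level appear on the line), but the conclusion can be repaired as in the paper by padding a generator $to$ with powers of some $t'o'$ having $t'<1$.

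The decisive gap is in your derivation of \eqref{eq:A5}. First, the plan is circular: the Markov renewal machinery in the appendix (Lemma \ref{lem:regeneration}, Proposition \ref{prop:uniform renewal}) is developed \emph{under the hypothesis} that the increment law already satisfies the minorization \eqref{eq:minorization}, so it cannot be invoked to produce \eqref{eq:minorization} for the overshoot law $(O_{\tau(t)},S_{\tau(t)})$. Second, even granting some smoothing of the overshoot distribution, a density lower bound on ``a nonempty open subset of $\Orth\times\R$'' is not \eqref{eq:minorization}: the condition requires domination of $\gamma\,H_{\Orth}\otimes\dx$ on \emph{all} of $\SOd\times I$, and you cannot ``localize'' down to that --- you must \emph{expand} the orthogonal component from an open set $D$ to the whole of $\SOd$. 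The paper achieves this with a second stage that your single stopping line cannot replicate: after passing to $\tilde{\bT}=(T(v))_{v\in\mathcal{C}(0)}$ it takes a \emph{fixed generation} $2kn$ of the new branching process, so that the size-biased measure is an exact convolution power $\eta^{*2kn}$. The spread-out property gives a continuous density component for $\eta^{*2n}$, Fubini for the Haar measure on $\Sim$ yields a product lower bound $\tilde\gamma\,H_{\Orth}|_D\otimes H_{\Rp}|_I$ with $D\subseteq\SOd$ open, and Bhattacharya's theorem on convolution powers of measures on the compact connected group $\SOd$ then gives $(H_{\Orth}|_D)^{*k}\ge\epsilon H_{\SOd}$, whence \eqref{eq:minorization}. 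A first-passage overshoot is not a convolution power of a fixed measure, so this mechanism is unavailable along your route; without it the minorization is not established.
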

\begin{proof}
We first deal with the case that \eqref{eq:A4'} holds. Set $\bT'\defeq (T(v))_{v \in \mathcal{C}(0)}$
(recall that $\norm{T(v)} < 1$ a.s.\ for all $v \in \mathcal{C}(0)$).
Then \cite[Proposition 3.7]{Iksanov+Meiners:2015b} states that if $(T(v))_{|v|=1}$
satisfies the assumptions \eqref{eq:A1}--\eqref{eq:A4'}, then so does the family $(T(v))_{v \in \mathcal{C}(0)}$.
That $W$ is the same for the sequences $\bT$ and $\bT'$ follows from an application of \cite[Theorem 9]{Kyprianou:2000}.
The additional point which we have to take care of here is that we need to know
that the closed multiplicative subgroup of $\Orth$ generated by the $O(v)$, $v \in \mathcal{C}(0)$,
which we denote by $\O^>$, equals $\O$.
Clearly, $\O^> \subseteq \O$.
Conversely, let $t > 0$ and $o \in \Orth$ be such that $to$ is in the support of $T_j$ for some $j \in \N$.
If we can show that $o \in \O^>$, then it follows that $\O \subseteq \O^>$.
By \eqref{eq:A2}, there are $t'\in (0,1)$ and $o' \in \Orth$ such that $t'o'$ is in the support of some $T_k$ for some $k \in \N$.
Then $o' \in \O^>$ by the definition of $\mathcal{C}(0)$ and $\O^>$.
For the minimal $m \in \N_0$ such that $t(t')^m < 1$, we have $o(o')^m \in \O^>$,
which implies $o \in \O^>$.

Now we turn to the case that \eqref{eq:A4} holds.
Here, we obtain $\bT'$ in two successive steps.
We start by considering as before  the sequence $\tilde{\bT} \defeq (T(v))_{v \in \mathcal{C}(0)}$.
As above, it follows that
(referring to the proof of \cite[Proposition 3.7]{Iksanov+Meiners:2015b} for the moment assumptions in \eqref{eq:A4})
that \eqref{eq:A1}--\eqref{eq:A3}
as well as the spread-out assumption and the moment assumptions in \eqref{eq:A4} carry over.
Also, as above, $W$ is the same for $\bT$ and $\tilde{\bT}$.
That $\eta\defeq \E[\sum_{v \in \mathcal{C}(0)} \norm{T(v)}^{\alpha} \delta_{T(v)}(\cdot)]$ is spread-out
can be obtained by arguments similar to those given in the proof of \cite[Lemma 1]{Araman+Glynn:2006}.
Hence there is an $n \in \N$ such that the $n$-fold convolution $\eta^{*n}$ has a component
which is continuous w.r.t.\ the Haar measure on $\Sim$.
Denote the corresponding density by $f$ and assume without loss of generality that $f$ has compact support
and that $c \leq f \leq d$
for constants $0<c<d$.
Then $\eta^{*2n}$ has a component with density $f^{*2}$, which is continuous \cite[p.\,295]{Hewitt+Ross:1963}.
By the Fubini formula for the Haar measure on $\Sim = \Rp \times \Orth$
\cite[Proposition 1.5.5]{Deitmar+Echterhoff:2009}, 
\begin{equation}	\label{eq:etan}
\eta^{*2n} \geq \tilde{\gamma} {H_{\Orth}}_{|D} \otimes {H_{\Rp}}_{|I}
\end{equation}
for some $\tilde{\gamma}>0$ and open sets $D \subseteq \Orth$, $I \subseteq \Rp$.
Replacing $2n$ by $4n$ if neccessary, we may assume that $D \subseteq \SOd$.
Since the latter group is compact and connected, we may invoke (the proof of) \cite[Theorem 3]{Bhattacharya:1972}
which gives that there is a $k \in \N$ such that $({H_{\Orth}}_{|D})^{*k} \ge \epsilon H_{\SOd}$ for some $\epsilon >0$.
Then \eqref{eq:etan} yields that
\begin{equation*}
\mu(A,B) \defeq  \eta^{*(2kn)}(A \times e^{-B}) , \qquad A \subseteq \Orth, B \subseteq \R \text{ measurable}
\end{equation*}
satisfies $(M)$. On the other hand, 
\begin{equation*}
\mu = \E\bigg[\sum_{|v|=2kn} \big\|\tilde{L}(v)\big\|^{\alpha} \delta_{(\tilde{O}(v), - \log (\|\tilde{T}(v)\|))}(\cdot) \bigg],
\end{equation*}
where $(\tilde{L}(v))_{v \in \V}$ is the weighted branching process associated with $\tilde{\bT}$.
It is readily checked that assumptions \eqref{eq:A1}--\eqref{eq:A3}
and the moment assumptions of \eqref{eq:A4} persist to hold for $\bT'\defeq (\tilde{L}(v))_{|v|=2kn}$
and that also the martingale limit $W$ is the same for $\bT$ and $\tilde{\bT}$ and thus the assertion follows.	\qed
\end{proof}

\begin{remark} \label{Rem:U under A4}
Under \eqref{eq:A5}, it holds that $\U=\R_> \times \SOd$ or $\U=\R_> \times \Orth$.
Since the group $\U'$ generated by $\bT'$ will always be a subgroup of the one generated by $\bT$, it follows that under \eqref{eq:A4},
these are the only two possible cases for $\U$. Note that $\U=\R_> \times \Orth$ while $\U'=\R_> \times \SOd$ is possible
(for example, if $\norm{T_j} < 1$ and $\det(T_j)=-1$ for all $j=1, \dots, N$).
This is not a problem since both the $(\U,\alpha)$- and $(\U',\alpha)$-stable laws are just the $d$-dimensional rotational invariant $\alpha$-stable laws.
\end{remark}

\begin{proof}[Proof of Proposition \ref{Prop:convergence Z_n w} in the case $\alpha=1$]
By Lemma \ref{Lem:embedded ladder process}, we may assume throughout the proof, that $\norm{T_j} \le 1$ a.s.\ for all $1 \le j \le N$,
and that  \eqref{eq:A5} holds if \eqref{eq:A4} holds, i.e., we work with the sequence $\bT'$ instead of $\bT$ (but drop the superscript).

We begin with the converse implication of (b)
and suppose that $w \in \R^d$ is an eigenvector corresponding to the eigenvalue $1$ of $\E[Z_1]$.
Then
\begin{equation*}
|w| = \big|\E[Z_1] w\big| \leq \big\|\E[Z_1] \big\| \cdot |w| \leq \E\big[\norm{Z_1}\big] \cdot |w|	\leq \E \bigg[\sum_{j \geq 1} \norm{T_j}\bigg] \cdot |w|	= |w|.
\end{equation*}
Hence, equality must hold in this chain of inequalities
and hence $T_j w = \norm{T_j}w$ for all $j \in \N$ a.s.
Consequently, $Z_nw = W_n w \to Ww$ a.s.\ as $n \to \infty$ and $Ww$ satisfies \eqref{eq:endogenous hom FP}.

We are left with proving the direct implication in (b). Thus, assume that $Z$ is $\F$-measurable and satisfies \eqref{eq:endogenous hom FP}.
First, we write $\R^d = V_+ + V_+^\perp$ where $V_+ = \{x \in \R^d: ox=x \text{ for all } o \in \O\}$.
Both, $ V_+$ and $ V_+^\perp$ are invariant subspaces of $\R^d$ for every $o \in \O$.
Write $Z^{ V_+}$ and $Z^{ V_+^\perp}$ for the orthogonal projection of $Z$ onto $ V_+$ and $ V_+^\perp$, respectively.
We have
\begin{eqnarray*}
Z^{V_+} + Z^{V_+^\perp}
& = &
Z	~=~	\sum_{|v|=n} L(v) [Z]_v	~=~	\sum_{|v|=n} L(v) [Z^{V_+}]_v + \sum_{|v|=n} L(v) [Z^{V_+^\perp}]_v	\\
& = &
\sum_{|v|=n} \norm{L(v)} [Z^{V_+}]_v + \sum_{|v|=n} L(v) [Z^{V_+^\perp}]_v.
\end{eqnarray*}
From linear independence (the left sum is in $V_+$, the right sum is in $V_+^\perp$),
we conclude that
\begin{align*}
Z^{V_+}	~&=~	\sum_{|v|=n} \norm{L(v)} [Z^{V_+}]_v	\quad	\text{a.s.\ for all } n \in \N_0	\\
\text{and}	\quad
Z^{V_+^\perp}	~&=~	\sum_{|v|=n} L(v) [Z^{V_+^\perp}]_v	\quad	\text{a.s.\ for all } n \in \N_0.
\end{align*}
The equation for $Z^{V_+}$ can be reduced to one-dimensional equations
and it follows from \cite[Theorem 4.13]{Alsmeyer+Meiners:2013} that $Z^{V_+} = Ww$ for some $w \in V_+$.
Taking expectations, we conclude that $w=\E[Z^{V_+}]$.
Now notice that $V_+ = E_1(\E[Z_1])$ and thus $w$ is an eigenvector of $\E[Z_1]$ or zero.

Observe that if $\SOd \subseteq \O$, $w$ has to be zero: Using the change of measure \eqref{eq:change of measure} with $f_k(L_1)\defeq(O_1 w)_k$ for $1 \le k \le d$, we infer that
\begin{equation*}
w = \E \bigg[\sum_{j \ge 1} \norm{T_j} (O_j w)\bigg] = \E [O_1 w],
\end{equation*}
which implies $w = O_1 w$ a.s.
Hence, $\SOd \subseteq \O$ implies $w=0$.
Referring to Remark \ref{Rem:U under A4},
we see that \eqref{eq:A4} particularly implies $\SOd \subseteq \O$.

It remains to show that $Z^{V_+^\perp}=0$ a.s.
We drop the ${V_+^\perp}$ in the superscript and write $Z$ for $Z^{V_+^\perp}$.
We will use a variant of \eqref{eq:Z alternative}. 
To formulate it, consider the coming generation at time $t \geq 0$, which is  $\mathcal{C}(t)$, defined by Eq. \eqref{eq:C(t)}.
The idea is to switch from genealogical generations in the branching process
to particles living roughly at the same time (with $S(v)$ interpreted as the time of birth of particle $v$).
The gain is that there is a better control over the birth times $S(v)$ when $v$ ranges over $\mathcal{C}(t)$
than when it ranges of $\{|v|=n\}$.
Instead of considering the multiplicative martingales $(M_n(x))_{n \in \N_0}$ for $Z$
(defined via the characteristic function $\phi$ of $Z$),
we consider
\begin{equation*}
M_{\mathcal{C}(t)}(x)	~=~	\prod_{v \in \mathcal{C}(t)} \phi(L(v)^\transp x),	\quad	x \in \R^d.
\end{equation*}
It can be checked along the lines of \cite[Lemma 8.7(b)]{Alsmeyer+Biggins+Meiners:2012}
and \cite[Lemma 4.4]{Alsmeyer+Meiners:2013} that $M_{\mathcal{C}(t)}(x) \to M(x)$ for all $x \in \R^d$
with $M(x) = \lim_{n \to \infty} M_n(x)$ a.s.
Arguing as in the proof of \cite[Lemma 3.6]{Iksanov+Meiners:2015a} gives
\begin{equation}	\label{eq:Z^h via M_C(t)}
Z^h	~=~	\lim_{t \to \infty} \sum_{v \in \mathcal{C}(t)} \E[L(v) [Z]_v; |L(v)[Z]_v| \leq h \mid \F_{\mathcal{C}(t)}]	\quad	\text{a.s.}
\end{equation}
for every $h>0$ with $\nu(\{|x|=h\}) = 0$ a.s.
Here, $\F_{\mathcal{C}(t)}$ is the $\sigma$-field that makes everything measurable in the weighted branching model defined by $(\bC,\bT)$
that is born up to and including time $t$, see the proof of Lemma 8.7 in \cite{Alsmeyer+Biggins+Meiners:2012} for a rigorous definition.
In the given situation, $\nu=0$ a.s.\ and hence $Z=Z^h$ and $h>0$ can be chosen arbitrarily, hence $h=1$ is the most convenient choice.
The key equation for us thus is
\begin{align}	\label{eq:Z via M_C(t)}
Z
& =
\lim_{t \to \infty} \sum_{v \in \mathcal{C}(t)} L(v) I(\norm{L(v)}^{-1})	\\
& =
\lim_{t \to \infty} \bigg( \sum_{v \in \mathcal{C}(t)} L(v) I(e^t) + \sum_{v \in \mathcal{C}(t)} L(v) \big(I(\norm{L(v)}^{-1})	- I(e^t)\big)\bigg)
\text{ a.s.}	\notag
\end{align}
where $I(t) \defeq \E[Z; |Z| \leq t]$, $t \geq 0$.
For $0 \leq s < t$, we have
\begin{eqnarray*}
|I(t)-I(s)|
& \leq &
\E[|Z|; s < |Z| \leq t]	\\
& = &
\int_s^t \Prob(|Z|>x) \, \dx - t \Prob(|Z|>t) + s \Prob(|Z|>s)	\\
& \leq &
\int_s^t \Prob(|Z|>x) \, \dx + s \Prob(|Z|>s).
\end{eqnarray*}
Using this for the last summand in the last line of \eqref{eq:Z via M_C(t)}, we infer
\begin{align}
\bigg|&\sum_{v \in \mathcal{C}(t)} L(v) \big(I(\norm{L(v)}^{-1})	- I(e^t)\big)\bigg|	\nonumber \\
&~=~	\sum_{v \in \mathcal{C}(t)} \norm{L(v)} \int_{e^t}^{\norm{L(v)}^{-1}} 
\Prob(|Z|>x) \, \dx
+ \sum_{v \in \mathcal{C}(t)} \norm{L(v)} e^t \Prob(|Z|>e^t). \label{eq:decomposition I}
\end{align}
The second sum tends to $0$ a.s.\ as $t \to \infty$ by \eqref{eq:tail of Z} and the fact
that $\sum_{v \in \mathcal{C}(t)} \norm{L(v)} = \E[W | \F_{\mathcal{C}(t)}] \to W$ a.s.
Regarding the first sum, using \eqref{eq:tail of Z} and $\norm{L(v)} = \exp(-S(v))$, we obtain that, for arbitrary $\epsilon > 0$,
\begin{equation*}
\sum_{v \in \mathcal{C}(t)} \norm{L(v)} \int_{e^t}^{\norm{L(v)}^{-1}} \!\!\! \Prob(|Z|>x) \, \dx
~\leq ~ \epsilon \sum_{v \in \mathcal{C}(t)} \norm{L(v)} (S(v)-t)
\end{equation*}
for all sufficiently large $t$.
By the law of large numbers for (single-type) general branching processes
\cite[Theorem 3.1]{Nerman:1981}, the last sum converges to a constant multiple of $W$,
see \cite[p.\,191]{Alsmeyer+Meiners:2013} for a detailed argument.
Since $\epsilon > 0$ was arbitrary, it remains to show that in \eqref{eq:Z via M_C(t)}
\begin{equation}	\label{eq:cancellation term for Z}
\sum_{v \in \mathcal{C}(t)} L(v) I(e^t) ~\to~	0	\quad	\text{ in probability as } t \to \infty.
\end{equation}
Now notice that, for all $t \geq 1$, $I(e^t) \in {V_+^\perp}$ and, by \eqref{eq:tail of Z},
\begin{equation*}
|I(e^t)|
~\leq~	1 + \int_1^{e^t} \Prob(|Z|>x) \dx
~\leq~	\mathrm{const} \cdot t.
\end{equation*}
Hence, \eqref{eq:cancellation term for Z} follows from Lemma \ref{Lem:rate of convergence}. \qed
\end{proof}

Let us note one consequence out of the last step of the proof, which constitutes the analogue of \cite[Lemma 4.9]{Iksanov+Meiners:2015a}:

\begin{lemma}\label{lem:W^h bounded}
Let $X$ be a solution to \eqref{eq:FP of ST hom} and let $W^h$ be defined as in \eqref{eq:W'^h}.
For each $h >0$, there is a finite constant $K >0$ such that 
\begin{equation} \label{eq:Wh bounded by W}
|W^h| ~\leq~ KW \quad \text{a.s.}
\end{equation} 
\end{lemma}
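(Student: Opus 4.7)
The plan is to reduce the claim to estimating $\sum_{|v|=n} \|L(v)\|\, L_1(h/\|L(v)\|)$, where $L_1$ is as in \eqref{eq:L_beta}, and then invoke the tail estimate \eqref{eq:tail F}. First, because each $L(v)$ is a similarity matrix (so $|L(v)y| = \|L(v)\|\,|y|$ for all $y \in \R^d$) and because $X_v$ is independent of $\F$, the truncated conditional expectation rewrites as
\begin{equation*}
\E[L(v)X_v;\,|L(v)X_v|\leq h \mid \F] \;=\; L(v)\, I\!\left(\frac{h}{\|L(v)\|}\right), \qquad I(t) \defeq \E\!\left[X\1_{\{|X|\leq t\}}\right].
\end{equation*}
Since $|L(v)\, I(\cdot)| \leq \|L(v)\|\, L_1(\cdot)$, it suffices to bound $\limsup_n \sum_{|v|=n} \|L(v)\|\, L_1(h/\|L(v)\|)$ by a constant multiple of $W$.

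Next I would combine \eqref{eq:tail F}, $\Prob(|X|>t) \leq Ct^{-\alpha}$, with the identity $L_1(t) = \int_0^t \Prob(|X|>x)\,\dx - t\Prob(|X|>t)$ to obtain $L_1(t) \leq C'\,t^{1-\alpha}$ when $\alpha \in (0,1)$, $L_1(t) \leq C'(1+\log^+\!t)$ when $\alpha = 1$, and $L_1(t) \leq \E[|X|]<\infty$ when $\alpha > 1$. In the first regime this immediately yields the claim:
\begin{equation*}
\|L(v)\|\, L_1(h/\|L(v)\|) \;\leq\; C'h^{1-\alpha}\|L(v)\|^\alpha,
\end{equation*}
and summation over $|v|=n$ followed by the limit gives $|W^h| \leq C'h^{1-\alpha}\, W$ a.s.

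For $\alpha \geq 1$ the pointwise bound on $L_1$ is too crude, and I would switch from generational summation to summation over the coming generations $\mathcal{C}(t)$ from \eqref{eq:C(t)}, mirroring the final step of the proof of Proposition \ref{Prop:convergence Z_n w} in the case $\alpha = 1$. Writing $I(h/\|L(v)\|) = I(he^t) + \big[I(h/\|L(v)\|) - I(he^t)\big]$, the bracketed difference obeys
\begin{equation*}
\big|I(h/\|L(v)\|) - I(he^t)\big| \;\leq\; \int_{he^t}^{h/\|L(v)\|}\! \Prob(|X|>x)\,\dx \;+\; he^t\,\Prob(|X|>he^t);
\end{equation*}
the tail estimate converts this into contributions of order $\|L(v)\|(S(v)-t)$ per particle, whose sum over $v \in \mathcal{C}(t)$ converges to a constant multiple of $W$ by the law of large numbers for general branching processes \cite[Theorem 3.1]{Nerman:1981}, exactly as in the passage following \eqref{eq:decomposition I}.

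The main obstacle is the remaining centering term $\sum_{v \in \mathcal{C}(t)} L(v)\, I(he^t)$, since $|I(he^t)|$ is unbounded in $t$ when $\alpha = 1$. Decomposing $I(he^t)$ along the $\O$-invariant subspace $V_+ = \{x \in \R^d : ox = x \text{ for all } o \in \O\}$ and its orthogonal complement, the $V_+^\perp$-component vanishes in probability by Lemma \ref{Lem:rate of convergence}, exactly as in the cited proof, while on $V_+$ the matrices $O(v)$ act trivially and the $V_+$-component reduces to $I(he^t)^{V_+}\, W_{\mathcal{C}(t)}$ with $W_{\mathcal{C}(t)} \to W$. This is the delicate step: the proportionality to $W$ in the final bound emerges here, and must be extracted by exploiting that the limit defining $W^h$ actually exists, which forces the would-be unbounded scalar factor along $V_+$ to stabilize. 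The case $\alpha>1$ is handled by splitting $I(h/\|L(v)\|) = \E[X] - \E[X;\,|X|>h/\|L(v)\|]$: the tail piece is again $\mathcal{O}(\|L(v)\|^\alpha)$ and hence contributes $\mathcal{O}(W)$, while the first piece gives $(\sum_{|v|=n} L(v))\E[X]$, which is shown to be dominated by $W$ once one uses that $\E[X]$ must lie in $E_1(\E[Z_1])$ together with the boundedness implicit in the very existence of the limit $W^h$.
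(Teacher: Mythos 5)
For the case in which the paper actually proves and uses this lemma --- $\alpha=1$ under \eqref{eq:A4'} resp.\ \eqref{eq:A5}, after the reduction of Lemma \ref{Lem:embedded ladder process} --- your argument is essentially the paper's: pass to the coming generation $\mathcal{C}(t)$, represent $W^h$ as in \eqref{eq:Z^h via M_C(t)}, bound the differences $I(h/\norm{L(v)})-I(he^t)$ via \eqref{eq:tail F} (instead of \eqref{eq:tail F endogenous}) and Nerman's theorem by a constant multiple of $W$, and then control the centering term $\big(\sum_{v\in\mathcal{C}(t)}L(v)\big)I(he^t)$ by exploiting that the finite limit $W^h$ exists. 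The paper does this last step in one line by contradiction (if $\limsup_t|I(e^t)|=\infty$ then $|W^h|=\infty$ on survival), whereas you split $I(he^t)$ along $V_+\oplus V_+^\perp$, kill the $V_+^\perp$-part with Lemma \ref{Lem:rate of convergence} and read off boundedness of the $V_+$-part from $W_{\mathcal{C}(t)}\to W>0$ on survival; this is a slightly more careful implementation of the same idea (it rules out cancellations caused by the rotations $O(v)$), but it is only available after the reduction to $\norm{T_j}\le 1$ and \eqref{eq:A4'}/\eqref{eq:A5}, which you never invoke, and the rate-of-convergence step is only in probability, so you must pass to a.s.\ convergent subsequences. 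Your $\alpha<1$ paragraph is correct and simpler than routing through $\mathcal{C}(t)$.

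The genuine gap is the $\alpha>1$ branch. There the centering term is $Z_n\,\E[X]$, and your assertion that it is ``dominated by $W$ \dots\ together with the boundedness implicit in the very existence of the limit $W^h$'' is not an argument: the limit of $Z_n\E[X]$ is the endogenous fixed point $Z$, and nothing forces $|Z|\le KW$ a.s. In fact the claimed inequality fails for $1<\alpha<2$ in general: for the $b$-ary search tree weights $T_j=V_j^{\lambda_2}$ one has $\sum_j\norm{T_j}^\alpha=\sum_j V_j=1$ a.s., so $W\equiv1$, while for the solution $X=Z$ one has $\nu=0$ and hence $W^h=W'=Z$ for every admissible $h$, and $Z$ has unbounded support (if its essential supremum $K$ were finite, then on the positive-probability event that all $[Z]_j$ are near a support point of near-maximal modulus and $(V_1,\dots,V_b)$ is near $(1/b,\dots,1/b)$ one gets $|Z|\approx b^{1-\Real(\lambda_2)}K>K$). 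So no proof along the lines you sketch can work for $\alpha>1$; the lemma is genuinely a statement of the $\alpha=1$ setting, which is how the paper proves it (as a consequence of the last step of the proof of Proposition \ref{Prop:convergence Z_n w} for $\alpha=1$) and the only place it is used (case (b2) of Proposition \ref{Prop:solving Psi}). You should restrict your claim accordingly rather than suggest the $\alpha>1$ case can be ``handled''.
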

\begin{proof}
Fix $h >0$. It can be checked that the identity \eqref{eq:Z^h via M_C(t)} holds with $Z^h$ replaced by $W^h$.
The subsequent estimates leading to Eq.\ \eqref{eq:decomposition I} remain valid (also without restricting to $V_+^\perp$).
Subsequently, one has to use the tail estimate \eqref{eq:tail F} rather than \eqref{eq:tail F endogenous} and hence obtains that 
\begin{equation*}
\bigg|W^h - \lim_{t \to \infty} \sum_{v \in \mathcal{C}(t)} L(v) I(e^t)\bigg| ~\leq~ K' W \quad \text{a.s.}
\end{equation*}
for some $K' \ge 0$. If now $\limsup_{t \to \infty} |I(e^t)| =\infty$, then $|W^h|=\infty$ on the set of survival,
which yields a contradiction.
Hence $\limsup_{t \to \infty} |I(e^t)| <\infty$ and the assertion follows.
\end{proof}

\begin{lemma}	\label{Lem:rate of convergence}
Assume that \eqref{eq:A1}--\eqref{eq:A3}, $\norm{T_j} \le 1$ a.s.\ for all $1 \le j \le N$ and that either \eqref{eq:A4'} or \eqref{eq:A5} hold.
Let $V_+ = \{x \in \R^d: ox=x \text{ for all } o \in \O\}$
be the space of $\O$-invariant vectors and let $V=V_+^\perp$.
Further, denote by $(x_t)_{t \geq 0}$ a bounded sequence in $V$.
Then
\begin{equation}	\label{eq:rate of convergence}
\lim_{t \to \infty} t \bigg(\sum_{v \in \mathcal{C}(t)} L(v)\bigg) x_t
~=~	0	\quad	\text{in probability as } t \to \infty.
\end{equation}
\end{lemma}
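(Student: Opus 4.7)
The plan is to first reduce the mean $\E[(\sum_{v \in \mathcal{C}(t)} L(v)) x_t]$ to a Markov renewal quantity via the stopped many-to-one formula \eqref{eq:change of measure stopping line}, then invoke the rate-of-convergence result for Markov renewal processes stated in the appendix, and finally upgrade mean convergence to convergence in probability through a second-moment bound exploiting the branching structure.

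First, I would apply \eqref{eq:change of measure stopping line} with $\alpha=1$ component-wise (i.e., to each coordinate of $f(s,s',o) = ox_t$), which yields the identity
\begin{equation*}
\E\bigg[\bigg(\sum_{v \in \mathcal{C}(t)} L(v)\bigg) x_t \bigg] ~=~ \E[O_{\tau(t)}] \, x_t.
\end{equation*}
Under \eqref{eq:A5} (or \eqref{eq:A4'}), the orthogonal part $(O_n)_{n \ge 0}$ is a Markov chain on the compact group $\O$ whose unique invariant measure is the normalized Haar measure $H_{\O}$; this is precisely where the minorization \eqref{eq:minorization} (or the spread-out assumption in the finite-$\O$ case) enters. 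The Markov renewal rate-of-convergence lemma from the appendix will then yield $\norm{\E[O_{\tau(t)}] - \Pi} = o(1/t)$ as $t \to \infty$, where $\Pi \defeq \int_{\O} o\, H_{\O}(\ddo)$ acts as the orthogonal projection onto $V_+$. Since $(x_t)_{t \ge 0}$ is bounded and $x_t \in V = V_+^\perp$, we have $\Pi x_t = 0$, giving $t\cdot \E[(\sum_v L(v)) x_t] \to 0$.

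To upgrade mean convergence to convergence in probability, I plan to estimate the second moment of $\Phi(t) \defeq (\sum_{v \in \mathcal{C}(t)} L(v)) x_t$ and obtain $t^2 \E[|\Phi(t)|^2] \to 0$. Expanding $|\Phi(t)|^2$ as a double sum over $v,w \in \mathcal{C}(t)$ and isolating the diagonal $v=w$, the bound $\norm{L(v)} < e^{-t}$ on $\mathcal{C}(t)$ combined with the many-to-one identity readily gives $|x_t|^2 \E[\sum_v \norm{L(v)}^2] \le |x_t|^2 e^{-t}$, which decays exponentially. The off-diagonal terms will be reorganised by conditioning on the most recent common ancestor $u = v \wedge w$: independence of the weighted-branching subtrees rooted at distinct children of $u$, combined with a second application of the many-to-one formula together with the rate bound of the previous paragraph applied at the shifted time $t-S(u)$ within each subtree, should produce an integrated estimate over the possible birth times of $u$.

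The hard part will be this off-diagonal variance estimate: one must preserve the cancellation coming from $x_t \in V_+^\perp$ throughout the branching decomposition at every level of the common ancestor, so that the combined bound decays like $o(1/t^2)$ uniformly in the depth of $u$. Standard branching arguments only give $O(1/t^2)$ or worse without the $V_+^\perp$ cancellation, so the precise polynomial rate supplied by the Markov renewal lemma is essential; this is exactly what that lemma is designed to provide, and it explains why the stronger minorization assumption \eqref{eq:A5} was introduced in place of the bare \eqref{eq:A4}.
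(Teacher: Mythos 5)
Your first step (the mean estimate) is sound and is in the same spirit as the paper: via \eqref{eq:change of measure stopping line} one reduces $\E[(\sum_{v \in \mathcal{C}(t)} L(v))x_t]$ to a Markov renewal quantity, and Proposition \ref{prop:uniform renewal} together with the Haar-average cancellation on $V_+^\perp$ gives $t\,\E[(\sum_{v}L(v))x_t]\to 0$. (Two caveats even here: under \eqref{eq:A4'} the group $\O$ is finite, so the minorization \eqref{eq:minorization} required by the appendix proposition fails and the paper has to treat that case by a separate multitype branching argument; and under \eqref{eq:A4'} only $\E[S_1^2]<\infty$ is assumed, which limits the attainable renewal rate.)

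The genuine gap is the ``upgrade to convergence in probability'' via a second moment. Under the standing assumptions, only $\E[h_3(W_1)]<\infty$ resp.\ $\E[h_{2\ell+\delta+1}(W_1)]<\infty$ are available, i.e.\ moments of order barely above $x\log x$; in particular $\E[W_1^2]=\E[(\sum_j\norm{T_j})^2]$ may be infinite, and then $\E[|\sum_{v\in\mathcal{C}(t)}L(v)x_t|^2]$ need not be finite at all. Your off-diagonal term, after conditioning on the most recent common ancestor $u$, carries the coefficient $\sum_{i\neq j}\norm{T_i(u)}\norm{T_j(u)}$, which the cancellation $x_t\in V_+^\perp$ does not make integrable: the cancellation only shrinks the conditional expectations of the two descendant factors, and this shrinking is of order $1/(t-S(ui))$, hence gives no help at all for ancestors born within $O(1)$ of the line $t$ --- exactly the regime where the non-integrable coefficient sits. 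So the proposed bound $t^2\E[|\Phi(t)|^2]\to 0$ cannot be established (and is typically false or vacuous) under \eqref{eq:A4'}/\eqref{eq:A5}. This is precisely why the paper's proof does not use a plain variance bound: it decomposes at the intermediate stopping line $\mathcal{C}(t/2)$, works with characteristics truncated at $c=t/4$ (using Lemma \ref{Lem:no big jump over t} to control late births), partitions $\O$ into $p_t=o(t^{\ell-1+\epsilon})$ uniformly fine cells, truncates the subtree contributions $Z_v$ at the level $e^{S(v)}$, and then applies Chebyshev with the weight function $h_{2\ell+\delta}$, so that only the assumed $h$-moments of $W_1$ are needed; the renewal estimate of Proposition \ref{prop:uniform renewal} enters only through the deterministic means $m_s^{\phi^c_{I_{t,j}}}$. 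Without some such truncation/conditioning mechanism, your plan stalls exactly at the step you yourself flag as ``the hard part.''
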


\begin{proof}
Without loss of generality, we assume that $\sup_{t \geq 0} |x_t| \leq 1$.

Next notice that $V_+$ is an $\O$-invariant subspace of $\R^d$.
Thus also its orthogonal complement $V$ is an $\O$-invariant subspace of $\R^d$.

\Step 1: First assume that \eqref{eq:A4'} holds.
In particular, $\O$ is finite. Let $o_1,\ldots,o_p$ denote the elements of $\O$ with $o_1 = 1_{\Orth}$.
Then $((S(v),O(v))_{|v|=n})_{n \geq 0}$ defines a multi-type branching random walk with type space $\O$.
Pick an arbitrary $x \in V$. Then
\begin{equation}	\label{eq:matrices cancel}
\sum_{j=1}^p o_j x	~=~	0.
\end{equation}
Indeed, for $o \in \O$, $o\O = \O$ and, hence,
$o (o_1 x + \ldots + o_p x) = o_1x + \ldots + o_p x$.
Consequently,  $o_1x + \ldots + o_p x \in V_+$.
On the other hand, $o_1x+\ldots+o_px \in V$ since $V$ is $\O$-invariant.
This implies \eqref{eq:matrices cancel},
which in turn implies
\begin{equation*}
\frac{W}{p}  \sum_{j=1}^p o_j x_t	~=~	0
\end{equation*}
for every $t \geq 0$. Consequently,
\begin{eqnarray*}
t \bigg|\sum_{v \in \mathcal{C}(t)} L(v) x_t\bigg|
& = &
t \bigg|\sum_{j=1}^p \bigg(\sum_{v \in \mathcal{C}(t):O(v)=o_j} e^{-S(v)} O(v) - \frac{W}{p} o_j \bigg) x_t \bigg|	\\
& \leq &
\sum_{j=1}^p t \bigg| \bigg(\sum_{v \in \mathcal{C}(t):O(v)=o_j} e^{-S(v)} - \frac{W}{p} \bigg) o_j x_t \bigg|	\\
& \leq &
\sum_{j=1}^p t \bigg|\sum_{v \in \mathcal{C}(t):O(v)=o_j} e^{-S(v)} - \frac{W}{p} \bigg|.
\end{eqnarray*}
The result now follows from \cite[Theorem 2.14]{Iksanov+Meiners:2015b}
once it has been checked that the assumptions of the cited theorem are satisfied in the present situation.
The kind of checking that is needed to verify the assumptions can be found in the proof of \cite[Lemma 3.14]{Iksanov+Meiners:2015a}, it is here where the additional assumptions that $\norm{T_j} \leq 1$ a.s., $1 \le j \le N$, enters.

\smallskip
\Step 2: Next, we turn to the more delicate case where \eqref{eq:A5} holds, hence $\O$ is infinite.
Suppose that $I_{t,1},\ldots,I_{t,p_t}$ is a partition of $\O$
and, for $j=1,\ldots,p_t$, define $o_j \defeq h_{t,j}^{-1} \int_{I_{t,j}} o \, H_{\O}(do)$ where $H_{\O}$ is the normed Haar measure on $\O$
and $h_{t,j} = H_{\O}(I_{t,j})$.
Due to the translation invariance of the Haar measure $H_{\O}$,
the matrix $h_{t,1}o_1+\ldots+h_{t,p_t}o_{p_t}$ is invariant under multiplication with elements from $\O$ and
hence $(h_{t,1}o_1+\ldots+h_{t,p_t}o_{p_t})x = 0$ for all $x \in V$.
Therefore,
\begin{align}
t \bigg|\sum_{v \in \mathcal{C}(t)} L(v) x_t\bigg|
& ~=~ 
t \bigg|\sum_{v \in \mathcal{C}(t)} L(v) x_t - W \sum_{j=1}^{p_t} h_{t,j} o_j x_t \bigg|		\label{eq:rate, 1st decomposition}	\\
& ~\leq~
t \bigg| \sum_{j=1}^{p_t} \sum_{v \in \mathcal{C}(t): O(v) \in I_{t,j}} e^{-S(v)} (O(v)-o_j) x_t \bigg|	\notag	\\
& \hphantom{=}~
+t \bigg| \sum_{j=1}^{p_t} \bigg(\sum_{v \in \mathcal{C}(t):O(v) \in I_{t,j}} e^{-S(v)} -  h_{t,j} W\bigg) o_j x_t \bigg|.		\notag
\end{align}
Now assume that the partition is uniformly fine in the sense that
\begin{equation}	\label{eq:uniformly fine partition}
\max_{j=1,\ldots,p_t} \sup_{o \in I_{t,j}} \norm{o-o_j} = o(t^{-1})	\quad	\text{ as }	t \to \infty.
\end{equation}
Since $\O \subseteq \Orth$ and $\Orth$ is a $d(d-1)/2$-dimensional smooth manifold,
it has box-counting dimension $d(d-1)/2$, see e.g.\ \cite[Section 3.2]{Falconer:1990}.
Hence, for $0 < \epsilon < \frac\delta2 \wedge 1$, with $\delta$ as in \eqref{eq:A4},
we can choose the family of partitions in such a way that
${ p_t = o(t^{d(d-1)/2+\epsilon}) = o(t^{\ell-1+\epsilon})}$ as $t \to \infty$,
this will be assumed as well.

Then the summand in the second line of \eqref{eq:rate, 1st decomposition} vanishes as $t \to \infty$.
We thus centre our attention on the summand in the third line.
We will prove that
\begin{equation}	\label{eq:continuous rate of convergence}
t \sum_{j=1}^{p_t} \bigg|  \sum_{v \in \mathcal{C}(t):O(v) \in I_{t,j}} e^{-S(v)} -  h_{t,j} W\bigg|	~\to~	0	\quad	\text{in probability as } t \to \infty.
\end{equation}
We reformulate this problem in terms of a general branching process with type space $\O$
(see \cite{Jagers:1989} for definition and details).
For a measurable set $I \subseteq \O$ define
\begin{equation*}
\phi_I(t) ~=~ e^{t} \sum_{j \geq 1} e^{-S_j(\varnothing)} \1_{\{O_j(\varnothing) \in I\}} \1_{[0,S_j(\varnothing))}(t) ,	\quad	t \geq 0.
\end{equation*} 
For $c >0$, let $\phi_I^c(t) \defeq \1_{[0,c]}(t) \phi_I(t)$ and notice that
\begin{equation*}
[\phi_I^c]_v(t) ~=~ \1_{[0,c]}(t) e^{t} \sum_{j \geq 1} e^{-S_j(v)} \1_{\{O(v)O_j(v) \in I\}} \1_{[0,S_j(v))}(t) ,	\quad	t \geq 0.
\end{equation*}
The general branching process counted with characteristic $\phi_I^c$ is defined as
\begin{equation*}
\mathcal{Z}^{\phi_I^c}(t) \defeq \sum_{v \in \V} [\phi_I^c]_v(t-S(v)) = e^t \sum_{v \in \mathcal{C}(t): S(v|_{|v|-1}) \geq t-c} e^{-S(v)} \1_{\{O(v) \in I\}},
\quad	t \geq 0.
\end{equation*}
We write $m_t^{\phi_I^c} \defeq e^{-t} \E [\mathcal{Z}^{\phi^c_I}]$, which is finite since it is bounded by $\E[\mathcal{Z}^{\phi_\O}]=1$,
see \eqref{eq:change of measure stopping line}, and define
\begin{equation}	\label{eq:def h_I^c}
h_{I}^c ~\defeq~ \frac{\E[S_1 \wedge c]}{\E [S_1]} H_\O(I).
\end{equation} 
For $0<c<t$, we can then rewrite the left-hand side of \eqref{eq:continuous rate of convergence} as follows:
\begin{align}
t & \sum_{j=1}^{p_t} \bigg| \sum_{v \in \mathcal{C}(t):O(v) \in I_{t,j}} e^{-S(v)} -  h_{t,j} W\bigg|
~=~t  \sum_{j=1}^{p_t} \bigg| e^{-t} \mathcal{Z}^{\phi_{I_{t,j}}}(t) - h_{t,j} W \bigg| \notag \\
& \leq~
t \sum_{j=1}^{p_t} \Big(e^{-t} \mathcal{Z}^{\phi_{I_{t,j}}}(t) - e^{-t} \mathcal{Z}^{\phi^c_{I_{t,j}}}(t)\Big) + t \sum_{j=1}^{p_t}   \big(h_{t,j} - h_{I_{t,j}}^c \big)W	\label{eq:rate, 2nd decomposition}  \\
& \hphantom{\leq~} + t \sum_{j=1}^{p_t} \bigg| e^{-t} \mathcal{Z}^{\phi^c_{I_{t,j}}}(t) - h_{I_{t,j}}^c W \bigg| .
	\notag
\end{align}
The first sum in \eqref{eq:rate, 2nd decomposition} tends to $0$ in $\L^1$ as $t \to \infty$ when $c=t/4$.
Indeed, observe that $\O = \bigcup_{j=1}^{p_t} I_{t,j}$ implies that
$\phi^c_{I_{t,1}}(t)+\ldots+	\phi^c_{I_{t,p_t}}(t) = \phi^c_\O(t)$, $t \geq 0$
and, hence,
\begin{equation}	\label{eq:sum over Z^phi_I_t,j}
\sum_{j=1}^{p_t} \mathcal{Z}^{\phi^c_{I_{t,j}}}(t)	~=~	\mathcal{Z}^{\phi^c_\O}(t),	\quad	t \geq 0.
\end{equation} Then, taking expectations in the first sum in \eqref{eq:rate, 2nd decomposition} and using Eq. \eqref{eq:change of measure stopping line} as well as $\E[W]=1$ leads to
\begin{align*}
& t\,  \E\bigg[ e^{-t} \sum_{j=1}^{p_t} \mathcal{Z}^{\phi_{I_{t,j}}}(t) - e^{-t} \sum_{j=1}^{p_t}\mathcal{Z}^{\phi^c_{I_{t,j}}}(t)\bigg] \\
&~=~
t\, \E\bigg[\sum_{v \in \mathcal{C}(t)} e^{-S(v)} (1-\1_{\{S(v|_{|v|-1}) \ge t-c \}}) \bigg]\\
&~=~
t\, \Prob(S_{\tau(t)-1} < t-c)
~\leq~
t\, \Prob(S_{\tau(t)}-S_{\tau(t)-1} > c) \to 0	\text{ as } t \to \infty
\end{align*}
by Lemma \ref{Lem:no big jump over t} if $c$ grows linearly with $t$.
From now on, we fix $c=t/4$.
The expectation of the second sum in \eqref{eq:rate, 2nd decomposition} is bounded by
\begin{equation*}
t \,  \frac{\E[S_1] - \E[S_1 \wedge c]}{\E [S_1]} ~=~ \frac{1}{\E[S_1]} \, t \, \int_{t/4}^\infty \Prob(S_1 > s) \, \ds ~\to~0
\end{equation*}
as $t \to \infty$ since $\E[S_1^2] < \infty$ (which implies $P(S_1>s)=o(s^{-2})$).

The remaining sum in \eqref{eq:rate, 2nd decomposition} requires much more attention. 
We begin the proof by observing that $[\phi^c_{I_{t,j}}]_v(t-S(v)) = 0$ for all $v$ with $S(v) \leq \frac t2$ due to the convention $c=t/4$. Consequently, for $j=1,\ldots,p_t$,
\begin{equation*}
e^{-t} \mathcal{Z}^{\phi^c_{I_{t,j}}}(t)	~=~	\sum_{v \in \mathcal{C}(\frac t2)} e^{-S(v)} e^{-(t-S(v))}[\mathcal{Z}^{\phi^c_{I_{t,j}}}]_v(t-S(v)).
\end{equation*}
Using this, we can estimate as follows: 
\begin{align*}
t  & \sum_{j=1}^{p_t} \bigg| e^{-t} \mathcal{Z}^{\phi^c_{I_{t,j}}}(t) - h_{I_{t,j}}^c W\bigg|	\\
& \leq~
t \sum_{j=1}^{p_t} \sum_{v \in \mathcal{C}(\frac t2)} e^{- S(v)} \bigg| e^{-(t-S(v))}  [\mathcal{Z}^{\phi^c_{I_{t,j}}}]_v(t-S(v)) - m_{t-S(v)}^{\phi^c_{I_{t,j}}} \bigg|	\\
& \hphantom{\leq~}
+ t \sum_{j=1}^{p_t} \sum_{v \in \mathcal{C}(\frac t2): S(v) \le at} e^{-S(v)} \bigg| m_{t-S(v)}^{\phi_{I_{t,j}}^c} - h_{I_{t,j}}^c \bigg| \\
& \hphantom{\leq~}
+ t \sum_{j=1}^{p_t} \sum_{v \in \mathcal{C}(\frac t2): S(v) > at} e^{-S(v)} \bigg| m_{t-S(v)}^{\phi_{I_{t,j}}^c} - h_{I_{t,j}}^c \bigg|\\
& \hphantom{\leq~} 
+ t \sum_{j=1}^{p_t} h_{I_{t,j}}^c \bigg| \sum_{v \in \mathcal{C}(\frac t2)} e^{-S(v)} - W\bigg| ~\eqdef~\sum_{k=1}^4 J_k(t) 
\end{align*}
for some (fixed) $a \in (\frac 12, 1)$. We consider each term separately. Using \eqref{eq:def h_I^c}, the sum over $(h^c_{I_{t,j}})_j$ in $J_4(t)$ is uniformly bounded by one, and \cite[Proposition 4.3]{Iksanov+Meiners:2015b} gives that $\lim_{t \to \infty} | \sum_{v \in \mathcal{C}(\frac t2)} e^{-S(v)} - W | =0$ a.s. For $J_3(t)$, we change the order of the summation (the number of summands being finite a.s.) and use  \eqref{eq:sum over Z^phi_I_t,j} and the change of measure \eqref{eq:change of measure stopping line} to obtain
\begin{align*}
\E [J_3(t)] ~&\le~ t \E \bigg[ \sum_{v \in \mathcal{C}(\frac t2):S(v) > at } e^{-S(v)} \sum_{j =1}^{p_t} \big( m_{t-S(v)}^{\phi_{I_{t,j}}^c} + h_{I_{t,j}}^c \big)   \bigg] \\
&\le~  2 \, t \, \Prob \big( S_{\tau(t/2)}-t/2 > (a-1/2)t \big), 
\end{align*}
which vanishes by \cite[Lemma A.3]{Iksanov+Meiners:2015b}.
Turning to $J_2(t)$, using that $p_t =o(t^{\ell-1+\epsilon})$ and $\E[\sum_{v \in \mathcal{C}(\frac t2)} e^{-S(v)}]=1$
(by \eqref{eq:change of measure stopping line}), we have that
\begin{align}
\E[J_2(t)] ~&=~t
\E\bigg[\sum_{v \in \mathcal{C}(\frac t2):S(v) \le at} e^{-S(v)} \sum_{j=1}^{p_t} \big|  m_{t-S(v)}^{\phi_{I_{t,j}}^c} - h_{I_{t,j}}^c  \big| \bigg] \nonumber	\\
 &\le~ t \sup_{s \ge (a-1/2)t} \sum_{j=1}^{p_t} \big|  m_{s}^{\phi_{I_{t,j}}^c} - h_{I_{t,j}}^c  \big|  \nonumber \\
 &\le~\sup_{s \ge (a-1/2)t} \Big( \frac{s}{a-1/2} \Big)^{\ell+\epsilon} \max_{1 \le j \le p_t} \big|  m_{s}^{\phi_{I_{t,j}}^c} - h_{I_{t,j}}^c  \big|.
 \label{eq:rate, renewal}
 \end{align}
To proceed further, we need a rate-of-convergence result from Markov renewal theory.
Indeed, applying the change of measure, 
\begin{align}
&m_t^{\phi_{I_{t,j}^c}} = e^{-t} \E  \bigg[ \sum_{v \in \V} [\phi_{I_{t,j}^c}]_v(t-S(v)) \bigg]	\nonumber \\
&= e^{-t} \, \E \bigg[  \sum_{v \in \V} e^{t-S(v)} \1_{[0,c]}(t-S(v))\sum_{i \ge 1} e^{-S_i(v)} \1_{I_{t,j}}( O(vi))  \1_{[0, S_i(v)]}(t-S(v))  \bigg] \nonumber\\
&=  \sum_{n=0}^\infty \E \bigg[ \sum_{\abs{v}=n} e^{-S(v)} \, f_{t,j}^c(O(v),t-S(v)) \bigg] 
= \E \bigg[ \sum_{n=0}^\infty  f_{t,j}^c (O_n,t-S_n) \bigg], \nonumber 
\end{align}
with 
\begin{equation*}
f_{t,j}^c(o,r) ~\defeq~ \1_{[0,c]}(r)\E\big[ \1_{I_{t,j}}(oO_1)\1_{[0,S_1)}(r) \big] ~\le~\1_{[0,\infty)}(r) \Prob(S_1 > r) ~\eqdef~g(r).
\end{equation*}
By \eqref{eq:A4}, the function $g$ satisfies $g(r)=o(r^{\ell+\delta+1})$.
From \eqref{eq:def h_I^c}, we conclude
\begin{equation*}
h_{I_{t,j}}^c ~=~\frac{1}{\E[S_1]} \int_\R \int_\O f_{t,j}^c(o,r) H_\O(\ddo) \dr
\end{equation*}
and hence we can apply Proposition \ref{prop:uniform renewal} to deduce
that the last term in \eqref{eq:rate, renewal} tends to zero as $t \to \infty$.
 
We finally consider $J_1(t)$ and proceed as in \cite[pp.\,735--736]{Iksanov+Meiners:2015b}.
For fixed $t$, define
\begin{equation*}
Z_{v,j}~\defeq~e^{-(t-S(v))}[\mathcal{Z}^{\phi_{I_{t,j}}^c}]_v(t-S(v))
\end{equation*}
and similarly $Z_v$ with $I_{t,j}$ replaced by $\O$, i.e., $Z_v=\sum_{j=1}^{p_t} Z_{v,j}$.
Conditioned upon $\mathcal{F}_{\mathcal{C}(t/2)}$, the $Z_v$, $v \in \mathcal{C}(t/2)$ are independent.
Let $Z_{v,j}' \defeq Z_{v,j}\1_{\{Z_v \le e^{S(v)}\}}$, $m_{v,j}'\defeq\E[Z_{v,j}']$, and $J_1'(t)$ as $J_1(t)$,
but with $Z_{v,j}$ and $m_{t-S(v)}^{\phi_{I_{t,j}}^c}$ replaced by $Z_{v,j}'$ and $m_{v,j}'$, respectively.
On the set $\big\{ Z_v \le e^{S(v)} \text{ for all } v \in \mathcal{C}(t/2)\big\}$,
\begin{align*}
J_1(t) ~&=~J_1'(t) + t \sum_{j=1}^{p_t} \sum_{v \in \mathcal{C}(\frac t2)} e^{-S(v)} \big(m_{t-S(v)}^{\phi_{I_{t,j}}^c} - m_{v,j}' \big) \\
 ~&=~J_1'(t) + t \sum_{v \in \mathcal{C}(\frac t2)} e^{-S(v)} \big(m_{t-S(v)}^{\phi_{\O}^c} - m_{v}' \big).
\end{align*}
We want to prove that $J_1(t) \to 0$ in probability.
To this end, we use the above decomposition and obtain for arbitrary $\eta >0$,
\begin{align*}
\Prob\big( |J_1(t)| \ge \eta\big) ~&=~ \E  \big[  \Prob( |J_1(t)| \ge \eta | \mathcal{F}_{\mathcal{C}(\frac t2)})   \big] \\
&\le~ \E \bigg[ \sum_{v \in \mathcal{C}(\frac t2)} \Prob( Z_v > e^{S(v)} | \mathcal{F}_{\mathcal{C}(\frac t2)} ) \bigg] ~+~ \E \big[  \Prob( |J_1'(t)| \ge \eta/2 | \mathcal{F}_{\mathcal{C}(\frac t2)})   \big] \\
&\phantom{\le~} + \frac{2 t}{\eta} \E \bigg[ \sum_{v \in \mathcal{C}(\frac t2)} e^{-S(v)} \big(m_{t-S(v)}^{\phi_{\O}^c} - m_{v}' \big) \bigg].
\end{align*}
The first and the last term can be dealt with as the corresponding terms in \cite[pp.\,735--736]{Iksanov+Meiners:2015b}.
It remains to consider the middle term.
\begin{align*}
\E \big[\Prob&( |J_1'(t)| \ge \eta/2 | \mathcal{F}_{\mathcal{C}(\frac t2)})   \big] \\
~\le&~\sum_{j=1}^{p_t} \E \bigg[  \Prob \bigg( t \sum_{v \in \mathcal{C}(\frac t2)} e^{-S(v)}|Z_{v,j}'-m_{v,j}' | \ge \eta/(2 p_t)
\, \Big| \, \mathcal{F}_{\mathcal{C}(\frac t2)} \bigg)   \bigg] \\
\le&~\sum_{j=1}^{p_t} \frac{4 p_t^2 t^2}{\eta^2} \E \bigg[ \sum_{v \in \mathcal{C}(\frac t2)} e^{-2 S(v)} \mathrm{Var}[Z_{v,j}' |  \mathcal{F}_{\mathcal{C}(\frac t2)}] \bigg] \\
\le&~ \frac{4 p_t^2 t^2}{\eta^2} \E \bigg[ \sum_{v \in \mathcal{C}(\frac t2)} e^{-2 S(v)}  \sum_{j=1}^{p_t} \E[(Z_{v,j}')^2 |  \mathcal{F}_{\mathcal{C}(\frac t2)}] \bigg] \\ 
~\le&~ \frac{4 p_t^2 t^2}{\eta^2} \E \bigg[ \sum_{v \in \mathcal{C}(\frac t2)} e^{-2 S(v)}   \E[Z_{v}^2 \1_{\{Z_v \le e^{S(v)} \}} |  \mathcal{F}_{\mathcal{C}(\frac t2)}] \bigg] \\
=&~\frac{4 p_t^2 t^2}{\eta^2} \E \bigg[ \sum_{v \in \mathcal{C}(\frac t2)} e^{-2 S(v)}   \E\bigg[h_{2\ell+\delta}(Z_v) \frac{Z_{v}^2}{h_{2\ell+\delta}(Z_v)} \1_{\{Z_v \le e^{S(v)} \}} |  \mathcal{F}_{\mathcal{C}(\frac t2)} \bigg] \bigg] \\
\le&~\frac{4 p_t^2 t^2}{\eta^2} \E \bigg[ \sum_{v \in \mathcal{C}(\frac t2)} e^{- S(v)}  \frac{e^{S(v)}}{h_{2\ell+\delta}(e^{S(v)})} \bigg] \sup_{s \ge 0}
\E\bigg[h_{2\ell+\delta} \big(e^{-s} \mathcal{Z}^{\phi_\O}(s)\big)  \bigg] \\
\le&~\frac{4}{\eta^2} \frac{e^{t/2} t^{2 \ell + 2 \epsilon}}{h_{2\ell+\delta}(e^{t/2})} \sup_{s \ge 0} \E\bigg[h_{2\ell+\delta} \big(e^{-s} \mathcal{Z}^{\phi_\O}(s)\big)  \bigg] ~\to 0 \quad \text{ as } t \to \infty.
\end{align*}
Here we used the triangular inequality, the independence of $Z_{v,j}'$ and $\mathcal{F}_{\mathcal{C}(\frac t2)}$,
Chebyshev's inequality and the facts that $t \mapsto t^2/h_{2\ell+\delta}(t)$ and $t \mapsto t/ h_{2\ell+\delta}(t)$ are increasing and decreasing, respectively, for large $t$,
$t (\log t)^{2 \ell + 2\epsilon} / h_{2\ell+\delta}(t) \to 0$ as $t \to \infty$,
and the finiteness of the supremum, which follows from \eqref{eq:A4}
and is proved as in \cite[Lemma 3.14]{Iksanov+Meiners:2015a}.	\qed
\end{proof} 
%
%
%

\subsection{Computing $\Psi$}

In this section, we finish the determination of the $\F$-measurable L\'evy triplet $(W',\bf \Sigma, \nu)$
of solutions of the homogeneous equation.
In order to so, we will make use of some results proved in Section \ref{sec:invariant matrices and measures} below,
which are postponed since their proofs do not require probabilistic tools like branching processes that are used in this section.

As a by-product, we prove Proposition \ref{Prop:(U,alpha)-stable laws}.
We remind the reader of the definition of the functions $\eta_1^\alpha$, $\eta_2^\alpha$, $\eta^1$
and the vector $\gamma^1$ in Eqs.\ \eqref{eq:Definition eta1}--\eqref{eq:Definition gamma}.
$\eta_1^\alpha$, $\eta_2^\alpha$ are defined in terms of a $(\U,\alpha)$-invariant L\'evy measure $\nu^\alpha$,
i.e., satisfying \eqref{eq:bar nu (U,alpha)-invariant};
$\eta^1$ and $\gamma^1$ are defined in terms of a measure $\rho$ on $\Sd$. Note that \eqref{eq:A4'} implies $\G=\R$, hence $\U=\{t^Q \, : \, t >0 \} \times C_\U$, see Proposition \ref{Prop:Buraczewski et al}.
Here $t^Q \defeq  e^{(\ln t) Q}$, and we choose $Q$ such that $\norm{t^Q}=t$.

\begin{proposition}\label{Prop:solving Psi}
Assume \eqref{eq:A1}--\eqref{eq:A3}, let $\phi$ be a solution to \eqref{eq:FE of ST hom}
and let $\Phi=\exp(\Psi)$ be the limit of the multiplicative martingales, given by Proposition \ref{Prop:Multiplicative martingales}. 
\begin{itemize}
	\item[(a)]	Let $\alpha \in (0,1)$. There is a $(\U,\alpha)$-invariant L\'evy measure $\nu^\alpha$
				such that a.s.,
				\begin{equation}	\label{eq:Psi alpha smaller 1}
				\Psi(x) ~=~- W |x|^\alpha \eta_1^\alpha(x) + \imag W |x|^\alpha \eta_2^\alpha(x) \qquad \forall\, x \in \R^d .
				\end{equation}
	\item[(b)]	Let $\alpha =1$.
				\begin{itemize}
					\item[(b1)]	Assume \eqref{eq:A4} in addition.
								There is a $c>0$ such that a.s.,
								\begin{equation}	\label{eq:Psi alpha is 1, continuous case}
								\Psi(x) ~=~- Wc \abs{x} \qquad \forall\, x \in \R^d .
								\end{equation}
					\item[(b2)]	Assume \eqref{eq:A4'} in addition.
								There is a $z \in \R^d$ with $\E[\sum_{j \ge 1} T_j]z=z$ and a finite $\CU$-invariant measure $\rho$ on $\Sd$,
								satisfying $\int \scalar{x,s} \rho(\ds)=0$ for all $x \in E_1(Q^\transp)$,
								such that a.s., 
								\begin{equation}\label{eq:Psi alpha is 1, discrete case}
								\Psi(x) ~=~ \imag W \scalar{z,x} + W \big(\eta^1(x) + \imag\scalar{\gamma^1,x}\big) \qquad \forall\, x \in \R^d .
								\end{equation}
				\end{itemize}
	\item[(c)]	Let $\alpha \in (1,2)$. There is a $(\U,\alpha)$-invariant L\'evy measure $\nu^\alpha$
				such that a.s.,
				\begin{equation}\label{eq:Psi alpha greater 1}
				\Psi(x) ~=~ \imag  \scalar{Z,x} - W|x|^\alpha \eta_1^\alpha(x) + \imag W |x|^\alpha \eta_2^\alpha(x) \qquad \forall\, x \in \R^d .
				\end{equation}
	\item[(d)]	Let $\alpha=2$. Then there is a positive semi-definite $d \times d$ matrix $\Sigma$ satisfying $o^\transp \Sigma o$ for all $o \in \O$ and a  $z \in \R^d$ with $z = \sum_{j \geq 1} T_j z$ a.s.,
				such that a.s., 
				\begin{equation}\label{eq:Psi alpha is 2}
				\Psi(x) ~=~ \imag  \scalar{z,x} - W\frac{x^\transp \Sigma x}{2} \qquad \forall\, x \in \R^d .
				\end{equation}
	\item[(e)]	Let $\alpha >2$. Then there is a $z \in \R^d$ with $z = \sum_{j \geq 1} T_j z$ a.s., such that a.s., 
				\begin{equation}	\label{eq:Psi alpha greater 2}
				\Psi(x) ~=~ \imag  \scalar{z,x} \qquad \forall\, x \in \R^d .
				\end{equation}
\end{itemize}
\end{proposition}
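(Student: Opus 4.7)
The plan is to start from the L\'evy--Khintchine representation of $\Psi$ given by Proposition \ref{Prop:Multiplicative martingales},
\begin{equation*}
\Psi(x) = \imag\scalar{W',x} - \tfrac12\,x^\transp \mathbf{\Sigma} x + \int \bigl(e^{\imag\scalar{x,y}} - 1 - \imag\scalar{x,y}\1_{[0,1]}(\abs{y})\bigr)\nu(\dy),
\end{equation*}
and to substitute into it what Lemmas \ref{Lem:nu evaluated} and \ref{Lem:Sigma=W times deterministic Sigma} already give: $\nu = W\nu^\alpha$ for a deterministic $(\U,\alpha)$-invariant L\'evy measure (vanishing when $\alpha \geq 2$), and $\mathbf{\Sigma} = W\Sigma$ for a deterministic $\O$-invariant covariance matrix (vanishing unless $\alpha = 2$). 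What remains in every regime is to pin down the random drift $W'$. The common strategy is to reshape the integral so that $\Psi$ takes the form $\imag\scalar{W'',x}$ plus exactly the nonlinear-in-$x$ piece predicted in (a)--(e), where $W''$ differs from $W'$ by $W$ times a deterministic vector coming from the choice of truncation. Inserted into the functional equation \eqref{eq:FE Psi}, together with the $(\U,\alpha)$-invariance of the ingredients (Lemma \ref{Lem:I(tx) evaluated}) and \eqref{eq:FP tilted a.s.}, the nonlinear terms propagate automatically and we are left with the linear identity $W'' = \sum_{j \geq 1}T_j [W'']_j$ a.s., so $W''$ is an endogenous solution of \eqref{eq:FP of ST hom a.s.} and is classified by Proposition \ref{Prop:convergence Z_n w}.

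For $\alpha \neq 1$, the reshaping of the L\'evy integral is routine. When $\alpha \in (0,1)$, the tail estimate $\bar\nu(B_r^\comp) \asymp r^{-\alpha}$ from Section \ref{subsec:nu} combined with the L\'evy property gives $\int_{B_1}\abs{y}\,\nu^\alpha(\dy) < \infty$, so the truncation can be dropped at the cost of a $W\cdot\mathrm{const}$ shift; splitting into real and imaginary parts then produces $-W\abs{x}^\alpha\eta_1^\alpha(x) + \imag W\abs{x}^\alpha\eta_2^\alpha(x)$, and Proposition \ref{Prop:convergence Z_n w}(a) forces $W'' = 0$. When $\alpha \in (1,2)$ the symmetric manoeuvre works: $\int_{B_1^\comp}\abs{y}\,\nu^\alpha(\dy) < \infty$, the truncation extends to all of $\R^d$, and Proposition \ref{Prop:convergence Z_n w}(c) identifies $W''$ with an endogenous solution $Z$. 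The cases $\alpha = 2$ and $\alpha > 2$ are immediate since the L\'evy integral disappears, leaving $\imag\scalar{W',x} - Wx^\transp\Sigma x/2$ (respectively $\imag\scalar{W',x}$), and Proposition \ref{Prop:convergence Z_n w}(d) forces $W' = z$ with $\sum_{j \geq 1}T_j z = z$ a.s.

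The delicate case is $\alpha = 1$, where moving the truncation would cost a logarithmically divergent shift, so the L\'evy integral has to be computed in place using the structure of $\nu^\alpha$. Under \eqref{eq:A4}, Remark \ref{Rem:U under A4} forces $\R_> \times \SOd \subseteq \U$, so $\nu^\alpha(\dy) = c\abs{y}^{-d-1}\dy$; the $y \mapsto -y$ symmetry of the centred truncation kills the imaginary part of the integrand, the real part evaluates to $-Wc'\abs{x}$, and since $\SOd \subseteq \O$ rules out non-trivial eigenvectors in Proposition \ref{Prop:convergence Z_n w}(b), the residual drift must vanish, giving \eqref{eq:Psi alpha is 1, continuous case}. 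Under \eqref{eq:A4'}, where $\G = \R_>$ and $\O$ is finite, I would invoke the radial decomposition of $(\U,1)$-invariant L\'evy measures established in Section \ref{sec:invariant matrices and measures} (Proposition \ref{Prop:structure of bar nu}) to express the integral as $W\eta^1(x) + \imag W\scalar{\gamma^1,x}$ modulo an absorbable shift, then apply Proposition \ref{Prop:convergence Z_n w}(b) to write the endogenous remainder of $W'$ as $Wz$ for an eigenvector $z$ of $\E[Z_1]$, yielding \eqref{eq:Psi alpha is 1, discrete case}. Proposition \ref{Prop:(U,alpha)-stable laws} then falls out as the $W \equiv 1$ specialisation of the same analysis (the endogeneity condition degenerating into the $(U,\alpha)$-stability identity itself), so no separate proof is required. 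The principal obstacle is the \eqref{eq:A4'}, $\alpha=1$ case: recovering the precise constants $\eta^1$ and $\gamma^1$ and verifying that they exhaust the possible contributions requires leaning heavily on the algebraic results of Section \ref{sec:invariant matrices and measures}.
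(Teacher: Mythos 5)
Your treatment of (a), (c), (d), (e) and (b1) follows essentially the paper's route: substitute $\nu=W\bar\nu$ and $\mathbf{\Sigma}=W\Sigma$ from Lemmas \ref{Lem:nu evaluated} and \ref{Lem:Sigma=W times deterministic Sigma} into the L\'evy--Khintchine form, evaluate the integral (Lemma \ref{Lem:I(tx) evaluated}) so that the truncation cost is absorbed into a shifted drift $W'+\gamma^\alpha W$, use \eqref{eq:M}, the $\U$-invariance of $\eta_j^\alpha$ and \eqref{eq:FP tilted a.s.} to cancel the nonlinear terms, and classify the residual endogenous fixed point via Proposition \ref{Prop:convergence Z_n w}. (The paper separates real and imaginary parts and then divides by $|x|$ and lets $|x|\to\infty$ resp.\ $|x|\to 0$ to isolate the linear term; your direct cancellation via $\sum_{|v|=n}\norm{L(v)}^\alpha[W]_v=W$ accomplishes the same thing.)

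The gap is in (b2). You present the centering condition $\int\scalar{x,s}\,\rho(\ds)=0$ for all $x\in E_1(Q^\transp)$ as something that falls out of the algebraic structure of $(\U,1)$-invariant L\'evy measures, and you describe the passage to $W\eta^1(x)+\imag W\scalar{\gamma^1,x}$ as happening ``modulo an absorbable shift''. Neither holds. The vector $\gamma^1$ exists only when the right-hand side of \eqref{eq:Definition gamma} lies in the range of $Q-\Id$, which by \cite[Theorem 13]{Luczak:2010} is \emph{equivalent} to the centering condition; without it there is no strictly $(\U,1)$-stable exponent $\tilde{\Psi}$ to subtract, and the leftover shift is exactly the logarithmically divergent one you flag at the start of your $\alpha=1$ discussion. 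Moreover, the centering condition is not a consequence of Proposition \ref{Prop:structure of bar nu} or anything else in Section \ref{sec:invariant matrices and measures}; it has to be extracted from the fixed-point equation itself. The paper does this probabilistically: for $x\in E_1(Q^\transp)\cap E_1(\CU)$ the explicit evaluation \eqref{eq:Psi alpha 1 a} of the L\'evy integral injects a term proportional to $\sum_{|v|=n}\norm{L(v)}\log\norm{L(v)}\,[W]_v\int\scalar{x,s}\,\rho(\ds)$ into the functional equation for the drift, and the a priori bound $|W^h|\le KW$ of Lemma \ref{lem:W^h bounded} together with $\sup_{|v|=n}\norm{L(v)}\to 0$ a.s.\ forces $\int\scalar{x,s}\,\rho(\ds)=0$, since otherwise this term diverges; the case $x\in E_1(\CU)^\perp$ then follows from $\CU$-invariance of $\rho$. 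Until you supply this (or an equivalent) argument, the decomposition in (b2) does not get off the ground and Proposition \ref{Prop:convergence Z_n w}(b) cannot be applied to the residual drift.
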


\begin{proof}
By Proposition \ref{Prop:Multiplicative martingales},
$\Psi$ is a L\'evy-Khintchine exponent (see \eqref{eq:Psi}) with an ${\F}$-measurable L\'evy triplet $(W', {\bf \Sigma}, \nu)$.
By Lemma \ref{Lem:nu evaluated}, $\nu=W \bar{\nu}$ a.s.~for a deterministic $(\U,\alpha)$-invariant L\'evy measure $\bar{\nu}$.
By Lemma \ref{Lem:Sigma=W times deterministic Sigma}, $\mathbf{\Sigma} = W \Sigma$ a.s.\ for a deterministic covariance matrix $\Sigma$
satisfying $\Sigma = o \Sigma o^{\transp}$ for all $o \in \O$.
Moreover, $\bar\nu =0$ if $\alpha \geq 2$ and $\Sigma=0$ unless $\alpha=2$.
The burden of the proof is to determine the random shift $W'$.
We now consider the cases separately. 

(a) Let $0 < \alpha < 1$. Using the evaluation of the L\'evy integral in Lemma \ref{Lem:I(tx) evaluated},
we obtain from \eqref{eq:Psi} that for all $x \in \R^d \setminus \{0\}$,
\begin{align}
\Psi(x) ~&=~\imag \scalar{W',x} + W \big(- |x|^\alpha \eta_1^\alpha(x) + \imag |x|^\alpha \eta_2^\alpha(x) + \imag \scalar{\gamma^\alpha,x} \big) \nonumber \\
~&=~\imag  \scalar{W'+\gamma^\alpha W,x} - W  |x|^\alpha \eta_1^\alpha(x) + \imag W |x|^\alpha \eta_2^\alpha(x)  \label{eq:alpha smaller 1,Psi1},
\end{align}
for functions $\eta_1^\alpha$, $\eta_2^\alpha$ defined as  in \eqref{eq:Definition eta1} and \eqref{eq:Definition eta2}, resp.,
in terms of $\nu^\alpha \defeq \bar{\nu}$. Using that $\eta_j^\alpha(u^\transp x)=\eta_j^\alpha(x)$, $j=1,2$, and \eqref{eq:M},
we infer that for all $n \in \N$
\begin{align}
\Psi(x) ~&=~ \sum_{\abs{v}=n} [\Psi]_v(L(v)^\transp x) \nonumber \\
&=~\imag  \sum_{\abs{v}=n} \scalar{L(v) [W'+\gamma^\alpha W]_v,x} - \sum_{\abs{v}=n} \norm{L(v)}^\alpha [W]_v |x|^\alpha \eta_1^\alpha(x) \nonumber \\
&\phantom{=~} + \imag \sum_{\abs{v}=n} \norm{L(v)}^\alpha [W]_v |x|^\alpha \eta_2^\alpha(x)  \quad \text{a.s.}\label{eq:alpha smaller 1,Psi2}
\end{align}
Combining \eqref{eq:alpha smaller 1,Psi1} and \eqref{eq:alpha smaller 1,Psi2} and linear independence of $1$ and $\imag$, we obtain
\begin{align}
\scalar{W'+&\gamma^\alpha W,x} +  W |x|^\alpha \eta_2^\alpha(x) \nonumber \\ 
~=&~  \sum_{\abs{v}=n}  \scalar{L(v) [W'+\gamma^\alpha W]_v,x}
 +  \sum_{\abs{v}=n} [W]_v \norm{L(v)}^\alpha |x|^\alpha \eta_2^\alpha(x). \label{eq:alpha smaller 1,Psi3}
\end{align}
Dividing by $|x|$ and letting $\abs{x} \to \infty$, we obtain
\begin{equation} \label{eq:alpha smaller 1,Psi4}
\scalar{W'+\gamma^\alpha W,y} ~=~ \sum_{\abs{v}=n}  \scalar{L(v) [W'+\gamma^\alpha W]_v,y}
\end{equation}
for all $y \in \Sd$, hence $W'+\gamma^\alpha W$ is a solution to \eqref{eq:FP of ST hom a.s.},
i.e., is an endogenous fixed point. Since $\alpha <1$, Proposition \ref{Prop:convergence Z_n w} yields that $W'+\gamma^\alpha W=0$ a.s.
Then \eqref{eq:Psi alpha smaller 1} follows from \eqref{eq:alpha smaller 1,Psi1}.

(c) For $1 < \alpha <2$, we start from Eq.\ \eqref{eq:alpha smaller 1,Psi3}. Dividing by $|x|$, but considering $\abs{x} \to 0$ this time, we obtain the identity \eqref{eq:alpha smaller 1,Psi4}. Hence, $W'+\gamma^\alpha W$ is an endogenous fixed point.
Proposition \ref{Prop:convergence Z_n w} thus implies \eqref{eq:Psi alpha greater 1}.

(b1) Assumption \eqref{eq:A4} implies $\U=\R_> \times \O$ with $\O=\SOd$ or $\O=\Orth$, see Remark \ref{Rem:U under A4}.
Thus, the r.v.~with L\'evy triplet $(0,0,\bar{\nu})$ is rotation invariant and $1$-stable.
By \cite[Theorem 14.14]{Sato:1999}, its characteristic exponent equals $-c \abs{x}$ for some $c>0$ and we obtain that a.s.\ 
\begin{equation*}
\Psi(x) ~=~\imag \scalar{W',x} - W c \abs{x}.
\end{equation*}
Using \eqref{eq:M} and considering real and imaginary part separately, we obtain that 
\begin{equation*}
W' ~=~\sum_{\abs{v}=n} L(v) [W']_v,
\end{equation*}
and Proposition \ref{Prop:convergence Z_n w} yields that $W'=0$ a.s.\ due to the assumption \eqref{eq:A4}. 

(b2) Let $\rho$ be the spherical component of the L\'evy measure $\bar{\nu}$, given by Proposition \ref{Prop:structure of bar nu}.
We start by proving that $\int \scalar{x,s} \rho(\ds)=0$ for all $x \in E_1(Q^\transp)$. We consider two different cases: 

1. Suppose $x \in E_1(Q^\transp) \cap E_1(\CU) \eqdef V'$, i.e.,
$u^\transp x=\norm{u}x$ for all $u \in \U$.
For this case, we adjust the proof given of \cite[Theorem  4.10 (b1)]{Iksanov+Meiners:2015a} to the present situation. 
Note that on $V'$,
\eqref{eq:FP of ST hom} reduces to a fixed-point equation of a smoothing transformation with nonnegative scalar weights.
Let $e_1, \dots, e_k$ denote an orthonormal basis of $V'$ and write $\tilde{W}_j :=\scalar{e_j, W' + \gamma W}$
where $\gamma$ is as in \eqref{eq:Psi alpha 1 a}.
Further, let $s_j:=\scalar{e_j, s}$ for $j=1,\ldots,k$ and $s \in \R^d$.
Thus, using \eqref{eq:Psi alpha 1 a} and  the linear independence of $\imag$ and $1$, we obtain for all $r \in \Rp$,
$j = 1,\ldots,k$, a.s.
\begin{align}
& r\tilde{W_j} -  W \frac{2}{\pi} \int_{\Sd}  r s_j \log(|r s_j|) \, \rho(\ds) \nonumber  \\
&~=~ r \sum_{v=n} \norm{L(v)} [\tilde{W}_j]_v - \sum_{v=n} \norm{L(v)} \log( \norm{L(v)}) [W]_v \frac2\pi \int_{\Sd} r s_j \,\rho(\ds) \nonumber \\
&\hphantom{~=~ r \sum_{v=n} \norm{L(v)} [\tilde{W}_j]_v}  - \sum_{v=n} \norm{L(v)} [W]_v \frac2\pi \int_{\Sd} r s_j \log (|r s_j|) \, \rho(\ds).	\label{eq:Jej}
\end{align}
Assuming for a contradiction that $\int_{\Sd} s_j \rho(\ds) \neq 0$ for some $j$,
we choose $r > 0$ such that $\int_{\Sd} r s_j \log |r s_j| \, \rho(\ds)$ vanishes. Hence, upon dividing by $r$,
\begin{align} \label{eq:tilde W}
\tilde{W_j} ~=~ \sum_{v=n} \norm{L(v)} [\tilde{W}_j]_v - \sum_{v=n} \norm{L(v)} \log \big( \norm{L(v)} \big) [W]_v \frac2\pi \int_{\Sd}  s_j \,\rho(\ds).
\end{align} 
By Lemma \ref{lem:W^h bounded}, Eqs.\ \eqref{eq:W'} and \eqref{eq:W'^h} and Lemma \ref{Lem:nu evaluated},
we deduce that there is some $K \geq 0$ such that $|\tilde{W}_j| \leq K W$ a.s.,
and consequently also $|[\tilde{W}_j]_v| \le K [W]_v$ a.s.\ for all $v \in \V$.
This together with \eqref{eq:tilde W} yields that a.s.
\begin{equation*}
\bigg|\sum_{v=n} \norm{L(v)} \log(\norm{L(v)}) [W]_v \frac2\pi \int_{\Sd}  s_j \,\rho(\ds) \bigg| ~\le~ 2 K W.
\end{equation*}
The assumption $\int_{\Sd} s_j\ \rho(\ds) \neq 0$ implies that the left-hand side tends to $\infty$ a.s.\ since
$\lim_{n \to \infty} \sup_{|v|=n} \norm{L(v)} = 0$ a.s.\ by \eqref{eq:Biggins:1998}
and $\sum_{|v|=n} \norm{L(v)} [W]_v=W$ a.s. Contradiction!
Hence, $\int_{\Sd} \scalar{x,s} \, \rho(\ds)=0$ for all $x \in V'$.

2. Suppose $x \in E_1(\CU)^\perp$. Since $\rho$ is $\CU$-invariant (see Proposition \ref{Prop:structure of bar nu}),
it follows that $s_0 \defeq \int s\, \rho(\ds)$ is $\CU$-invariant as well, thus $0=\scalar{x,s_0}=\int \scalar{x,s} \rho(\ds)$.

Hence, by 1. and 2. together, 
\begin{equation}	\label{eq:condition for rho}
\int \scalar{x,s} \, \rho(\ds)=0	\quad \text{for all } x \in E_1(Q^\transp).
\end{equation}
By Corollary \ref{Cor:operator stability} below, the infinitely divisible law with L\'evy triplet $(0,0,\bar{\nu})$ is operator-stable with exponent $Q$.
For such laws, \cite[Theorem 13]{Luczak:2010} shows that validity of \eqref{eq:condition for rho}
is equivalent to the existence of $\gamma^1$
(given by the fomula \eqref{eq:Definition gamma}, cf.\ \cite[Proposition 12 and Eq.\ (19)]{Luczak:2010})
such that
\begin{equation*}
\tilde{\Psi}(x)~\defeq~\imag\scalar{\gamma^1,x} + \eta^1(x)
\end{equation*} 
satisfies $\tilde{\Psi} \big((t^Q)^\transp  x\big) = t \tilde{\Psi}(x)$ for all $t>0$.
Moreover, since $t^Q$ commutes with $\CU$ (see Prop. \ref{Prop:Buraczewski et al}),
the $\CU$-invariance of $\rho$ and the definition of $\gamma^1$ imply that $\gamma^1$ is $\CU$-invariant, and so is $\eta^1$.
Hence, $\tilde{\Psi} \big((t^Q)^\transp o^\transp  x\big) = t \tilde{\Psi}(x)$ for all $t>0$, $o \in \CU$, i.e.,
$\tilde{\Psi}$ is the exponent of a strictly $(\U,1)$-stable law.
 
Thus we can write
\begin{equation}
\Psi(x) ~=~\imag \scalar{W'-\gamma^1 W,x} + W \tilde{\Psi}(x), \label{eq:Psi alpha 1 b}
\end{equation}
and this equals, using \eqref{eq:M} and the strict $(\U,1)$-stability of $\tilde{\Psi}$,
\begin{align}
\Psi(x) ~=&~\imag \scalar{\sum_{|v|=n} L(v) [W'-\gamma^1 W]_v,x} + \sum_{|v|=n} \tilde{\Psi}(x) \norm{L(v)}[W]_v \nonumber	\\
~=&~\imag \scalar{\sum_{|v|=n} L(v) [W'-\gamma^1 W]_v,x} + W \tilde{\Psi}(x). \label{eq:Psi alpha 1 c}
\end{align}
Substracting \eqref{eq:Psi alpha 1 c} from \eqref{eq:Psi alpha 1 b}, we infer that $W'-\gamma^1W$ is an endogenous fixed point, hence equals $zW$ for some $z$ with $\E[\sum_{j \ge 1} T_j] z =z$ by Proposition \ref{Prop:convergence Z_n w}. All in all,
\begin{equation*} \Psi(x)=zW + \tilde{\Psi}(x).\end{equation*}

(d) $\alpha=2$ implies $\nu=0$ a.s.~and hence
\begin{equation*}
\Psi(x)~=~\imag \scalar{W',x} - W\frac{x^\transp \Sigma x}{2}.
\end{equation*}
The claimed properties of $\Sigma$ are proved in Lemma \ref{Lem:Sigma=W times deterministic Sigma}. Using again the linear independence of $1$ and $\imag$ and \eqref{eq:M}, we deduce, since $x \in \R^d \setminus \{0\}$ is arbitrary, that
\begin{equation*}
W' ~=~\sum_{\abs{v}=n} L(v) [W']_v,
\end{equation*}
hence $W'$ satisfies \eqref{eq:FP of ST hom a.s.}.
By Proposition \ref{Prop:convergence Z_n w}, either $W'=0$ or $W'=w$ for a deterministic $w \neq 0$
which satisfies $w = \sum_{j \ge 1} T_j w$ a.s.

(e) In this case, $\nu$ and ${\bf \Sigma}$ vanish, $W'$ can be identified as in (d). \qed
\end{proof}

\begin{proof}[Proof of Proposition \ref{Prop:(U,alpha)-stable laws}]
The notion of $(U,\alpha)$-stability implies, using uniqueness of the L\'evy triple $(\gamma, \Sigma, \nu^\alpha)$,
that $\nu^\alpha$ satisfies \eqref{eq:bar nu (U,alpha)-invariant} and  that $o^\transp \Sigma o$ for all $o \in O$.
For $\alpha \neq 1$, we can argue as before, using the identity $\Psi(u^\transp x)=\norm{u}^\alpha\Psi(x)$
and letting $\norm{u} \to \infty$ resp. $\norm{u} \to 0$ to prove that only $\eta_{1,2}^\alpha$ or $\Sigma$ remain.

If $\alpha=1$ and $U=\{t^Q \, : \, t >0\} \times C$,
then a $(U,1)$-stable law is in particular operator-stable with exponent $Q$ (see Section \ref{subsect:operator semistable}),
and necessarily of the form $(\gamma,0,\nu^\alpha)$, where $\int f(x) \nu^\alpha(\dx)=\int_{\R_>} \int_{\Sd} f(t^Qs) \frac{1}{t^2} \, \rho(\ds) \dt$.
Then \cite[Proposition 12 and Theorem 13]{Luczak:2010} give that \eqref{eq:condition for rho} is equivalent to the existence of $\gamma \in \R^d$
such that $(\gamma,0,\nu^\alpha)$ is strictly operator-stable.
In addition, if \eqref{eq:condition for rho} holds, then $(\gamma^1,0,\nu^\alpha)$ is strictly operator-stable,
and $\gamma^1$ inherits $C$-invariance from $\rho$. Thus, $(\gamma^1,0,\nu^\alpha)$ is strictly $(U,1)$-stable
and if $(\gamma,0,\nu^\alpha)$ is strictly $(U,1)$-stable as well, then also $(\gamma^1-\gamma,0,0)$ is $(U,1)$-stable,
which implies that $z\defeq\gamma^1-\gamma$ satisfies $u^\transp z=\norm{u}z$ for all $u\in U$. \qed
\end{proof}

Propositions  \ref{Prop:(U,alpha)-stable laws} and \ref{Prop:solving Psi} together show that all solutions of the homogeneous equation are of the form $Z+Y_W$. It remains to solve the inhomogeneous equation.

\subsection{Proof of Theorem \ref{Thm:solutions to multivariate smoothing equations}: The converse inclusion} \label{subsec:converse inclusion}	\label{subsec:inhomogeneous equation}

Since we have determined all solutions to the homogeneous equation in Proposition \ref{Prop:solving Psi} above,
we can now finish the proof of our main result by proving the converse inclusion in Theorem \ref{Thm:solutions to multivariate smoothing equations}
(the direct inclusion has already been proved in Section  \ref{subsec:proof direct inclusion}).

\begin{proof}[Proof of Theorem \ref{Thm:solutions to multivariate smoothing equations}: The converse inclusion]
Let $X$ be a solution to \eqref{eq:FP of ST inhom} with characteristic function $\phi$.
Write $\Phi_n(x)\defeq\prod_{|v|=n} \phi(L(v)^\transp x)$,
and notice that the multiplicative martingale associated with $\phi$ takes the form
$M_n(x)=\exp(\imag \scalar{x,W_n^*}) \cdot \Phi_n(x)$ (see \eqref{eq:M_n}).
The assumption that $W_n^* \to W^*$ in probability implies 
\begin{equation*}
\Phi_n(x) ~~\to~ M(x)/\exp(\imag\scalar{W^*,x}) ~\eqdef~\Phi(x)
\end{equation*}
in probability. Arguing as in the proof of \cite[Theorem 4.2]{Alsmeyer+Meiners:2012},
it follows that $\psi(x)\defeq\E[\Phi(x)]$ satisfies the functional equation \eqref{eq:FE of ST hom} of the homogeneous smoothing transform
and that $\Phi(x)$ equals the limit of the multiplicative martingale associated with $\psi(x)$,
hence $\Phi(x)=\exp(\Psi(x))$ with $\Psi(x)$ given by Proposition \ref{Prop:solving Psi}. 
We conclude that 
\begin{equation*}
\phi(x)=\E[M(x)]=\E\big[\exp(\imag\scalar{W^*,x})\Phi(x) \big] =\E\big[\exp(\imag\scalar{W^*,x} + \Psi(x)) \big].
\end{equation*}	\qed
\end{proof}

\section{Matrices and measures invariant under actions of similarity groups}	\label{sec:invariant matrices and measures}

In this section, we study the property of $(U,\alpha)$-stability in detail, for arbitrary closed subgroups $U \subseteq \Sim$.
We start by describing the general structure of such groups.
This will allow us to relate $(U,\alpha)$-stable laws to operator semi-stable laws,
and to characterize L\'evy measures and covariance matrices,
satisfying the invariance properties \eqref{eq:bar nu (U,alpha)-invariant} and \eqref{eq:Sigma=oSigmao^T}, respectively.

\subsection{Structure of $U$ and polar coordinates}	\label{app:polar coordinates}	

Let $U \nsubseteq \Orth$ be a closed subgroup of the similarity group $\Sim$, a particular case of which is $\U$,
the closed subgroup  generated by the $T_j$, $j=1,\ldots,N$.
We write $G$ for the image of $U$ under the group homomorphism $u \mapsto \norm{u}$
and distinguish between the discrete case $G=r^\Z$ for some $0<r<1$, and the continuous case $G=\Rp$. As  before, $t^Q\defeq e^{(\ln t) Q}$, and the right-hand side denotes the matrix exponential.

\begin{proposition}	\label{Prop:Buraczewski et al}
Let $\Cu \defeq U \cap \Orth$.
Then there is a subgroup $\Au \subseteq U$ isomorphic to $G$ such that
\begin{equation*}
U ~\simeq~ \Au \ltimes \Cu,
\end{equation*}
in particular, every $u \in U$ has a unique representation $u=a c$ with $a \in \Au, c \in \Cu$. Moreover, in the
\begin{itemize}
\item discrete case: $\Au = \{A^n \, : \, n \in \Z\}$ for some $A \in U$ with $\norm{A}=r$,
\item continuous case: $\Au=\{t^Q \, : \, t \in \Rp\}$ for a $d \times d$-matrix $Q=Q'+c\Id$, where $Q'$ is skew symmetric and $c \neq 0$. $Q$ can be chosen in such a way that $\Au$ and $\Cu$ commute. 
\end{itemize}
\end{proposition}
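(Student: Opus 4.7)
The plan is to analyze the continuous group homomorphism $\pi: U \to G$, $u \mapsto \norm{u}$, whose kernel is exactly $\Cu = U \cap \Orth$. This yields a short exact sequence $1 \to \Cu \to U \xrightarrow{\pi} G \to 1$, and the proposition reduces to exhibiting a continuous group-theoretic section of $\pi$ whose image $\Au$ has the prescribed form. Uniqueness of the factorization $u = ac$ with $a \in \Au$, $c \in \Cu$ will then be automatic from $\pi(u) = \pi(a)$.

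First I would dispatch the discrete case $G = r^{\Z}$. By surjectivity of $\pi$ pick any $A \in U$ with $\norm{A} = r$ and set $\Au \defeq \{A^n : n \in \Z\}$. Then $\Au \cap \Cu = \{\Id\}$ since $\norm{A^n} = r^n = 1$ forces $n = 0$, so the map $n \mapsto A^n$ is a continuous group isomorphism $\Z \to \Au$ inverting $\pi$ on $\Au$, and hence $U = \Au \ltimes \Cu$.

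In the continuous case $G = \Rp$, the existence of a one-parameter subgroup $\Au = \{t^Q : t > 0\}$ splitting the sequence is the content of Proposition C.1 and Theorem D.13 of Buraczewski et al., which I would invoke directly. The claimed form $Q = Q' + c\Id$ with $Q'$ skew-symmetric then follows from the Lie algebra decomposition $\mathrm{Lie}(\Sim) = \R \cdot \Id \oplus \mathfrak{o}(d)$, since any $Q \in \mathrm{Lie}(U) \subseteq \mathrm{Lie}(\Sim)$ splits uniquely into a scalar and a skew-symmetric part. The requirement that $\pi(t^Q) = \norm{t^Q} = t^c$ surjects onto $\Rp$ forces $c \neq 0$, after which reparameterization normalizes $c$ (and gives $\norm{t^Q} = t$).

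The main obstacle is the last assertion: that $Q$ can be arranged so that $\Au$ and $\Cu$ commute. The natural approach is to average the skew part against the normalized Haar measure $H_{\Cu}$ on the compact group $\Cu$, setting
\begin{equation*}
\tilde Q' ~\defeq~ \int_{\Cu} o \, Q' \, o^{-1} \, H_{\Cu}(\ddo), \qquad \tilde Q ~\defeq~ \tilde Q' + c\Id.
\end{equation*}
By construction $o \tilde Q o^{-1} = \tilde Q$ for every $o \in \Cu$, so $o \cdot t^{\tilde Q} = t^{\tilde Q} \cdot o$ for all $t > 0$, $o \in \Cu$. What requires care is that $\{t^{\tilde Q}\}$ must still lie inside $U$: this holds because $\Cu \subseteq U$, so the adjoint action of $\Cu$ preserves $\mathrm{Lie}(U)$, hence $\tilde Q \in \mathrm{Lie}(U)$ and the resulting closed one-parameter subgroup is contained in $U$. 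Since the coefficient of $\Id$ is untouched by averaging, $\pi(t^{\tilde Q}) = t^c$ still surjects onto $\Rp$, so $\{t^{\tilde Q}\}$ remains a valid section, and the skew-symmetric part of the averaged matrix is still skew-symmetric, preserving the required form.
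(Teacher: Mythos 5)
Your proof is correct, and it follows the paper's skeleton for the main structural input --- both you and the authors obtain the existence of the splitting one-parameter subgroup in the continuous case by citing Proposition C.1 of Buraczewski et al. --- but you diverge in how the remaining assertions are handled, and your treatment is more self-contained. For the commutativity of $\Au$ and $\Cu$, the paper simply cites Proposition D.13 of the same reference, whereas you prove it directly by Haar-averaging: replacing $Q$ by $\tilde Q = \int_{\Cu} oQo^{-1}\,H_{\Cu}(\ddo)$. That argument is sound: the adjoint action of $\Cu \subseteq U$ preserves the (closed, linear) Lie algebra of $U$, so $\tilde Q$ remains in it; averaging preserves skew-symmetry of $Q'$ and leaves the scalar part $c\Id$ untouched, so $\norm{t^{\tilde Q}} = t^{c}$ still surjects onto $\Rp$ and the intersection $\Au \cap \Cu = \{\Id\}$ together with $\Au\Cu = U$ (both forced by the norm homomorphism) gives the unique factorization. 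Your derivation of the form $Q = Q' + c\Id$ from the decomposition of the Lie algebra of $\Sim$ into $\R\cdot\Id \oplus \mathfrak{o}(d)$ is also cleaner than the paper's sketch, which argues at the group level that a similarity with $\norm{u} \neq 1$ is an orthogonal matrix times $e^{(\ln c)\Id}$. Finally, your discrete-case argument (pick any $A$ with $\norm{A}=r$ and check $\Au \cap \Cu = \{\Id\}$) is the elementary verification the paper leaves to the citation. What your route buys is independence from Proposition D.13 and an explicit mechanism for the commutation; what it costs is nothing beyond the standard facts about closed subgroups of Lie groups (Cartan's theorem, invariance of the Lie algebra under the adjoint action, and stability of closed linear subspaces under integration) that you correctly invoke.
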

\begin{proof}
The structure of $U$ is given by Proposition C.1 in \cite{Buraczewski+al:2009}, commutativity in the continuous case is proved in Proposition D.13 in \cite{Buraczewski+al:2009}. It is proved there that (in the continuous case) $Q$ is an element of the Lie Algebra of $U$, i.e., $e^Q \in U \subseteq \Sim$. Any orthogonal matrix is the exponential of a skew symmmetric matrix, hence a similarity matrix $u$ with $\norm{u}\neq 1$ is the product of an orthognal matrix times a scalar multiple of the identity matrix, which we represent by $e^{(\ln c) \Id}$. \qed
\end{proof}

The multiplication $\cdot:(\Au \ltimes \Cu)^2 \to \Au \ltimes \Cu$ is defined via the conjugation action induced on $\Cu$
by elements of $\Au$:
\begin{equation*}
(a_1,c_1) \cdot (a_2,c_2) = (a_1a_2,\, a_2^{-1}c_1a_2 c_2),	\quad	(a_1,c_1), (a_2,c_2) \in \Au \times \Cu.
\end{equation*}
Notice that  $a_2^{-1}c_1a_2 c_2 \in \Cu$ since $\Cu$ is a normal subgroup of $U$.
Further note that when all elements of $\Au$ commute with all elements of $\Cu$,
multiplication on $\Au \ltimes \Cu$ simplifies to
\begin{equation*}
(a_1,c_1) \cdot (a_2,c_2) = (a_1a_2,c_1 c_2),	\quad	(a_1,c_1), (a_2,c_2) \in \Au \times \Cu
\end{equation*}
and hence, in this case, $\Au \ltimes \Cu = \Au \times \Cu$,
that is, $U$ is isomorphic to the direct product $\Au \times \Cu$.

According to the two cases considered, we introduce generalized polar coordinates as follows. Let
\begin{equation}	\label{eq:SL}
\Su ~\defeq~ \begin{cases}
\{ x \in \R^d \, : \, \abs{x}=1 \}=\Sd & \text{ if } G=\Rp, \\
\{x \in \R^d \, : \,  r \le \abs{x} < 1 \} & \text{ if } G= r^{\Z}, \text{  } 0 < r <1.
\end{cases}
\end{equation}
Then any $x \in \R^d\setminus\{0\}$ has a unique representation $x=as$ with $s \in \Su$ and $a \in \Au$.
Notice that in general, $a$ is not a scalar. For example, in the setting of cyclic P\'olya urns, Section \ref{subsubsec:Cyclic Polya urn}, 
\begin{equation*}
\AU=\{ t^\zeta \, : \, t \in \Rp \}, \qquad \CU=\{ \zeta^k \, : \, 0 \le k < b \}, \qquad \SU=\S^1,
\end{equation*}
where $\zeta=\cos(2\pi/b) + \imag \sin(2 \pi/b)$ is a primitive $b$th root of unity.

\subsection{Operator (semi)stable laws} \label{subsect:operator semistable}
An infinitely divisible law on $\R^d$ with characteristic exponent $\Psi$ is called $(A,c)$-{\em operator semistable} if there is a $d \times d$-matrix $A$, $b \in \R^d$ and $c \in (0,1)$ such that
\begin{equation} \label{eq:operator semistable}
\Psi(A^\transp x) ~=~c\Psi(x) + \imag\scalar{x,b} \quad \text{ for all } x \in \R^d,
\end{equation}
see \cite[Definition 1.3.6]{Hazod+Siebert:2001}. 
It is called {\em operator stable} with exponent $Q$ if there is a matrix $Q$ and a mapping $s \mapsto b(s) \in \R^d$ such that 
\begin{equation} \label{eq:operator stable}\Psi((t^Q)^\transp x) ~=~t\Psi(x) + \imag\scalar{x,b(t)} \quad \text{ for all } x \in \R^d,\ t>0,\end{equation}
see \cite[Definition 1.3.11]{Hazod+Siebert:2001}. The law is called {\em strictly} operator (semi)stable if $b=0$ or $b(t) \equiv 0$, respectively.
Recalling the definition of $(U,\alpha)$-stability in \eqref{eq:(u,alpha)-stable chf}, and using the structure of $U$ given in Proposition \ref{Prop:Buraczewski et al}, we obtain the following Corollary.

\begin{corollary}\label{Cor:operator stability}
Let $\eta$ be a (strictly) $(U,\alpha)$-stable law. 
\begin{enumerate}
\item If $\Au=\{A^n \, : \, n \in \Z\}$ with $\norm{A}<1$, then \eqref{eq:operator semistable} holds with $c=\norm{A}^{\alpha}$, i.e., $\eta$ is $(A, \norm{A}^\alpha)$ (strictly) operator semistable.
\item If $\Au=\{e^{sQ} \, : \, s \in \R\}$, then \eqref{eq:operator stable} holds upon rescaling $Q$ such that $\norm{t^Q}^\alpha=t$, i.e., $\eta$ is (strictly) operator stable with exponent $Q$.
\end{enumerate}
\end{corollary}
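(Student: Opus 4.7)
The proof will be essentially a direct computation: one simply specialises the defining relation of $(U,\alpha)$-stability in \eqref{eq:(u,alpha)-stable chf} to elements of the one-parameter subgroup $\AU$ provided by Proposition \ref{Prop:Buraczewski et al}, and compares with the definitions \eqref{eq:operator semistable} and \eqref{eq:operator stable} of (strict) operator (semi)stability.

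For assertion (1), the plan is to note that $A \in \AU \subseteq U$, so that the $(U,\alpha)$-stability of $\eta$ gives
\begin{equation*}
\Psi(A^{\transp} x) ~=~ \norm{A}^{\alpha} \Psi(x) + \imag \scalar{x, b(A)}, \qquad x \in \R^d.
\end{equation*}
Setting $c \defeq \norm{A}^{\alpha}$ and $b \defeq b(A)$, this is precisely \eqref{eq:operator semistable}; and $c \in (0,1)$ since $\norm{A} \in (0,1)$. Strict $(U,\alpha)$-stability (i.e.\ $b(u) \equiv 0$) yields $b = 0$, so strict operator semistability follows at once.

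For assertion (2), I would apply \eqref{eq:(u,alpha)-stable chf} with $u = t^{Q}$, which belongs to $\AU \subseteq U$ for every $t \in \Rp$:
\begin{equation*}
\Psi((t^{Q})^{\transp} x) ~=~ \norm{t^{Q}}^{\alpha} \Psi(x) + \imag \scalar{x, b(t^{Q})}, \qquad x \in \R^d,\ t > 0.
\end{equation*}
By the normalisation chosen in Proposition \ref{Prop:Buraczewski et al} one has $\norm{t^{Q}} = t$, whence $\norm{t^{Q}}^{\alpha} = t^{\alpha}$. To obtain the exponent $t$ required in \eqref{eq:operator stable}, I would rescale $Q \mapsto \tilde Q \defeq Q/\alpha$; since $\AU = \{t^{Q} : t \in \Rp\} = \{s^{\tilde Q} : s \in \Rp\}$ as a set, this is merely a reparametrisation of the one-parameter group (with $s = t^{\alpha}$), and $\norm{s^{\tilde Q}}^{\alpha} = s$. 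Writing $\tilde b(s) \defeq b(s^{\tilde Q})$, the identity reads
\begin{equation*}
\Psi((s^{\tilde Q})^{\transp} x) ~=~ s\, \Psi(x) + \imag \scalar{x, \tilde b(s)},
\end{equation*}
which is \eqref{eq:operator stable} with exponent $\tilde Q$, and strictness of $(U,\alpha)$-stability transfers to strictness of the operator-stable law as before.

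There is essentially no obstacle here beyond bookkeeping: the only subtlety is the rescaling of $Q$ in part (2), which must be done consistently so that the set $\AU$ is unchanged while the normalisation $\norm{t^{Q}}^{\alpha} = t$ is achieved; the claim follows because $\{t^{Q} : t \in \Rp\}$ is a one-parameter subgroup and any linear reparametrisation of its generator leaves the group (and the stability relation) intact.
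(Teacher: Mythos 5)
Your proposal is correct and is exactly the argument the paper intends: the corollary is stated as an immediate consequence of specialising \eqref{eq:(u,alpha)-stable chf} to elements of $\Au$ from Proposition \ref{Prop:Buraczewski et al}, and your rescaling $Q \mapsto Q/\alpha$ in part (2) is precisely the normalisation the statement refers to. No further comment is needed.
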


Notice that $\Cu$ did not play a role in the above considerations, therefore $(U,\alpha)$-stability is more restrictive than operator (semi)stability. 

\subsection{L\'evy measures invariant under similarity transformations}	\label{subsec:U-invariant Levy measures}

Using the generalized polar coordinates introduced in Section \ref{app:polar coordinates} above, we can now describe the structure of L\'evy measures satisfying \eqref{eq:bar nu (U,alpha)-invariant}. Of course, here $\U$ can be any closed subgroup of $\Sim$. We write $\HAU$ for the Haar measures on $\AU \simeq \G$, which is the counting measure in the discrete case, and the image of $\dt/t$ under the map $t \mapsto t^Q$ in the continuous case.

\begin{proposition}	\label{Prop:structure of bar nu} 

Let $\bar \nu$ be a L\'evy measure on $\R^d \setminus \{0\}$.
Then the following assertions are equivalent:
\begin{itemize}
	\item[(i)]
		$\bar\nu$ satisfies \eqref{eq:bar nu (U,alpha)-invariant}.
	\item[(ii)]
		There is a $\CU$-invariant finite measure $\rho$ on $\SU$ such that for all $\bar\nu$-integrable $f : \R^d \setminus\{0\} \to \R$,
		\begin{equation}	\label{eq:structure of bar nu}
		\int f(x) \, \bar\nu(\dx) ~=~  \int_{\AU } \int_{\SU} f(ax) \norm{a}^{-\alpha} \, \rho(\dx)\, \HAU(\da).
		\end{equation}
\end{itemize}
\end{proposition}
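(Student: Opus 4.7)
The plan is to exploit the polar-coordinate bijection $\Phi : \AU \times \SU \to \R^d \setminus \{0\}$, $\Phi(a,s) = as$, provided by Proposition \ref{Prop:Buraczewski et al}. This is a Borel isomorphism and intertwines the left action of $\AU$ on $\R^d \setminus \{0\}$ with left translation on the $\AU$-factor, since $a_0 \cdot (as) = (a_0 a)s$ and $a_0 a \in \AU$. The action of $c \in \CU$ on the other hand carries a conjugation twist: $c \cdot (as) = a \cdot ((a^{-1} c a)s)$ with $(a^{-1}ca)s \in \SU$ because $a^{-1}ca \in \CU \subseteq \Orth$.

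For (ii) $\Rightarrow$ (i), I will fix $g = a_0 c_0 \in U$ and compute $\int \mathbbm{1}_B(g^{-1}x)\,\bar\nu(\dx)$ via the postulated representation. Writing $g^{-1} a_0 a = a \cdot (a^{-1} c_0^{-1} a)$ and substituting $a \mapsto a_0 a$ in the $\AU$-integral, left-invariance of $\HAU$ together with the multiplicativity $\norm{a_0 a} = \norm{a_0}\norm{a}$ on the abelian group $\AU$ produce the prefactor $\norm{g}^{-\alpha}$. The residual orthogonal twist $a^{-1} c_0^{-1} a \in \CU$ is then absorbed by the $\CU$-invariance of $\rho$, yielding $\bar\nu(gB) = \norm{g}^{-\alpha}\bar\nu(B)$.

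For (i) $\Rightarrow$ (ii), I will pull $\bar\nu$ back through $\Phi^{-1}$ to a $\sigma$-finite measure $\mu$ on $\AU \times \SU$ and set $m(\da,\ds) \defeq \norm{a}^{\alpha} \mu(\da,\ds)$. Applying \eqref{eq:bar nu (U,alpha)-invariant} to $g \in \AU$, the push-forward identity and the intertwining property give that $m$ is left-invariant under $\AU$-translations in the first coordinate. A standard disintegration argument, applied to rectangles $A' \times S'$ and exploiting the uniqueness of left Haar measure on $\AU$, then forces $m = \HAU \otimes \rho$ for a unique measure $\rho$ on $\SU$, which produces the integral formula \eqref{eq:structure of bar nu}. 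The $\CU$-invariance of $\rho$ is obtained by testing \eqref{eq:bar nu (U,alpha)-invariant} on $g \in \CU$ ($\norm{g}=1$) against rectangles, which yields $\rho((a^{-1}ga)S') = \rho(S')$ for $\HAU$-a.e.\ $a$: in the continuous case $\AU$ and $\CU$ commute (Proposition \ref{Prop:Buraczewski et al}), so this reads $\rho(gS') = \rho(S')$; in the discrete case $\HAU$ is counting measure, so the relation holds for every $a$, and specialization to $a=\mathrm{id}$ gives the same conclusion. Finiteness of $\rho$ follows by evaluating the representation on $\{|x|>1\}$: the $\AU$-integral $\int_{\{\norm{a}\text{ large}\}} \norm{a}^{-\alpha}\,\HAU(\da)$ is a positive finite constant, and the L\'evy-measure property $\bar\nu(\{|x|>1\}) < \infty$ forces $\rho(\SU) < \infty$.

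The main obstacle I foresee is the disintegration step: while the product decomposition $m = \HAU \otimes \rho$ is morally standard, one has to verify it carefully for $\sigma$-finite (rather than finite or probability) measures and ensure that $\rho$ inherits the correct measurability from $\bar\nu$. The bookkeeping around the $\CU$-twist and the verification of finiteness are, by comparison, routine once the product structure is established.
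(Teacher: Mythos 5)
Your (ii) $\Rightarrow$ (i) direction is essentially the paper's own argument: decompose $g=a_0c_0$, shift the $\AU$-integral using left-invariance of $\HAU$ and multiplicativity of the norm to extract $\norm{g}^{-\alpha}$, and absorb the conjugated orthogonal part $a^{-1}c_0^{-1}a\in\CU$ (normality of $\CU$ in $\U$) into the $\CU$-invariance of $\rho$. Where you genuinely diverge is (i) $\Rightarrow$ (ii): the paper treats the discrete and continuous cases separately and outsources the radial--spherical factorization to the structure theory of operator (semi)stable laws (Theorems 1.4.4/1.4.5 and 1.4.11/1.4.12 in Hazod--Siebert), with $\rho$ given by the explicit formulas $\bar\nu(\cdot\cap\SU)$ resp.\ $\bar\nu(\{t^Qx : x\in B,\ t\ge 1\})$; you instead pull $\bar\nu$ back through the polar bijection, tilt by $\norm{a}^{\alpha}$, and derive the product form $m=\HAU\otimes\rho$ from uniqueness of left Haar measure on $\AU$. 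Your route is self-contained and handles both cases uniformly; the paper's is shorter but reference-dependent and yields $\rho$ in closed form. The obstacle you flag in the disintegration step dissolves once you note that $m$ is not merely $\sigma$-finite but \emph{Radon}: for compact $A'\subseteq\AU$ the set $\Phi(A'\times\SU)$ sits in an annulus bounded away from the origin, on which the L\'evy measure $\bar\nu$ is finite, so $m(A'\times S')\le\big(\sup_{a\in A'}\norm{a}^{\alpha}\big)\,\bar\nu\big(\Phi(A'\times\SU)\big)<\infty$; hence $A'\mapsto m(A'\times S')$ is a locally finite left-invariant measure on $\AU$, Haar uniqueness gives $m(A'\times S')=\rho(S')\HAU(A')$ with $\rho(S')\defeq m(A_0'\times S')/\HAU(A_0')$ for a fixed reference set $A_0'$, and countable additivity of $\rho$ plus the $\pi$-system extension from rectangles are routine. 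Your $\CU$-invariance and finiteness arguments are likewise sound (in the discrete case the uniform lower bound $\int_{\AU}\1_{\{|as|>1\}}\norm{a}^{-\alpha}\,\HAU(\da)\ge c_0>0$ for $s\in\SU$ is what closes the finiteness step), so the proposal is correct as outlined.
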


Notice that since $\AU$ is not compact, the Haar measure $\HAU$ on $\AU$ is unique up to a positive scaling constant only.
In the discrete case, we stipulate that $\HAU$ is the counting measure.
In the continuous case, we stipulate that $\HAU$ is such that the pushforward measure of $\HAU$ under the map
$\AU \to \Rp$, $a \mapsto \norm{a}$ is $\dt/t$.

\begin{proof}
We consider the discrete and continuous case separately. 

In the discrete case, \eqref{eq:bar nu (U,alpha)-invariant} yields that $\bar{\nu}(A^{-1} \cdot)= \norm{A}^\alpha \bar{\nu}$. 
Recall the definition of $\SU$ from \eqref{eq:SL}.
Setting $\rho\defeq\bar{\nu}(\cdot \cap \SU)$,
we first observe that by \eqref{eq:bar nu (U,alpha)-invariant}, $\bar{\nu}$ is $\CU$-invariant
and hence so is $\rho$.
Further, \cite[Theorem 1.4.5]{Hazod+Siebert:2001}
implies that for any Borel set $B \subseteq \R^d\setminus\{0\}$,
\begin{align*}
\bar{\nu}(B) =& \sum_{n=-\infty}^\infty \norm{A}^{n \alpha} \rho \big( (A^n B) \cap \SU \big)
=	\int_{\AU} \norm{a}^{-\alpha} \rho\big( (a^{-1} B) \cap \SU \big) \, \HAU(\da) \\
=&	\int_{\AU}  \norm{a}^{-\alpha} \int_{\SU} \1_B(ax) \, \rho(\dx) \, \HAU(\da). 
\end{align*}
This proves (i) $\Rightarrow$ (ii) in the discrete case.

For the converse implication, we invoke \cite[Theorem 1.4.4]{Hazod+Siebert:2001}
which gives that each measure satisfying \eqref{eq:structure of bar nu} is a L\'evy measure
with $\bar{\nu}(A^{-n} \cdot) =\norm{A}^{n\alpha} \bar{\nu}$,
i.e., $\bar{\nu}(a^{-1} \cdot)=\norm{a}^\alpha \bar{\nu}$ for all $a \in \AU$.
Let $\U \ni u=a_0c$ with $a_0 \in \AU$, $c \in \CU$ and let $B$ be a Borel subset of $\R^d \setminus \{0\}$.
In general, $\AU$ and $\CU$ do not commute,
but since $(a_0c)^{-1}$ is an element of  $\U$ with norm $\norm{(a_0c)^{-1}}=\norm{a_0}^{-1}$,
there is a $c' \in \CU$ such that $(a_0c)^{-1}=a_0^{-1}c'^{-1}$.
Then
\begin{align*} \label{eq:CU invariance of bar nu}
\bar{\nu}(c^{-1}a_0^{-1} B)
&~=~ \bar{\nu}(a_0^{-1}c'^{-1}B) = \norm{a_0}^\alpha \bar{\nu}(c'^{-1}B) \\
&~=~  \norm{a_0}^\alpha \int_{\AU } \int_{\SU} \1_B(c'ax) \norm{a}^{-\alpha} \rho(\dx) \, \HAU(\da) \\
&~=~  \norm{a_0}^\alpha \int_{\AU } \int_{\SU} \1_B(ac_a'x) \norm{a}^{-\alpha} \rho(\dx) \, \HAU(\da) \\
&~=~  \norm{a_0}^\alpha \int_{\AU } \int_{\SU} \1_B(ax) \norm{a}^{-\alpha} \rho(\dx) \, \HAU(\da)	\\
&~=~ \norm{a_0c}^\alpha \bar{\nu}(B),
\end{align*}
where the $c_a' = a^{-1} c' a \in \CU$.
In the next-to-last line, the $\CU$-invariance of $\rho$ was used.
Thus (ii) $\Rightarrow$ (i) is proved in the discrete case.

Turning to the implication (i) $\Rightarrow$ (ii) in the continuous case, 
choose $Q$ in such a way that $\norm{t^Q}^\alpha=t$.
Then \eqref{eq:bar nu (U,alpha)-invariant} for $(t^{Q})^{-1}$ becomes $\bar{\nu}((t^Q)^{-1} \, \cdot) = t \bar{\nu}(\cdot)$.
Define
\begin{equation} \label{eq:rho continuous case}
\rho(B) ~\defeq~\bar{\nu} \big( \{ t^Q x \, : \, x \in B, \, t \geq 1 \}\big),	\quad	B \subseteq \Sd.
\end{equation}
By Proposition \ref{Prop:Buraczewski et al}, $t^Q$ and $\CU$ commute for every $t >0$.
Thus, we infer from Eqs.\ \eqref{eq:rho continuous case} and \eqref{eq:bar nu (U,alpha)-invariant} that
\begin{equation*}
\rho(cB) = \bar{\nu} \big( c \cdot \{ t^Q x \, : \, x \in \Sd, \, t\ge 1 \}\big) = \rho(B)
\end{equation*}
for all $c \in \CU$ and all Borel sets $B \subseteq \Sd$, i.e., $\rho$ is $\CU$-invariant.
Moreover, \cite[Theorem 1.4.12]{Hazod+Siebert:2001} gives that
\begin{align}
\label{eq:bar nu operator stable} \bar{\nu}(C) ~&=~ \int_{\SU} \int_0^\infty \1_C(t^Q x) t^{-2}   \, \dt \, \rho(\dx) \\
&=~\int_{\Rp} \int_{\SU}  \1_C(t^Q x) \norm{t^Q}^{-\alpha}  \rho(\dx) \, \frac{\dt}{t} \nonumber \\
&=~\int_{\AU} \int_{\SU}  \1_C(a x) \norm{a}^{-\alpha}  \rho(\dx) \, \HAU(\da) . \nonumber
\end{align}
Here we used the particular scaling of $Q$ and the fact that $\dt/t$ is the pushforward measure of $\HAU$
under $a \mapsto \norm{a}$.
Thus the implication (i) $\Rightarrow$ (ii) is proved. 

For the converse implication, we use that each $u \in \U$ is of the form $u=t^Q c$ for some $t>0$ and $c \in \CU$.
\cite[Theorem 1.4.11]{Hazod+Siebert:2001} gives that if $\bar{\nu}$ satisfies \eqref{eq:structure of bar nu},
then $\bar{\nu}$ is a L\'evy measure satisfying $\bar{\nu}((t^Q)^{-1} C) = t \bar{\nu}(C)$.
Validity of \eqref{eq:bar nu (U,alpha)-invariant} for all $u \in \U$ then follows as in the discrete case. \qed
\end{proof}

\subsection{Matrices invariant under orthogonal transformations}	\label{subsec:oSigmao^t}

In this section, we analyze the structure of a positive semi-definite $d \times d$ matrix $\Sigma$
satisfying
\begin{equation}	\label{eq:oSigmao^t}
o \Sigma o^\transp ~=~ \Sigma
\quad \text{ for all } o \in \O.
\end{equation}
We defined $\O$ as the closed subgroup generated by the $(O(j))_{j \ge 1}$, but it can be any closed subgroup of $\Orth$.

The main result of this section is the following proposition.
To formulate it, we recall two notions.
We say that a subspace $V$ of $\R^d$ is $\O$-invariant if $oV=V$ for all $o \in \O$,
and $\O$-indecomposable if it does not contain any nontrivial $\O$-invariant subspace.

\begin{proposition}	\label{Prop:oSigmao^t}
Let $\Sigma$ be a positive semi-definite symmetric matrix, satisfying \eqref{eq:oSigmao^t}. Then there is a decomposition
\begin{equation}	\label{eq:O-invariant subspaces}
\R^d ~=~ V_+ \oplus V_- \oplus V_1 \oplus \ldots \oplus V_l
\end{equation}
into $\O$-invariant orthogonal subspaces with the following properties:
\begin{itemize}
	\item[(i)]
		Every $o \in \O$ is the identity mapping on $V_+$ and minus the identity on $V_-$.
		$V_+$ and $V_-$ are the maximal $\O$-invariant subspaces with these properties.
		Further, $V_+$ and $V_-$ are $\Sigma$-invariant subspaces and the restrictions
		$\Sigma_{|V_\pm}$ are positive semi-definite symmetric matrices.
	\item[(ii)]
		For each $i=1,\ldots,l$, $V_i$ is $\O$-indecomposable and $\Sigma$-invariant,
		and $\Sigma_{|V_i}$ is a nonnegative scalar multiple of the identity mapping on $V_i$.
\end{itemize} 
\end{proposition}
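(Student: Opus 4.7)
The plan hinges on a single observation: since every $o \in \O$ is orthogonal, $o^{\transp} = o^{-1}$, so the invariance relation \eqref{eq:oSigmao^t} is equivalent to the commutation
\begin{equation*}
\Sigma o ~=~ o \Sigma \quad \text{for every } o \in \O.
\end{equation*}
First I would exploit this commutation to decompose $\R^d$ orthogonally into the distinct eigenspaces of $\Sigma$: each eigenspace $E_{\mu_j}(\Sigma)$ is $\O$-invariant, $\Sigma$ acts on it as the scalar $\mu_j \ge 0$, and these eigenspaces are mutually orthogonal by symmetry of $\Sigma$.

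Next I would further decompose each $E_{\mu_j}(\Sigma)$ as an orthogonal direct sum of $\O$-indecomposable subspaces. Since $\O \subseteq \Orth$, the orthogonal complement inside $E_{\mu_j}(\Sigma)$ of any $\O$-invariant subspace is again $\O$-invariant, so this reduces to iteratively peeling off $\O$-invariant subspaces of minimal positive dimension, a procedure that terminates by finite-dimensionality (no Haar-integration argument is required). Pooling these pieces across all $j$, I would set $V_+$ to be the sum of those $1$-dimensional pieces on which every $o \in \O$ acts as $\Id$, $V_-$ the analogous sum for the action by $-\Id$, and enumerate the remaining pieces as $V_1,\ldots,V_l$.

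It then remains to verify the stated properties. Orthogonality and $\O$-invariance of all summands are immediate from the construction, and $\Sigma$-invariance of each summand holds since each lies inside a single $\Sigma$-eigenspace. For (i), the maximality of $V_+$ reduces to identifying it with the fixed subspace $V_+^{\mathrm{max}} \defeq \{x \in \R^d : ox = x \text{ for all } o \in \O\}$; this follows because $V_+^{\mathrm{max}}$ intersects each $\O$-indecomposable $V_i$ in an $\O$-invariant subspace, hence in $\{0\}$ or $V_i$, forcing $V_+^{\mathrm{max}}$ to be precisely the sum of the trivial pieces, which is $V_+$ (and similarly for $V_-$). Positive semi-definiteness and symmetry of $\Sigma|_{V_\pm}$ are automatic as restrictions of a PSD symmetric matrix to an invariant subspace. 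For (ii), each $V_i$ is $\O$-indecomposable by construction and $\Sigma$-invariant by lying in a single $\Sigma$-eigenspace; moreover, $\Sigma|_{V_i}$ is a symmetric matrix commuting with the $\O$-action on $V_i$, so each of its eigenspaces is an $\O$-invariant subspace of $V_i$, and indecomposability forces only one eigenvalue to occur, giving $\Sigma|_{V_i} = \mu_{j(i)}\Id_{V_i}$ with a unique $\mu_{j(i)} \ge 0$.

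I do not anticipate a serious obstacle: the only conceptual point is the Maschke-style decomposition into $\O$-indecomposables, which is however trivialised here by the ambient inner product already furnishing $\O$-invariant complements via orthogonality. The small subtlety to watch is that the decomposition of each $E_{\mu_j}(\Sigma)$ into indecomposables is not canonical, but the maximality characterisation of $V_\pm$ via the fixed and antifixed subspaces shows that $V_+$ and $V_-$ are nonetheless intrinsically defined.
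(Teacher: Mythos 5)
Your proof is correct, but it takes a genuinely different route from the paper. The paper proceeds element by element: it first proves Lemma \ref{Lem:oSigmao^t}, which shows for a single $o\in\O$ commuting with $\Sigma$ that the decomposition of $\R^d$ into $E_{\pm1}(o)$ and the two-dimensional rotation blocks of $o$ is $\Sigma$-invariant (this requires the analysis of repeated rotation angles and the Hurwitz-criterion argument ruling out off-diagonal blocks); it then sets $V_\pm=\bigcap_{o\in\O}V_\pm(o)$, runs a maximal-refinement argument over the partially ordered set of decompositions of $(V_+\oplus V_-)^\perp$ into jointly $\Sigma$- and $\O$-invariant subspaces to reach $\O$-indecomposable pieces, and finally invokes Schur's lemma (Lemma \ref{Lem:Schur}) to force $\Sigma_{|V_i}$ to be scalar. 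You instead start from the spectral decomposition of $\Sigma$ itself: the commutation $\Sigma o=o\Sigma$ (equivalent to \eqref{eq:oSigmao^t} since $o^\transp=o^{-1}$) makes each eigenspace $E_{\mu}(\Sigma)$ automatically $\O$-invariant, and within an eigenspace every $\O$-invariant subspace is trivially $\Sigma$-invariant, so the splitting into $\O$-indecomposables via orthogonal complements costs nothing and the scalar form of $\Sigma_{|V_i}$ is free; neither Lemma \ref{Lem:oSigmao^t} nor Schur's lemma is needed. Your approach is shorter and makes the scalar structure transparent, while the paper's approach produces the finer per-element Lemma \ref{Lem:oSigmao^t} as a by-product. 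The only place where you are terse is the identification of your $V_+$ (sum of the one-dimensional trivial pieces) with the maximal fixed subspace $\{x: ox=x \text{ for all } o\in\O\}$: one should decompose a fixed vector $x=\sum_k x_k$ along the invariant pieces, note that uniqueness of the decomposition and invariance of each piece give $ox_k=x_k$, and then use indecomposability to see that any piece containing a nonzero fixed vector is a one-dimensional trivial piece; this is a routine fill-in (and the same for $V_-$), not a gap.
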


One ingredient in the proof of Proposition \ref{Prop:oSigmao^t} is the following variant of Schur's lemma, see e.g.\ \cite[Corollary XVIII.6.2]{Lang:1993}.

\begin{lemma}\label{Lem:Schur}
Let $F$ be a family of real $d \times d$ matrices,
and suppose that $\{0\}$ and $\R^d$ are the only subspaces of $\R^d$ that are invariant for each matrix in $F$.
If $\Sigma$ is a symmetric matrix that commutes with every matrix in $F$, then $\Sigma = c \Id$ for a constant $c \in \R$. 
\end{lemma}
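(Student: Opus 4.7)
The plan is to combine the spectral theorem for real symmetric matrices with the irreducibility hypothesis, which is the standard route to this real-variable version of Schur's lemma. The key observation driving the proof is that whenever $\Sigma$ commutes with a matrix $A$, every eigenspace of $\Sigma$ is invariant under $A$; irreducibility then forces such an eigenspace to be all of $\R^d$.

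First, since $\Sigma$ is real and symmetric, the spectral theorem supplies real eigenvalues $\lambda_1,\ldots,\lambda_k$ and an orthogonal decomposition $\R^d = \bigoplus_{i=1}^k E_{\lambda_i}(\Sigma)$. For every $A \in F$ and every $x \in E_{\lambda_i}(\Sigma)$, the commutation $A\Sigma = \Sigma A$ yields $\Sigma(Ax) = A\Sigma x = \lambda_i (Ax)$, so $Ax \in E_{\lambda_i}(\Sigma)$. Hence each eigenspace $E_{\lambda_i}(\Sigma)$ is invariant under every matrix in $F$.

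Next, I would apply the irreducibility hypothesis: each $F$-invariant subspace equals $\{0\}$ or $\R^d$. Since the eigenspaces are nontrivial by construction, there is exactly one index $i$ with $E_{\lambda_i}(\Sigma) = \R^d$, and for this index we obtain $\Sigma = \lambda_i \Id$, establishing the claim with $c = \lambda_i$.

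The only conceptual point that warrants care is that the standard form of Schur's lemma over $\R$ does not yield a scalar commutant for an irreducible family (one could, in principle, obtain a copy of $\C$ or $\mathbb{H}$ acting by intertwining operators). The hypothesis that $\Sigma$ is symmetric is precisely what rules this out, because symmetry forces diagonalizability over $\R$ and thus reduces the analysis to eigenspaces, each of which is treated by the invariance argument above. No further obstacle is anticipated.
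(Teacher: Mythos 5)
Your proof is correct. The paper does not prove this lemma at all—it simply cites \cite[Corollary XVIII.6.2]{Lang:1993}—and your argument is exactly the standard one that reference encapsulates: the spectral theorem diagonalizes the symmetric $\Sigma$ over $\R$, commutation makes each eigenspace $F$-invariant, and irreducibility forces a single eigenspace equal to $\R^d$. Your closing remark correctly identifies why symmetry is the essential hypothesis that collapses the real commutant to scalars.
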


For the proof of Proposition \ref{Prop:oSigmao^t}, we need another lemma.

\begin{lemma}	\label{Lem:oSigmao^t}
Let $\Sigma$ be a positive semi-definite symmetric $d \times d$ matrix and $o \in \Orth$
such that $\Sigma o = o \Sigma$.
Let 
\begin{equation}	\label{eq:decomposition for o}
\R^d	~=~	V_+ \oplus V_- \oplus V_1 \oplus \dots \oplus V_k
\end{equation}
be a decomposition of $\R^d$ into orthogonal $o$-invariant subspaces
where $V_\pm = E_{\pm1}(o)$ and, for each $i=1,\ldots,k$,
$V_i$ is a $2$-dimensional, $o$-indecomposable subspace on which $o$ acts as a rotation by an angle $\pi \not = \theta_i \in (0,2\pi)$.

Then $V_+, V_-, V_1, \ldots, V_k$ are $\Sigma$-invariant as well.
\end{lemma}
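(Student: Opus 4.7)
The plan is to exploit the commutation relation $\Sigma o = o \Sigma$ together with the symmetry of $\Sigma$, using the basic principle that any subspace of the form $\ker(p(o))$ for a real polynomial $p$ is automatically $\Sigma$-invariant (because $\Sigma$ commutes with $p(o)$). I would verify the claim summand by summand using this principle.

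First I would handle $V_\pm$: for $v \in V_\pm$, the identity $(o \mp I)(\Sigma v) = \Sigma (o \mp I) v = 0$ shows $\Sigma v \in V_\pm$. Since $\Sigma$ is symmetric and preserves $V_\pm$, their joint orthogonal complement $W \defeq V_1 \oplus \cdots \oplus V_k$ is also $\Sigma$-invariant.

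For each $V_i$, I would use that $o$ acts on $V_i$ with minimal polynomial $q_{\theta_i}(x) \defeq x^2 - 2\cos(\theta_i)\,x + 1$, so $V_i \subseteq W_{\theta_i} \defeq \ker\bigl(q_{\theta_i}(o|_W)\bigr)$. The kernel $W_{\theta_i}$ is precisely the sum of all $V_j$ with $\theta_j = \theta_i$, and it is $\Sigma$-invariant because $\Sigma$ commutes with $q_{\theta_i}(o|_W)$. When the rotation angles $\theta_1,\ldots,\theta_k$ are pairwise distinct we have $W_{\theta_i} = V_i$, so $\Sigma V_i \subseteq V_i$ follows immediately.

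The main obstacle is the case of repeated rotation angles, in which the splitting of $W_{\theta_i}$ into 2-dimensional $o$-invariant pieces is not unique and $\Sigma$ need only respect the full isotypic component $W_{\theta_i}$, not the particular choice of individual summands. For the application in Proposition \ref{Prop:oSigmao^t} this is not a genuine obstruction: one takes the canonical (isotypic) decomposition in which equal-angle pieces are amalgamated into single summands, yielding a $\Sigma$-invariant decomposition that is coarse enough to be preserved by $\Sigma$ and fine enough for the subsequent application of Schur's lemma (Lemma \ref{Lem:Schur}) to the restriction of $\Sigma$ on each $\O$-indecomposable component.
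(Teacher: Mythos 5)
Your handling of $V_\pm$ and of the case where the values $\cos\theta_i$ are pairwise distinct is correct, and it is essentially the paper's own argument in a different guise: the paper diagonalizes the symmetric matrix $a=o+o^\transp$, which commutes with $\Sigma$, and the eigenspace $E_{2\cos\theta_i}(a)$ intersected with $(V_+\oplus V_-)^\perp$ is exactly your $W_{\theta_i}=\ker\bigl(o^2-2\cos(\theta_i)\,o+\Id\bigr)$. The genuine gap, which you yourself flag, is the case of repeated angles: the lemma asserts that the \emph{given} planes $V_i$ are $\Sigma$-invariant, not merely the isotypic components $W_{\theta_i}$, and your argument does not deliver this. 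Symptomatically, you never use the hypothesis that $\Sigma$ is positive semi-definite; that hypothesis is in the statement precisely for this case, and the paper invokes it exactly where you stop (it argues that $\Sigma V_i$ is an $o$-invariant subspace of $E_{2\cos\theta_i}(a)$, hence either $\{0\}$ or one of the summands $V_j$, and then excludes $\Sigma V_i=V_j$ with $j\neq i$ by a positive semi-definiteness argument on $V_i\oplus V_j$).

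That said, your reluctance to claim the finer statement is well founded. The paper's assertion that an $o$-invariant plane inside $E_{2\cos\theta_i}(a)$ must coincide with one of the chosen summands is itself unjustified once that eigenspace has dimension at least $4$, and the lemma in fact fails as literally stated: on $\R^4\cong\C^2$ let $o$ act as multiplication by $e^{\imag\theta}$ with $\theta\in(0,\pi)$, so that $V_1=\mathrm{span}(e_1,e_2)$, $V_2=\mathrm{span}(e_3,e_4)$ is an admissible decomposition with $\theta_1=\theta_2=\theta$, and let $\Sigma$ be the real form of the positive semi-definite Hermitian matrix with all entries equal to $1$; then $\Sigma$ is symmetric, positive semi-definite and commutes with $o$, yet $\Sigma e_1=e_1+e_3\notin V_1$. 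What your argument does prove---invariance of $V_\pm$ and of the isotypic components---is the correct statement, and after amalgamating equal-angle summands (and, for the purposes of Proposition \ref{Prop:oSigmao^t}, further splitting each isotypic block along the $\Sigma$-eigenspaces, which remain $o$-invariant) it suffices for the intended application. So as a proof of the lemma as printed your proposal is incomplete, but the missing step cannot be supplied without weakening the conclusion to the isotypic form you give.
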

\begin{proof}
We have $o^{-1}=o^\transp$ since $o \in \Orth$ and multiplication of $\Sigma o = o \Sigma$ with $o^\transp$ from the left and right
yields that $o^\transp$ and $\Sigma$ commute.
Hence so does $a \defeq (o + o^\transp)$.
As symmetric matrices, $a$ and $\Sigma$ are diagonalizable.
This implies that the eigenspaces of $a$ are $\Sigma$-invariant and vice versa.
To see this, pick an eigenvector $v$ of $a$ corresponding to the eigenvalue $\lambda$
and notice that
\begin{equation*}
\lambda (\Sigma v) ~=~ \Sigma a v ~=~ a (\Sigma v),
\end{equation*}
i.e., $\Sigma$ maps the eigenspace $E_\lambda(a)$ into itself. 

The eigenvalues of $a$ can easily be computed:
Let $b \in \Orth$ be such that $bob^\transp$ is in normal form, i.e.,
\begin{equation*}
b o b^\transp ~=~
\begin{pmatrix}
I_{d'}	& 			& 				&			& 0 \\
		& -I_{d''}	&				&			& \\
		&			& R_{\theta_1}	&			& \\
		&			&				& \ddots	& \\
0		&			&				&			& R_{\theta_k} \\
\end{pmatrix}
\end{equation*}
for $2 \times 2$-rotation matrices $R_{\theta_i}$, $i=1,\ldots,k$.
Then $b o^\transp b^\transp = (bob^\transp)^\transp$ is in normal form, too, and $b(o + o^\transp)b^\transp$ is a diagonal matrix,
with diagonal entries $+2$, $-2$ and $2 \cos (\theta_i)$, $i=1,\ldots,k$.
The corresponding eigenspaces are $V_+$, $V_-$ and $V_i$, $i=1,\ldots,k$ if all $\theta_i$ are distinct.
In this case, the proof is complete.

Suppose there is an $i \in \{1,\ldots,k\}$ such that $\theta_i = \theta_j$ for some $j \not = i$.
Then $V_i \oplus V_j \subseteq E_{2 \cos (\theta_i)}(a)$. Further,
\begin{equation*}
o \Sigma V_i = \Sigma o V_i = \Sigma V_i,
\end{equation*}
i.e., $\Sigma V_i$ is $o$-invariant.
By the reasoning in the beginning of the proof, $\Sigma V_i \subseteq E_{2 \cos (\theta_i)}(a)$.
Hence $\Sigma V_i = \{0\} \subseteq V_i$ or $V_i=V_j$ for some $j$ with $\theta_i=\theta_j$. 

We show that $\Sigma V_i = V_j$ for $i \neq j$ is impossible.
Choosing a basis of $E_{2 \cos (\theta_i)}(a)$,
$\Sigma$ has to be a symmetric matrix w.r.t.~this basis.
In particular, it has to permute $V_i$ and $V_j$.
Hence $V_i \oplus V_j$ is $\Sigma$-invariant. With respect to a joint basis of $V_i$ and $V_j$, 
\begin{equation*}
\Sigma_{|V_i \oplus V_j} ~\sim~
\begin{pmatrix}
0 & A^\transp \\
A & 0
\end{pmatrix}
\end{equation*}
for a $2 \times 2$-matrix $A$ with non-vanishing determinant.
Thus, $\det(\Sigma_{|V_i \oplus V_j})$ is negative, which violates the Hurwitz criterion for positive semi-definiteness.  
 \qed
\end{proof}

\begin{proof}[Proof of Proposition \ref{Prop:oSigmao^t}]
For each $o \in \O$ there is an individual decomposition of the form \eqref{eq:decomposition for o},
we denote its components by $V_{\pm}(o)$ etc.

Defining $V_+ \defeq \bigcap_{o \in \O} V_+(o)$ and $V_- \defeq \bigcap_{o \in \O} V_-(o)$,
we obtain $\Sigma$- and $\O$-invariant orthogonal subspaces, on which each $o$ acts as the identity
or minus the identity, respectively. $\overline{V}\defeq(V_+ \oplus V_-)^\perp$ is $\Sigma$- and $\O$-invariant as well. 

We consider the set
\begin{equation*} \mathcal{V} ~\defeq~ \{ V_1 \oplus \dots \oplus V_l \, : \,  \{0\} \neq  V_j \subseteq \overline{V} \text{ is  a $\Sigma$- and $\O$-invariant subspace}\}.\end{equation*}
We do not distinguish between decompositions that consist of the same subspaces, but in a different order.
$\mathcal{V}$ is non-empty since it contains $\overline{V}$.
The set $\mathcal{V}$ possesses a partial order $\prec$:
a decomposition is larger than another one if the former is a refinement of the latter. 
Any totally ordered subset of $\mathcal{V}$ has at most $d$ elements
since every strict refinement decreases the dimension of at least one subspace by at least 1.
Now pick a totally ordered subset of $\mathcal{V}$ with maximal number of elements
and within this subset pick the largest element, $V_1 \oplus \dots \oplus V_l$, say.
Clearly, $V_1 \oplus \dots \oplus V_l$ is maximal with respect to $\prec$.
We claim that $V_1,\ldots,V_l$ are $\O$-indecomposable.
If not, then there is a $V_j$ which contains two proper orthogonal subspaces
that are invariant under every $o \in \O$,
and hence also $\Sigma$-invariant by Lemma \ref{Lem:oSigmao^t}.
This contradicts the maximality of the decomposition $V_1 \oplus \dots \oplus V_l$ with respect to $\prec$.

Consequently, we can decompose $\Sigma$ according to a maximal element of $\mathcal{V}$.
In particular, it suffices to solve 
\begin{equation*}
o_{|V} \,  \Sigma_{|V} \,  o^\transp_{|V} ~=~ \Sigma_{|V} \quad	\text{for all } o \in \O
\end{equation*}
separately for $V \in \{V_+, V_-, V_1, \ldots, V_l\}$.

The restriction of $\Sigma$ to $V_+$ or $V_-$ can be any positive semi-definite symmetric matrix,
for conjugation by $o_{|V_\pm}$ is the identity mapping for all $o \in \O$.
To identify $\Sigma_{|V_i}$, where $V_i$ is an $\O$-invariant and indecomposable subspace, we use Schur's lemma (Lemma \ref{Lem:Schur}).
This yields that each $\Sigma_{|V_i}$ is a scale multiple of the identity  on $V_i$, 
and since $\Sigma_{|V_i}$ is positive semi-definite, the scaling factor is nonnegative.
 \qed
\end{proof}
\appendix

\section{The Choquet-Deny lemma}	\label{app:Choquet-Deny}

Given a probability measure $\mu$ on the similarity group $\Sim$,
let $U$ be the closed subgroup generated by the support of $\mu$---if
$\mu$ is the step distribution of the associated multiplicative random walk
$(L_n)_{n \in \N_0}$, see Section \ref{subsec:change of measure}, then $U= \U$.  

\begin{lemma}	\label{Lem:Choquet-Deny}
Let $\psi :U \to \R$ be measurable and bounded. If
\begin{equation}
\int_{U} \psi(ug) \, \mu(\mathrm{d} u) ~=~ \psi(g)
\end{equation}
for all $u \in U$, then $\psi$ is constant $\mu$-a.e.
\end{lemma}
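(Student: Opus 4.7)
The plan is to combine a bounded-martingale argument with the classical Choquet--Deny theorem for abelian groups, using the semidirect-product structure of $U$ provided by Proposition \ref{Prop:Buraczewski et al}.

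First I set up the associated left random walk. Let $(U_n)_{n \ge 1}$ be i.i.d.\ with common law $\mu$, set $L_n \defeq U_n U_{n-1} \cdots U_1$ and $\F_n \defeq \sigma(U_1, \ldots, U_n)$. For each fixed $g \in U$, the harmonicity hypothesis yields
\[ \E[\psi(L_{n+1} g) \mid \F_n] ~=~ \int_U \psi(u L_n g) \, \mu(\mathrm{d}u) ~=~ \psi(L_n g), \]
so $(\psi(L_n g))_{n \ge 0}$ is a bounded $\F_n$-martingale, which converges a.s.\ and in $L^1$ to a limit $M_g$ with $\E[M_g] = \psi(g)$.

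Next I reduce to an abelian quotient. By Proposition \ref{Prop:Buraczewski et al}, $U = \Au \ltimes \Cu$ with $\Cu = U \cap \Orth$ compact and $\Au \simeq G$ abelian; since $\Orth$ is normal in $\Sim$, $\Cu$ is normal in $U$, so $U/\Cu \simeq \Au \simeq G$ is itself a group. Form the right-$\Cu$-averaged function
\[ \tilde\psi(g) ~\defeq~ \int_{\Cu} \psi(gc) \, H_{\Cu}(\mathrm{d}c), \]
where $H_{\Cu}$ denotes the normalized Haar measure on $\Cu$. Straightforward applications of Fubini's theorem, the harmonicity of $\psi$, and the invariance of $H_{\Cu}$ show that $\tilde\psi$ is $\mu$-harmonic and right-$\Cu$-invariant. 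Hence $\tilde\psi$ descends to a bounded harmonic function on the abelian group $G$ with step law $\bar\mu \defeq (u \mapsto \norm{u})_\ast \mu$. Since $\supp \bar\mu$ generates $G$, the classical Choquet--Deny theorem on the abelian group $G$ delivers $\tilde\psi \equiv c_0$ for some constant $c_0$.

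Finally, to upgrade this to the constancy of $\psi$ itself, I would invoke the Peter--Weyl decomposition for the right $\Cu$-action. For each nontrivial irreducible unitary representation $\pi$ of the compact group $\Cu$ and each matrix coefficient $\chi_\pi$ of $\pi$, the function
\[ \psi_{\chi_\pi}(g) ~\defeq~ \int_{\Cu} \chi_\pi(c) \, \psi(gc) \, H_{\Cu}(\mathrm{d}c) \]
is bounded and again $\mu$-harmonic. The main obstacle is to prove $\psi_{\chi_\pi} \equiv 0$ $\mu$-a.e.\ for every nontrivial $\pi$; once this is known, Peter--Weyl applied to $c \mapsto \psi(gc)$ gives $\psi = \tilde\psi = c_0$ $\mu$-a.e. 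For the vanishing, I would reapply the martingale argument to $\psi_{\chi_\pi}$, exploit its covariance under the adjoint action of $\Au$ on $\Cu$ (trivial in the continuous case by Proposition \ref{Prop:Buraczewski et al}, but nontrivial in the discrete case), and combine these with the abelian Choquet--Deny conclusion on $G$ to force each nontrivial component to vanish. In the complex setting underlying Theorem \ref{Thm:solutions to complex smoothing equations}, where $U \subseteq \C^*$ is already abelian, this third step is vacuous and the lemma reduces directly to classical Choquet--Deny on an abelian group.
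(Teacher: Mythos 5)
Your first two steps are fine: the bounded-martingale observation and the reduction of the right-$\CU$-average $\tilde\psi$ to a bounded harmonic function on the abelian quotient $U/\CU\simeq\G$ (using that $\CU=U\cap\Orth$ is normal in $U$) are correct, and classical Choquet--Deny on $\G$ does give $\tilde\psi\equiv c_0$. But the proposal stops exactly where the real work begins. You write that ``the main obstacle is to prove $\psi_{\chi_\pi}\equiv 0$ $\mu$-a.e.\ for every nontrivial $\pi$'' and then only say what you \emph{would} do (``reapply the martingale argument \ldots exploit its covariance under the adjoint action \ldots combine these \ldots to force each nontrivial component to vanish''). No argument is actually supplied, and none of the listed ingredients obviously yields the vanishing: the abelian Choquet--Deny conclusion only controls the trivial isotypic component, the martingale argument applied to $\psi_{\chi_\pi}$ just reproduces its harmonicity, and in the discrete case $\G=r^{\Z}$ the adjoint action of $\Au$ on $\CU$ is genuinely nontrivial, so there is no free cancellation. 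For a general locally compact group, bounded harmonic functions need not be constant, so the step you skip is precisely the point where the specific structure of $U$ (compact closure of the commutator subgroup, compactness of the conjugation action) must enter; that is the entire content of the theorem the paper invokes.

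For comparison, the paper does not attempt a Fourier decomposition over $\CU$ at all. It quotes Guivarc'h's Choquet--Deny theorem for (possibly non-abelian) unimodular groups whose commutator subgroup has compact closure and on which the conjugation action is compact, and then merely verifies the hypotheses for $U=\AU\ltimes\CU$: aperiodicity of $\mu$ holds by definition of $U$; unimodularity follows from the Fubini formula for the Haar measure on $\AU\CU$; $\overline{[U,U]}\subseteq\Orth$ is compact; and for $u=\norm{u}o$ one has $u^{-1}[a,b]u=o^{-1}[a,b]o\in\Orth$, so conjugation orbits of compact sets stay in the compact group $\Orth$. To repair your proof you would either have to carry out the Peter--Weyl step in detail (which amounts to reproving a special case of Guivarc'h's result) or, more economically, cite such a result for groups with compact commutator subgroup and check its hypotheses as the paper does. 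Your closing remark that the complex case is abelian and hence immediate is correct but does not cover the general $d$-dimensional statement for which the lemma is used.
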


This is a consequence of \cite[Theorem 3]{Guivarch:1973}.
For the reader's convenience, we state that theorem and show how the lemma can be derived from it.

In the following, let $G$ be a locally compact, separable and unimodular group.
A probability measure $\mu$ on $G$ is called \emph{aperiodic},
if the closed subgroup generated by the support of $\mu$ equals $G$.
Write $[G,G]$ for the commutator subgroup, i.e., the group generated by the commutators
$[a,b] \defeq (ba)^{-1} ab$, $a,b \in G$, and $\overline{[G,G]}$ for its closure.
Let $H \subsetneq G$ be a normal subgroup 
of $G$. Then $G$ acts on $H$ by conjugation (inner automorphisms), i.e.,
\begin{equation*}
g.h ~\defeq~ g^{-1} h g,	\quad g \in G, h \in H.
\end{equation*}
For $A \subseteq H$ write
\begin{equation*}
A^G ~\defeq~ \{g.a:\, g \in G, a \in A\}.
\end{equation*}
The action of $G$ on $H$ is said to be \emph{compact} if for each compact $A \subseteq H \setminus \{1_G\}$,
$1_G$ the unit element of $G$, $A^G$ is relatively compact, i.e., has compact closure.
Then \cite[Theorem 3]{Guivarch:1973} reads as follows:

\begin{theorem}	\label{Thm:Guivarch}
Let $\mu$ be an aperiodic probability measure on $G$.
If $\overline{[G,G]}$ is Abelian or compact and if the action of $G$ on $\overline{[G,G]}$ is compact,
then the only bounded, measurable functions $\psi$ satisfying
\begin{equation*}
\int \psi(u g) \, \mu(\mathrm{d}u) ~=~ \psi(g)	\quad	\text{for all } g \in G
\end{equation*}
are the $\mu$-almost everywhere constant functions.
\end{theorem}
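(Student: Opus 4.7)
My plan is to prove the theorem by reducing it to the classical Choquet-Deny theorem for locally compact abelian groups, applied to the quotient $\bar G = G/\overline{[G,G]}$. The classical theorem asserts that on an abelian locally compact group, bounded measurable $\bar\mu$-harmonic functions are constant on cosets of the closed subgroup generated by $\mathrm{supp}(\bar\mu)$; since the pushforward $\bar\mu$ of $\mu$ on $\bar G$ inherits aperiodicity from $\mu$, any bounded $\bar\mu$-harmonic function on $\bar G$ will be constant. Hence the entire content of the proof is to show that $\psi$ is left-invariant under $\overline{[G,G]}$, i.e.\ $\psi(hg)=\psi(g)$ for all $h \in \overline{[G,G]}$ and $\mu$-a.e.\ $g$, so that $\psi$ descends to a bounded measurable function $\bar\psi$ on $\bar G$.

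To establish the commutator invariance, I would first set up the left random walk $S_n = u_n \cdots u_1$ with $(u_i)$ i.i.d.\ of law $\mu$. For fixed $g$, the sequence $M_n(g) \defeq \psi(S_n g)$ is a bounded martingale in the natural filtration, so it converges almost surely and in $L^2$ to a limit $M_\infty(g)$. Next, for $a,b \in \mathrm{supp}(\mu)$, applying the harmonic equation twice in two different orders yields
\begin{equation*}
\iint \bigl[\psi(u_2 u_1 g) - \psi(u_1 u_2 g)\bigr]\, \mu(\mathrm{d}u_1)\mu(\mathrm{d}u_2) = 0,
\end{equation*}
and since $u_2 u_1 = (u_1 u_2)\,[(u_1 u_2)^{-1}(u_2 u_1)]$ with the bracketed factor in $[G,G]$, this identity can be iterated to express differences $\psi(hg)-\psi(g)$ for $h \in [G,G]$ in terms of harmonic averages of increasingly conjugated commutators. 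Writing $\psi(S_n h g) = \psi\bigl((S_n h S_n^{-1})\, S_n g\bigr)$, and using that the hypothesis ``$G$ acts compactly on $\overline{[G,G]}$'' forces $\{S_n h S_n^{-1}: n \geq 0\}$ to lie in a compact subset $K \subseteq \overline{[G,G]}$, I would extract a subsequential limit $h_\infty \in K$. Combined with Cesàro averaging of the bounded martingale and the almost-sure convergence of $M_n(g)$, one obtains $M_\infty(hg) = M_\infty(g)$ a.s., and taking expectations gives $\psi(hg) = \psi(g)$ for $\mu$-a.e.\ $g$ and every $h$ in a dense subset of $\overline{[G,G]}$; continuity in $h$ (for a representative of $\psi$ obtained via $\mu$-regularization $\psi = \psi * \mu$) extends this to all $h \in \overline{[G,G]}$.

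Once this invariance is in hand, the descent to $\bar G$ is routine: $\bar\psi([g]) \defeq \psi(g)$ is well-defined, bounded, measurable, and $\bar\mu$-harmonic on the abelian quotient $\bar G$. Aperiodicity of $\mu$ means $\mathrm{supp}(\mu)$ generates $G$ as a closed subgroup; projecting, $\mathrm{supp}(\bar\mu)$ generates $\bar G$, so by the classical abelian Choquet-Deny theorem $\bar\psi$ is constant, whence $\psi$ is $\mu$-a.e.\ constant.

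The main obstacle is Step 2, the derivation of $\overline{[G,G]}$-invariance. The compactness hypothesis on the conjugation action is precisely what is needed to prevent the conjugated commutators $S_n h S_n^{-1}$ from ``escaping to infinity'', which would otherwise break the averaging argument; in its absence (for instance if $[G,G]$ is non-compact and acted on non-compactly) the theorem genuinely fails, and non-constant bounded harmonic functions exist. A secondary technical point is the regularization used to give $\psi$ a continuous version in the $h$-variable, so that the subsequential limit $h_\infty$ can be fed back into $\psi$; this is standard but must be handled carefully to preserve measurability and the harmonic equation.
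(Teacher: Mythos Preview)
The paper does not prove this theorem. Theorem~\ref{Thm:Guivarch} is quoted directly from \cite[Theorem~3]{Guivarch:1973} and invoked as a black box; the paper's only work in this appendix is the paragraph \emph{after} the theorem, where it verifies that the hypotheses are met when $G=U$ is a closed subgroup of $\Sim$ (unimodularity via the Fubini formula for $\Au\Cu$, compactness of $\overline{[U,U]}\subseteq\Orth$, and compactness of the conjugation action). So there is no ``paper's own proof'' to compare your attempt against.

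That said, your overall strategy---prove left $\overline{[G,G]}$-invariance of $\psi$ and then descend to the abelian quotient $G/\overline{[G,G]}$ where classical Choquet--Deny applies---is indeed Guivarc'h's plan. But your execution of the invariance step has a real gap. The identity $\iint[\psi(u_2u_1g)-\psi(u_1u_2g)]\,\mu(\mathrm{d}u_1)\mu(\mathrm{d}u_2)=0$ is only an \emph{averaged} cancellation and does not by itself give $\psi(hg)=\psi(g)$ for a fixed commutator $h$. Your proposed fix---write $\psi(S_nhg)=\psi((S_nhS_n^{-1})S_ng)$, use compactness of the conjugation action to keep $h_n\defeq S_nhS_n^{-1}$ in a compact set $K$, and pass to a subsequential limit $h_\infty$---does not close the argument: $S_ng$ has no limit, so even with a uniformly continuous version of $\psi$ you cannot conclude $\psi(h_nS_ng)-\psi(S_ng)\to 0$ from $h_n\to h_\infty$ unless you already know $\psi$ is left $K$-invariant, which is what you are trying to prove. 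Moreover, you never use the hypothesis that $\overline{[G,G]}$ is \emph{abelian or compact}; in Guivarc'h's argument this is essential (in the compact case one can average over $\overline{[G,G]}$ with Haar measure, in the abelian case higher commutators vanish and an iterated averaging terminates), and the compactness of the conjugation action alone does not suffice.
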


Following the proof of \cite[Theorem A.1]{Buraczewski+al:2009},
we show how Theorem \ref{Thm:Guivarch} applies to the situation here, i.e.,
$G=U$ is the closed subgroup generated by the support of $\mu$. 
Then $\mu$ is aperiodic on $U$ by the very definition of $U$.
Referring to Proposition \ref{Prop:Buraczewski et al}, there is a closed subgroup $\Au$ of $U$,
which is isomorphic to a closed subgroup of the multiplicative group $\R_>$,
and a normal compact subgroup $\Cu=U \cap \Orth$, such that $U/\Cu\simeq\Au$.
The groups $\Au$ and $\Cu$ (as a compact group, see \cite[Theorem 1.4.1]{Deitmar+Echterhoff:2009}) are unimodular and hence,
by the Fubini formula for the Haar measure on $U=\Au \Cu$, \cite[Proposition 1.5.5]{Deitmar+Echterhoff:2009},
$U$ is unimodular as well.

Clearly, the commutator subgroup of $U$ is a subgroup of $\Orth$, hence its closure is compact.
Moreover, for any compact $A \subseteq \overline{[U,U]} \setminus \{\Id\}$, $A^U$ is again a subset of $\Orth$
since 
\begin{equation*}
u.[a,b] ~=~ u^{-1} [a,b]\, u ~=~ o^{-1} [a,b]\, o,
\end{equation*}
where $u = \norm{u} o$ with $o \in \Cu \subseteq \Orth$. Hence, $A^U$ is relatively compact as a subset of a compact set.

\section{Evaluating the L\'evy integrals}\label{Sect:Levy integrals}

In this section, we compute
\begin{equation}	\label{eq:I(x)}
I(x) = \int_{\R^d \setminus \{0\}} \left(e^{\imag \scalar{x,y}} - 1 - \imag \scalar{x,y} \1_{[0,1]}(\abs{y}) \right) \bar\nu(\dy),	\quad	x \in \R^d
\end{equation}
for a deterministic $(U,\alpha)$-invariant L\'evy measure $\bar\nu$, i.e.\ satisfying \eqref{eq:bar nu (U,alpha)-invariant}.

\begin{lemma}	\label{Lem:I(tx) evaluated}
Let $\bar\nu$ be a deterministic L\'evy measure satisfying \eqref{eq:bar nu (U,alpha)-invariant} for some $1 \neq \alpha \in (0,2)$,
and define $I(x)$ via \eqref{eq:I(x)}. Then, for $0 \not = x \in \R^d$, 
\begin{equation}	\label{eq:I(tx) evaluated}
I(x)	~=~	
				- |x|^\alpha \eta_1^\alpha(x) + \imag |x|^\alpha \eta_2^\alpha(x) + \imag \scalar{x,\gamma^\alpha},
\end{equation}
with functions $\eta_1^\alpha,\eta_2^\alpha$ defined in \eqref{eq:Definition eta1} and \eqref{eq:Definition eta2}, respectively.
$\eta_1^\alpha$ and $\eta_2^\alpha$ are bounded real functions
satisfying $\eta_j^\alpha(u^\transp x) = \eta_j^\alpha(x)$ for all $u \in \U$, $x \in \R^d\setminus\{0\}$, $j=1,2$,
and $\eta_1^\alpha$ is nonnegative.
The vector $\gamma^\alpha$ satisfies $o \gamma^\alpha = \gamma^\alpha$ for all $o \in \CU$.
\end{lemma}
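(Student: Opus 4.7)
The idea is to split $I(x)$ into real and imaginary parts, then to rewrite the imaginary part so as to match the particular compensator appearing in the definition of $\eta_2^\alpha$. Using $e^{\imag\scalar{x,y}} = \cos\scalar{x,y} + \imag\sin\scalar{x,y}$, I would decompose
\begin{align*}
I(x) ~=~ &-\int (1-\cos\scalar{x,y})\,\bar\nu(\dy) + \imag \int (\sin\scalar{x,y} - \1_{\{\alpha>1\}}\scalar{x,y})\,\bar\nu(\dy)	\\
	&+ \imag \Big\langle x,\int y\,(\1_{\{\alpha>1\}} - \1_{[0,1]}(\abs{y}))\,\bar\nu(\dy)\Big\rangle,
\end{align*}
which identifies the three terms with $-|x|^\alpha \eta_1^\alpha(x)$, $\imag|x|^\alpha \eta_2^\alpha(x)$, and $\imag\scalar{x,\gamma^\alpha}$, respectively, where $\gamma^\alpha = -\int_{\{\abs{y}\leq 1\}} y\,\bar\nu(\dy)$ if $\alpha<1$ and $\gamma^\alpha = \int_{\{\abs{y}>1\}} y\,\bar\nu(\dy)$ if $\alpha>1$.

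To justify the split I would first verify absolute convergence of all pieces using the polar decomposition $\bar\nu(\dy) = \rho(\ds)\,\norm{a}^{-\alpha}\HAU(\da)$ from Proposition \ref{Prop:structure of bar nu}. Since $\abs{as} = \norm{a}$ for $s \in \SU$ and every $a$-integral reduces to $\int f(t)\,t^{-\alpha-1}\,\dt$ times $\rho(\SU) < \infty$, one checks: $\int_{\{\abs{y}\leq 1\}}\abs{y}^2\,\bar\nu(\dy)<\infty$ (handling the real part and, for $\alpha>1$, the near-zero imaginary part via $|\sin z - z|\leq |z|^3/6$); $\int_{\{\abs{y}>1\}}\abs{y}\,\bar\nu(\dy)<\infty$ for $\alpha>1$ (ensuring $\abs{\gamma^\alpha}<\infty$); and symmetrically $\int_{\{\abs{y}\leq 1\}}\abs{y}\,\bar\nu(\dy)<\infty$ for $\alpha<1$.

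The boundedness of $\eta_1^\alpha$ and $\eta_2^\alpha$ will then follow by splitting each integral at $\abs{y}\sim\abs{x}^{-1}$ and applying $(1-\cos z)\leq(z^2/2)\wedge 2$ together with analogous pointwise bounds for $|\sin z - \1_{\{\alpha>1\}}z|$. Each regime yields a contribution of order $|x|^\alpha$ times a constant depending only on $\alpha$ and $\rho(\SU)$, so dividing by $|x|^\alpha$ produces a uniform bound; nonnegativity of $\eta_1^\alpha$ is clear from $1-\cos z \geq 0$. For the $U$-invariance of $\eta_j^\alpha$ I would use the change-of-variable identity $\int f(uy)\,\bar\nu(\dy) = \norm{u}^\alpha \int f(y)\,\bar\nu(\dy)$, which is just a reformulation of \eqref{eq:bar nu (U,alpha)-invariant}, combined with $\scalar{u^\transp x,y}=\scalar{x,uy}$ and $\abs{u^\transp x} = \norm{u}\abs{x}$ (as $u$ is a similarity): then $\eta_j^\alpha(u^\transp x) = \norm{u}^{-\alpha}\abs{x}^{-\alpha}\cdot\norm{u}^\alpha\int(\cdots)\,\bar\nu(\dy) = \eta_j^\alpha(x)$. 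For the $\CU$-invariance of $\gamma^\alpha$ the same identity applied to $o\in\CU$ gives $\norm{o}=1$, so the integration region $\{\abs{y}>1\}$ (resp.\ $\{\abs{y}\leq 1\}$) and $\bar\nu$ itself are both preserved, and $\int o y\,\1_{\{\abs{y}>1\}}\,\bar\nu(\dy) = \int y\,\1_{\{\abs{y}>1\}}\,\bar\nu(\dy)$, i.e., $o\gamma^\alpha = \gamma^\alpha$.

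The main obstacle is essentially bookkeeping: the case distinction $\alpha<1$ versus $\alpha>1$ governs which compensator is actually needed for convergence, and one has to be careful to match the Lévy-Khintchine truncation $\1_{[0,1]}(\abs{y})$ in \eqref{eq:I(x)} against the $\1_{\{\alpha>1\}}$ truncation in the definition of $\eta_2^\alpha$ so that exactly the cross-term $\scalar{x,\gamma^\alpha}$ remains. Beyond this, everything is a routine computation once the polar representation of $\bar\nu$ is in hand.
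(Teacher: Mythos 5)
Your proposal is correct and, apart from one subpoint, follows the same route as the paper: split $I(x)$ into real and imaginary parts, absorb the mismatch between the truncation $\1_{[0,1]}(\abs{y})$ and the compensator $\1_{\{\alpha>1\}}\scalar{x,y}$ into the vector $\gamma^\alpha$ (your signs are the ones consistent with the decomposition), verify the required integrability through the polar representation of $\bar\nu$ from Proposition \ref{Prop:structure of bar nu}, and obtain the $\U$-invariance of $\eta_j^\alpha$ and the $\CU$-invariance of $\gamma^\alpha$ by using \eqref{eq:bar nu (U,alpha)-invariant} as a change of variables, exactly as the paper does. The one place you genuinely diverge is the boundedness of $\eta_1^\alpha,\eta_2^\alpha$: the paper deduces it from the invariance itself, arguing that the (continuous) functions $\eta_j^\alpha$ are then determined by their values on the relatively compact set $\SU$, whereas you estimate directly by splitting each integral at $\abs{y}\sim\abs{x}^{-1}$ and using $(1-\cos z)\le (z^2/2)\wedge 2$ and its analogues; both work, and your version is quantitative and avoids relying on the continuity of $\eta_j^\alpha$, which the paper uses without comment. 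Two bookkeeping cautions for your route: in the discrete case $\G=r^{\Z}$ one only has $\abs{as}=\norm{a}\,\abs{s}$ with $\abs{s}\in[r,1)$, and the $\HAU$-integral is a geometric sum rather than $\int f(t)\,t^{-\alpha-1}\,\dt$, so your scaling bounds hold only up to constants depending on $r$ and $\alpha$ — which is all you need, but should be said.
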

\begin{proof}
Fix $0 \not = x \in \R^d$ and notice that $I(x)$ is finite since $\bar\nu$ is a L\'evy measure.
Further, according to Proposition \ref{Prop:structure of bar nu},
there is a $\CU$-invariant finite measure $\rho$ on $\SU$ such that \eqref{eq:structure of bar nu} holds.

We set
\begin{equation*}
\gamma^\alpha
\defeq
\begin{cases}
-\int	y \1_{[0,1]}(|y|) \bar\nu(\dy)
= - 	\int_{\AU} \int_{\SU} ax \1_{[0,1]}(|ax|) \norm{a}^{-\alpha} \rho(\dx) \, \HAU(\da)	&	\text{ if } \alpha < 1	\\
- \int_{\AU} \int_{\SU} ax \1_{(1,\infty)}(|ax|) \norm{a}^{-\alpha} \rho(\dx) \, \HAU(\da),	&	\text{ if } \alpha \in (1,2).
\end{cases}
\end{equation*}
The asserted $\CU$-invariance follows from the $\CU$-invariance of $\rho$.

Recalling the definitions
\begin{align*}
\eta_1^\alpha(x) ~&=~ \frac{1}{|x|^\alpha} \int \big(1 - \cos (\scalar{x,y}) \big) \nu^\alpha(\dy) \\
\eta_2^\alpha(x) ~&=~ \frac{1}{|x|^\alpha} \int \big(\sin (\scalar{x,y}) - \1_{\{\alpha >1\}}\scalar{x,y} \big) \nu^\alpha(\dy) ,
\end{align*}
\eqref{eq:I(tx) evaluated} holds, and it remains to prove boundedness and invariance properties of $\eta_i^\alpha$, $i=1,2$.
Let $u \in \U$, then, using \eqref{eq:bar nu (U,alpha)-invariant},
\begin{align*}
\eta_1^\alpha(u^\transp x) ~&=~ \frac{1}{\norm{u}^\alpha|x|^\alpha} \int \big(1 - \cos (\scalar{x,uy}) \big) \nu^\alpha(\dy) \\
~&=~ \frac{\norm{u}^\alpha}{\norm{u}^\alpha|x|^\alpha} \int \big(1 - \cos (\scalar{x,y}) \big) \nu^\alpha(\dy) ~=~\eta_1^\alpha(x),
\end{align*} 
and the invariance of $\eta_2^\alpha$ is proved along the same lines. This implies in particular that the continuous functions $\eta_i^\alpha$ are determined by their respective values on the (relative) compact set $\SU$, hence the asserted boundedness follows.
 \qed
\end{proof}

For $\alpha=1$, we can compute a meaningful expression for $\eta^1(x)$ only for $x \in E_1(Q^\transp)$. Notice that this implies $x \in E_1(t^Q)$ for all $t \ge 0$. 
Hence, using formula \eqref{eq:bar nu operator stable} for $\bar{\nu}$,  we obtain
\begin{align}
\eta^1(x)
~&=~\int_{\Sd} \int_{\R_>} \bigg( e^{\imag \scalar{(t^{Q})^\transp x,s}} - 1 - \imag\scalar{(t^Q)^\transp x,s}\1_{\{|(t^Q)^\transp s| \le 1\}}  \bigg) t^{-2}\, \dt\, \rho(\ds) \nonumber \\
&=~\int_{\Sd} \int_{\R_>} \bigg( e^{\imag \scalar{tx,s}} - 1 - \imag\scalar{tx,s}\1_{\{t \le 1\}}  \bigg) t^{-2}\, \dt\, \rho(\ds) \nonumber \\
&=~-\int_{\Sd} \bigg( \abs{\scalar{x,s}} + \imag \frac2\pi \scalar{x,s} \log|\scalar{x,s}| \bigg) \rho(\ds) + \imag\scalar{\gamma,x}  \label{eq:Psi alpha 1 a}
\end{align}
for a suitable $\gamma \in \R^d$, see \cite[Theorem 14.10]{Sato:1999} for details.

\section{Auxiliary results from (Markov) renewal theory}

\begin{lemma}	\label{Lem:no big jump over t}
Let $(S_n)_{n \in \N_0}$ be a random walk with i.i.d.\ increments, $S_0=0$, $\E[S_1]>0$ and $\E[(S_1^+)^2]<\infty$.
Let $\tau(t) \defeq \inf\{n \in \N_0: S_n > t\}$, $t \geq 0$.
Then, for every $0<a<1$,
\begin{equation}	\label{eq:no big jump over t}
t \Prob(S_{\tau(t)-1} < at)	~\to~	0	\quad	\text{as } t \to \infty.
\end{equation}
\end{lemma}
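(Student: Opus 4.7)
The plan is to reduce the event $\{S_{\tau(t)-1} < at\}$ to a large-jump event and then exploit the moment hypothesis on $S_1^+$ together with a renewal bound on $\E[\tau(t)]$.

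First I would observe that by the definition of $\tau(t)$, on $\{S_{\tau(t)-1} < at\}$ we have $S_{\tau(t)} > t$ and $S_{\tau(t)-1} < at$, so the last increment $\xi_{\tau(t)} \defeq S_{\tau(t)} - S_{\tau(t)-1}$ exceeds $(1-a)t$. Hence
\begin{equation*}
\Prob(S_{\tau(t)-1} < at) ~\leq~ \Prob(\xi_{\tau(t)} > (1-a)t).
\end{equation*}
Next, since $\{\tau(t) \geq n\} = \{S_k \leq t \text{ for all } k < n\}$ is measurable with respect to $\xi_1,\ldots,\xi_{n-1}$, it is independent of $\xi_n$; a union bound combined with this independence gives the Wald-type inequality
\begin{equation*}
\Prob(\xi_{\tau(t)} > s) ~\leq~ \sum_{n \geq 1} \Prob(\tau(t) \geq n)\, \Prob(\xi_1 > s) ~=~ \E[\tau(t)]\, \Prob(\xi_1 > s).
\end{equation*}

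The assumption $\E[S_1]>0$ together with $\E[(S_1^+)^2]<\infty$ implies by the elementary renewal theorem for random walks (see e.g.\ Gut, \emph{Stopped Random Walks}) that $\E[\tau(t)] \sim t/\E[S_1]$ as $t \to \infty$, so in particular $\E[\tau(t)] \leq C t$ for some $C > 0$ and all sufficiently large $t$. Moreover, $\E[(S_1^+)^2] < \infty$ implies by dominated convergence that
\begin{equation*}
s^2 \Prob(\xi_1 > s) ~\leq~ \E[(\xi_1^+)^2 \1_{\{\xi_1^+ > s\}}] ~\to~ 0	\quad \text{as } s \to \infty,
\end{equation*}
i.e., $\Prob(\xi_1 > s) = o(s^{-2})$.

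Combining the four estimates yields
\begin{equation*}
t\, \Prob(S_{\tau(t)-1} < at) ~\leq~ C t^2 \Prob(\xi_1 > (1-a)t) ~=~ t^2 \cdot o(t^{-2}) ~=~ o(1),
\end{equation*}
which is \eqref{eq:no big jump over t}. The only mild obstacle is the justification of $\E[\tau(t)] = O(t)$: pointwise $\tau(t)/t \to 1/\E[S_1]$ follows from the strong law of large numbers, and the required $L^1$-control (uniform integrability of $\tau(t)/t$) is a standard consequence of the second-moment assumption on $S_1^+$; the two extra factors of $t$ that this produces are absorbed safely by $\Prob(\xi_1 > s) = o(s^{-2})$.
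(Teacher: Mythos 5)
Your proof is correct and takes essentially the same route as the paper's: both reduce the event to a single large last increment, use the independence of $\{\tau(t)\ge n\}$ from $\xi_n$ to obtain the Wald-type bound $\E[\tau(t)]\,\Prob(S_1>(1-a)t)$, and conclude from $\E[\tau(t)]=O(t)$ together with $t^2\Prob(S_1>t)\to 0$. No gaps worth flagging.
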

\begin{proof}
$\E[(S_1^+)^2]<\infty$ implies $\lim_{t \to \infty} t^2\Prob(S_1 > t) = 0$.
Further, it is known from standard random walk theory that $\lim_{t\to\infty} t^{-1} \E[\tau(t)] = \E[S_1]^{-1}$.
Consequently, setting, for $n \in \N$, $A_n \defeq  \{S_0 \leq t, \ldots, S_{n-2} \leq t, S_{n-1} < at\}$, we have
\begin{eqnarray*}
t \Prob(S_{\tau(t)-1} < at)
& = &
t \sum_{n \geq 1} \Prob(A_n \cap \{S_{n+1}>t\})
~\leq~
t \sum_{n \geq 1} \Prob(A_n) \Prob(S_{1}>(1-a)t) \\
& \leq &
t^2 \Prob(S_{1}>(1-a)t)  \cdot \frac1t \sum_{n \geq 1} \Prob(\tau(t) \geq n)	\\
& = &
t^2 \Prob(S_1 > (1-a)t) \cdot \frac{\E[\tau(t)]}{t}
~\to~	0	\qquad	\text{as } t \to \infty.	\hfill
\end{eqnarray*}	\qed
\end{proof}

\subsubsection*{A rate-of-convergence result in Markov renewal theory}

Throughout this section, let $((O_n, S_n))_{n \in \N_0}$ be a random walk on $\Orth \times \R$ with increment law $\mu$, say.
The components $O_n$ and $S_n$ may be dependent.
We assume that $\mu$ satisfies the minorization condition \eqref{eq:minorization}
and that
\begin{equation}	\label{eq:moments Markov random walk}
\E[S_1] > 0 \text{ and } \E\big[ |S_1|^{\ell + 1 + \delta} \big] < \infty
\end{equation}
for some $\ell > 0$.
Let $\O$ be the closed subgroup of $\Orth$ generated by the support of $O_1$.

\begin{proposition}	\label{prop:uniform renewal}
Let $\ell >0$ and $g: \R \to [0,\infty)$ be a measurable function that is decreasing on $[0,\infty)$, with $g(t)=0$ for all $t<0$
and $\lim_{t \to \infty} t^{\ell+1+\epsilon} g(t) =0$ for some $0<\epsilon< (\delta \wedge 1)$. Then 
\begin{equation*}
\lim_{t \to \infty} \sup_{\abs{f} \leq g}
~t^{\ell+\epsilon} \abs{\E \bigg[ \sum_{n=0}^\infty f(O_n, t-S_n) \bigg] ~-~ \frac{1}{\E[S_1]} \int_0^\infty \int_\O f(o, r) \, H_\O(\ddo) \, \dr}
~=~0.
\end{equation*} 
\end{proposition}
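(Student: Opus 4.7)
The plan is to reduce this Markov renewal result to an ordinary renewal theorem with a rate of convergence, by exploiting the regenerative structure supplied by the minorization condition \eqref{eq:minorization}. The key ingredients will be the Athreya--Ney--Nummelin splitting construction, a rate version of the key renewal theorem on $\R$, and uniform ergodicity of the chain on $\O$.

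First, I would apply the splitting construction. Assumption \eqref{eq:minorization} provides the decomposition $\mu = \gamma|I| \,\mu_1 + (1-\gamma|I|)\,\mu_2$, where $\mu_1$ is uniform on $\SOd \times I$ and $\mu_2$ is a probability measure. The splitting technique then yields, on an enlarged probability space, an i.i.d.\ sequence of regeneration times $0 = \tau_0 < \tau_1 < \ldots$, with cycle lengths $\tau_{k+1}-\tau_k$ geometrically distributed (hence with exponential moments) and such that the post-regeneration increment $(O_{\tau_{k+1}}O_{\tau_k}^{-1}, S_{\tau_{k+1}}-S_{\tau_k})$ has an absolutely continuous component with respect to $H_{\SOd}\otimes\mathrm{Leb}$. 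Together with \eqref{eq:moments Markov random walk}, this gives $\E[|S_{\tau_1}|^{\ell+1+\delta}]<\infty$ and $\E[S_{\tau_1}]=\E[\tau_1]\E[S_1]$ by Wald's identity.

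Second, I would reduce the sum to an ordinary convolution. Setting
\begin{equation*}
G_f(o,s) \defeq \E\bigg[\sum_{n=0}^{\tau_1-1} f(oO_n, s-S_n)\bigg]
\qquad\text{and}\qquad
V \defeq \sum_{k\geq 0}\Prob((O_{\tau_k},S_{\tau_k})\in\cdot),
\end{equation*}
a standard conditioning on the regeneration data gives
\begin{equation*}
R_f(t) \defeq \E\bigg[\sum_{n=0}^\infty f(O_n,t-S_n)\bigg] = \int_\O\int_\R G_f(o,t-r)\,V(\ddo,\dr).
\end{equation*}
Since $\tau_1$ has exponential moments and $g$ is decreasing with $s^{\ell+1+\epsilon}g(s)\to 0$, the one-cycle functional $G_f$ inherits a decay of the form $\sup_{o\in\O} s^{\ell+1+\epsilon/2}|G_f(o,s)|\to 0$ as $s\to\infty$, uniformly over $|f|\leq g$; moreover $G_f$ vanishes on $s<-K$ for some deterministic $K$. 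Substituting the explicit form of $G_f$ shows that the target limit in the proposition equals $\E[\tau_1]/\E[S_{\tau_1}]$ times $\int_0^\infty\int_\O f\,\ddo\,\dr = (1/\E[S_1])\int_0^\infty\int_\O f\,\ddo\,\dr$.

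Third, I would apply a Stone-type decomposition to the embedded renewal measure $V$. The absolutely continuous component created by splitting allows one to write $V = V_{\mathrm{ac}} + V_{\mathrm{sing}}$, where $V_{\mathrm{sing}}$ is a finite (in fact exponentially decaying) signed measure and $V_{\mathrm{ac}}$ has a bounded continuous density $v(o,r)$ with respect to $H_\O\otimes\mathrm{Leb}$. The one-dimensional rate-of-convergence theorems for renewal densities (Stone, Carlsson; see also the techniques in \cite[Section 5]{Iksanov+Meiners:2015b}) combined with the moment bound $\E[|S_{\tau_1}|^{\ell+1+\delta}]<\infty$ give $|v(o,r) - 1/\E[S_{\tau_1}]| = o(r^{-\ell-\epsilon})$, uniformly in $o\in\O$; the uniformity comes from the $\SOd$-minorization, which makes the chain on the compact group $\O$ uniformly ergodic. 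Convolving with $G_f$ then produces the asserted bound $t^{\ell+\epsilon}|R_f(t) - \tfrac{1}{\E[S_1]}\int_0^\infty\int_\O f\,\ddo\,\dr|\to 0$ uniformly over $|f|\leq g$.

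The main obstacle will be the third step. Getting the rate $o(t^{-\ell-\epsilon})$ uniformly in both $o\in\O$ and in $f$ with $|f|\leq g$ requires carefully combining the one-dimensional renewal rate (driven by the moment $\E[|S_{\tau_1}|^{\ell+1+\delta}]<\infty$ and the spread-out component of $\mu$) with the $\O$-equidistribution rate (driven by uniform ergodicity of the chain on the compact group $\O$). The delicate part is that the constants in the Stone decomposition and in the ergodic convergence must be shown to depend only on $g$ and on the structural data of the chain, not on the particular $f$.
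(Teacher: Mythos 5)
Your proposal follows the same skeleton as the paper's proof: Athreya--Ney/Nummelin splitting to create i.i.d.\ regeneration cycles with exponential cycle-length tails, a decomposition of the renewal sum into a first-cycle term plus a sum over subsequent cycles, and a rate-of-convergence version of the key renewal theorem for the embedded spread-out random walk. The one genuine difference is in how you treat the group component, and here your route is harder than it needs to be. You keep the two-dimensional renewal measure $V=\sum_k\Prob((O_{\tau_k},S_{\tau_k})\in\cdot)$ and then propose to prove $|v(o,r)-1/\E[S_{\tau_1}]|=o(r^{-\ell-\epsilon})$ \emph{uniformly in $o$} via a Stone decomposition combined with uniform ergodicity of the chain on $\O$ --- and you correctly flag this combination as the main obstacle. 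But the splitting construction already hands you more: at each regeneration the state is reset to the product law $\nu\otimes\eta$ with $\nu=H_{\SOd}$, independently of the past, so the post-regeneration cycle functional does not depend on the starting point $o$ at all. One can therefore integrate out the group variable once and for all, defining $\hat f(t)\defeq\E_{\nu\otimes\eta}[\sum_{k=0}^{\tau_1-1}f(M_k,t-R_k)]$, and the problem collapses to a genuinely one-dimensional renewal equation for the walk $V_n=R_{\tau_{n+1}-1}-R_{\tau_1-1}$, to which a Nummelin--Tuominen-type rate theorem applies directly (the increments are absolutely continuous because $\eta$ is, and have moments of order $\ell+1+\epsilon$ by the exponential cycle tails and \eqref{eq:moments Markov random walk}). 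No ergodicity rate on $\O$ is needed; the occupation identity $\E_{\nu\otimes\eta}[\sum_{n=0}^{\tau_1-1}f(M_n)]=\E_{\nu\otimes\eta}[\tau_1]\int_\O f\,dH_\O$ identifies the limiting constant, which is also cleaner than your appeal to Wald's identity (the increment at a regeneration epoch is drawn from $\eta$ rather than from the original step law, so Wald does not apply verbatim; the identity $\E[V_1]=\E[\tau_1]\E[S_1]$ follows instead from the strong law along cycles). Your remaining steps --- the uniform decay of the one-cycle functional over $|f|\le g$, and the control of the first (non-stationary) cycle --- are exactly the content of the paper's auxiliary lemmata and are correctly anticipated.
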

Here and below, $\sup_{|f| \leq g}$ means the supremum over all measurable functions
$f : \O \times \R \to \R$ satisfying $ \sup_{o \in \O} |f(o,x)| \leq g(x)$ for all $x \in \R$.

The main ingredient in the proof will be the use of regeneration techniques for general state space Markov chains
as developed in \cite{Athreya+Ney:1978a,Nummelin:1978}. We sum up what is needed in the subsequent lemma.

\begin{lemma}	\label{lem:regeneration}
There is a measurable space $(\Omega, \mathcal{G})$ together with a family of probability measures $(\Prob_{o,r})_{o \in \O, r \in \R}$
and sequences of random variables $((M_n, R_n))_{n \geq 0}$ and $(\tau_n)_{n \geq 1}$ with
\begin{equation}	\label{eq:MnRn}
\Prob_{o,r}\big(((M_n, R_n))_{n \geq 0} \in \cdot  \big) ~=~ \Prob\big( ((oO_n, r + S_n))_{n \geq 0} \in \cdot \big)
\end{equation}
for all $o \in \O$, $r \in \R$. Further, the following properties hold:
\begin{enumerate}
	\item[(i)]
		There is a filtration $(\mathcal{G}_n)_{n \in \N}$ such that $((M_n, R_n))_{n \ge 0}$ is Markov adapted to $(\mathcal{G}_n)_{n \in \N}$,
		and $(\tau_n)_{n \geq 1}$ is a sequence of predictable $(\mathcal{G}_n)_{n \in \N}$-stopping times, i.e., $\{\tau_n=k\} \in \mathcal{G}_{k-1}$.
	\item[(ii)]
		There are probability measures $\nu$ on $\O$ and $\eta$ on $\R$, $\eta$ having a bounded Lebesgue density,
		such that for all $n \geq 1$ and $o \in \O$, under $\Prob_o$,
		$((M_{\tau_n+k}, R_{\tau_n+k}-R_{\tau_n-1}))_{0 \leq k \leq \tau_{n+1}-\tau_n-1}$
		is independent of $(M_0, R_0, \dots, M_{\tau_n-1}, R_{\tau_n -1})$
		and has law $\Prob_{\nu \otimes \eta}(((M_{k}, R_{k}))_{k=0}^{\tau_{1}-1} \in \cdot)$.
	\item[(iii)]
		For each bounded measurable function $f:\O \to \R$,
		\begin{equation}	\label{eq:occupation measure}
		\E_{\nu \otimes \eta} \bigg[ \sum_{n=0}^{\tau_1-1} f(M_n) \bigg] ~=~ \E_{\nu \otimes \eta}[\tau_1] \, \int_\O f(o) \, H_\O(\ddo).
		\end{equation}
	\item[(iv)]
		There are $C, \lambda>0$ such that $\Prob_{o,r}(\tau_1 > n) \le C e^{-\lambda n}$ for all $o \in \O$, $r \in \R$ and $n \in \N$.
\end{enumerate}
\end{lemma}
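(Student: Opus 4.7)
The plan is to construct an Athreya--Ney--Nummelin-style splitting of the increment law $\mu$, exploiting the minorization \eqref{eq:minorization} to introduce explicit regeneration epochs, and then to identify the regeneration distribution on $\O$ with Haar measure via the left-invariance of $H_{\Orth}$.

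I will begin with the mixture decomposition $\mu = p\kappa + (1-p)\mu'$, where $\kappa$ is the probability measure on $\SOd \times I$ obtained by normalizing $H_{\Orth}|_{\SOd} \otimes \mathrm{Leb}|_I$, the weight $p\defeq\gamma|I|H_{\Orth}(\SOd)$ is its mass inside $\mu$, and $\mu' \defeq (\mu-p\kappa)/(1-p)$ is the complementary probability measure. On an enlarged probability space $(\Omega, \mathcal{G})$ I will carry three independent i.i.d.\ sequences: $(B_n)_{n \ge 1} \sim \mathrm{Ber}(p)$, $(\tilde X_n, \tilde Y_n)_{n \ge 1} \sim \kappa$, and $(\hat X_n, \hat Y_n)_{n \ge 1} \sim \mu'$; setting $(X_n, Y_n) \defeq (\tilde X_n, \tilde Y_n)$ when $B_n = 1$ and $(\hat X_n, \hat Y_n)$ otherwise produces i.i.d.\ $\mu$-distributed increments. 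Under $\Prob_{o,r}$, putting $(M_0, R_0) \defeq (o,r)$ and iterating $M_n \defeq M_{n-1} X_n$, $R_n \defeq R_{n-1} + Y_n$ immediately gives \eqref{eq:MnRn}. Choosing $\mathcal{G}_n \defeq \sigma(B_1, \ldots, B_{n+1},\, (\tilde X_k, \tilde Y_k, \hat X_k, \hat Y_k)_{k \le n})$, the pair $(M_n, R_n)$ is $\mathcal{G}_n$-measurable while $B_{n+1}$ is a look-ahead; this makes each $\tau_n \defeq \inf\{k : B_1 + \cdots + B_k = n\}$ a predictable $(\mathcal{G}_n)$-stopping time, since $\{\tau_n = k\} = \{B_1 + \cdots + B_{k-1} = n-1,\, B_k = 1\} \in \mathcal{G}_{k-1}$, yielding (i); the geometric tail $\Prob_{o,r}(\tau_1 > n) = (1-p)^n$ gives (iv) with $\lambda = -\log(1-p) > 0$.

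For (ii), at time $\tau_n$ the increment $(X_{\tau_n}, Y_{\tau_n}) = (\tilde X_{\tau_n}, \tilde Y_{\tau_n})$ is an independent draw from $\kappa$ and, by the strong Markov property of the split chain, the subsequent increments $(X_{\tau_n + k}, Y_{\tau_n + k})_{k \ge 1}$ are i.i.d.\ $\mu$ and independent of $\mathcal{G}_{\tau_n - 1}$. The shift by $R_{\tau_n - 1}$ in the additive coordinate makes $R_{\tau_n} - R_{\tau_n - 1} = \tilde Y_{\tau_n}$ uniform on $I$, identifying $\eta$ as the uniform law on $I$; taking $\nu$ to be the induced marginal law of $M_{\tau_n}$ under $\Prob_{\nu \otimes \eta}$---which, by left-invariance of $H_{\Orth}$ under multiplication by elements of $\SOd$ and the fact that $\O \in \{\SOd, \Orth\}$, is forced to be $H_{\O}$---the shifted trajectory $((M_{\tau_n + k}, R_{\tau_n + k} - R_{\tau_n - 1}))_{0 \le k \le \tau_{n+1} - \tau_n - 1}$ has the claimed law. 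Property (iii) will then follow from a standard regenerative-occupation-measure argument: the measure $\pi(f) \defeq \E_{\nu \otimes \eta}[\sum_{n=0}^{\tau_1 - 1} f(M_n)] / \E_{\nu \otimes \eta}[\tau_1]$ is invariant for the Markov transition of $(M_n)$ on $\O$ (apply the strong Markov property at $\tau_1$), so by uniqueness of the invariant probability measure for the ergodic chain $(M_n)$---which is $H_{\O}$, again by left-invariance of Haar---one concludes $\pi = H_{\O}$.

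The main technical obstacle will be ensuring genuine (unconditional) independence of $M_{\tau_n}$ from $\mathcal{G}_{\tau_n - 1}$ in (ii): since $\tilde X_{\tau_n}$ is supported on $\SOd$ while $M_{\tau_n - 1}$ may lie in either coset of $\SOd$ in $\Orth$, the conditional law of $M_{\tau_n - 1} \tilde X_{\tau_n}$ is a priori the left-translate of $H_{\SOd}$ by $M_{\tau_n - 1}$ and thus depends on $\det M_{\tau_n - 1}$. When $\O = \SOd$ this is automatic; when $\O = \Orth$, the residual $\mu'$ must put positive mass on $\Orth \setminus \SOd$, so the parity chain $(\det M_n) \in \{\pm 1\}$ is itself ergodic, and I will resolve the issue either by augmenting the splitting with an independent parity refresh at each regeneration (feasible because $\SOd$ is normal of index two in $\Orth$) or by passing to the compound epochs at which both $B_k = 1$ and the parity has just been refreshed; either modification preserves the geometric tail bound needed for (iv) and delivers the required unconditional independence with $\nu = H_{\O}$.
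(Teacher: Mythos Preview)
Your approach via Nummelin splitting of the increment law is essentially the same regeneration mechanism the paper uses, and your identification of (iii) via uniqueness of the invariant measure matches the paper's reference to Athreya--Ney. However, there is a meaningful difference in how the parity issue is handled when $\O=\Orth$, and your proposed fixes are shakier than necessary.

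The paper does \emph{not} take $\nu=H_{\O}$; it takes $\nu=H_{\SOd}$ and flips the regeneration coin \emph{only when $M_n\in\SOd$}. Since $H_{\SOd}$ is left-invariant under $\SOd$, this guarantees $M_{\tau_n}\sim H_{\SOd}$ unconditionally on the past, so (ii) holds immediately with this $\nu$. The lemma only asks for \emph{some} probability measure $\nu$ on $\O$, not for $\nu=H_{\O}$; the Haar measure $H_{\O}$ enters only in (iii) as the invariant measure of the Doeblin chain $(M_n)$, and this is unrelated to the regeneration distribution $\nu$. The price is that (iv) is no longer a one-line geometric bound: one needs the auxiliary observation that from any state, $M_n$ enters $\SOd$ within two steps with probability bounded below (this is the paper's estimate $\Prob(oO_k\notin\SOd\text{ for }k=1,2)\le q<1$), so that ``at least every third step there is a uniform positive chance for regeneration''.

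Your first fix (an ``independent parity refresh'' at regeneration) is not available: the minorization supplies increments in $\SOd$ only, so you cannot force a sign flip within the $\kappa$-component, and any parity change must come from $\mu'$. Your second fix (compound epochs where $B_k=1$ and the parity has been suitably reset) is workable but, once made precise, amounts to waiting for $M_{k-1}\in\SOd$ and then drawing from $\kappa$---i.e., exactly the paper's conditional splitting. So the cleanest repair is to drop the insistence on $\nu=H_{\O}$ and follow the paper's route with $\nu=H_{\SOd}$.
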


Here and below, we use the shorthand $\Prob_{\nu \otimes \eta} ~=~ \int_{\O} \int_\R \Prob_{o,r} \, \nu(\ddo) \, \eta(\dr)$,
the same notation for expectations, and sometimes omit the initial value $R_0$ if it is irrelevant.

Now we turn to the proof of Proposition \ref{prop:uniform renewal}. 

\begin{proof}[Proof of Proposition \ref{prop:uniform renewal}]
In order to prove this result, we will combine methods from \cite{Athreya+McDonald+Ney:1978} and \cite{Nummelin+Tuominen:1983}.
Below, we describe the steps of the proof and defer the technicalities to several lemmata.
By splitting $f$ into its positive and negative part, it suffices to consider nonnegative functions which are bounded by $g$.

Let $((M_n, R_n))_{n \geq 0}$ and $(\tau_n)_{n \geq 0}$ be as in Lemma \ref{lem:regeneration}.
Define $V_0 \defeq 0$ and
\begin{equation*}
V_n ~\defeq~ \sum_{k=1}^n (R_{\tau_{k+1}-1}-R_{\tau_k-1}) ~=~ R_{\tau_{n+1}-1} - R_{\tau_1-1}.
\end{equation*}
Then, under each $\Prob_{o,r}$, $(V_n)_{n \geq 1}$ is a random walk with i.i.d.\ increments and
increment law $\Prob_{\nu \otimes \eta}(R_{\tau_1-1} \in \cdot)$
which is absolutely continuous since the law $\eta$ of $R_0$ is absolutely continuous.
Since $\E[S_1]>0$ and
\begin{equation*}
1~=~\Prob(S_n/n \to \E [S_1] \text{ as } n \to \infty) ~=~ \Prob_{\Id,0}(R_n/n \to \E[S_1] \text{ as } n \to \infty),
\end{equation*}
we deduce that $\E_{\nu \otimes \eta}[V_1] = \E_{\nu \otimes \eta}[\tau_1]\E[S_1]$.

Let $f$ be nonnegative and set
\begin{equation*}
\hat{f}(t) ~\defeq~ \E_{\nu \otimes \eta} \bigg[ \sum_{k=0}^{\tau_1-1} f(M_k, t- R_k) \bigg].
\end{equation*}
Then we can proceed as in \cite[Section 4]{Athreya+McDonald+Ney:1978} to obtain
\begin{align}
\E \bigg[ & \sum_{n=0}^\infty f(O_n, t-S_n) \bigg]
= \E_{\Id,0} \bigg[ \sum_{n=0}^\infty f(M_n, t-R_n) \bigg]	\nonumber \\
&= \E_{\Id,0} \bigg[ \sum_{n=0}^{\tau_1 -1} f(M_n, t-R_n) \bigg] \\
&\hphantom{=} + \E_{\Id,0} \bigg[ \sum_{n=1}^\infty \,
\E \bigg[\sum_{k=\tau_n}^{\tau_{n+1}-1} f(M_k, t-(R_k-R_{\tau_{n}-1}) - R_{\tau_{n}-1}) \, \Big| \, \mathcal{G}_{\tau_n-1} \bigg] \bigg]  \nonumber \\
&= \E_{\Id,0} \bigg[ \sum_{n=0}^{\tau_1 -1} f(M_n, t-R_n) \bigg]
+ \E_{\Id,0} \bigg[ \sum_{n=0}^\infty \hat{f}(t-R_{\tau_1-1}-V_n) \bigg]
\eqdef E_1(t) + E_2(t). \nonumber
\end{align}
We show in Lemma \ref{lem:rateE1} that $t^{\ell+\epsilon} E_1(t)$ tends to zero, uniformly over all $f$ with $\abs{f}\le g$.
We rewrite
\begin{equation*}
E_2(t) ~=~ \int_\R \E_{\nu \otimes \eta} \bigg[\sum_{n=0}^\infty \hat{f}(t-s-V_n)\bigg] \, \Prob_{\Id,0}(R_{\tau_1-1} \in \ds)
\end{equation*}
and use \eqref{eq:occupation measure} to infer that (recall that $f \geq 0$)
\begin{align*}
\frac{1}{\E_{\nu\otimes\eta} [V_1]} \int_\R \hat{f}(r) \, \dr
~&=~	\frac{1}{\E_{\nu\otimes\eta} [V_1]} \E_{\nu \otimes \eta} \bigg[ \sum_{k=0}^{\tau_1-1} \int_\R f(M_k, r-R_k) \, \dr \bigg]	\\
&=~	\frac{1}{\E_{\nu\otimes\eta} [V_1]} \E_{\nu\otimes\eta} \bigg[ \sum_{k=0}^{\tau_1-1} \int_\R f(M_k, r) \, \dr \bigg] \\
&=~	\frac{1}{\E[S_1]} \int_0^\infty \int_{\O} f(o,r) \, H_\O (\ddo) \, \dr.
\end{align*}
Then the claimed convergence rate holds, if
\begin{equation*}
\int_\R  \, \sup_{\abs{f} \leq g} t^{\ell+\epsilon} \abs{\E_{\nu \otimes \eta} \bigg[ \sum_{n=0}^\infty \hat{f}(t-s-V_n) \bigg]
 - \frac{1}{\E_{\nu\otimes\eta}[V_1]}\int_0^\infty \hat{f}(r) \, \dr} \, \Prob_{\Id,0}(R_{\tau_1-1} \in \ds)
\end{equation*}
tends to 0 as $t \to \infty$. This result will be established in Lemma \ref{lem:1D convergence rate}.
\qed
\end{proof}

\subsubsection*{Lemmata needed in the proof of Proposition \ref{prop:uniform renewal}}

\begin{proof}[Proof of Lemma \ref{lem:regeneration}]
Observe that $((oO_n, r+S_n))_{n \in \N_0}$ is indeed a Markov chain on $\O \times \R$
and that the increments of $S_n-S_{n-1}$ are independent of the past. 
Since $\mu$ satisfies the minorization condition \eqref{eq:minorization}, we have that for all $o \in \SOd$, $B \in \SOd$,
\begin{equation}	\label{eq:recurrence1}
\Prob(oO_{1} \in B) = \Prob(O_{1} \in o^{-1}B) \geq \gamma H_{\O}((o^{-1}B) \cap \SOd) \geq \frac\gamma2 H_{\SOd}(B).
\end{equation}
If $\O=\SOd$, this shows that $(oO_n)_{n \in \N}$ is a Doeblin chain on $\O$
(see \cite[Section 16.2]{Meyn+Tweedie:1993} for the definition). 

If $\O$ contains elements with determinant $-1$ as well, then readily $\O=\Orth$, for $\SOd \subseteq \O$,
and the product of two matrices with negative determinant has a positive determinant.
Moreover, this necessitates $\Prob(\det(O_1)=-1) >0$.
Then, for all $o \in \O \setminus \SOd$ and $B \subseteq \SOd$,
\begin{align}
\Prob(oO_{2} \in B)
~\geq&~ \int \1_{\{\det(o')=-1 \}} \Prob(O_{1} \in (oo')^{-1}B) \, \Prob(O_1 \in \ddo') \nonumber \\
\geq&~ \Prob(\det(O_1)=-1) \, \frac\gamma2 H_{\SOd}(B). \label{eq:recurrence2}
\end{align}
Thus, $(oO_n)_{n \in \N}$ is a Doeblin chain in the case $\O=\Orth$, too. 
Its unique invariant probability measure is given by the normalized Haar measure $H_\O$ on $\Orth$.

Again by the minorization condition for $\mu$,
it follows that there is an absolutely continuous measure $\eta(\dx)\defeq  \1_I(x) dx$ such that
\begin{equation}	\label{eq:minorization for MRW}
\Prob(oO_1 \in A, S_1 \in B) ~\geq~ \frac\gamma2 H_{\SOd}(A) \eta(B)
\end{equation}
for all $o \in \SOd$ and all measurable $A, B$. 

\eqref{eq:recurrence1} and \eqref{eq:recurrence2} yield that there is some $q <1$ such that, for all $o \in \O$,
\begin{equation}	\label{eq:geometric rate}
\Prob(oO_k \notin \SOd \text{ for } k=1,2) \leq q.
\end{equation}
It follows that $(oO_n)_{n \in \N}$ is $(\SOd,\gamma/2,\nu,1)$-recurrent in the sense of \cite{Athreya+McDonald+Ney:1978},
with $\nu\defeq H_{\SOd}$.
Then, $((M_n, R_n))_{n \in \N_0}$ can be constructed along the same lines as in \cite[Section 3]{Athreya+McDonald+Ney:1978}:
Under $\Prob_{o,r}$, let $((M_n, R_n))_{n \in \N}$ have the same transitions as $(oO_n,r+ S_n)_{n \in \N}$,
but whenever $O_n$ enters $\SOd$, an independent $B(1,\gamma/2)$-distributed coin is flipped.
If 1 shows up, then $(M_{n+1}, R_{n+1}-R_n)$ is generated according to $\nu \otimes \eta$,
this event we call a {\em regeneration}; if 0 shows up,
then $(M_{n+1}, R_{n+1}-R_n)$ is generated according to $(1-\frac\gamma2)^{-1}\big(\Prob((M_nO_1, S_1) \in \cdot) - \frac\gamma2 \nu \otimes \eta \big)$.
Thus, the total transition probabilities are still equal to that of $(oO_n, r+S_n)_{n \in \N}$.
Let $\tau_0=0$ and $\tau_n$ be the $n$th regeneration time,
see \cite{Athreya+Ney:1978a,Nummelin:1978} for details.
This gives Assertions 1 and 2, while Assertion 3 is proved in \cite[Theorem 6.1]{Athreya+Ney:1978a}. 
The construction together with \eqref{eq:geometric rate} show that at least every third step,
there is a uniform positive chance for regeneration, this yields Assertion 4.
\qed
\end{proof}

\begin{lemma}	\label{lem:rateE1}
Let $g$ be as in Proposition \ref{prop:uniform renewal} and $((M_n,R_n))_{n \geq 0}$ as in Lemma \ref{lem:regeneration}.
Then 
\begin{equation}	\label{eq:rateE1}
\lim_{t \to \infty} \sup_{\abs{f}\le g}  t^{\ell+\epsilon+1} \bigg|\E_{\Id,0} \bigg[\sum_{n=0}^{\tau_1-1} f(M_n, t-R_n) \bigg] \bigg| ~=~0.
\end{equation}
In particular, $\lim_{t \to \infty} t^{\ell+\epsilon+1} \hat{g}(t)=0$. Moreover,
\begin{equation}	\label{eq:rateRtau}
\lim_{t \to \infty} t^{\ell+\epsilon+1} \, \Prob_{\Id,0}(R_{\tau_1-1}>t/2) ~=~0.
\end{equation}
\end{lemma}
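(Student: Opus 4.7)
The plan is to reduce all three assertions to a single moment estimate on
\[
M_* ~\defeq~ \max_{0 \le n \le \tau_1-1} R_n
\]
under $\Prob_{\Id,0}$, and then to exploit the uniformity of the estimate in the starting state. Since $|f|\le g$, since $g\ge 0$ is decreasing on $[0,\infty)$ with $g(s)=0$ for $s<0$, and since $g$ takes values in $[0,\infty)$ (so that $g(0)<\infty$), I have the pointwise bound
\[
\sum_{n=0}^{\tau_1-1} |f(M_n,t-R_n)| ~\le~ g(t/2)\,\tau_1 ~+~ g(0)\,\tau_1\,\1_{\{M_*>t/2\}}.
\]
Taking $\E_{\Id,0}$, the first term contributes $g(t/2)\,\E_{\Id,0}[\tau_1]$, and $t^{\ell+1+\epsilon}g(t/2)\to 0$ follows from the hypothesis on $g$ together with the finiteness of $\E_{\Id,0}[\tau_1]$ guaranteed by Lemma~\ref{lem:regeneration}(iv). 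Hence it suffices to show $t^{\ell+1+\epsilon}\E_{\Id,0}[\tau_1\,\1_{\{M_*>t/2\}}]\to 0$.

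The central step is to produce a moment bound $\E_{\Id,0}[M_*^{p'}]<\infty$ for some $p'$ strictly larger than $\ell+1+\epsilon$. Under $\Prob_{\Id,0}$, $R_n=S_n$; with $Y_k\defeq|S_k-S_{k-1}|$ one has
\[
M_* ~\le~ \sum_{k\ge 1} Y_k\,\1_{\{k<\tau_1\}}.
\]
I choose $\epsilon''>0$ so small that $\epsilon+\epsilon''<\delta$, and set $p'\defeq\ell+1+\epsilon+\epsilon''$, so that $p'<\ell+1+\delta$. Minkowski's inequality in $L^{p'}$ followed by H\"older with conjugate exponents $q_1\defeq(\ell+1+\delta)/p'$ and $q_2\defeq q_1/(q_1-1)$ yields
\[
\|Y_k\1_{\{k<\tau_1\}}\|_{p'} ~\le~ \bigl(\E[|S_1|^{\ell+1+\delta}]\bigr)^{1/(\ell+1+\delta)} \,\Prob_{\Id,0}(\tau_1>k)^{1/(p'q_2)}.
\]
The exponential tail from Lemma~\ref{lem:regeneration}(iv) turns $\sum_k\|Y_k\1_{\{k<\tau_1\}}\|_{p'}$ into a convergent geometric series, giving $\|M_*\|_{p'}<\infty$ and, via Markov's inequality, $\Prob_{\Id,0}(M_*>t/2)\le Ct^{-p'}$.

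A final H\"older step, using that all moments of $\tau_1$ are finite (again by exponential tails), yields
\[
\E_{\Id,0}[\tau_1\,\1_{\{M_*>t/2\}}] ~\le~ \|\tau_1\|_r\,\Prob_{\Id,0}(M_*>t/2)^{(r-1)/r} ~\le~ C_r\,t^{-p'(r-1)/r}.
\]
Choosing $r>p'/(p'-p)$ with $p=\ell+1+\epsilon$ ensures $p'(r-1)/r>p$, completing the proof of \eqref{eq:rateE1}; uniformity in $\{|f|\le g\}$ is built into the initial reduction. For the ``in particular'' assertion on $\hat g$, the crucial observation is that every bound above is uniform in the starting state $(o,r)$, since Lemma~\ref{lem:regeneration}(iv) is uniform and the law of the increments $(S_k-S_{k-1})_{k\ge 1}$ does not depend on $(o,r)$. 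Integrating the corresponding bound on $\E_{o,r}[\sum_{k=0}^{\tau_1-1} g((t-r)-S_k)]$ against the boundedly supported density of $\eta$ and against $\nu$ preserves the $o(t^{-p})$ rate, since $t-r\to\infty$ uniformly in $r$ on the support of $\eta$. Finally, $|R_{\tau_1-1}|\le M_*$ under $\Prob_{\Id,0}$ combined with Markov's inequality applied to $\E[M_*^{p'}]<\infty$ yields $\Prob_{\Id,0}(R_{\tau_1-1}>t/2)\le Ct^{-p'}=o(t^{-p})$. The main accounting obstacle will be keeping the exponent gap $\delta-\epsilon>0$ visible throughout, so that the strict little-$o$ statements are actually obtained rather than merely big-$O$ bounds.
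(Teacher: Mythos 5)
Your proof is correct, but it follows a genuinely different route from the paper's. The paper truncates the pre-regeneration sum at a logarithmic level $n_t=(\log t)(\ell+2)/\lambda$ and splits it into three pieces: the contribution with $R_n\le t/2$ (killed by $t^{\ell+1+\epsilon}g(t/2)\to 0$), the contribution with $R_n>t/2$ and $n\le n_t$ (killed term by term via Markov's inequality using $\E[|S_n|^{\ell+1+\delta}]\le n^{\ell+\delta}\E[|S_1|^{\ell+1+\delta}]$), and the event $\{\tau_1>n_t\}$ (killed by the exponential tail of $\tau_1$). You instead establish a single moment bound $\E[M_*^{p'}]<\infty$ for the running maximum $M_*$ up to regeneration, with $\ell+1+\epsilon<p'<\ell+1+\delta$, via Minkowski in $L^{p'}$, H\"older against the exponential tail of $\tau_1$, and the increment moment assumption \eqref{eq:moments Markov random walk}; everything then follows from Markov's inequality and one more H\"older step. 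Your exponent bookkeeping is consistent ($q_1=(\ell+1+\delta)/p'>1$ is exactly what makes the H\"older step legitimate, and $r>p'/(p'-p)$ delivers the strict little-$o$). Your route buys two things the paper leaves implicit: \eqref{eq:rateRtau} drops out directly from $R_{\tau_1-1}\le M_*$ (the paper's derivation of \eqref{eq:rateRtau} by substituting the $t$-dependent function $g(s)=\1_{[0,t/2)}(s)$ into \eqref{eq:rateE1} is slightly informal), and the uniformity in the initial state $(o,r)$ needed for the assertion on $\hat g$ — which is an expectation under $\nu\otimes\eta$, not $\delta_{(\Id,0)}$ — is made explicit via the bounded support of $\eta$. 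The paper's argument, in exchange, is more elementary, avoiding $L^{p'}$ norms entirely.
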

\begin{proof}
In order to prove \eqref{eq:rateE1}, we assume that $g(0) \leq 1$ and fix some $|f| \leq g$.
Recall that, by Lemma \ref{lem:regeneration}(iv),
$\Prob_{\Id,0}(\tau_1 >n) \leq C e^{-\lambda n}$ for some $C,\lambda > 0$.
Define $n_t=  (\log t) (\ell+2)/\lambda$, $t>0$.
Then
\begin{align*}
t^{\ell+\epsilon+1} \bigg| & \E_{\Id,0} \bigg[\sum_{n=0}^{\tau_1-1} f(M_n, t-R_n) \bigg] \bigg|
~\leq~  t^{\ell+\epsilon+1} \E_{\Id,0} \bigg[\sum_{n=0}^{\tau_1-1} g(t-R_n) \bigg]  \\
~\le&~ t^{\ell+\epsilon+1}\E_{\Id,0} \bigg[\1_{\{\tau_1 \le n_t\}}\sum_{n=0}^{\lfloor n_t \rfloor}  g(t-R_n)\big(\1_{\{R_n \le t/2\}} + \1_{\{R_n > t/2\}} \big) \bigg] \\
&~ +  t^{\ell+\epsilon+1}\E_{\Id,0} \bigg[ \1_{\{\tau_1 > n_t\}} \sum_{n=0}^{\tau_1-1} g(t-R_n) \bigg]
~\eqdef~ I_1(t) + I_2(t) + I_3(t).
\end{align*}
Recall that $g$ is decreasing on $[0, \infty)$ and $\lim_{t \to \infty} t^{\ell+1+\delta} g(t)=0$ for some $\delta>0$. This gives 
\begin{equation*}
I_1(t)
~=~  t^{\ell+\epsilon+1}\E_{\Id,0} \bigg[\1_{\{\tau_1 \le n_t\}} \sum_{n=0}^{\lfloor n_t \rfloor}  g(t-R_n) \1_{\{R_n \leq t/2\}} \bigg]
~\leq~ t^{\ell+\epsilon+1} (\lfloor n_t \rfloor + 1) g(t/2) ~\underset{t \to \infty}{\to}~0.
\end{equation*}
By Jensen's inequality, $\E[\abs{S_n}^\kappa] \le n^{k-1} \E[\abs{S_1}^\kappa]$ for all $\kappa \geq 1$.
Thus, applying Markov's inequality,
\begin{align*}
I_2(t) ~&\leq~	t^{\ell+\epsilon+1} \sum_{n=0}^{\lfloor n_t \rfloor} \Prob(S_n > t/2)
~\leq~		t^{\ell+\epsilon+1} \sum_{n=0}^{\lfloor n_t \rfloor} \frac{\E[ \abs{S_n}^{\ell+1+\delta}]}{(t/2)^{\ell+1+\delta}}	\\
~&\leq~	\frac{2^{\ell+1+\delta}}{t^{\delta-\epsilon}} \sum_{n=0}^{\lfloor n_t \rfloor} n^{\ell+\delta} \E[\abs{S_1}^{\ell+1+\delta}]
~\leq~		2^{\ell+1+\delta} \frac{n_t^{\ell+1+\delta}}{t^{\delta-\epsilon}} \E[\abs{S_1}^{\ell+1+\delta}],
\end{align*}
which tends to zero as $t \to \infty$. Finally,
\begin{align*}
I_3(t)
~&\leq~ t^{\ell+\epsilon+1} \E_{\Id,0}\big[ \tau_1 \1_{\{\tau_1 > n_t\}} \big] \\
&\leq~ t^{\ell+\epsilon+1} \bigg( n_t \Prob_{\Id,0}(\tau_1 > n_t) + \int_{n_t}^\infty \Prob_{\Id,0}(\tau_1 > r) \, \dr \bigg) \\
&\leq~ t^{\ell+\epsilon+1} \bigg( n_t C e^{-\lambda n_t} + \frac{C}{\lambda} e^{-\lambda n_t} \bigg) 	\\
&\leq~ C \frac{t^{\ell+\epsilon+1} (n_t +1/\lambda)}{t^{\ell+2}}
~\to~	0
\end{align*}
as $t \to \infty$. Thus we have proved the first and second assertion.
\eqref{eq:rateRtau} follows from \eqref{eq:rateE1} with $g(s)=\1_{[0,t/2)}(s)$.
\qed
\end{proof}

\begin{lemma}\label{lem:1D convergence rate}
It holds that
\begin{equation}	\label{eq:1D convergence rate}
\lim_{t \to \infty}
\int_\R  \, \sup_{\abs{f} \le g} t^{\ell + \epsilon}
\abs{\E_{\nu \otimes \eta} \bigg[ \sum_{n=0}^\infty \hat{f}(t-s-V_n) \bigg] - \frac{\int_0^\infty \hat{f}(r) \, \dr}{\E_{\nu \otimes \eta}[V_1]}} \, \Prob_{\Id,0}(R_{\tau_1-1} \in \ds) ~=~0.
\end{equation}
\end{lemma}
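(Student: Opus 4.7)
\emph{Proof proposal.} My plan is to reduce this two-dimensional Markov-renewal problem to a one-dimensional renewal-density rate-of-convergence estimate. By Lemma~\ref{lem:regeneration}(ii), the step distribution of $(V_n)_{n\ge 1}$ is absolutely continuous, since $R_0\sim\eta$ has a bounded Lebesgue density, so the renewal measure $U(\dy)\defeq\sum_{n=0}^{\infty}\Prob_{\nu\otimes\eta}(V_n \in \dy)$ admits a Lebesgue density $u$. Via the substitution $r=t-s-y$,
\begin{equation*}
\E_{\nu\otimes\eta}\bigg[\sum_{n=0}^\infty \hat f(t-s-V_n)\bigg] - \frac{\int_0^\infty\hat f(r)\,\dr}{\E_{\nu\otimes\eta}[V_1]} \;=\; \int \hat f(r)\,\bigg[u(t-s-r) - \frac{1}{\E_{\nu\otimes\eta}[V_1]}\bigg]\dr,
\end{equation*}
and since $|f|\le g$ implies $|\hat f|\le \hat g$, the inner expression in \eqref{eq:1D convergence rate} admits the pointwise bound $\int \hat g(r)\,\bigl|u(t-s-r)-1/\E_{\nu\otimes\eta}[V_1]\bigr|\,\dr$ after taking the supremum and absolute value.

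Next, I would use the exponential tail of $\tau_1$ from Lemma~\ref{lem:regeneration}(iv) together with $\E[|S_1|^{\ell+1+\delta}]<\infty$ from \eqref{eq:moments Markov random walk} and the elementary inequality $\E|\sum_{k=1}^{N}X_k|^p\le \E[N^p]\,\E|X_1|^p$ (applied to the regenerative decomposition of $V_1$ as a randomly stopped sum of copies of $S_1$) to obtain $\E_{\nu\otimes\eta}[|V_1|^{\ell+1+\delta}]<\infty$. Combined with the absolute continuity of the step distribution of $(V_n)$ (in fact, with a bounded density component inherited from $\eta$), classical renewal-density rate-of-convergence estimates then furnish a function $\phi:\R\to[0,\infty)$ with $\sup_{x\in\R}\phi(x)<\infty$ and $\phi(x)=o(x^{-(\ell+\delta)})$ as $x\to\infty$, such that $|u(y)-1/\E_{\nu\otimes\eta}[V_1]|\le\phi(y)$ for all $y\in\R$. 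Observe further that $\int \hat g(r)\,\dr<\infty$, since Lemma~\ref{lem:rateE1} gives $\hat g(r)=o(r^{-(\ell+\epsilon+1)})$.

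It then remains to split and integrate. I would partition the outer integration at $s=t/2$. For $s\le t/2$, split the inner integral at $r=(t-s)/2$: on $\{r\le (t-s)/2\}$, the bound $|u-1/\E_{\nu\otimes\eta}[V_1]|\le \phi((t-s)/2)=o((t-s)^{-(\ell+\delta)})$ together with $\int \hat g(r)\,\dr<\infty$ contributes $o(t^{\epsilon-\delta})$ after multiplication by $t^{\ell+\epsilon}$; on $\{r>(t-s)/2\}$, the tail estimate $\int_{(t-s)/2}^{\infty}\hat g(r)\,\dr=o((t-s)^{-(\ell+\epsilon)})$ combined with the uniform boundedness of $|u-1/\E_{\nu\otimes\eta}[V_1]|$ yields $o(1)$. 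For $s>t/2$, the crude bound $t^{\ell+\epsilon}\int \hat g(r)|u-1/\E_{\nu\otimes\eta}[V_1]|\,\dr\le C\,t^{\ell+\epsilon}$ multiplied by $\Prob_{\Id,0}(R_{\tau_1-1}>t/2)=o(t^{-(\ell+\epsilon+1)})$ from \eqref{eq:rateRtau} contributes $o(t^{-1})$ to the outer integral. The main obstacle is pinning down a sufficiently sharp quantitative renewal-density estimate that matches our moment hypothesis on $V_1$ together with the spread-out property inherited from $\eta$; once such an estimate is identified in the literature, the splitting argument above is routine.
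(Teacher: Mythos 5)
Your skeleton follows the paper's proof quite closely — the outer split of the integral at $s=t/2$ handled by \eqref{eq:rateRtau}, the reduction to a one-dimensional renewal statement for the i.i.d.\ walk $(V_n)_{n\ge1}$, and the use of Lemma \ref{lem:rateE1} to control the tail of $\hat g$ are all exactly the right moves. The problem is that the step you defer ("once such an estimate is identified in the literature, the splitting argument above is routine") is the entire non-routine content of the lemma. You need a renewal \emph{density} $u$ that is uniformly bounded and satisfies $|u(y)-1/\E_{\nu\otimes\eta}[V_1]|\le\phi(y)$ with $\phi$ bounded and $\phi(y)=o(y^{-(\ell+\delta)})$; no such statement is proved or precisely cited, so the proposal does not close the proof. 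The paper avoids needing a density estimate altogether: after checking that the family $\{\hat f:\,|f|\le g\}$ is uniformly dominated by the directly Riemann integrable function $\hat g$ (via Fubini and the bounded Lebesgue density of $\eta$), it applies the uniform key renewal theorem with polynomial rates, \cite[Theorem 4.2(ii)]{Nummelin+Tuominen:1983}, whose hypotheses — positive drift, spread-out (here absolutely continuous) step law, $\E_{\nu\otimes\eta}\big[|V_1|^{\ell+\epsilon+1}\big]<\infty$, and the tail conditions \eqref{eq:conditions g} on $\hat g$, supplied by Lemma \ref{lem:rateE1} — are exactly what is available. Your density-form input is strictly stronger than what is needed, and making it rigorous would in any case lead you back to the same circle of results (Stone-type decompositions, Nummelin--Tuominen), so as written there is a genuine gap rather than a shortcut.

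A few secondary points would also need repair. First, $U=\sum_{n\ge0}\Prob_{\nu\otimes\eta}(V_n\in\cdot)$ has an atom at $0$ coming from $V_0=0$, so it has no Lebesgue density; the $n=0$ term must be split off (harmless, since its contribution is bounded by $\hat g(t-s)=o\big((t-s)^{-(\ell+\epsilon+1)}\big)$ for $s\le t/2$). Second, the inequality $\E\big|\sum_{k=1}^{N}X_k\big|^p\le\E[N^p]\,\E|X_1|^p$ is not valid as stated; the moment bound $\E_{\nu\otimes\eta}[|V_1|^{\ell+1+\delta}]<\infty$ is true, but it should be derived from the exponential tail of $\tau_1$ in Lemma \ref{lem:regeneration}(iv) together with \eqref{eq:moments Markov random walk} via Minkowski's or H\"older's inequality. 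Third, the uniform boundedness of the renewal density and the integrability of $\hat g$ over all of $\R$ each need a short argument: the former uses that $\eta$ has a bounded, compactly supported density and that $R_0$ is independent of the remainder of the regeneration cycle, while the latter follows from Fubini, $\int\hat g(t)\,\dt=\E_{\nu\otimes\eta}[\tau_1]\int g(t)\,\dt<\infty$; Lemma \ref{lem:rateE1} only controls the behaviour of $\hat g$ at $+\infty$.
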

\begin{proof}
We start by proving that the functions $\hat{f}$ are uniformly directly Riemann-integrable over $\abs{f} \leq g$.
Write
\begin{equation*}
\hat{f}(t)	~=~	\int \E_{\nu \otimes \delta_0} \bigg[\sum_{k=0}^{\tau_1-1} f(M_k,t-s-R_k) \bigg] \, \eta(\ds),
\end{equation*}
then observe that
\begin{align*}
\int \bigg| \E_{\nu \otimes \delta_0} \! \bigg[\sum_{k=0}^{\tau_1-1} f(M_k,t-R_k) \bigg] \bigg| \dt
\leq	\E_{\nu \otimes \delta_0} \bigg[\sum_{k=0}^{\tau_1-1} \int g(t-R_k) \bigg] \dt
=	\E_{\nu \otimes \delta_0}[\tau_1] \int g(t) \dt
\end{align*}
and recall that $\eta$ has a bounded Lebesgue density.
Consequently, $\hat{f}$ is the convolution of a Lebesgue integrable function with a bounded function
and hence continuous, see \cite[Lemma VII.1.2]{Asmussen:2003}.
Further, arguing as in \cite[Eq.\ (4.4)]{Athreya+McDonald+Ney:1978},
\begin{align*}
\sum_{l\in \Z} \sup_{t \in [l h,(l+1)h]}  |\hat{f}(t)|
~&\leq~  \E_{\nu \otimes \eta}[\tau_1] \int_\O \sum_{l \in \Z}  \sup_{t \in [l 2h,(l+1)2h]}  \abs{f(o,t)} \, H_\O(\ddo) \\
&\leq~	\E_{\nu \otimes \eta}[\tau_1] \sum_{l \in \Z}  \sup_{t \in [l 2h,(l+1)2h]}  g(t)	~\eqdef~	C_g.
\end{align*}
Hence, it suffices to show that $g$ is directly Riemann-integrable.
The latter is clear since $g$ is monotone and Lebesgue-integrable (see \cite[Proposition V.4.1(v)]{Asmussen:2003}).
We have the uniform bound (cf. \cite[Theorem V.2.4(iii)]{Asmussen:2003})
\begin{equation*}
\sup_{t \in \R} \sup_{\abs{f} \le g} \, \E_{\nu\otimes\eta} \bigg[ \sum_{n=0}^\infty |\hat{f}(t-V_n)| \bigg]
~\leq~		C_g \E_{\nu\otimes\eta} \bigg[ \sum_{n=0}^\infty \1_{[-4h,4h]}(V_n) \bigg]
~\eqdef~	D_g~<~ \infty.
\end{equation*}
Now we decompose the integral inside the limit in \eqref{eq:1D convergence rate} according to the set $\{R_{\tau_1-1}> t/2\}$
to obtain the following upper bound
\begin{align*}
\sup_{s \leq t/2} \sup_{\abs{f} \leq g}  t^{\ell + \epsilon}
\bigg|\E_{\nu\otimes\eta} \bigg[ \sum_{n=0}^\infty \hat{f}(t-s-V_n) \bigg] - \frac{1}{\E_{\nu\otimes\eta} [V_1]}\int_0^\infty \hat{f}(r) \dr \bigg|&	\\
+ 2 D_g \, t^{\ell + \epsilon} \, \Prob_{\Id,0}(R_{\tau_1-1} > t/2)&.
\end{align*}
The second term tends to zero by \eqref{eq:rateRtau}.
For the first term, we invoke \cite[Theorem 4.2(ii)]{Nummelin+Tuominen:1983} (with $G=\delta_0$),
which gives (note that $\abs{f} \le g$ implies $|\hat{f}| \le \hat{g}$)
\begin{equation*}
\lim_{t \to \infty} t^{\ell + \epsilon} \sup_{\abs{\hat{f}} \le \hat{g}} \abs{ \E_{\nu\otimes\eta} \bigg[ \sum_{n=0}^\infty \hat{f}(t-s-V_n) \bigg]
~-~\frac{1}{\E_{\nu\otimes\eta} [V_1]}\int_0^\infty \hat{f}(r) dr } ~=~ 0
\end{equation*}
as soon as $V_1$ has positive drift (here, $\E_{\nu \otimes \eta}[V_1] = \E_{\nu \otimes \eta}[\tau_1]\E[S_1] > 0$ by the proof of Proposition \ref{prop:uniform renewal}),
a spread-out law (here, the law of $V_1$ is even absolutely continuous)
and $\E_{\nu\otimes\eta}[|V_1|^{\ell+\epsilon+1}]<\infty$
(which is true by \eqref{eq:moments Markov random walk} and Lemma \ref{lem:regeneration}(iv))
and $\hat{g}$ is bounded, Lebesgue-integrable and satisfies
\begin{equation}	\label{eq:conditions g}
t^{\ell + \epsilon} \int_t^{2t} \hat{g}(r) \dr \to 0 \quad \text{ and } \quad t^{\ell + \epsilon} \sup_{r \ge t} \hat{g}(r) \to 0 \quad \text{ as } t\to\infty.
\end{equation}
Lemma \ref{lem:rateE1} gives $\lim_{t \to \infty} t^{\ell+\epsilon+1} \hat{g}(t) = 0$,
which is sufficient for \eqref{eq:conditions g} to hold.
\qed
\end{proof}

\begin{acknowledgements}
The research has partly been carried out during visits of the authors to the Institute of Mathematical Statistics in M\"unster.
The authors would like to express their gratitude for the hospitality. We further thank Vincent Vargas for interesting discussions on the subject.
\end{acknowledgements}

\bibliographystyle{plain}      
\bibliography{Fixed_points}   

\end{document}